\definecolor{shadecolor}{rgb}{1,0.9,0.7}
\newtheorem{theorem}{Theorem}[section]
\newtheorem{lemma}[theorem]{Lemma}
\newtheorem{lemma-definition}[theorem]{Lemma-Definition}
\newtheorem{proposition}[theorem]{Proposition}
\newtheorem{corollary}[theorem]{Corollary}
\theoremstyle{definition}
\newtheorem{definition}[theorem]{Definition}
\newtheorem{construction}[theorem]{Construction}
\newtheorem{assumption}[theorem]{Assumption}
\theoremstyle{remark}
\newtheorem{remark}[theorem]{Remark}
\numberwithin{equation}{section}
\numberwithin{figure}{section}
\newcommand {\lfor} {\llbracket}
\newcommand {\rfor} {\rrbracket}
\newcommand{\DD} {\mathbb{D}}
\newcommand{\NN} {\mathbb{N}}
\newcommand{\ZZ} {\mathbb{Z}}
\newcommand{\QQ} {\mathbb{Q}}
\newcommand{\RR} {\mathbb{R}}
\newcommand{\CC} {\mathbb{C}}
\newcommand{\PP} {\mathbb{P}}
\newcommand{\GG} {\mathbb{G}}
\newcommand {\shA}  {\mathcal{A}}
\newcommand {\shAff} {\mathcal{A}\text{\textit{ff}}}
\newcommand {\shD}  {\mathcal{D}}
\newcommand {\shF}  {\mathcal{F}}
\newcommand {\shHom} {\mathcal{H}\!\text{\textit{om}}}
\newcommand {\shL}  {\mathcal{L}}
\newcommand {\shM}  {\mathcal{M}}
\newcommand {\shMPA} {\mathcal{MP\!A}}
\newcommand {\shN}  {\mathcal{N}}
\newcommand {\shO}  {\mathcal{O}}
\newcommand {\shPL} {\mathcal{PL}}
\newcommand {\shR}  {\mathcal{R}}
\newcommand {\shS}  {\mathcal{S}}
\newcommand {\shT}  {\mathcal{T}}
\newcommand {\shP}  {\mathcal{P}}
\newcommand {\shX}  {\mathcal{X}}
\newcommand {\shY}  {\mathcal{Y}}
\newcommand {\foD}  {\mathfrak{D}}
\newcommand {\foS}  {\mathfrak{S}}
\newcommand {\foX}  {\mathfrak{X}}
\newcommand {\foZ}  {\mathfrak{Z}}
\newcommand {\fob}  {\mathfrak{b}}
\newcommand {\fom}  {\mathfrak{m}}
\newcommand {\fop}  {\mathfrak{p}}
\newcommand {\fou}  {\mathfrak{u}}
\newcommand {\an}  {\mathrm{an}}
\newcommand {\dlog} {\operatorname{dlog}}
\newcommand {\eps}  {\varepsilon}
\newcommand {\gp}  {{\operatorname{gp}}}
\newcommand {\Gr}  {\operatorname{Gr}}
\newcommand {\Hom}  {\operatorname{Hom}}
\newcommand {\hra} {\hookrightarrow}
\newcommand {\id}  {\operatorname{id}}
\newcommand {\im}  {\operatorname{im}}
\newcommand {\Int}  {\operatorname{Int}}
\renewcommand {\ker } {\operatorname{ker}}
\newcommand {\la}  {\leftarrow}
\newcommand {\lra}  {\longrightarrow}
\newcommand {\ls}  {\dagger}
\newcommand {\M} {\mathcal{M}}
\newcommand {\maxid} {\mathfrak{m}}
\newcommand {\MPA} {\operatorname{MPA}}
\renewcommand{\O}  {\mathcal{O}}
\newcommand {\ol} {\overline}
\renewcommand{\P}  {\mathscr{P}}
\newcommand {\Proj} {\operatorname{Proj}}
\newcommand {\ra}  {\to}
\newcommand {\res}  {\operatorname{res}}
\newcommand {\restxt}  {\mathrm{res}}
\newcommand {\scrS}  {\mathscr{S}}
\newcommand {\sing} {\mathrm{sing}}
\newcommand {\Spec} {\operatorname{Spec}}
\newcommand {\Spf}  {\operatorname{Spf}}
\newcommand {\trop}  {{\operatorname{trop}}}
\newcommand {\ul} {\underline}
\newcommand {\val}  {{\operatorname{val}}}
\newcommand {\D} {\mathfrak D}
\newcommand {\X} {\mathfrak X}
\def\mydate{\ifcase\month \or January\or February\or March\or
April\or May\or June\or July\or August\or September\or October\or 
November\or December\fi \space\number\day,\space\number\year}
\let\oldcite\cite
\renewcommand{\cite}{\@ifnextchar[{\@newcite}{\oldcite}}
\def\@newcite[#1]#2{\oldcite{#2},\,#1}
\begin{document}

\title[Period integrals from wall structures]
{Period integrals from wall structures via tropical cycles, canonical
coordinates in mirror symmetry and analyticity of toric degenerations}
\author{Helge Ruddat, Bernd Siebert}

\address{\tiny JGU Mainz, Institut f\"ur Mathematik, Staudingerweg 9, 55128 Mainz, Germany}
\email{ruddat@uni-mainz.de}

\address{\tiny Department of Mathematics, University of Texas at Austin,
2515 Speedway Stop, Austin, TX 78712, USA}
\email{siebert@math.utexas.edu}
\thanks{H.~Ruddat was partially supported by the Carl-Zeiss foundation
and DFG grant RU 1629/4--1}

\begin{abstract}
We give a simple expression for the integral of the canonical holomorphic volume
form in degenerating families of varieties constructed from wall structures and
with central fiber a union of toric varieties. The cycles to integrate over are
constructed from tropical 1-cycles in the intersection complex of the central
fiber.

One application is a proof that the mirror map for the canonical
formal families of Calabi-Yau varieties constructed by Gross and the second
author is trivial. We also show that these families are the completion of an
analytic family, without reparametrization, and that they are formally versal as
deformations of logarithmic schemes. Other applications include canonical
one-parameter type III degenerations of K3 surfaces with prescribed Picard
groups.

As a technical result of independent interest we develop a theory of period
integrals with logarithmic poles on finite order deformations of normal crossing
analytic spaces.
\end{abstract}

\maketitle
\tableofcontents

\setcounter{tocdepth}{1}
\section{Introduction}
A \emph{period} of a complex manifold $X$ is the integral $\int_\beta\alpha$ of
a holomorphic differential $k$-form $\alpha$ over a singular $k$-cycle $\beta$
on $X$. The classical example is the elliptic integral $\int
dx/\sqrt{x^3+ax+b}$, an integral over a closed curve of the holomorphic one form
$y^{-1}dx$ on the elliptic curve $y^2=x^3+ax+b$. More modern accounts emphasize
the interpretation of periods in terms of Hodge theory and their dependence on
varying $X$ and $\alpha$ in a holomorphic family \cite{griffithsBullAMS}. This
interpretation is of fundamental importance in the study of moduli spaces
\cite{CMSP}. Another fascinating aspect of period integrals is the countable set
of values obtained for algebraic varieties defined over $\QQ$ \cite{periods}.

The main result of this paper gives a simple closed formula of a class
of period integrals for families of complex manifolds naturally arising in mirror
symmetry and in the study of cluster varieties. The families $\shX\to S$
considered have a special fiber $X_0$ that is a union of projective toric
varieties of dimension $n$, glued pairwise along toric divisors. In particular,
$X_0$ is normal crossings outside a subset of codimension two. The special fiber
is conveniently represented by the union of momentum polytopes, glued pairwise
along their facets according to the gluing of the irreducible components of
$X_0$, thus forming a cell complex $\P$ with underlying topological space $B$ a
pseudo-manifold, possibly with boundary. Outside codimension two, the
family $\shX\to S$ is built from toric pieces via special isomorphisms encoded
in a \emph{wall structure} on $B$. The special isomorphisms respect the toric
holomorphic differential $n$-forms $z_1^{-1}dz_1\wedge\ldots\wedge z_n^{-1}d
z_n$, which thus define a global relative holomorphic $n$-form $\Omega$ for
$\shX$ over the parameter space $S$.

For example, for any Laurent polynomial
$f\in\CC[u_1^{\pm1},\ldots,u_{n-1}^{\pm1}]$, the family of subvarieties
\begin{equation}
\label{Eqn: Basic equation}
zw=f\cdot t^\kappa
\end{equation}
of $\CC^2\times (\CC^*)^{n-1}$ parametrized by $t\in\CC$ is of this form. Such
families arise as mirrors to local Calabi-Yau manifolds \cite{CKYZ},\cite{ICM},
and in mirror symmetry for varieties of general type \cite{GKR},\cite{AAK}. If
the wall structure is not locally finite, this picture is accurate only at
finite orders in the deformation parameter. A careful treatment of period
integrals with logarithmic poles at $t=0$ in this setup is given in
Appendix~\ref{Sect: Period integrals}.

In the simplest versions \cite{logmirror1},\cite{affinecomplex}, $S$ is the
spectrum of a discrete valuation ring, or a disk analytically, but in any case,
$S$ is an open subset of an affine toric variety or its completion along a toric
divisor \cite{GHK},\cite{theta}. Thus there is a well-defined notion of monomial
function on $S$.

For the domain of integration, we consider continuous deformations $\beta_t$ of
a class of $n$-cycles $\beta$ on $X_0$ that generically fiber as a real
$(n-1)$-torus bundle over a graph in $B$ and which intersect the singular locus
of $X_0$ transversely in some sense. In a nutshell, our main result says that in
the best cases, which include \cite{affinecomplex} and \cite{GHK}, there are
constants $c\in \CC$ and a monomial $t^q$ on $S$ with
\[
\frac{1}{(2\pi\sqrt{-1})^{n-1}}\int_{\beta_t} \Omega_{X_t}= c+\log t^q,
\]
as a holomorphic function outside $t^q=0$ and up to multiples of
$2\pi\sqrt{-1}$. This result is highly remarkable since for algebraically
parametrized families, period integrals of this form typically lead to
transcendental functions. In fact, replacing $t$ by $h\cdot t$ for some
invertible analytic function $h$ changes the right-hand side by a summand $\log
h$. Thus while the logarithmic monomial behavior can be expected for cycles of
our form, the fact that $c$ is a constant is very special to the particular
construction of the family $\shX\to S$ via a wall structure.

The most obvious application of this result is to \emph{mirror symmetry}. On the
complex side of mirror symmetry, one is looking at families $\shX\to S$ of
Calabi-Yau varieties with topological monodromy around the critical locus
unipotent of maximally possible exponent \cite{CdGP91}, \cite{De93}, \cite{Mo93}.
In this situation, the limiting mixed Hodge structure on the cohomology of a
nearby smooth fiber turns out to be of Hodge-Tate type \cite{De93}, and
exponentials of the kind of period integrals studied here provide a
distinguished set of holomorphic functions on the parameter space. These
functions provide a set of coordinates at points where the family is
semi-universal, that is, where the Kodaira-Spencer map is an isomorphism. Since
these functions only depend on discrete choices, they are known as
\emph{canonical coordinates} in mirror symmetry. The identification of the
complexified K\"ahler moduli space of the mirror with complex moduli works by
canonical coordinates. Thus our result says that the mirror map is monomial on
the subspace generated by our cycles. In favorable situations one obtains
full-dimensional pieces of the complexified K\"ahler cone on the mirror side.

As another important application, we prove a strong \emph{analyticity} result
for the canonical toric degenerations of \cite{affinecomplex} and their
universal refinement in \cite{theta}, Theorem~A.7. This result should be
important for the symplectic study of canonical toric degenerations.


\subsection{Toric degenerations from wall structures}
\label{Par: Toric degenerations}
For more precise statements we now give more details on the setup and
construction. We work in the general setting of \cite{theta} and fix a finitely
generated $\CC$-algebra $A$ and some $k\in\NN$. The algebra $A$ provides moduli for
the construction and may be assumed to be $\CC$ at first reading. The base ring
of our degeneration is $A_k=A[t]/(t^{k+1})$, so $k$ determines the order of
deformation to be considered.
\medskip

\setcounter{tocdepth}{1}
\subsubsection{Polyhedral affine manifolds \texorpdfstring{$(B,\P)$}{(B,P)}}
The basic arena of all constructions is a cell complex $\P$ of integral
polyhedra with underlying topological space $B$ an $n$-dimensional
pseudo-manifold with possibly empty boundary (\cite{theta}, Definition~1.1). All
constructions happen away from codimension two. We reserve the letter $\sigma$
for maximal cells and $\rho$ for codimension one cells of $\P$, respectively, possibly
adorned. For a cell $\tau\in\P$ we denote by $\Lambda_\tau\simeq \ZZ^{\dim\tau}$
the group of integral tangent vector fields on the interior $\Int\tau$ of
$\tau$. We also need maximal cells of the barycentric subdivision of a
codimension one cell $\rho$, and these are denoted $\ul\rho$. By writing
$\ul\rho$ it is understood that $\rho$ is the codimension one cell of $\P$
containing $\ul\rho$.

Denote by $\Delta\subset B$ the union of all $(n-2)$-cells of the barycentric
subdivision that lie in the $(n-1)$-skeleton of $\P$, that is, which are
disjoint from the interiors of maximal cells. On $B\setminus\Delta$ we assume
given an integral affine structure that restricts to the usual integral affine
structure on the interiors of maximal cells. Note that this amounts merely to
specifying, for each $\ul\rho$ not contained in $\partial B$, the parallel
transport through $\ul\rho$ of a primitive integral vector complementary to
$\Lambda_\rho$ on one of the two neighboring maximal cells $\sigma$ of $\rho$
to the other neighboring cell $\sigma'$. The polyhedral complex $\P$ along with
the affine structure on $B=|\P|$ away from $\Delta$ is what we call a
\emph{polyhedral affine pseudo-manifold}. We use the notation
$\Delta_2\subset\Delta$ for the smaller set defined by the $(n-2)$-skeleton of $\P$.
\medskip

\subsubsection{Kinks $\kappa_{\ul\rho}$ and multivalued piecewise affine
function $\varphi$}
The second piece of data is the collection of exponents
$\kappa\in\NN\setminus\{0\}$ appearing in the local models \eqref{Eqn: Basic
equation} in codimension one. There is one such exponent for each $\ul\rho$, so
these exponents may vary\footnote{In \cite{affinecomplex} and \cite{GHK}, kinks
depend only on $\rho$, but they do depend on $\ul\rho\subset\rho$ in some proofs
of \cite{GHK}.} along a codimension one cell $\rho$. As a matter of notation, we
denote the collection of all $\kappa_{\ul\rho}$ by the associated multivalued
piecewise affine function $\varphi$ (\cite{theta}, Definition~1.8).
\medskip

\subsubsection{Wall structures}
The third piece of data is a wall structure $\scrS$ on our polyhedral affine
pseudo-manifold, as defined in \cite{theta}, Definition~1.22. The wall structure
consists of a finite collection of \emph{walls}, each wall being an
$(n-1)$-dimensional rational polyhedron $\fop$ contained in some cell of $\P$,
along with an algebraic function $f_\fop$. The walls define an
$(n-1)$-dimensional cell complex, assumed to cover all $(n-1)$-cells $\rho\in\P$
and to subdivide each maximal cell of $\P$ into (closed) convex \emph{chambers},
denoted $\fou$. There are thus two kinds of walls, depending on whether the
minimal cell of $\P$ containing $\fop$ is a maximal cell $\sigma$ or a
codimension one cell $\rho$. In the first case, \emph{walls of codimension
zero}, $f_\fop$ is of the form\footnote{The definition in \cite{theta} admitted
$f_\fop$ of the more general form $1+\sum_i a_i z^{m_i}t^{\ell_i}$ with
$\ell_i>0$ and $m_i\in\Lambda_\sigma$ tangent to $\fop$. Such a wall can be
decomposed into walls of the more restrictive form considered here, provided
$f_\fop$ can be written as a product $\prod_i(1+a'_i z^{m'_i}t^{\ell'_i})$. This
is the case iff the (finite) Taylor series expansion of $\log f_\fop$ at $1\in
A$ has no terms that are pure powers of $t$. This property is crucial for walls
of codimension~$0$ not to contribute to the period integral. It is fulfilled in
all known cases.}
\[
f_\fop=1+a z^m t^\ell,
\]
with $a\in A$, $\ell>0$ and $z^m$ the monomial in the Laurent polynomial ring
$\CC[\Lambda_\sigma] \simeq\CC[z_1^{\pm1},\ldots,z_n^{\pm1}]$ defined by some
$m\in \Lambda_\sigma$ \emph{tangent to $\fop$}. The second case, \emph{walls
of codimension one}, cover the sources of the inductive construction of the wall
structure. Such a wall is therefore also called \emph{slab} and written with a
different symbol $\fob$ instead of $\fop$ for easier distinction. In this case,
there are no conditions on $f_\fob$ other than that the exponents of monomials
be tangent to $\rho$, that is,
\[
f_\fob\in A[\Lambda_\rho][t].
\]
Here $\Lambda_\rho\simeq\ZZ^{n-1}$ denotes the group of integral tangent vector
fields on $\rho$.
\medskip

\subsubsection{Construction of the scheme $X^\circ_k/A_k$} \label{subsec-make-Xk}
From the wall structure we build a scheme $X_k^\circ$ over $A_k=A[t]/(t^{k+1})$
assuming a \emph{consistency condition}, by taking one copy $\Spec R_\fou$ with
$R_\fou= A_k[\Lambda_\sigma]$ for each chamber $\fou\subseteq\sigma$ and one copy of
$\Spec R_\fob$ with
\begin{equation}\label{Eqn: ring in codim one}
R_\fob=A_k[\Lambda_\rho][Z_+,Z_-]/(Z_+ Z_- - f_\fob t^{\kappa_{\ul\rho}})
\end{equation}
for each slab $\fob$. A wall $\fop$ of codimension zero defines a \emph{wall
crossing automorphism} of $A_k[\Lambda_\sigma]$, for $\sigma$ the maximal cell
containing $\fop$, see \eqref{Eqn: Wall crossing formula} below and
\cite{theta}, \S2.3. The consistency condition in codimension zero
(\cite{theta}, Definition~2.13) is equivalent to saying that sequences of such
automorphisms identify all $\Spec R_\fou$ for chambers contained in the same
maximal cell $\sigma$ in a consistent fashion.

If a slab $\fob\subseteq\ul\rho$ is a facet of a chamber $\fou\subseteq\sigma$,
there is an open embedding
\begin{equation}\label{Eqn: Canonical open emb}
\Spec R_\fou\lra \Spec R_\fob
\end{equation}
defined by the inclusion $\Lambda_\rho\subset \Lambda_\sigma$ and by identifying
$Z_+$ with $z^\zeta$ for $\zeta\in\Lambda_\sigma$ a generator of
$\Lambda_\sigma/\Lambda_\rho$ pointing from $\rho$ into $\sigma$. For the other
chamber $\fou'$ containing $\fob$, contained in the maximal cell $\sigma'$ with
$\sigma\cap\sigma'=\rho$, the corresponding homomorphism $R_\fob\to R_{\fou'}$
maps $Z_-$ to $z^{\zeta'}$ with $\zeta'$ the parallel transport of $-\zeta$
through $\ul\rho$. In this procedure there is a choice of co-orientation of
$\rho$ that determines which maximal cell to take for $\sigma$, and a choice of
$\zeta\in\Lambda_\sigma$, but any two choices lead to isomorphic results.
Consistency in codimension one provides the necessary cocycle condition to
assure the existence of a scheme $X_k^\circ$ with open embeddings of all $\Spec
R_\fou$ and $\Spec R_\fob$ compatible with all wall crossing automorphisms and
all open embeddings \eqref{Eqn: Canonical open emb}.

If $\partial B\neq\emptyset$, the codimension one cells $\ul\rho$ contained in
$\partial B$ require a slightly different treatment that turns $\partial B$ into a
divisor in $X_k^\circ$. We do not review this construction here because all our
arguments take place on the complement of $\partial B$.
\medskip

\subsubsection{Codimension two locus, partial completion and theta functions}
The fiber $X_0^\circ$ of $X_k^\circ$ over $t=0$ is a product of $\Spec A$ with a
union of toric varieties, one for each maximal cell $\sigma$, glued pairwise
canonically along toric divisors as prescribed by the combinatorics of $\P$. By
construction, the toric varieties do not contain any toric strata of codimension
larger than one. For a maximal cell $\sigma$, the fan of the corresponding toric
variety is the $1$-skeleton of the normal fan of $\sigma$, so consists only of
the origin and the rays. While it is always possible to add the codimension two
strata to $X_0^\circ$ to arrive at a scheme $X_0$, the extension $X_k$ of the
flat deformation $X^\circ_k$ of $X_0^\circ$ to $X_0$ is a lot more subtle and in
particular, requires a consistency condition in codimension two. The approach
taken in \cite{GHK} and \cite{theta} to produce $X_k$ is to rely on the
construction of a canonical $A_k$-module basis of the homogeneous coordinate
ring, consisting of \emph{(generalized) theta functions}. For $B=(S^1)^n$ these
functions indeed agree with Riemannian theta functions. Theta functions will
only be used once in this paper, for the construction of the degenerate momentum
map in Proposition~\ref{Prop: momentum map}. There is one generalized theta
function $\vartheta_m$ for each integral point $m$ of $B$. We refer to
\cite{theta} for details. Our periods are computed entirely on $X_k^\circ$ and
hence the extension from $X_k^\circ$ to $X_k$ is largely irrelevant here.
\medskip

\subsubsection{Gluing data}
\label{Par: Gluing data}
One obvious way to introduce parameters in the construction is to compose the
open embeddings $\Spec R_\fou\to \Spec R_\fob$ from \eqref{Eqn: Canonical open
emb} with an $A_k$-linear toric automorphism of $\Spec R_\fou$. For
$R_\fou=A_k[\Lambda_\sigma]$ such an automorphism is given by a homomorphism
$\Lambda_\sigma\to A^\times$. The choices $s_{\sigma\ul\rho}\in
\Hom(\Lambda_\sigma,A^\times)$ for each $\ul\rho,\sigma$ with
$\ul\rho\subset\sigma$ is called \emph{(open) gluing data}. All previous notions
generalize, with consistency in codimension one and two now checked with the
open embeddings \eqref{Eqn: Canonical open emb} twisted by the given gluing
data. Gluing data may spoil projectivity or even the existence of the completed
central fiber $X_0\supset X_0^\circ$. Since the details of this extension are
not relevant for the present paper, we refer the interested reader to
\cite{theta}, Section~5. Gluing data change the period integral and will play an
important role in the application to analyticity, hence have to be taken into
consideration.

For simplicity of notation we write $X_0$ and $X_k$ instead of $X_0^\circ$ and
$X_k^\circ$ in the following discussion, but work only away from strata of codimension
larger than one.


\subsection{Singular cycles on $X_0$ from tropical $1$-cycles}
The $n$-cycles considered are defined from $n$-cycles $\beta$ on $X_0$ that
generically fiber as a finite union of real $(n-1)$-torus bundles over a graph
$\beta_\trop$ in $B$. The torus fiber over a non-vertex point of $\beta_\trop$
in the interior of a maximal cell $\sigma\in\P$ is an orbit under the conormal
torus $\xi^\perp\otimes U(1)\simeq U(1)^{n-1}$ inside the real torus
$\Hom(\Lambda_\sigma,\ZZ)\otimes_\ZZ U(1)\simeq U(1)^n$ acting on the toric
irreducible component $X_\sigma\subseteq X_0$ defined by $\sigma$. The matching
of the various orbits at a vertex amounts to the local vanishing of the boundary
of $\beta_\trop$ as a singular $1$-cycle with twisted coefficients\footnote{See
\cite{Br97}, \S{VI.12}, for singular homology with coefficients in a sheaf.} in
the local system $\Lambda$.
\medskip

\subsubsection{The degenerate momentum map $\mu:X_0\to B$}
To globalize we observe that each maximal cell $\sigma$ comes with a momentum
map $\mu_\sigma: X_\sigma\to \sigma$ of the corresponding irreducible component
$X_\sigma\subseteq X_0$. For trivial gluing data (all $s_{\sigma\ul\rho}=1$), the
$\mu_\sigma$ agree on codimension one strata to define a degenerate momentum map
$\mu:X_0\to B$. This map should be viewed as a limiting SYZ-fibration
\cite{SYZ}. There is a partial collapse of torus fibers over the deeper strata
of $X_0$ described explicitly by the Kato-Nakayama space of $X_0$ as a log
space, see \cite{KNreal} for some details. For non-trivial gluing data, the
$\mu_\sigma$ have to be composed with diffeomorphisms of the maximal cells
$\sigma$ to make them match over common strata. In the
projective setting one can use generalized theta functions for a canonical
construction, otherwise there may be obstructions to the existence of $\mu$ in
codimension two. The following is Proposition~\ref{Prop: momentum map}.

\begin{proposition}
If $X_0$ is projective, there exists a degenerate momentum map $\mu:X_0\to B$.
Without the projectivity assumption, such a map exists at least on the complement
of the union of toric strata of $X_0$ of codimension two.
\end{proposition}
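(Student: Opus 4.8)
The plan is to construct $\mu$ by gluing the toric momentum maps $\mu_\sigma\colon X_\sigma\to\sigma$ over the codimension one strata, first in the simplest case of trivial gluing data, then to handle the twist introduced by $s_{\sigma\ul\rho}$, and finally to address the codimension two issue in the non-projective case. First I would fix, for each maximal cell $\sigma\in\P$, the standard toric momentum map $\mu_\sigma$ whose image is the polytope $\sigma$; this depends on the choice of a torus-invariant K\"ahler (or merely symplectic) form, so for the projective hypothesis one fixes a projective embedding of $X_0$ and takes the restriction of the Fubini--Study form, producing compatible K\"ahler forms on all the $X_\sigma$ simultaneously. The point is that a projective embedding of $X_0$ pulls back to projectively normal coordinates on each component, and the associated momentum maps have images the $\sigma$'s scaled uniformly.

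Next I would check compatibility along a codimension one cell $\rho=\sigma\cap\sigma'$. The two components $X_\sigma$ and $X_{\sigma'}$ are glued along the toric divisor $D_\rho\subset X_\sigma$ corresponding to $\rho$, and $\mu_\sigma$ restricted to $D_\rho$ is the momentum map of that toric divisor with image the facet $\rho\subset\sigma$; likewise for $\sigma'$. For \emph{trivial} gluing data the identification of $D_\rho\subset X_\sigma$ with $D_\rho\subset X_{\sigma'}$ is the canonical toric one (the one built into the construction of $X_0^\circ$ in \S\ref{subsec-make-Xk}), which intertwines the two restricted momentum maps because both are the unique torus-equivariant momentum map of the same toric variety onto the same polytope $\rho$ (uniqueness of momentum maps up to the additive constant fixed by the polytope). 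Hence the $\mu_\sigma$ patch to a continuous $\mu\colon X_0\to B$; continuity and the stated fiber structure follow cell by cell. For \emph{non-trivial} gluing data, the identification along $D_\rho$ differs from the canonical one by the toric automorphism coming from $s_{\sigma\ul\rho}\in\Hom(\Lambda_\sigma,A^\times)$, which over $\CC$ is a translation in the compact torus $\Hom(\Lambda_\sigma,\ZZ)\otimes U(1)$; this moves the momentum map by a constant but is still a momentum map, so after composing $\mu_\sigma$ with a diffeomorphism of $\sigma$ realizing the required reparametrization — as indicated in \S\ref{Par: Gluing data} — one again gets matching over $\rho$. In the projective case one can instead use the generalized theta functions $\vartheta_m$: they give a canonical projective embedding of $X_0$ compatible with all the gluing, and its momentum map is the desired canonical $\mu$ with no choices.

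The main obstacle is the non-projective case in codimension two. Away from codimension two the above gluing is unobstructed: over each $\ul\rho\not\subset\partial B$ one only needs to match two components along a single toric divisor, which is always possible. But at a codimension two stratum several components and several slabs come together, and the diffeomorphisms of the $\sigma$'s chosen to fix the codimension one matchings need not be simultaneously compatible around the stratum — the cocycle condition may fail, exactly as gluing data can obstruct the existence of the completed central fiber $X_0$ itself. So I would prove the statement only on $X_0$ minus its toric strata of codimension $\ge 2$: there the index set of the gluing is "tree-like" in the relevant sense (each chamber/slab adjacency involves at most a pairwise matching along one divisor), the reparametrizing diffeomorphisms can be chosen consistently by an inductive walk over the chambers of each $\sigma$ — mirroring the inductive structure of the consistency condition in codimension one — and the resulting $\mu$ is continuous with the asserted generic torus-bundle structure. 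This gives the unconditional second assertion, while the projectivity hypothesis removes the codimension two obstruction and yields the global $\mu$.
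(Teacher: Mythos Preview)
Your overall strategy matches the paper's: glue the toric momentum maps $\mu_\sigma$ along codimension one strata, using theta functions in the projective case and cell-wise diffeomorphisms in the non-projective one. But there is a concrete error in the middle step. The gluing automorphism along $X_\rho$ is by an element $g$ of the \emph{algebraic} torus $\Hom(\Lambda_\rho,\CC^*)$, not the compact torus $\Hom(\Lambda_\rho,U(1))$. If $g$ did lie in the compact torus, the momentum map would be \emph{unchanged} (momentum maps are $U(1)^{n-1}$-invariant), not shifted by a constant; and for general $g\in(\CC^*)^{n-1}$ the induced map on $\rho$ is not a translation at all. The correct argument, which the paper gives, is that multiplication by $g$ commutes with the $U(1)^{n-1}$-action and hence takes $\mu_\rho$-fibers to $\mu_\rho$-fibers, inducing a diffeomorphism $\Phi:\rho\to\rho$ with $\mu_\rho(g\cdot z)=\Phi(\mu_\rho(z))$. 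One then reglues $\rho$ into $\sigma'$ via $\Phi$ and checks that the resulting directed system of cells still has colimit homeomorphic to $B\setminus\Delta_2$. Your conclusion (compose with a diffeomorphism) is right, but your stated reason for it is wrong.

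In the projective case you correctly point to the theta functions, but do not explain why the resulting map has image $\sigma$ or why it is a momentum map for the $T_\sigma$-action---a generic projective embedding of $X_0$ will not have these properties. The paper writes down the explicit formula
$\mu_\sigma(z)=\big(\sum_{m\in\sigma\cap\Lambda_\sigma}|\vartheta_m(z)|^2\, m\big)\big/\big(\sum_{m\in\sigma\cap\Lambda_\sigma}|\vartheta_m(z)|^2\big)$
and checks the momentum map property by factoring through the Fubini--Study momentum map $\PP^N\to\Delta_N$ followed by the integral affine map $\Delta_N\to\mathfrak t^*$ sending the vertex $v_m$ to $m$; this uses that each $\vartheta_m$ restricts on $X_\sigma$ to a nonzero multiple of the monomial $z^m$, so that the embedding $X_\sigma\to\PP^N$ is equivariant for the homomorphism $T_\sigma\to U(1)^{N+1}$. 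Compatibility across $\rho$ then comes for free from the formula, since $\vartheta_m|_{X_\rho}=0$ for $m\notin\rho$.
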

\medskip

\subsubsection{The log singular locus $Z\subset X_0$, its amoeba image
$\shA\subset B$ and the adapted affine structure on $B\setminus(\Delta_2\cup\shA)$}
For each codimension one cell $\rho\in\P$ and any slab $\fob\subset \rho$, the
closure of the zero locus of $f_\fob$ defines a hypersurface $Z_\rho$ in the
codimension one locus $X_\rho\subset X_0$. By consistency, this zero locus is
independent of the chosen slab on $\rho$. From the local equation in codimension
one \eqref{Eqn: Basic equation},\eqref{Eqn: ring in codim one}, it follows that
$Z_\rho$ is the locus where the degeneration is not semi-stable and is indeed
singular even from the logarithmic point of view. We define the \emph{log
singular locus},
\[
Z=\bigcup_\rho Z_\rho,
\]
a codimension two subset of $X_0$ lying in the singular locus of $X_0$.
The image of $Z$ under our degenerate momentum map,
\[
\shA=\mu(Z),
\]
is called its \emph{amoeba image}. In fact, for each codimension one cell,
$\shA\cap\Int\rho$ is a diffeomorphic image of the hypersurface amoeba of
$f_\fob$, for any slab $\fob\subset\rho$. If the base ring $A$ is higher
dimensional, we first take a base change $A\to\CC$ to restrict to a slice of the
deformation $X_k\to \Spec A_k$ or work with a small analytic subset of $\Spec A$
for otherwise $\shA$ may be too large to be useful.

For $x\in B\setminus(\Delta_2\cup\shA)$ and $\fob$ a slab containing $x$, the restriction of
$f_\fob$ to $\mu^{-1}(x)$ has no zeros. Thus there exists a unique $m_x\in
\Lambda_\rho$ with the restriction of $z^{-m_x} f_\fob: \mu^{-1}(x)\to \CC^*$
contractible. This means that the adapted local equation
\begin{equation}
\label{Eqn: adapted local eqn}
Z_+ \big(z^{-m_x}Z_-\big) =\big(z^{-m_x}f_\fob\big) t^{\kappa_{\ul\rho}}
\end{equation}
locally analytically describes the toric normal crossings degeneration
\[
zw = t^{\kappa_{\ul\rho}}
\]
by taking $z=Z_+ $, $w=Z_-/f_\fob$. This observation motivates the
definition of an adapted integral affine structure on $B\setminus(\Delta_2\cup\shA)$ that
defines the parallel transport of $-\zeta'\in \Lambda_{\sigma'}$ through $x$ to
be $\zeta-m_x$ instead of $\zeta$, the integral tangent vector chosen in
connection with the gluing \eqref{Eqn: Canonical open emb}.

The set of integral tangent vectors for the adapted affine structure now defines
a local system $\Lambda$ on $B\setminus(\Delta_2\cup\shA)$ of free abelian groups of rank
$n$. The dual local system $\shHom(\Lambda,\ul\ZZ)$ is denoted $\check\Lambda$.
Note that if $x\in B\setminus(\Delta_2\cup\shA)$ lies in a maximal cell $\sigma$, we have a
canonical isomorphism of the stalk $\Lambda_x$ with $\Lambda_\sigma$.
\medskip

\subsubsection{Tropical $1$-cycles}
With the adapted affine structure on $B\setminus(\Delta_2\cup\shA)$ we are now in a position to
define the affine geometric data representing our singular $n$-cycles on $X_0$.

\begin{definition}
\label{Def: Tropical $1$-cycles}
A \emph{tropical one-cycle} $\beta_\trop$ is a twisted singular one-cycle on
$B\setminus(\Delta_2\cup\shA)$ with coefficients in the sheaf of integral
tangent vectors $\Lambda$, that is, $\beta_\trop\in
Z_1(B\setminus(\Delta_2\cup\shA),\Lambda)$.
\end{definition}

Thus a tropical one cycle is an oriented graph $\Gamma$ together with a map
$h:\Gamma\to B\setminus(\Delta_2\cup\shA)$ and for each edge $e\subseteq\Gamma$ a
section $\xi_e$ of $(h|_e)^*\Lambda$ such that for each vertex $v$ the cycle
condition $\sum_{e\ni v} \pm\xi_e=0$ holds, with sign depending on $e$ being
oriented toward or away from $v$. We typically assume without restriction that
$\xi_e\neq0$ for all $e$. One way to obtain such cycles is from a tropical curve
with a chosen orientation on each edge; the tangent vector for an edge $e$ is
then given by the oriented integral generator of the tangent space of $e$
multiplied by the weight of the edge. The balancing (cocycle) condition for tropical
curves implies that the associated twisted singular chain is a
cycle. We may thus think of twisted singular cycles carrying integral tangent
vectors as flabby versions of tropical curves. This motivates the use of the
word ``tropical''.
\medskip

\subsubsection{The singular cycle $\beta$ associated to a tropical $1$-cycle
$\beta_\trop$}
\label{Sub: construction of beta}
Fix a parameter value $a\in\Spec(A)_\an$ and let $X_0(a)$ be the fiber of $X_0$
over $a$. Let us now assume for simplicity that $\beta_\trop$ is transverse to
the $(n-1)$-skeleton of $\P$ and that each of its edges $e$ is embedded into a
single maximal cell $\sigma$. For each edge $e$, choose a section $S_e\subset
X_\sigma$ of $\mu_\sigma: X_\sigma\to \sigma$ over $e$, chosen compatibly over
vertices. Then define a chain $\beta_e$ over $e$ as the orbit of $S_e$ under the
subgroup of $\Hom(\Lambda_\sigma, U(1))\simeq U(1)^n$ mapping $\xi_e$ to $1$. If
$\xi_e$ is primitive, this subgroup equals $\xi^\perp\otimes U(1)\simeq
U(1)^{n-1}$, otherwise it is the product of this $(n-1)$-torus with $\ZZ/m_e\ZZ$
for $m_e\in\NN\setminus\{0\}$ the index of divisibility of $\xi_e$. At a vertex
$v$ of $\beta_\trop$ the cycle condition $\sum_{e\ni v} \pm \xi_e=0$, with signs
adjusting for the orientation of the edges at $v$, implies that the boundaries
over $v$ of the chains $\beta_e$ bound an $n$-chain $\Gamma_v$ over $v$. The
singular $n$-cycle associated to $\beta_\trop$ is now defined as
\[
\beta=\sum_e \beta_e+\sum_v\Gamma_v.
\]
The section $S_e$ is only unique in homology up to adding closed circles in
fibers; the orbit of such a circle yields a copy of the fiber class of
$\mu_\sigma$, which is homologically trivial in $X_0(a)$, so we obtain the
following.

\begin{lemma} \label{lemma-homo-trop-to-ordinary}
The association $\beta_\trop\mapsto \beta$ induces a well-defined homomorphism
\[
H_1(B\setminus(\Delta_2\cup\shA),\Lambda)\ra H_n(X_0(a),\ZZ).
\]
\end{lemma}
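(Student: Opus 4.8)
The statement asserts that $\beta_\trop \mapsto \beta$ descends to a well-defined map on homology. Since the construction of $\beta$ from the explicit data (graph $\Gamma$, map $h$, sections $\xi_e$, section surfaces $S_e$, filling chains $\Gamma_v$) is additive in the obvious sense, the group-homomorphism property will be automatic once well-definedness is established. So the entire content is: show that $[\beta] \in H_n(X_0(a),\ZZ)$ depends only on the homology class $[\beta_\trop] \in H_1(B\setminus(\Delta_2\cup\shA),\Lambda)$, and not on (i) the chosen representative twisted $1$-cycle, (ii) the chosen sections $S_e$ over edges, or (iii) the chosen filling chains $\Gamma_v$ over vertices.

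First I would dispose of the auxiliary choices (ii) and (iii) for a \emph{fixed} representative $\beta_\trop$. For the sections $S_e$: the text already observes that changing $S_e$ by a closed loop in a fibre of $\mu_\sigma$ changes $\beta_e$ by the orbit of that loop under the $(n-1)$-torus $\xi_e^\perp\otimes U(1)$, which is a copy of the full fibre class of $\mu_\sigma$ (up to the $\ZZ/m_e$ factor), and this fibre class is nullhomologous in the compact toric variety $X_\sigma$, hence in $X_0(a)$. A general change of $S_e$ over an interval is homologous to such a loop-correction localized near one endpoint (contract along the interval), so it only changes $\beta$ by nullhomologous orbits. For the filling chains $\Gamma_v$: any two choices $\Gamma_v,\Gamma_v'$ with the same boundary differ by an $n$-cycle supported over the point $v$, i.e. by an $n$-cycle inside a single toric component $X_\sigma$ (or the gluing locus); since the relevant toric varieties here have no strata of codimension $\ge 2$ and one checks their $n$-th homology is generated by fibre classes of $\mu_\sigma$, such a cycle is again nullhomologous. (If $\beta_\trop$ meets the codimension-one skeleton, an edge straddling a cell $\rho$ is handled by the adapted local model $zw=t^\kappa$ and the matching of conormal torus orbits across $\rho$ built into the adapted affine structure $\Lambda$; the same fibre-class argument applies on the gluing stratum $X_\rho$.)

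The heart is invariance (i): if $\beta_\trop = \partial \gamma$ for some twisted $2$-chain $\gamma \in C_2(B\setminus(\Delta_2\cup\shA),\Lambda)$, then $\beta$ is nullhomologous in $X_0(a)$. The plan is to \emph{lift the bounding $2$-chain $\gamma$ to an $(n+1)$-chain $\Gamma$ in $X_0(a)$ with $\partial \Gamma = \beta$}, by the same recipe used to build $\beta$ from $\beta_\trop$, one dimension up: over each $2$-simplex of $\gamma$, carrying a section $\eta$ of $\Lambda$, form the orbit of a chosen $2$-chain section of $\mu_\sigma$ under the subtorus of $\Hom(\Lambda_\sigma,U(1))$ killing $\eta$ (an $(n-1)$-torus, since $\eta$ has rank $1$ generically), giving an $(n+1)$-chain; over edges of $\gamma$ insert the analogous filling $(n+1)$-chains; and check that the twisted-chain boundary relation $\partial\gamma = \beta_\trop$ translates, orbit-by-orbit, into $\partial\Gamma = \beta$ plus fibre-class corrections that are nullhomologous as before. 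One must handle the locus where the section $\eta$ of $\Lambda$ over a $2$-simplex vanishes or drops rank: there the subtorus jumps to the full $U(1)^n$ and the orbit degenerates, but the corresponding pieces are exactly full fibre classes, again nullhomologous, so they can be discarded. Finally, homotopy invariance within $H_1$ (as opposed to $Z_1$) is subsumed: a homologous $1$-cycle differs by a boundary, which is the case just treated.

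**Main obstacle.** The delicate point is the interaction of the orbit construction with the singular/gluing locus and with degeneracies of the $\Lambda$-valued sections. Concretely: (a) verifying that the $n$-th homology of the relevant non-normal toric components $X_\sigma$, $X_\rho$ (toric varieties whose fans are only $1$-skeleta of normal fans) is generated by the $\mu_\sigma$-fibre classes, so that every correction term that arises is genuinely nullhomologous in $X_0(a)$; and (b) controlling the chain $\gamma$ near $\Delta_2\cup\shA$ and near loci where its tangent-vector labels vanish, since the torus-orbit operation is not continuous there. I would address (b) by a transversality/general-position argument: perturb $\gamma$ (rel $\partial$) so that the bad loci of its labels meet the skeleton and the section surfaces generically, reducing the orbit construction to the already-understood local models $zw=t^\kappa$; and address (a) by a direct computation with the torus-orbit stratification of these toric varieties. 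Everything else is a bookkeeping translation of the twisted-chain boundary identity into an identity of singular chains in $X_0(a)$.
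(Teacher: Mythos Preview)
Your approach is essentially the paper's, but far more detailed: the paper dispatches the lemma in one sentence, noting only that the ambiguity in $S_e$ (and implicitly in $\Gamma_v$) is by multiples of the fiber class of $\mu_\sigma$, which is nullhomologous in $X_0(a)$. Everything else---including the descent from $Z_1$ to $H_1$---is left to the reader. Your plan to lift a bounding twisted $2$-chain $\gamma$ to an $(n+1)$-chain by the same orbit recipe one dimension up is the natural way to fill that in, and it works.

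One point of confusion in your proposal should be cleared up, though it does not damage the argument. You worry about ``the locus where the section $\eta$ of $\Lambda$ over a $2$-simplex vanishes or drops rank'' and propose a transversality perturbation to handle it. There is no such locus. A coefficient in singular homology with values in the local system $\Lambda$ is a \emph{flat} section over the simplex; since a simplex is simply connected this is a single element of the stalk, constant across the simplex. Either $\eta=0$, in which case that $2$-simplex contributes nothing to $\gamma$ and can be dropped, or $\eta\neq 0$ everywhere, with a fixed divisibility index, and the subtorus $\eta^\perp\otimes U(1)$ is a well-defined $(n-1)$-torus over the whole simplex. So obstacle~(b) evaporates. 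Obstacle~(a) is genuine but elementary: as soon as the fan of $X_\sigma$ contains a single ray (equivalently $\sigma$ has a facet), the big torus $T^n\simeq S^1\times T^{n-1}$ bounds the solid $(n+1)$-chain $D^2\times T^{n-1}$ obtained by filling in that $S^1$ toward the corresponding toric divisor; this already shows the fiber class is trivial in $X_\sigma$, hence in $X_0(a)$.
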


\begin{remark} \label{rem-Symington-CBM}
For $n=2$, the construction of $n$-cycles from tropical one-cycles was done
before in \cite{Symington} with a minor variation: Symington's tropical cycles
have boundary in the amoeba image $\shA$ of the affine structure, that is, they
are relative cycles in $H_1(B,\shA;\iota_*\Lambda)$ for
$\iota:B\setminus\shA\hra B$ the inclusion. Note that in this dimension,
$\shA\subset B$ is a finite set. Symington's alternative definition gives
nothing new compared to our cycles since relative one-cycles with boundary on
$\shA$ are homologous to cycles with little loops around $\shA$ and this process
lifts to singular cycles on $X_0(a)$. Similar relative cycles in higher
dimension are more peculiar; one needs to replace $\iota_*\Lambda$ by a subsheaf
of $\iota_*\Lambda$, see \cite{affinecoh} for details.

If $\beta_\trop$ is the tropical cycle associated to a tropical curve and the
section $S$ the restriction of the positive real locus in a real degeneration
situation, as discussed in \cite{KNreal}, then $\beta$ is a Lagrangian cell
complex. A related situation for $n=3$ arises in \cite{tropLag} for $n=3$.

\sloppy
More generally, a similar procedure produces singular cycles in
$H_{n-p+q}(X_0(a),\ZZ)$ from cycles in $H_{q}(B,\iota_*\bigwedge^p\Lambda)$,
well-defined up to adding cycles constructed from
$H_{q-1}(B,\iota_*\bigwedge^{p-1}\Lambda)$. For tropical cycles with boundary in
$\shA$, more care needs to be taken. See \cite{CBM13} for an application of
relative tropical 2-cycles to conifold transitions, see also \cite{affinecoh}.
\end{remark}

\fussy
The point of using the adapted affine structure on $B\setminus(\Delta_2\cup\shA)$ is as
follows. For any analytic family $\shX\to D$ with central fiber
$X_0(a)$ and given by \eqref{Eqn: ring in codim one} locally in codimension one,
there is a continuous family of $n$-cycles $\beta(t)$ for $t\in D\setminus
\RR_{<0}$ with $\beta(0)=\beta$. The reason for having to remove $\RR_{<0}$
in this statement is the topological monodromy action on $\beta(t)$ for varying
$t$ in a loop around the origin.

At a vertex $v$ of $\beta_\trop$ on a slab $\fob\subseteq\rho$, the local
situation is as follows. If $\xi_e\in\Lambda_\rho$, then, in adapted coordinates,
$zw=t^{\kappa_{\ul\rho}}$ describes $\shX$ locally, and the cycle $\beta$ is locally a product
of an $(n-2)$-chain $\gamma\approx U(1)^{n-2}$ with the union of two disks
$|z|\le 1$, $|w|\le 1$. In this case, $\beta(t)$ equals $\gamma$ times the
cylinder $zw=t^{\kappa_{\ul\rho}}$, $|z|,|w|\le 1$ and the local topological
monodromy is trivial. Otherwise, $\beta$ is locally the product of an
$(n-1)$-chain $\gamma \approx U(1)^{n-1}$ with a curve $\iota$ connecting
$z=1,w=0$ with $z=0, w=1$. In deforming to $t\neq 0$ we can again leave $\gamma$
untouched, but the curve $\iota$ deforms to a curve $\iota(t)$ on the cylinder
connecting $(z,w)=(1,1/t^{\kappa_{\ul\rho}})$ to $(z,w)=
(1/t^{\kappa_{\ul\rho}},1)$. The topological monodromy acts on $\iota(t)$ by a
$\kappa_{\ul\rho}$-fold Dehn twist. These Dehn-twists leave an expected
ambiguity of the construction of $\beta(t)$ by multiples of the vanishing cycle
$\alpha\approx (S^1)^n$. Note that there are also continuous families of cycles
homologous to $\alpha$ that converge to a fiber of the degenerate momentum map
$\mu$. In particular, $\alpha$ can be interpreted as a fiber of the SYZ
fibration. Note also that $\alpha$ can be viewed as constructed from a generator
of $H_0(B\setminus (\Delta_2\cup\shA),\iota_*\bigwedge^0\Lambda)$ in the
generalized construction mentioned in Remark~\ref{rem-Symington-CBM}.
\medskip

\subsubsection{Picard-Lefschetz transformation and $c_1(\varphi)$}
\label{Par: PL trf}
The effect of Picard-Lefschetz transformations on our singular cycles can be
written down purely in terms of affine geometry. Since this expression appears
in our period integrals, we review it here. The multivalued piecewise affine
function $\varphi$ defines a cohomology class in $H^1(B\setminus
(\Delta_2\cup\shA),\check\Lambda)$ denoted $c_1(\varphi)$, see \cite{theta}, \S1.2. Cap
product then defines an integer valued pairing with tropical cycles, that we
denote
\begin{equation}\label{Eqn: pairing with c_1}
\langle c_1(\varphi),\beta_\trop\rangle \in\ZZ.
\end{equation}
Explicitly, this pairing can be computed as follows. Assume without restriction
that $\beta_\trop$ is transverse to the $(n-1)$-skeleton of $\P$. Then for a
vertex $v$ of $\beta_\trop$ on a slab $\fob\subseteq\ul\rho$, let $e,e'$ be the
adjacent edges following the orientation of $\beta_\trop$. Denoting $\sigma$ the
maximal cell containing $e$, let $\check d_e\in\check\Lambda_\sigma$ be the
primitive generator of $\Lambda_\rho^\perp$ evaluating positively on tangent
vectors pointing from $\rho$ into $\sigma$. Then $v$ contributes the summand
$\langle \check d_e,\xi_e\rangle\cdot\kappa_{\ul\rho}$ to $\langle
c_1(\varphi),\beta_\trop\rangle$, the sum taken over all vertices of $\beta_\trop$ on slabs.

The following is Proposition~\ref{Lem: monodromy action}.

\begin{proposition}
\label{Prop: Picard-Lefschetz transformation}
Let $\beta_\trop\in Z_1(B\setminus(\Delta_2\cup\shA),\Lambda)$ be a tropical one-cycle and let
$\beta\in H_n(X_0(a),\ZZ)$ be the associated singular $n$-cycle. Then the
Picard-Lefschetz transformation of the deformation $\beta_t$ of $\beta$ to an
analytic smoothing $X_t$ of $X_0(a)$ acts by
\[
\beta_t\longmapsto \beta_t+\langle c_1(\varphi),\beta_\trop\rangle\cdot \alpha.
\]
Here $\alpha\in H_n(X_t,\ZZ)$ is the vanishing cycle.
\end{proposition}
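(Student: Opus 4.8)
The plan is to reduce the global statement to the purely local computation already sketched in the paragraph preceding Section~\ref{Par: PL trf}, and then assemble the local contributions. First I would recall that the Picard--Lefschetz transformation is the topological monodromy of the family $\shX\to D$ acting on $H_n(X_t,\ZZ)$ as $t$ traverses a small loop around $0$. Since $\beta_t$ is a \emph{continuous} family of cycles defined on $D\setminus\RR_{<0}$, the monodromy is measured precisely by the discrepancy between the two limits of $\beta_t$ as $t$ approaches $\RR_{<0}$ from above and below; this discrepancy is a sum of purely local contributions, one for each vertex $v$ of $\beta_\trop$ lying on a slab $\fob\subseteq\ul\rho$, because away from such vertices (i.e.\ over edges of $\beta_\trop$ in the interiors of maximal cells, and over vertices lying in interiors of maximal cells) the local model of $\shX$ is the trivial toric family $(\CC^*)^n\times D$ and the pieces $\gamma\times(\cdots)$ of $\beta_t$ carry no monodromy.

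Next I would run the local analysis at a slab vertex $v$ on $\fob\subseteq\ul\rho$. In the adapted coordinates of \eqref{Eqn: adapted local eqn}, the family looks locally like $zw=t^{\kappa_{\ul\rho}}$ times $(\CC^*)^{n-1}$, and $\beta_t$ is locally $\gamma\approx U(1)^{n-1}$ times a path $\iota(t)$ on the cylinder $zw=t^{\kappa_{\ul\rho}}$ running between the two marked endpoints. Transporting $t$ once around the origin subjects $\iota(t)$ to a $\kappa_{\ul\rho}$-fold Dehn twist about the waist circle of the cylinder; tracking the endpoints, the effect on $\beta_t$ is to add $\kappa_{\ul\rho}$ copies of (the waist circle)$\times\gamma$, which is exactly $\kappa_{\ul\rho}$ copies of the local vanishing cycle $\alpha\approx (S^1)^n$. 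The sign and the extra integer factor come from how $\gamma$ sits relative to the waist: the $(n-1)$-torus $\gamma$ is the $\xi_e^\perp$-orbit, and the waist circle of the cylinder corresponds to the direction $\check d_e\in\Lambda_\rho^\perp$; the number of times the product torus $\gamma\times(\text{waist})$ covers the full $(S^1)^n$ is the pairing $\langle\check d_e,\xi_e\rangle$ (this is where one uses that $\xi_e$ may be non-primitive, so that $\beta_e$ involves the extra $\ZZ/m_e\ZZ$ factor). Hence vertex $v$ contributes $\langle\check d_e,\xi_e\rangle\cdot\kappa_{\ul\rho}\cdot\alpha$, with the orientation conventions arranged so that the sign is correct.

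Finally I would sum over all slab vertices of $\beta_\trop$. By the description of the pairing \eqref{Eqn: pairing with c_1} recalled just before Proposition~\ref{Prop: Picard-Lefschetz transformation}, $\sum_v \langle\check d_e,\xi_e\rangle\cdot\kappa_{\ul\rho}=\langle c_1(\varphi),\beta_\trop\rangle$, which gives the claimed formula $\beta_t\mapsto\beta_t+\langle c_1(\varphi),\beta_\trop\rangle\cdot\alpha$. I should also check two consistency points that make the sum well defined: that the contribution at $v$ is independent of the choices made in constructing $\beta$ (different sections $S_e$, different co-orientation of $\rho$) up to adding multiples of $\alpha$ — but since $\alpha$ is precisely the ambiguity built into $\beta_t$, and since changing the co-orientation of $\rho$ flips the sign of both $\check d_e$ and the reading of the Dehn twist, these cancel; and that the local vanishing cycles at different slab vertices are all homologous in $H_n(X_t,\ZZ)$ to the single global class $\alpha$, which follows because each is a fiber of (a deformation of) the degenerate momentum map $\mu$, and such fibers are all homologous. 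The main obstacle I anticipate is making the Dehn-twist count in the local model fully rigorous — in particular pinning down the sign and the factor $\langle\check d_e,\xi_e\rangle$ against the orientation conventions for $\beta_\trop$, for $\alpha\approx(S^1)^n$, and for the adapted affine structure — rather than any conceptual difficulty; everything else is bookkeeping and the reduction to the local model.
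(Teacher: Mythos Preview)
Your proposal is correct and follows essentially the same approach as the paper's proof (Proposition~\ref{Lem: monodromy action}, which in turn invokes Lemma~\ref{lemma-on-monodromy} from Appendix~\ref{Sect: Period integrals}): decompose $\beta_t$ into pieces, observe that only the pieces crossing a slab via a curve $\iota(t)$ in the local model $zw=t^{\kappa}$ carry monodromy, read off the $\kappa_{\ul\rho}$-fold Dehn twist there, identify $\gamma_i\times S^1$ with $\pm\alpha$, and sum using the explicit description of $\langle c_1(\varphi),\beta_\trop\rangle$. The paper packages the local step into its finite-order framework (charts of types~I/II and the explicit $\iota(t)$ from the proof of Proposition~\ref{holomorphic-periods-induce-formal-periods}), but the content is identical to what you wrote.

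One small correction to the point you flagged as the main obstacle: your parenthetical about non-primitivity is misplaced. The paper imposes Assumption~\ref{Ass: beta}\,($\beta_{\mathrm{III}}$), which arranges that at any slab vertex $\xi_e$ is \emph{primitive} and, when $\xi_e\notin\Lambda_\rho$, generates $\Lambda_\sigma/\Lambda_\rho$; hence $\langle\check d_e,\xi_e\rangle\in\{-1,0,1\}$ and the $\ZZ/m_e\ZZ$ factor is trivial there. So $\langle\check d_e,\xi_e\rangle$ is purely an orientation sign (vanishing precisely when $\xi_e\in\Lambda_\rho$, matching the trivial-monodromy case), not a covering multiplicity coming from non-primitive $\xi_e$. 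With this normalization the sign bookkeeping you were worried about becomes straightforward, exactly as in the paper's two-line check at the end of the proof of Proposition~\ref{Lem: monodromy action}.
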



\subsection{Statements of main results}
We need two more ingredients before being able to state the main theorem.
\medskip

\subsubsection{Pairing $\beta$ with gluing data}
Our gluing data $s=(s_{\sigma\ul\rho})$ also produces a first cohomology class,
this time in $H^1(B\setminus(\Delta_2\cup\shA),\check\Lambda\otimes A^\times)$. Just as
$c_1(\varphi)$, this cohomology class can be evaluated on tropical cycles via
the cap product to obtain an element of $A^\times$. We write
\begin{equation}
\label{Eqn: pairing with gluing data}
\langle s,\beta_\trop\rangle\in A^\times
\end{equation}
for this pairing. In the notation used for $c_1(\varphi)$ above, a vertex $v$ of
$\beta_\trop$ on a slab contained in $\ul\rho$ now contributes
$\big(s_{\sigma'\ul\rho}/ s_{\sigma\ul\rho}\big)^{\langle \check
d_e,\xi_e\rangle}$ as a multiplicative factor in the definition of $\langle
s,\beta_\trop\rangle$.
\medskip

\subsubsection{The complex Ronkin function}
\label{Sub: Ronkin fct}
For each value of the parameter space $\Spec A$, each slab function $f_\fob$
defines a holomorphic function on $\Hom(\Lambda_\rho,\CC^*)\simeq
(\CC^*)^{n-1}$. Such a holomorphic function $f$ has an associated \emph{Ronkin
function} \cite{Ro02} on $\RR^{n-1}$, defined by
\[
N_f(x)=\frac{1}{(2\pi\sqrt{-1})^{n-1}}\int_{\operatorname{Log}^{-1}(x)}
\frac{\log|f(z_1,\ldots,z_{n-1})|}{z_1\cdots z_{n-1}}dz_1\ldots dz_{n-1},
\]
with $\operatorname{Log}(z_1,\ldots,z_n)= (\log |z_1|,\ldots,\log |z_n|)$. This
function is piecewise affine on the complement of the hypersurface amoebae
$\shA_f=\operatorname{Log}(f=0)$ and otherwise continuous and strictly convex. It
plays a fundamental role in the study of amoebas \cite{PassareRullgard}. The
derivative of $N_f$ at a point $x\in \RR^n\setminus\shA_f$ is the homology class
of the restriction of $f$ to $\operatorname{Log}^{-1}(x)$, as a map
$U(1)^{n-1}\to\CC^*$. In particular, $N_f$ is locally constant near $x$ if and
only if this map is contractible.

Our period integrals involve the complex version of the Ronkin function for our
slab functions $f_\fob$. Let $x\in\Int\fob$ and $m_x\in \Lambda_\rho$ be as in
the definition of the adapted affine structure above. Taking $z_1,\ldots,z_{n-1}$
any toric coordinates on $\Spec\CC[\Lambda_\rho]$, we define the \emph{complex
Ronkin function} of $f_\fob$ at $x$ by
\begin{equation}
\label{Eqn: complex Ronkin fct}
\shR(z^{-m_x}f_\fob,x):= \frac{1}{(2\pi\sqrt{-1})^{n-1}} \int_{\mu^{-1}(x)}
\frac{\log \big(z^{-m_x} f_\fob(z_1,\ldots,z_{n-1})\big)}{z_1\cdots
z_{n-1}}dz_1\ldots dz_{n-1}\in\CC\lfor t\rfor.
\end{equation}
Under variation of parameters, that is, changing $A\to\CC$, the log singular
locus $Z$ and in turn the image $\shA$ moves. But as long as $x\not\in\shA$, the
complex Ronkin function varies analytically with the parameters, hence defines a
holomorphic function on appropriate open subsets of $\Spec(A)_\an$. Note also
that the real part of $\shR(z^{-m_x}f_\fob,x)$ equals $N_{z^{-m_x}f_\fob}$. But
$z^{-m_x}f_\fob$ is topologically contractible by the definition of $m_x$ and
hence $N_{z^{-m_x}f_\fob}$ is locally constant. In turn, the complex Ronkin
function is also locally constant, so does not depend on the choice of $x$
inside a connected component of $\fob\setminus\shA$. Reference
\cite{PassareRullgard} contains some more results on the complex Ronkin
function, notably a power series expansion in terms of the coefficients of
$f_\fob$. In general the information captured by the complex Ronkin function
does not seem to be well-understood.

Given a tropical cycle $\beta_\trop$ as before, we weight the complex Ronkin
function at a vertex $v$ of $\beta_\trop$ on a slab $\fob$ by $\langle \check
d_e,\xi_e\rangle\shR(z^{-m_v} f_\fob,v)$, notations as above. The sum of all
these contributions is denoted
\begin{equation}
\label{Eqn: pairing with Ronkin function}
\shR(\beta_\trop)\in \O(U)\lfor t\rfor,
\end{equation}
for $U\subset \Spec(A)_\an$ an open subset preserving the condition
$x\not\in\shA$ as discussed.

The complex Ronkin function $\shR(z^{-m_v} f_\fob,v)$ is trivial (constant $0$)
in one important situation.

\begin{proposition}
\label{Prop: Triviality of complex Ronkin function}
Assume that in a neighborhood of $\mu^{-1}(x)$ there is a convergent infinite product expansion
\[
z^{-m_x}f_\fob= \prod_{i=1}^\infty (1+a_i z^{m_i} t^{\ell_i})
\]
with $a_i\in A$ and all $m_i\neq0$. Then $\shR(z^{-m_x}f_\fob, x)= 0$.
\end{proposition}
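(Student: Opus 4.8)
The plan is to reduce the statement to the computation of the complex Ronkin function of a single factor of the form $1+a z^m t^\ell$ with $m\neq 0$, and show each such factor contributes zero. First I would observe that the integral defining $\shR(z^{-m_x}f_\fob,x)$ is the restriction of a $\CC\lfor t\rfor$-valued operation to the fiber $\mu^{-1}(x)\cong U(1)^{n-1}$, and that it is additive in $\log$: writing $\log(z^{-m_x}f_\fob)=\sum_{i=1}^\infty \log(1+a_i z^{m_i}t^{\ell_i})$, which converges $t$-adically because $\ell_i\to\infty$ (only finitely many $\ell_i$ lie below any given order), we get
\[
\shR(z^{-m_x}f_\fob,x)=\sum_{i=1}^\infty \frac{1}{(2\pi\sqrt{-1})^{n-1}}\int_{\mu^{-1}(x)}\frac{\log(1+a_i z^{m_i}t^{\ell_i})}{z_1\cdots z_{n-1}}\,dz_1\ldots dz_{n-1}.
\]
So it suffices to prove that each summand vanishes. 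Here I would also need to justify interchanging the infinite sum with the integral; since the integral is over a fixed compact torus and the series converges $t$-adically with each coefficient an $A$-valued (analytic-in-parameters) function that is integrable term by term, this is a formal manipulation of power series in $t$ with coefficients that are honest convergent integrals — no analytic subtlety beyond uniform convergence on $\mu^{-1}(x)$ for each fixed power of $t$.

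Next I would expand a single factor. Fix $i$, write $m=m_i\neq 0$, $a=a_i\in A$, $\ell=\ell_i$, and use $\log(1+a z^m t^\ell)=\sum_{k\ge 1}\frac{(-1)^{k+1}}{k}a^k z^{km}t^{k\ell}$. Plugging this in, the contribution to the coefficient of $t^{k\ell}$ is a constant times $\int_{U(1)^{n-1}} z^{km}\,\frac{dz_1\cdots dz_{n-1}}{z_1\cdots z_{n-1}}$. Now $z^{km}=z_1^{km^{(1)}}\cdots z_{n-1}^{km^{(n-1)}}$ for the integer coordinates $m=(m^{(1)},\ldots,m^{(n-1)})$ of $m\in\Lambda_\rho$ in the chosen toric coordinates, and since $m\neq 0$ some $m^{(j)}\neq 0$, hence $km^{(j)}\neq 0$; the corresponding one-dimensional integral $\int_{U(1)} z_j^{km^{(j)}-1}dz_j=0$ by the standard residue/orthogonality-of-characters computation. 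Therefore every coefficient of every power of $t$ vanishes, so each summand is $0$ in $\CC\lfor t\rfor$ (or in $\O(U)\lfor t\rfor$ once parameters in $A$ are carried along), and the sum is $0$.

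The main obstacle is the bookkeeping at $t^0$: one must be certain that the constant term genuinely vanishes and that the hypothesis $m_i\neq 0$ (together with $\ell_i>0$) is exactly what guarantees no purely-$t^0$ or pure-power-of-$t$ term survives — this is the same phenomenon flagged in the footnote on walls of codimension zero. Since $\ell_i>0$, the factor $z^{-m_x}f_\fob$ has constant term $1$, so $\log(z^{-m_x}f_\fob)$ has zero constant term before integration; after integration the $t^0$ coefficient is $\frac{1}{(2\pi\sqrt{-1})^{n-1}}\int_{\mu^{-1}(x)}\frac{\log 1}{z_1\cdots z_{n-1}}\,dz_1\cdots dz_{n-1}=0$ trivially, and for higher powers of $t$ the character-orthogonality argument above kills everything because $m_i\neq 0$ forces a nontrivial character in every monomial that appears. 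One should also remark that the product expansion, being convergent in a neighborhood of the compact set $\mu^{-1}(x)$, may be truncated modulo $t^{k+1}$ to a finite product, so at each finite order the argument is a finite computation; passing to the limit over $k$ then gives the claim in $\CC\lfor t\rfor$. Finally, the analytic dependence on parameters in $\Spec(A)_\an$ is automatic since each coefficient is a polynomial in the $a_i\in A$, so the identically-zero conclusion holds over the open set $U$ as well.
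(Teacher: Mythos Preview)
Your proof is correct and follows essentially the same approach as the paper: take the logarithm of the product to get a sum, expand each $\log(1+a_i z^{m_i} t^{\ell_i})$ as a Taylor series, and observe that every resulting monomial $z^{jm_i}$ has nonzero exponent, so its integral over the torus $\mu^{-1}(x)$ vanishes by orthogonality of characters. One small quibble: you assert $\ell_i>0$ in your bookkeeping paragraph, but this is not part of the hypothesis and is not needed --- the vanishing comes entirely from $m_i\neq 0$, regardless of the $t$-exponent.
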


\begin{proof}
By assumption we have a convergent Laurent expansion of $\log(z^{-m_x}f_\fob)$:
\[
\log\big(z^{-m_x}f_\fob\big)= \sum_i \log\big(1+a_i z^{m_i} t^{\ell_i}\big)
= \sum_i \sum_{j>0} \frac{(-1)^{j-1}}{j} a_i^j z^{jm_i}t^{j\ell_i}.
\]
The integral defining the complex Ronkin function can then be done term-wise.
Since $m_i\neq0$ for any $i$, the integral of $z^{jm_i}$ over the real torus
$\mu^{-1}(x)$ vanishes.
\end{proof}

For the slab functions appearing in the wall structures of \cite{affinecomplex},
the criterion of Proposition~\ref{Prop: Triviality of complex Ronkin function}
is fulfilled by the so-called \emph{normalization condition}, see \cite{affinecomplex}, \S3.6.
\medskip

\subsubsection{Period integrals}
Let us now assume that we have a polyhedral affine manifold $(B,\P)$, gluing
data $s=(s_{\sigma\ul\rho})$ and a wall structure on $B$ consistent in
codimension zero and one to order $k$, parametrized by a finitely generated
$\CC$-algebra $A$. We then obtain $X_k^\circ$, the flat deformation of
$X_0^\circ$ over $A_k=A[t]/(t^{k+1})$ and, for each point $a\in\Spec(A)_\an$,
the amoeba image $\shA=\mu(Z)\subset B$ of the log singular locus $Z$ in the
fiber $X_0^\circ(a)$ of $X_0^\circ$ over $a$. Let $\Omega$ be the canonical
relative holomorphic $n$-form on $X_k^\circ/A_k$ coming with the construction.

Here is the first main result of the paper, proved in \S\ref{Subsect: proof of
period thm}.

\begin{theorem}
\label{Thm: period thm}
Let $\beta_\trop\in Z_1(B\setminus(\Delta_2\cup\shA),\Lambda)$ be a tropical one-cycle and
$\beta$ an associated singular $n$-cycle on $X_0^\circ(a)$. Then using notations
introduced in \eqref{Eqn: pairing with c_1},\eqref{Eqn: pairing with gluing
data} and \eqref{Eqn: pairing with Ronkin function}, it holds
\begin{equation}
\label{Eqn: Main period formula}
\exp\bigg(\frac{1}{(2\pi\sqrt{-1})^{n-1}} \int_\beta\Omega \bigg)
= \exp\big(\shR(\beta_\trop)\big)\cdot\langle s,\beta_\trop\rangle \cdot t^{\langle c_1(\varphi),\beta_\trop\rangle}.
\end{equation}
According to Proposition~\ref{Prop: fiber integral well-defined} and
Proposition~\ref{holomorphic-periods-induce-formal-periods}, this result is
well-defined up to multiplication with $\exp(h\cdot t^{k+1})$ with $h\in \hat
A\lfor t\rfor$ for $\hat A$ the completion of $A$ at the maximal ideal
corresponding to $a$, and it agrees up to such changes with the corresponding
analytic integral for any flat analytic family $\shX\to U\times \DD$ over an
analytic open subset $U\times \DD\subset \Spec(A[t])_\an$ with reduction modulo $t^{k+1}$ equal to $X_k^\circ$. 
\end{theorem}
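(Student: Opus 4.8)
The plan is to reduce the global period integral over $\beta = \sum_e \beta_e + \sum_v \Gamma_v$ to a sum of local contributions, one for each edge and each vertex of $\beta_\trop$, and then to evaluate each local contribution explicitly. Since $\Omega$ is the canonical relative holomorphic $n$-form, it restricts on each chamber piece $\Spec R_\fou = \Spec A_k[\Lambda_\sigma]$ to $d\log z_1 \wedge \cdots \wedge d\log z_n$ (with respect to any basis of $\Lambda_\sigma$), and on each slab piece $\Spec R_\fob$ to $d\log Z_+ \wedge d\log z_1 \wedge \cdots \wedge d\log z_{n-1}$ in the adapted coordinates of \eqref{Eqn: adapted local eqn} — these forms being glued compatibly by the wall crossing automorphisms and the open embeddings \eqref{Eqn: Canonical open emb} precisely because those automorphisms are toric and hence preserve $d\log$-forms. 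I would begin by fixing this normalization and recording that on overlaps the transition functions contribute nothing beyond a possible shift by an integral combination of $d\log t$, which is exactly the source of the $t^{\langle c_1(\varphi), \beta_\trop\rangle}$ factor.

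\emph{Local analysis at edges.} For each edge $e$ embedded in a maximal cell $\sigma$, the cycle piece $\beta_e$ is (up to the $\ZZ/m_e\ZZ$ factor when $\xi_e$ is imprimitive) the orbit of a section $S_e$ under the subtorus $\xi_e^\perp \otimes U(1)$. Choosing a basis of $\Lambda_\sigma$ adapted to $\xi_e$, the integral $\int_{\beta_e} \Omega$ splits as a product: the $(n-1)$ torus directions orthogonal to $\xi_e$ each contribute a factor $2\pi\sqrt{-1}$ from $\int d\log z_j$, while the remaining direction is integrated over the section $S_e$ and contributes $\int_{S_e} d\log z^{\xi_e}$, a holomorphic quantity depending only on the endpoints of $e$ in $B$. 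After dividing by $(2\pi\sqrt{-1})^{n-1}$ these endpoint terms telescope along the graph $\Gamma$: the contribution of an interior vertex from its two adjacent edge-segments cancels, by the cycle condition $\sum_{e \ni v} \pm \xi_e = 0$, up to the discrepancy caused by crossing a slab — and that discrepancy is precisely where the adapted affine structure, the kink $\kappa_{\ul\rho}$, and the gluing data $s_{\sigma\ul\rho}$ enter.

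\emph{Local analysis at slab-vertices and assembly.} For a vertex $v$ of $\beta_\trop$ lying on a slab $\fob \subseteq \rho$, I would work in the adapted local coordinates \eqref{Eqn: adapted local eqn}, distinguishing the two cases described in \S\ref{Sub: construction of beta}. When $\xi_e \in \Lambda_\rho$ the cycle is locally a product of $U(1)^{n-2}$ with two disks and the local period is elementary. When $\xi_e \notin \Lambda_\rho$, the interesting case, the $(n-1)$-chain $\Gamma_v$ fibers over the arc $\iota$ joining $\{z=1, w=0\}$ to $\{z=0,w=1\}$ inside the local model $Z_+(z^{-m_v}Z_-) = (z^{-m_v}f_\fob)\, t^{\kappa_{\ul\rho}}$, times a torus $U(1)^{n-1}$ in the slab directions. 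Integrating $d\log Z_+ \wedge (\text{slab } d\log\text{'s})$ over this chain produces three pieces: a factor $\kappa_{\ul\rho}\log t$ weighted by $\langle \check d_e, \xi_e\rangle$ from the $d\log Z_+$ integral along $\iota$ on the cylinder $Z_+ (z^{-m_v}Z_-) = (z^{-m_v}f_\fob)t^{\kappa_{\ul\rho}}$; the slab-torus integral of $\log(z^{-m_v}f_\fob)/(z_1\cdots z_{n-1})\, dz_1\cdots dz_{n-1}$, which is by definition $\shR(z^{-m_v}f_\fob, v)$ as in \eqref{Eqn: complex Ronkin fct}; and a factor $\log(s_{\sigma'\ul\rho}/s_{\sigma\ul\rho})^{\langle \check d_e, \xi_e\rangle}$ coming from the gluing-data twist of the open embedding \eqref{Eqn: Canonical open emb}. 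Summing over all vertices and edges, exponentiating, and collecting the three types of terms gives exactly the right-hand side of \eqref{Eqn: Main period formula}: $\exp(\shR(\beta_\trop))$, the unit $\langle s, \beta_\trop\rangle$, and the monomial $t^{\langle c_1(\varphi), \beta_\trop\rangle}$. Finally, walls of codimension zero contribute nothing: crossing such a wall applies a wall crossing automorphism $z^m \mapsto z^m (1 + a z^{m'} t^\ell)^{\langle \cdot, \cdot\rangle}$, and since $\log(1+az^{m'}t^\ell)$ has $m' \neq 0$ tangent to the wall, its integral over the relevant torus orbit in $\beta$ vanishes exactly as in the proof of Proposition~\ref{Prop: Triviality of complex Ronkin function}; I would invoke the footnote condition on $f_\fop$ here to ensure no pure power of $t$ survives.

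\emph{Well-definedness and the analytic comparison.} The last sentence of the theorem requires showing the formal period over $X_k^\circ$ is independent (modulo $\exp(h t^{k+1})$) of the analytic representative. For this I would appeal to Proposition~\ref{Prop: fiber integral well-defined} and Proposition~\ref{holomorphic-periods-induce-formal-periods}: any two analytic families $\shX \to U \times \DD$ reducing to $X_k^\circ$ mod $t^{k+1}$ differ by an isomorphism that is the identity mod $t^{k+1}$, so Stokes' theorem applied to the region swept out by a homotopy between the two families of cycles $\beta(t)$ shows the two period integrals differ by something in $t^{k+1}\hat A\lfor t\rfor$; exponentiating gives the stated ambiguity. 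The monodromy ambiguity by multiples of $\alpha$, i.e. by $2\pi\sqrt{-1}$ after division, is handled by Proposition~\ref{Prop: Picard-Lefschetz transformation} and is absorbed into the "up to multiples of $2\pi\sqrt{-1}$" in the exponentiated formula.

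\textbf{Main obstacle.} The delicate point is the bookkeeping at slab-vertices: one must check that the adapted affine structure's parallel transport rule ($-\zeta' \mapsto \zeta - m_x$) is exactly compensated by the $z^{-m_x}$ normalization inside the Ronkin integrand, so that the edge-endpoint telescoping closes up cleanly and the $m_x$-shifts do not leak into the final formula. Equivalently, one must verify that the arc integral $\int_\iota d\log Z_+$ on the deformed cylinder genuinely yields $\kappa_{\ul\rho}\log t$ plus an exact term, uniformly over the slab-torus, and that the contractibility of $z^{-m_v}f_\fob$ guaranteed by the definition of $m_v$ is what makes the $\log Z_-$ versus $\log(Z_-/f_\fob)$ distinction immaterial. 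Getting these sign and normalization conventions consistent across the edge contributions, the Picard–Lefschetz formula, and the definition of $\langle c_1(\varphi), \beta_\trop\rangle$ is the real work of the proof.
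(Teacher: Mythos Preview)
Your overall strategy matches the paper's: decompose $\int_\beta\Omega$ into edge and vertex pieces, show the edge integrals give endpoint differences $(2\pi\sqrt{-1})^{n-1}\bigl(\log z^{\xi_e}(S(v_+))-\log z^{\xi_e}(S(v_-))\bigr)$ that telescope via the balancing condition, and show slab crossings contribute $\kappa_{\ul\rho}\log t$, the Ronkin term, and the gluing-data factor. The wall-crossing vanishing and the well-definedness discussion are also in line with the paper.

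Two technical components are, however, underdeveloped. First, you do not treat the chains $\Gamma_v$ at vertices of valency $\ge 3$. These are not merely placeholders for the telescoping: they are genuine $n$-chains in $\mu^{-1}(v)\simeq U(1)^n$ with $\partial\Gamma_v=-\sum_e\eps_{e,v}[\tilde T_e\cdot S(v)]$, and $\int_{\Gamma_v}\Omega$ must be computed. The paper does this via a separate combinatorial argument (reduction to trivalent vertices, then an alternating-roots lemma on $U(1)$), obtaining $0$ or $\tfrac12(2\pi\sqrt{-1})^n$ according to the parity of $\val(v)$, and then observing that $\sum_v\val(v)$ is even so these sum to an integer multiple of $(2\pi\sqrt{-1})^n$. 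Your proposal does not mention this step.

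Second, and more structurally, your slab-vertex computation is phrased as if one can integrate directly on a deformed cylinder, but we are over $\Spec\CC[t]/(t^{k+1})$: there is no fiber at $t\neq 0$ on which to carry the cycle. The paper's appendix builds a finite-order period formalism (special pull-backs $\Phi_i^+$, chart types I/II, cycle conditions Cy~I/Cy~II) to give $\int_\beta\Omega$ meaning as $h+g\log t$ with $g,h\in\CC[t]/(t^{k+1})$. Fitting $\beta$ into this framework forces the insertion of \emph{slab add-ins} $\Gamma_{e'}$---extra chains in the chamber chart $R^k_{\fou'}$ correcting $\partial_+\bar\beta_{e'}$ from a $T_{e'}$-orbit to the chart-adapted form---and the computation then splits into three pieces: the type-II chart integral (giving $\kappa_{\ul\rho}\log t$ and endpoint terms), the slab add-in integral (giving the Ronkin function of the reduction $f_{\ul\rho}$ modulo $t$), and an interpolation term between the two charts (upgrading $f_{\ul\rho}$ to the full $f_\fob$). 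Your single ``slab-torus integral of $\log(z^{-m_v}f_\fob)$'' collapses the last two, which is morally correct but skips the mechanism by which the higher-order-in-$t$ part of the Ronkin term acquires meaning in the finite-order setting. Your ``Main obstacle'' paragraph correctly anticipates that the bookkeeping here is the crux, but the resolution is this add-in/interpolation decomposition rather than a direct cylinder integral.
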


In the statement of Theorem~\ref{Thm: period thm}, the ambiguity of $\beta$ from
adding multiples of the vanishing cycle $\alpha$ disappears by exponentiation
since $\int_\alpha\Omega=(2\pi\sqrt{-1})^n$ (Lemma~\ref{lemma-intalpha}).

In practice one has a mutually compatible system of wall structures $\scrS_k$
consistent to increasing order $k$ and with an increasing, often unbounded
number of walls for $k\to\infty$. Theorem~\ref{Thm: period thm} then gives a
formula for the period integral as an element of the quotient field completion
$\hat A\lfor t\rfor$. In this formula only the complex Ronkin function
$\shR(\beta_\trop)$ potentially varies with $k$, capturing higher order
corrections to the slab functions $f_\fob$ as $k\to\infty$.

\begin{remark}
A straightforward generalization of Theorem~\ref{Thm: period thm} deals with
base spaces $A\lfor Q\rfor$ for $Q$ a toric monoid as in \cite{theta}. Then
$c_1(\varphi)\in H^1(B\setminus(\Delta_2\cup\shA),\check\Lambda\otimes Q^\gp)$ and $f_\fob\in
A[\Lambda_\rho][Q]$. Thus $\langle c_1(\varphi),\beta_\trop\rangle \in Q^\gp$
and the right-hand side of formula~\ref{Eqn: Main period formula} makes sense as
an element of $\hat A[Q^\gp]$ when writing the monomials of $\CC[Q^\gp]$ as
$t^q$ for $q\in Q^\gp$. This more general form follows easily from the stated
version by testing the statement on a dense set of $\Spf A\lfor Q\rfor$ by
base-changing via various morphisms $\Spf\big( \hat A\lfor t\rfor\big)\to
\Spf\big( \hat A\lfor Q\rfor\big)$.
\end{remark}

A particularly nice situation occurs when $B$ has simple singularities,
as introduced in \cite{logmirror1}, Definition~1.60. Morally, these are the
singularities that are indecomposable from the affine geometric point of view.
In dimension two, simple singularities lead to local models with slab functions
with at most one simple zero, that is $zw=(1+\lambda u)\cdot t^\kappa$ for some
$\lambda\in\CC$. In dimension three, the local models are $zw=(1+\mu u_1+\nu
u_2)\cdot t^\kappa$ or $xyz= (1+\lambda u)\cdot t^\kappa$ with
$\lambda,\mu,\nu\in\CC$. Then the algorithm of \cite{affinecomplex} produces a
canonical formal family $\foX\to \Spf \big(A\lfor t\rfor\big)$ with central
fiber $\Spec A$ classifying log Calabi-Yau spaces over the standard log point
with intersection complex $(B,\P)$, see \cite{theta}, Theorem~A7. Our second
main theorem says that locally this family is the completion of an analytic
family.

\begin{theorem}[Theorem~\ref{Thm: Analyticity of GS}]
\label{Thm: analyticity thm}
Assume that $(B,\P)$ has simple singularities, $B$ is orientable and $\partial
B$ again an affine manifold with singularities (e.g.\ empty). Then for each
closed point $a\in\Spec A$ there exists an analytic open neighborhood $U$ of
$a$ in $\Spec(A)_\an$, a disk $\DD\subset\CC$ and an analytic toric degeneration
\[
\shY\lra U\times \DD
\]
with completion at $(a,0)$ isomorphic, as a formal scheme over $A\lfor
t\rfor$, to the corresponding completion of the canonical toric degeneration $\foX\to \Spf
\big(A\lfor t\rfor\big)$ from \cite{theta}, Theorem~A.8.

Moreover, this completion is a hull for the logarithmic divisorial log
deformation functor defined in \cite{logmirror2}, Definition~2.7.
\end{theorem}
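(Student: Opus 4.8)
The plan is to compare the canonical formal family $\foX$ with an \emph{a priori} analytic versal deformation and to use the closed period formula of Theorem~\ref{Thm: period thm} to show that the reparametrization relating the two is itself analytic, so that it can be absorbed. Fix a closed point $a\in\Spec A$, write $X_0(a)$ for the fibre of $X_0^\circ$ over $a$ (as always we work away from toric strata of codimension $\ge 2$), and $\hat A$ for the completion of $A$ at $a$. Under the simple--singularities hypothesis the deformation theory of \cite{logmirror2} applies: the logarithmic divisorial log deformation functor of $X_0(a)$ is unobstructed, the logarithmic tangent cohomology is computed by the local-to-global spectral sequence, and the Kodaira--Spencer map of $\foX$ is an isomorphism onto the first tangent space, the $\Spec A$--directions being versal for deformations of the central fibre by the construction of $A$ in \cite{theta}, Theorem~A.8 and $\partial_t$ spanning the remaining smoothing direction. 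Hence $\widehat\foX$, the completion of $\foX$ along $(a,0)$, is a hull for this functor; once the isomorphism below is established, the same transfers to the analytic family we produce, which is exactly the ``Moreover'' assertion.

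First I would produce the analytic family. Since the functor is formally smooth with hull $\widehat\foX$, analytic (or étale-local) Artin approximation over a neighbourhood of $a$ yields an analytic toric degeneration $\shY_0\to U_0\times\DD_0$ whose completion along $(a,0)$ is again versal; because $X_0(a)$ is a union of toric varieties glued along toric divisors and the admissible log structures are divisorial, one may arrange $\shY_0$ to be given in codimension one by the local models \eqref{Eqn: ring in codim one}, so that $\shY_0$ carries a global relative $n$--form $\Omega_{\shY_0}$ restricting to the canonical form on $X_0(a)$ (using $B$ orientable), and the construction of \S\ref{Sub: construction of beta} attaches to each tropical one--cycle $\beta_\trop$ a continuous family $\beta^{\shY_0}_t$ of $n$--cycles on the fibres of $\shY_0$. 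Comparing the two hulls $\widehat{\shY_0}$ and $\widehat\foX$ of the same functor gives an isomorphism of formal log deformations $\phi\colon\widehat{\shY_0}\to\widehat\foX$ over an automorphism $\psi$ of $\hat A\lfor t\rfor$. The sub-functor of deformations still log smooth over the standard log point is intrinsic, so $\psi$ preserves the ideal $(t)$; and since $\Spec A$ is already the versal base of central fibres, $\psi$ may be taken to be the identity on $\hat A$. Thus $\psi(t)=t\cdot u$ for a unit $u\in(\hat A\lfor t\rfor)^\times$ with $u\equiv 1$ at $(a,0)$ --- the only surviving freedom, namely the mirror reparametrization of the smoothing parameter --- and by construction $\phi$ intertwines $\Omega_{\shY_0}$ with $\Omega_\foX$ and carries $\beta^{\shY_0}_t$ to $\beta^\foX_t$ for every $\beta_\trop$.

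Second, the period formula eliminates $u$. The simple singularities force the slab functions of \cite{affinecomplex} to satisfy the normalization condition, so Proposition~\ref{Prop: Triviality of complex Ronkin function} gives $\shR(\beta_\trop)=0$, and for the canonical wall structures Theorem~\ref{Thm: period thm} becomes the exact identity
\[
\exp\!\Big(\tfrac{1}{(2\pi\sqrt{-1})^{n-1}}\textstyle\int_{\beta^\foX_t}\Omega_\foX\Big)=\langle s,\beta_\trop\rangle\, t^{\langle c_1(\varphi),\beta_\trop\rangle}\quad\text{in }\hat A\lfor t\rfor .
\]
Choose $\beta_\trop$ with $d:=\langle c_1(\varphi),\beta_\trop\rangle\ge 1$; such a cycle exists because the kinks $\kappa_{\ul\rho}$ are positive, so that $c_1(\varphi)$ is non-torsion. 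Transporting the left-hand side through $\phi$ and the substitution $t\mapsto\psi(t)=tu$, it equals the value on $\shY_0$ of the exponentiated period, an \emph{honest} holomorphic function $P(t')$ on $U_0\times\DD_0$ vanishing to order exactly $d$ at $t'=0$; writing $P(t')={t'}^{\,d}P_1(t')$ with $P_1$ holomorphic and $P_1$ non-vanishing near $a$, one obtains $t^d u^d P_1(tu)=\langle s,\beta_\trop\rangle\, t^d$, i.e.\ $u$ solves $u^d P_1(tu)=\langle s,\beta_\trop\rangle$. By the analytic implicit function theorem this equation has a unique solution $u=u(t)$, holomorphic near $0$; by uniqueness it coincides with the formal $u$ above. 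Pulling $\shY_0$ back along the analytic automorphism $(\,\cdot\,,t)\mapsto(\,\cdot\,,t\,u(t))$ and shrinking to $U\times\DD$ produces an analytic toric degeneration $\shY\to U\times\DD$ with $\widehat\shY\cong\widehat\foX$ \emph{on the nose} over $\hat A\lfor t\rfor$, i.e.\ without reparametrization; by the first paragraph this completion is a hull for the functor of \cite{logmirror2}, Definition~2.7, which is the remaining assertion.

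I expect the genuinely delicate point to be the passage from ``agreement at every finite order'' --- all that Artin approximation, or the finite-order schemes $X_k^\circ$, supply directly --- to a \emph{formal} isomorphism that respects the canonical $n$--form and the tropical cycles, since an arbitrary isomorphism of hulls need neither preserve $\Omega$ nor restrict to the identity on $X_0(a)$. This forces one to phrase the deformation problem with the relative canonical form as part of the data (equivalently, to build $\shY_0$ as a bona fide toric degeneration rather than to invoke abstract versality), and it is precisely here that the closed period formula is indispensable: because the periods over our cycles are \emph{exact} monomials in $t$, with no higher-order corrections (the complex Ronkin term being zero), the reparametrization $u(t)$ is pinned down as the analytic solution of the single equation $u^d P_1(tu)=\langle s,\beta_\trop\rangle$ rather than being merely a formal power series a priori unrelated to the analytic data.
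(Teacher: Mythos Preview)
Your overall strategy --- analytic approximation, hull comparison, then the period formula to pin down the residual reparametrization --- is the paper's. The gap is the step where you assert that the hull isomorphism $\psi$ ``may be taken to be the identity on $\hat A$'' because $\Spec A$ is versal for central fibres. Hull isomorphisms are not unique (only their differentials are, by Schlessinger), so abstract hull theory does not let you prescribe $\psi|_{\hat A}$; and your single period equation $u^d P_1(tu)=\langle s,\beta_\trop\rangle$ only controls the $t$-direction once that is already granted.

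The paper handles this differently. First, it sets up the deformation functor over $\CC\lfor t\rfor$, so the hull isomorphism automatically satisfies $\psi(t)=t$ and your separate argument that $\psi$ preserves $(t)$ becomes unnecessary. Second, rather than claiming $\psi|_{\hat A}=\id$ a priori, the paper uses \emph{all} periods as intrinsic coordinates: after reducing by a $\GG_m$-action on the canonical family (Proposition~\ref{Prop: foX mod TT_L}) to a slice $\bar\foS$, the perfect-pairing theorem (Theorem~\ref{Thm: perfect pairing}) guarantees that the periods for $\beta_\trop\in c_1(\varphi)^\perp$ span the relative cotangent space of $\bar\foS$, and a preliminary holomorphic coordinate change on the analytic side (Proposition~\ref{prop-prepare-inv-periods}) makes its periods monomial too. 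Any isomorphism of deformations restricts to the identity on $X_0(a)$, hence preserves the topological cycles and the $\Omega$ normalized by $\int_\alpha\Omega=(2\pi\sqrt{-1})^n$ --- your last paragraph correctly flags this as the delicate point --- so it pulls periods back to periods; these being a full coordinate system then forces $\psi=\id$ on all of $\hat\foS$, not just on $\hat A$. The paper's analytic approximation step (Theorem~\ref{Thm: Analytic approximation}) is also more specific than yours: it produces a family agreeing with $\foX$ to a prescribed finite order $k_0$, which is what feeds into Proposition~\ref{approx-is-toric-degen} and makes the coordinate adjustment possible.
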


The proof occupies Section~\ref{Sect: analyticity}. The hard part of this
theorem is that it is not just an approximation result: the isomorphism of the
two formal families does not require any change of parameters. Thus the
canonical toric degenerations from \cite{affinecomplex} really are just an
algebraic order by order description of an analytic log-versal family with
monomial period integrals, that is, written in canonical coordinates.
\medskip

\noindent\emph{History of the results.}
The tropical construction of $n$-cycles and the main period computation in this
paper, for the case of \cite{affinecomplex} with trivial gluing data, has been
sketched by the second author in a talk at the conference ``Symplectic Geometry
and Physics'' at ETH~Z\"urich, September~3--7, 2007. Details have been worked
out in a first version of the paper in 2014 \cite{Version 1}. The present paper
is an essentially complete rewriting of that version, carefully treating period
integrals with logarithmic poles in finite order deformations, giving an
intrinsic formulation of all terms in the main period theorem (Theorem~\ref{Thm:
period thm}), including a treatment of non-normalized slab functions via the
Ronkin function and giving a proof of analyticity and versality of canonical toric
degenerations (Theorem~\ref{Thm: Analyticity of GS} and \S\ref{Par: Log versality}).


\subsection{Applications} \label{sec-applications}
We apply Theorem~\ref{Thm: period thm} in several interesting examples.
\begin{figure}
\captionsetup{width=1\linewidth}
\begin{center}
\includegraphics[width=.52\linewidth]{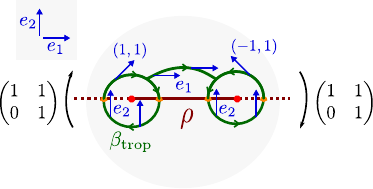}\qquad
\includegraphics[width=.38\linewidth]{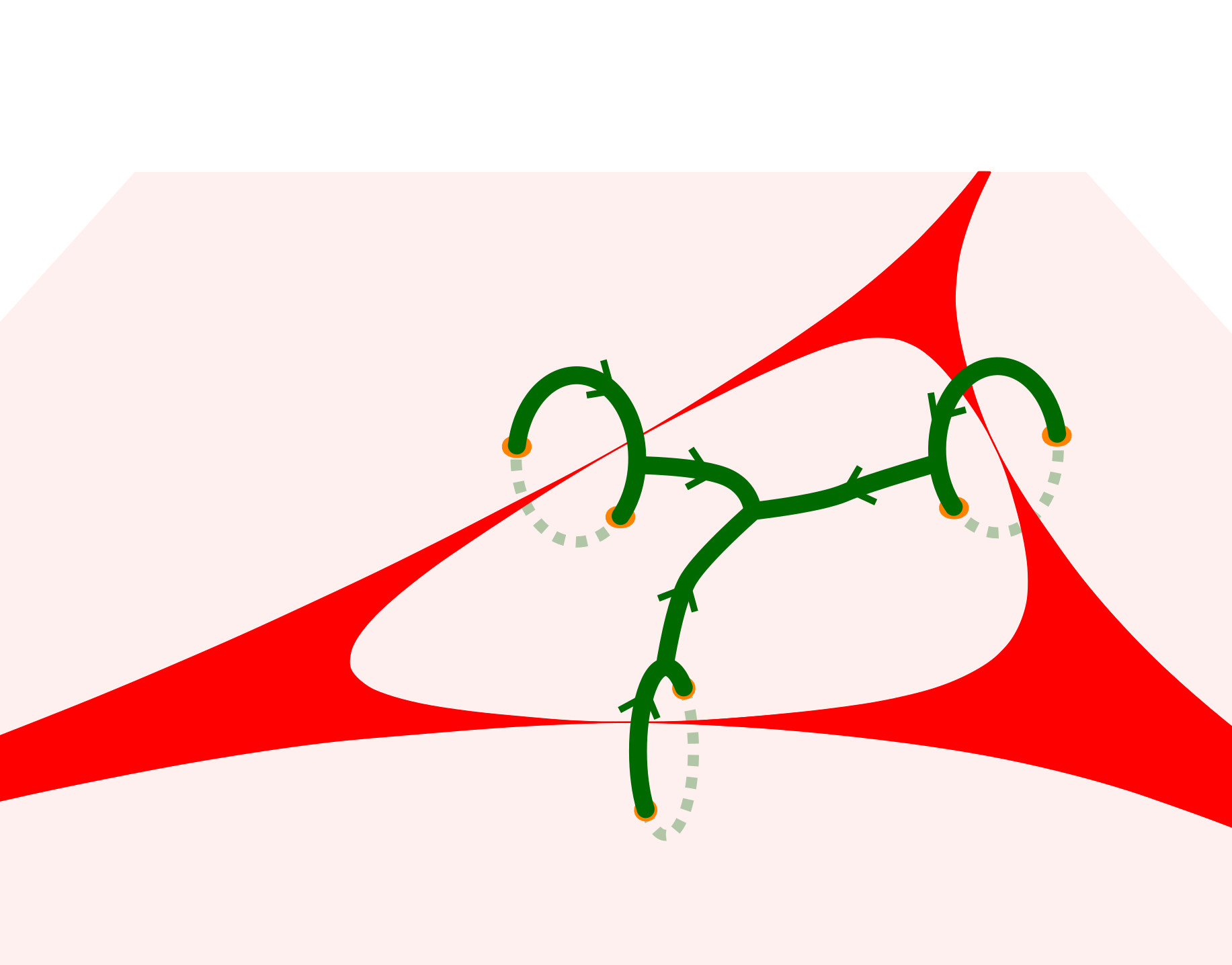} 
\end{center}
\caption{\small A one-dimensional slab $\rho$ with two focus-focus singularities
(red) on the left and a two-dimensional slab with amoeba image in red on
the right. Tropical one-cycles $\beta_\trop$ are given respectively in green and
their meeting points with the slabs in orange. Blue arrows on the left give the
sections of $\Lambda$ carried by edges of $\beta_\trop$.}
\label{fig-ex}
\end{figure}
\medskip

\subsubsection{Mirror dual of $K_{\PP^1}$}
\label{sec-localP1}
We consider \eqref{Eqn: Basic equation} alias \eqref{Eqn: ring in codim one} for
$f_\fob\in\CC[u,u^{-1}]$ featuring two zeros as follows
\begin{equation} \label{KP1-mirror}
zw=(au^{-1}+1)(1+bu)t^\kappa
\end{equation}
with $\kappa>0$ and $a,b\in\CC^\times$, $a\neq b$. The corresponding affine manifold $B$ is
shown on the left in Figure~\ref{fig-ex}, c.f. Figure 2.2 in \cite{ICM}. The two
focus-focus singularities are the images of the zeros of $f_\fob$ under the
momentum map. Figure~\ref{fig-ex} also shows a tropical one-cycle $\beta_\trop$,
and Theorem~\ref{Thm: period thm} yields
\begin{equation} \label{eq-mirror-KP1}
\exp\Big(\frac1{2\pi \sqrt{-1}}\int_\beta\Omega\Big) =a\cdot b,
\end{equation}
for $\beta$ the associated singular $2$-cycle. Indeed, one checks that the
contributions in $t$ arising from the two orange points on the same green circle
cancel. There are also no contributions from the trivial gluing data. By the
product expansion \eqref{KP1-mirror} and Proposition~\ref{Prop: Triviality of complex
Ronkin function}, the Ronkin term vanishes at the two inner crossing points
where this expansion is valid. However, for the two outer crossings, $f_\fob$ is
to be multiplied by $z^{-m_x}$, which is $u$ and $u^{-1}$, respectively. These
crossings produce the constant factors $a$ and $b$ in \eqref{eq-mirror-KP1}.

The geometry here arises as the mirror dual of $K_{\PP^1}$, a
smoothing of the $A_1$-singularity. Indeed, $a=b=1$ yields an $A_1$-singularity at
$x=y=u+1=0$. Our period computes the integral over the vanishing $2$-sphere. 
\medskip

\subsubsection{Mirror dual of $K_{\PP^2}$}
\label{sec-localP2}
We go up one dimension and consider a particular $f_\fob\in\CC[x^{\pm 1},y^{\pm
1}]$ whose zero-set is an elliptic curve so that \eqref{Eqn: Basic equation}
gives
\begin{equation} \label{eq-locP2}
zw=(1+x+y+s(xy)^{-1})t^\kappa
\end{equation}
for $s\in\CC^\times$ a parameter. This geometry arises as the mirror dual of
$K_{\PP^2}$, as studied before in \cite{CKYZ}, \S2.2 and from a toric
degeneration point of view in \cite{invitation}, Example~5.2 and \cite{ICM},
Figure~2.1. The corresponding real affine manifold is shown on the right of
Figure~\ref{fig-ex} with the amoeba of the elliptic curve the solid red area.
The amoeba complement in the slab has one bounded and three unbounded
components. The monomial $z^{-m_p}$ at a point $p$ inside one of the components
equals $1,x^{-1},y^{-1},xy$, respectively. To compute the contribution of the
Ronkin function, we write $z^{-m_p}\cdot f_\fob$ as an infinite product as
discussed in \cite{ICM}. Define integers $a_{ijk}$ by the identity
\begin{equation} \label{eq-factor-slab}
1+x+y+z=\prod_{i,j,k=0}^\infty (1+a_{ijk} x^iy^jz^{k})
\end{equation}
and then $h:=\prod_{k=1}^\infty (1+a_{kkk}s^k)$. Inserting $z=s(xy)^{-1}$ in
\eqref{eq-factor-slab} now yields a factorization of the slab function $f_\fob$
in \eqref{eq-locP2} as the product of $h$ and terms fulfilling the
hypothesis of Proposition~\ref{Prop: Triviality of complex Ronkin function}.
Thus the Ronkin term of the period integral at each of the three crossings of
the tropical cycle in the bounded center region of the amoeba complement equals
$\log(h)$. The Ronkin terms for crossings of the unbounded regions vanish
readily by Proposition~\ref{Prop: Triviality of complex Ronkin function}, except
for the constant term of $xy\cdot f_\fob$, which yields $\log(s)$.

Thus, by Theorem~\ref{Thm: period thm}, the exponentiated period integral for the
tropical cycle $\beta_\trop$ depicted in green in Figure~\ref{fig-ex} yields
\begin{equation} \label{eq-period-localP2}
\exp\Big(\frac1{(2\pi \sqrt{-1})^2}\int_\beta\Omega\Big) = h^3\cdot s.
\end{equation}

The smoothing algorithm of \cite{affinecomplex} replaces $f_\fob$ by $f_\fob+g$,
where $g=-2 s+5s^2-32s^3+\ldots$ is determined by the \emph{normalization condition}
saying that $\log(f_\fob+g)$ has no pure $s$-powers
(\cite{affinecomplex},\,\S3.6). See also \cite{GZ}, p.14 and \cite{ICM}, Example
3.1,(2). With the normalized slab function $f_\fob+g$, the factor $h^3$ disappears,
leaving only $s$ for the exponentiated period integral
\eqref{eq-period-localP2}. This illustrates the mechanism relating the
normalization condition and the fact that the exponentials of periods for the
canonical toric degenerations of \cite{affinecomplex} are monomials in the base
of the family, see also \S\ref{Par: Canonical coordinates} below. For a related
enumerative interpretation of the normalization condition see \cite{CCLT16},
Theorem~1.6.
\medskip

\subsubsection{Degenerations of K3 surfaces with prescribed Picard group} \label{example-K3}
For a K3 surface $Y$ with holomorphic volume form $\Omega$, an integral homology
class $\beta\in H_2(Y,\ZZ)$ is the first Chern class of a holomorphic line
bundle if and only if its Poincar\'e-dual class $\hat\beta\in H^2(Y,\CC)$ is of
type $(1,1)$. Since $\hat\beta$ is real, this condition can be detected by the
vanishing of $\int_\beta\Omega$:
\begin{equation}
\label{Eqn: (1,1)-condition}
\int_\beta \Omega= \int_\beta\overline\Omega=\int_Y \hat\beta\wedge\Omega \stackrel{!}{=}0.
\end{equation}
Combining this observation with Theorem~\ref{Thm: period thm} and
Theorem~\ref{Thm: analyticity thm}, we obtain the following computation of the
Picard lattice of general fibers of the canonical toric degenerations of K3
surfaces constructed in \cite{affinecomplex}. In this case $B$ is a $2$-sphere
and the amoeba locus $\shA$ consists of at most $24$ points. This number is
achieved if all singularities are of focus-focus type, that is, have local
affine monodromy conjugate to $\left(\begin{smallmatrix}1&
0\\1&1\end{smallmatrix}\right)$. This is the case iff all slab
functions have simple zeroes with pairwise different absolute values, for
example if $(B,\P)$ has simple singularities as explained before
Theorem~\ref{Thm: analyticity thm}. In any case, we assume that the affine
structure on $B$ extends over the vertices of $\P$, so we can disregard
$\Delta_2$. The result is then expressed in terms of the singular homology
group $H_1(B,\iota_*\Lambda)$ with $\iota:B\setminus \shA\to B$ the inclusion.
Before stating the result, we make some comments on this homology group and how
it relates to the K3 lattice.

First, if a singular $1$-cycle with coefficients in $\iota_*\Lambda$ passes
through a point of $\shA$, then the integral tangent vector carried at this
point is invariant under local integral affine monodromy and hence can be
perturbed away from $\shA$. In other words, push-forward by $\iota$ defines a
surjection
\begin{equation}
\label{Eqn: tropical cycles passing through shA}
\iota_*:H_1(B\setminus(\Delta_2\cup\shA),\Lambda)\lra H_1(B,\iota_*\Lambda).
\end{equation}
Second, for $\shY\ra U\times \DD$ the analytic family from Theorem~\ref{Thm:
analyticity thm} and $a\in U\subset \Spec(A)_\an$, $t\in \DD\setminus\{0\}$, let
$Y_t=Y_t(a)$ denote the fiber over $(a,t)$. Our construction of tropical cycles
defines a homomorphism
\[
H_1(B\setminus\shA,\Lambda)\lra H_2(Y_t,\ZZ)/\langle\alpha\rangle,
\quad \beta_\trop\longmapsto \beta,
\]
with $\alpha$ the vanishing cycle. This homomorphism is compatible with the respective
intersection pairings (cap product), as is the previous homomorphism \eqref{Eqn:
tropical cycles passing through shA}. Third, $H_1(B\setminus\shA,\Lambda)$
together with its intersection pairing only depends on the linear part of the
monodromy representation, hence can be computed in any model, by the classical
uniqueness result for this monodromy representation (\cite{LivneMoishezon},
Theorem on p.\,225). For one particular model, Symington in \cite{Symington},
\S11, has given a basis of tropical cycles\footnote{The cycles in
\cite{Symington} use a different construction for the cycles, but it is clear
how to obtain cycles homologous to hers in our fashion.} spanning the even
unimodular lattice $-E_8^{\oplus 2}\oplus H^{\oplus 2}$ of rank $20$ and
signature $(2,18)$, and these also map to a basis of $H_1(B,\iota_*\Lambda)$
under \eqref{Eqn: tropical cycles passing through shA} by unimodularity and a
rank computation. This lattice is the orthogonal complement of a hyperbolic
plane $H$ in the K3 lattice spanned by a fiber and a section of a K3 surface
fibered in Lagrangian tori over $B$. In our situation this fiber class is
$\alpha_t$ and we have identifications of lattices
\[
H_1(B,\iota_*\Lambda)=
\big\{\beta_t\in H_2(Y_t,\ZZ)\,\big|\, \beta_t\in\alpha_t^\perp\big\}\big/\langle \alpha\rangle
\simeq -E_8^{\oplus 2}\oplus H^{\oplus 2}.
\]
Our period integrals now identify the Picard lattice of $H_2(Y_t,\ZZ)$ inside this lattice.

\begin{corollary}
\label{Cor: K3 with Pic group}
Let $\pi:\shY\to U\times \DD$ be the analytic version from Theorem~\ref{Thm:
analyticity thm} of the canonical degenerating family of K3~surfaces defined by
a polyhedral affine structure $(B,\P)$ with underlying topological space $S^2$
and simple singularities, strictly convex multivalued piecewise affine function
$\varphi$ and gluing data $s\in H^1(B,\iota_*\check\Lambda\otimes\CC^*)$. Then the 
Picard group of a general fiber $Y_t$ of $\pi$ is canonically isomorphic to
\[
\big\{ \beta_\trop\in H_1(B,\iota_*\Lambda)\,\big|\, 
 \beta_\trop\in c_1(\varphi)^\perp,\, \langle s,\beta_\trop\rangle=1\big\}.
\]
\end{corollary}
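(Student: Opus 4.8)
The plan is to combine the period formula of Theorem~\ref{Thm: period thm}, the analyticity statement of Theorem~\ref{Thm: analyticity thm}, and the Hodge-theoretic characterization \eqref{Eqn: (1,1)-condition} of algebraic classes on a K3 surface. First I would recall that for a general fiber $Y_t$ the transcendental lattice has rank $2$ and the Picard group is exactly the set of classes $\beta_t\in H_2(Y_t,\ZZ)$ with $\int_{\beta_t}\Omega=0$; moreover, since the family is the analytic family from Theorem~\ref{Thm: analyticity thm} with central fiber the toric degeneration of \cite{affinecomplex}, every class in $\alpha_t^\perp/\langle\alpha_t\rangle$ arises as $\beta_t$ for a tropical one-cycle $\beta_\trop\in H_1(B,\iota_*\Lambda)$ via the surjection \eqref{Eqn: tropical cycles passing through shA} and the cycle construction of \S\ref{Sub: construction of beta}. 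The vanishing cycle $\alpha_t$ itself is algebraic (it is the fiber class of a Lagrangian torus fibration, hence represented by a holomorphic curve in the degeneration, and $\int_{\alpha_t}\Omega=(2\pi\sqrt{-1})^n$ by Lemma~\ref{lemma-intalpha}, which is visibly nonzero, so one must be slightly careful: the class $\alpha_t$ is not in $\alpha_t^\perp$, so it does not interfere and the Picard group decomposition respects the splitting off of the hyperbolic plane spanned by a fiber and section). So the task reduces to: for $\beta_\trop\in H_1(B,\iota_*\Lambda)$, decide when the lift $\beta_t$ satisfies $\int_{\beta_t}\Omega=0$.

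Next I would apply Theorem~\ref{Thm: period thm} to compute $\exp\big(\frac1{(2\pi\sqrt{-1})}\int_{\beta}\Omega\big)$. Because $(B,\P)$ has simple singularities, the slab functions after the normalization of \cite{affinecomplex} satisfy the hypothesis of Proposition~\ref{Prop: Triviality of complex Ronkin function} at every relevant crossing (this is exactly the normalization condition), except that at crossings where the monomial correction $z^{-m_v}$ is nontrivial one still gets only pure powers of the base coordinate contributing — and in the K3 (surface) setting, as in the $K_{\PP^2}$ example \S\ref{sec-localP2}, after normalization these conspire to leave no $s$-dependence beyond what is already recorded in $\langle s,\beta_\trop\rangle$ and $t^{\langle c_1(\varphi),\beta_\trop\rangle}$. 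Hence $\shR(\beta_\trop)=0$ and the formula collapses to
\[
\exp\Big(\tfrac1{2\pi\sqrt{-1}}\int_\beta\Omega\Big)=\langle s,\beta_\trop\rangle\cdot t^{\langle c_1(\varphi),\beta_\trop\rangle}.
\]
Now $\int_\beta\Omega=0$ holds (modulo $2\pi\sqrt{-1}\ZZ$, which is harmless after exponentiation) if and only if the right-hand side equals $1$ as a function of $t$ on the punctured disk, and since $\langle s,\beta_\trop\rangle\in\CC^*$ is a constant while $t^{\langle c_1(\varphi),\beta_\trop\rangle}$ is a nonconstant monomial unless its exponent vanishes, this forces simultaneously $\langle c_1(\varphi),\beta_\trop\rangle=0$, i.e.\ $\beta_\trop\in c_1(\varphi)^\perp$, and $\langle s,\beta_\trop\rangle=1$. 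This identifies the Picard group of $Y_t$ with the stated subgroup of $H_1(B,\iota_*\Lambda)$.

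The remaining point is the \emph{canonicity} of the isomorphism and checking that the pairings match up so the description is lattice-theoretically correct, not merely set-theoretic. Here I would invoke the compatibility of the homomorphism $\beta_\trop\mapsto\beta_t$ with intersection pairings noted after \eqref{Eqn: tropical cycles passing through shA}, together with the classical uniqueness of the monodromy representation (\cite{LivneMoishezon}) and Symington's explicit basis of tropical cycles spanning $-E_8^{\oplus2}\oplus H^{\oplus2}$, which identifies $H_1(B,\iota_*\Lambda)$ with $\alpha_t^\perp/\langle\alpha_t\rangle$ as lattices. The identification $c_1(\varphi)^\perp$ then corresponds to $\Omega^\perp$ intersected with $\alpha_t^\perp$, i.e.\ to the $(1,1)$-part, and the gluing-data condition $\langle s,\beta_\trop\rangle=1$ cuts out precisely the sublattice that survives. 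The main obstacle I anticipate is not any single step but the bookkeeping at crossings with nontrivial $z^{-m_v}$: one must argue carefully — as the $K_{\PP^1}$ and $K_{\PP^2}$ examples show explicitly — that after applying the normalization condition of \cite{affinecomplex} the only surviving contributions to the period are $\langle s,\beta_\trop\rangle$ and the monomial $t^{\langle c_1(\varphi),\beta_\trop\rangle}$, with no leftover transcendental or even constant factor depending on the moduli parameter in $\Spec A$; verifying this uses the structure of simple singularities in dimension two and the vanishing statement of Proposition~\ref{Prop: Triviality of complex Ronkin function}.
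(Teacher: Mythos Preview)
Your overall strategy matches the paper's: apply the period formula, note that normalization kills the Ronkin term, and read off the two conditions $\beta_\trop\in c_1(\varphi)^\perp$ and $\langle s,\beta_\trop\rangle=1$ from the requirement that the exponentiated period be identically~$1$. However, your treatment of the vanishing cycle $\alpha_t$ is genuinely wrong in two places, and this is precisely the step where the paper's argument has content.

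First, you assert that $\alpha_t$ is algebraic. It is not: $\int_{\alpha_t}\Omega=(2\pi\sqrt{-1})^2\neq 0$ by Lemma~\ref{lemma-intalpha}, so $\alpha_t$ fails the $(1,1)$-condition \eqref{Eqn: (1,1)-condition}. The paper uses exactly the opposite fact: since $\alpha_t$ can be chosen Lagrangian it is \emph{not} Poincar\'e-dual to a holomorphic line bundle, and this is what guarantees that the map $\Pic(Y_t)\to H_2(Y_t,\ZZ)/\langle\alpha_t\rangle$ is injective, so that the Picard lattice is determined by its image there. Second, you claim $\alpha_t\notin\alpha_t^\perp$; but $\alpha_t$ is a torus class with $\alpha_t\cdot\alpha_t=0$, so $\alpha_t\in\alpha_t^\perp$. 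The correct picture, spelled out in the paragraph before the corollary, is that $H_1(B,\iota_*\Lambda)\simeq\alpha_t^\perp/\langle\alpha_t\rangle$ and one is detecting which classes in this quotient lift to integral $(1,1)$-classes. The exponentiated period being~$1$ says $\int_{\beta_t}\Omega\in(2\pi\sqrt{-1})^2\ZZ$, so \emph{some} lift $\beta_t+k\alpha_t$ has vanishing period and is therefore $(1,1)$; injectivity of $\Pic\to H_2/\langle\alpha_t\rangle$ then pins down the Picard lattice. Your concern about residual constant factors from $z^{-m_v}$ is misplaced here: for the canonical family the slab functions are normalized, so Proposition~\ref{Prop: Triviality of complex Ronkin function} applies directly and $\shR(\beta_\trop)=0$ without further bookkeeping.
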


\begin{proof}
Theorem~\ref{Thm: period thm} implies that $\int_{\beta_t}\Omega_{Y_t}$ can only
be constant if $\langle c_1(\varphi),\beta_\trop\rangle = 0$. If this is the
case then $\int_{\beta_t}\Omega_{Y_t}$ extends holomorphically over $t=0$ and
thus $\langle s,\beta_\trop\rangle=1$ is equivalent to
$\int_{\beta_t}\Omega_{Y_t}\in (2\pi\sqrt{-1})^n\ZZ$. Noting that
$\int_{\alpha_t}\Omega_{Y_t}=(2\pi\sqrt{-1})^n$ for $\alpha_t\in H_2(Y_t,\ZZ)$
the class of the vanishing cycle from Proposition~\ref{Prop: Picard-Lefschetz
transformation}, the equality in \eqref{Eqn: tropical cycles passing through
shA} implies that $\beta_t$ is the image of the Poincar\'e-dual of an integral
$(1,1)$ class under the quotient map $H_2(Y_t,\ZZ)\to H_2(Y_t,\ZZ)/\langle
\alpha_t\rangle$. Since $\alpha_t$ can be chosen Lagrangian, it can not be
Poincar\'e-dual to the class of a holomorphic line bundle. Hence the image of
$\beta_t$ in $H_2(Y_t,\ZZ)/\langle\alpha_t\rangle$ is enough to determine the
Picard lattice.
\end{proof}

Thus for trivial gluing data $s=1$, or $s$ of finite order in
$H^1(B,\iota_*\check\Lambda\otimes\CC^*)$, the Picard lattice of $Y_t$ has next
to largest rank $19$. We thus retrieve families studied intensely, see e.g.\
\cite{Mo84}, \cite{dolgachev}.

It is also possible to treat the more general families with non-simple
singularities from \cite{theta}, \S{A.4}, by treatment as a limit of a situation
with focus-focus singularities. Such models lead to one-parameter families with
$A_k$-singularities. For trivial gluing data, their resolution still provides
families of K3 surfaces with Picard rank~$19$. Non-simple singularities are
necessary for producing families of K3 surfaces with large Picard rank of low
degree. Further details will appear in \cite{GHKS}.

Related results from a more elementary perspective have been obtained in
\cite{Yamamoto}.
\medskip

\subsubsection{Degenerations of rational elliptic surfaces}
Another application is to toric degenerations of rational elliptic surfaces. In
this case, the formula for period integrals in Theorem~\ref{Thm: period thm} has
been used in the thesis of Lisa Bauer to prove a Torelli theorem for toric
degenerations of rational elliptic surfaces with simple singularities. We refer
to \cite{Bauer},\S5 for details.
\medskip

\subsubsection{Canonical Coordinates in Mirror Symmetry}
\label{Par: Canonical coordinates}
A canonical system of holomorphic coordinates on the base $V$ of a maximally
unipotent Calabi-Yau degeneration $\shY\ra V$ was first proposed in
\cite{Mo93}, \cite{De93} as follows. Let $\shY_0$ denote the maximally degenerate fiber,
$V\subset\CC^r$ a small neighborhood of $0$ and $\shY_t$ a regular fiber.
Assume the discriminant $D=D_1+\ldots+D_s\subset V$ is the intersection with $V$ of
a union of coordinate hyperplanes and let $T_i:H_n(\shY_t,\ZZ)\ra
H_n(\shY_t,\ZZ)$ be the monodromy transformation along a simple loop around
$D_i$. If $Y_0$ is reduced with simple normal crossings then the $T_i$ are
unipotent, so $N_i=-\log(T_i)$ is well-defined. Set $N=\sum_i \lambda_i N_i$ for
any $\lambda_i>0$.

The \emph{monodromy weight filtration} is the unique filtration $W_0\subseteq
W_1\subseteq W_2\subseteq \ldots\subseteq W_{2n}$ on $H_n(\shY_t,\QQ)$ with the
properties $N(W_i)\subseteq W_{i-2}$ and that $N^k:\Gr^W_{n+k}\ra \Gr^W_{n-k}$
is an isomorphism for $\Gr^W_{i}=W_{i}/W_{i-1}$. Schmid gave a decreasing
filtration $F^\bullet_{\lim}$ on $H^n(\shY_t,\CC)$ which combines with the
Poincar\'e dual filtration $\tilde W_i:=W^\perp_{2n-i}$ to give a mixed Hodge
structure. The degeneration $\shY\ra V$ is \emph{maximally unipotent} if this
mixed Hodge structure is \emph{Hodge-Tate}. The latter property implies that
$W_{2i}=W_{2i+1}$. If $\shY_t$ is Calabi-Yau, then $\dim_\QQ W_0=1$ and
$\dim_\QQ W_2/W_0=\dim H^1(\shY_t,\Theta_{\shY_t})$, so at least dimension-wise
it makes sense to expect that a set of cycles $\beta_1,\ldots,\beta_r\in
H_n(\shY_t,\ZZ)$ that descends to a basis of $W_2/W_0$ gives rise to a set of
coordinates $h_i:=\exp(\int_{\beta_i}\Omega)$ for $\Omega$ a suitably normalized
relative $n$-form of $\shY\ra V$. This was proved in \cite{Mo93}, \cite{De93}. These
coordinates are \emph{canonical} in the sense of being unique up to an
\emph{integral} change of basis of $W_2/W_0$. The $h_i$ also agree with
exponentials of flat coordinates for the special geometry on the Calabi-Yau
moduli space defined by the Weil-Petersson metric
\cite{Tian},\cite{Strominger},\cite{Freed}.

Motivated from \cite{SYZ}, the Leray filtration of the momentum map was found to
coincide with the above weight filtration for $n=3$ \cite{Gr98}\,\S4, so
generators for $W_2/W_0$ should be obtained from one-cycles in $B$ with values
in $\Lambda$, as also suggested in~\cite{KS06}\,\S7.4.1. Note however, this can
only work if $H_1(B,\iota_*\Lambda)$ has large enough rank and it is easy to
produce examples where this fails. A way to ensure the rank matches is by
requiring $B$ to be simple, see \cite{logmirror2}, \cite{affinecoh}. In the
simple situation, Corollary~\ref{Cor: monomial analytic periods} and \S\ref{Par:
Log versality} give directly that the exponentiated periods from cycles in
$H^1(B,\iota_*\Lambda)$ provide coordinates on a versal family. At least in the
case with simple singularities, it is expected that the image of the
homomorphism
\begin{equation}
\label{eq-trop-to-usual}
H_1(B\setminus(\Delta_2\cup\shA),\Lambda)\ra H_n(Y_t,\QQ)/\langle\alpha_t\rangle
\end{equation}
generates $W_2$. In general, $W_0=\im N^n$ and
\[
W_2=\big(\im N^{n-2}\cap \ker N\big)+\big(\im N^{n-1}\cap \ker N^2\big).
\]
By Proposition~\ref{Prop: Picard-Lefschetz transformation}, $\alpha_t\in\ker(N)$
and $\beta\in\ker(N^2)$ for every $\beta$ obtained from a tropical one-cycle. By
\cite{logmirror2}, $N$ can be identified with the Lefschetz operator on the
mirror. Thus, by the rotation of the Hodge diamond \cite{logmirror2} and up to
identifying the composition $Y_t\ra Y_0\stackrel\mu\ra B$ with a
compactification of $T^*_{B\setminus(\Delta_2\cup\shA)}/\check\Lambda\ra
B\setminus(\Delta_2\cup\shA)$ in the upcoming work \cite{RZ}, we find
$\alpha_t\in\im N^n$ and the image of \eqref{eq-trop-to-usual} indeed generates
$W_2$. The geometry over $B\setminus(\Delta_2\cup\shA)$ has been investigated
extensively in \cite{KNreal}.
\medskip

\subsection{Relation to other works}
Beyond algebraic curves, explicit computations of period integrals we found to
be quite rare in the literature. In higher dimensions, residue calculations can
sometimes be used to compute certain periods by the Griffiths-Dwork method of
reduction of pole order \cite{Dwork}, \cite{Griffiths}. Equation~(3.7) in
\cite{CdGP91} gives a famous example of such a computation in the context of
mirror symmetry. More recently the same type of period calculation became the
main protagonist in a prominent conjecture for the classification of Fano
manifolds \cite{CCGGK}. Other period integrals are often determined indirectly
as solutions of differential equations coming from the flatness of the
Gauss-Manin connection, usually at the expense of losing the connection to
topology, that is, to the integral structure. Even more recently, \cite{AGIS}
computed periods of a section of the SYZ fibration to small $t$-order in a
maximal degeneration as considered also in this article. Also worth mentioning
are the explicit computation of periods for local Calabi-Yau manifolds,
Proposition~3.5 in \cite{DK} and the numerical approximation of period integrals
over polyhedral cells carried out in \S2 in \cite{CS}. A particular local
situation similar to the example in \S\ref{sec-localP1} has been computed
independently by Sean Keel (unpublished).
\bigskip

\paragraph{\emph{Acknowledgments}}
It should be obvious that this papers would not have been possible without the
long term collaboration of the second author with Mark Gross on toric
degenerations and their use in mirror symmetry. The final shape of the result
owes a lot to the collaboration of the second author with M.~Gross, P.~Hacking
and S.~Keel on generalized theta functions and concerning compactified moduli
spaces of K3 surfaces \cite{theta},\cite{GHKS}. We also benefited from
discussions with H.~Arg\"uz, L.~Bauer, D.~Matessi, D.~van Straten and E.~Zaslow
on various aspects of this article.

%
%

\setcounter{tocdepth}{2}
\section{From tropical cycles to singular chains}
\label{section-tropical-to-homology}
Throughout let $B$ be an oriented tropical manifold possibly with boundary and
with polyhedral decomposition $\P$, convex MPL-function $\varphi$, open gluing
data $s$ and a consistent order $k$ structure for this data, as explained in
\S\ref{Par: Toric degenerations}. Let $X^\circ_k$ be the scheme over
$\CC[t]/(t^{k+1})$ obtained from this data by gluing the standard pieces $\Spec
R_\fob^k$ and $\Spec R_\fou$. Our computation of the period integrals is
entirely on $X^\circ_k$. For this computation in
Sections~\ref{section-tropical-to-homology} and \ref{section-compute-periods} we
therefore do not impose the additional consistency requirements needed to assume
the existence of the partial compactification $X_k$ or even the existence of
$X_0$, nor do we need an extension of $X^\circ_k$ to an analytic family. An
exception is the discussion of the degenerate momentum map $X_0\to B$, which is
of independent interest.

For simplicity of presentation we work here with fixed gluing data $s$, that is,
with $A=\CC$ as base ring in \S\ref{Par: Toric degenerations}. General $A$ can
easily be treated by either introducing analytic parameters in all formulas or,
for reduced $A$, by verifying the claimed period formula \eqref{Eqn: Main period
formula} on a dense set of gluing data.


\subsection{A generalized momentum map for \texorpdfstring{$X_0$}{X0}}

For trivial gluing data, $X_0$ exists as a projective variety with
irreducible components the toric varieties $X_\sigma$ with momentum polyhedra
the maximal cells $\sigma$ of $\P$. If $\sigma,\sigma'$ intersect in a
codimension one cell $\rho$ of $\P$ then the $(n-1)$-dimensional toric variety
$X_\rho$ is a joint toric prime divisor of $X_\sigma,X_{\sigma'}$, with the
identification toric, that is, mapping the distinguished points in the big cells
to one another. It is then not hard to see that the momentum maps $\mu_\sigma:
X_\sigma\to \sigma$ patch to define a generalized momentum map $\mu: X_0\to
B=\bigcup_{\sigma\in\P}\sigma$.

For general gluing data, the momentum maps $\mu_\sigma$ may not agree on joint
toric strata and it is not clear that $\mu$ exists. Assuming projectivity, we
present here a canonical construction of $\mu$ and otherwise prove the existence
of $\mu$ away from codimension two strata.

\begin{proposition}
\label{Prop: momentum map}
Assume that $X_0$ is projective.  Then there is a continuous map
\[
\mu: X_0\lra B
\]
which on each irreducible component $X_\sigma\subset X_0$ restricts
to a momentum map for the toric $U(1)^n$-action and some
$U(1)^n$-invariant K\"ahler form on $X_\sigma$.

Without the projectivity assumption, $\mu$ can be constructed on the complement
$X_0^\circ$ of the codimension two toric strata.
\end{proposition}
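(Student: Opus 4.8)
The plan is to construct $\mu$ as a patching of the toric momentum maps $\mu_\sigma$, where the main subtlety for nontrivial gluing data is that these maps need not agree on the common toric divisors $X_\rho\subset X_\sigma\cap X_{\sigma'}$. First I would fix, for each maximal cell $\sigma$, a $U(1)^n$-invariant K\"ahler form $\omega_\sigma$ on $X_\sigma$ and the associated momentum map $\mu_\sigma\colon X_\sigma\to\sigma$; in the projective case these can be chosen to come from a single $U(1)$-equivariant projective embedding, using that a very ample line bundle on $X_0$ restricts to an equivariant very ample line bundle on each $X_\sigma$. The key point is that on a codimension one stratum $X_\rho$, the two momentum maps $\mu_\sigma|_{X_\rho}$ and $\mu_{\sigma'}|_{X_\rho}$ are both momentum maps for the induced $U(1)^{n-1}$-action on the $(n-1)$-dimensional toric variety $X_\rho$ and hence have images equal to translates of $\rho$; since both are identified with $\rho\subset B$ via the polyhedral gluing, they differ by a torus-equivariant diffeomorphism of $\rho$ fixing the lattice structure up to the discrepancy recorded by the gluing datum $s_{\sigma\ul\rho}$ versus $s_{\sigma'\ul\rho}$.

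The second step is to absorb this discrepancy by precomposing each $\mu_\sigma$ with a diffeomorphism $\psi_\sigma$ of $\sigma$, supported near the boundary, so that the corrected maps $\mu_\sigma':=\psi_\sigma\circ\mu_\sigma$ agree on all codimension one strata. Concretely, one builds $\psi_\sigma$ facet by facet: on a neighborhood of a facet $\rho$ of $\sigma$, $\psi_\sigma$ is chosen to interpolate between the identity in the interior and the prescribed boundary identification forced by matching with $\mu_{\sigma'}$, and these local modifications can be made compatible over lower-dimensional faces because the gluing data satisfy the cocycle condition in codimension one (consistency as recalled in \S\ref{subsec-make-Xk}). In the projective case, as indicated in the introduction after Proposition~\ref{Prop: momentum map}, one can instead give a canonical construction using generalized theta functions: the finite collection $\{\vartheta_m\}_{m\in B_0}$ indexed by integral points $m$ of $B$ defines a closed embedding of $X_0$ into projective space, equivariant for a $U(1)^N$-action under which the coordinate $\vartheta_m$ has weight $m$, and the restriction to $X_0$ of the Fubini--Study momentum map, followed by the natural map $\RR^N\to B$ dual to $m\mapsto e_m$, yields a well-defined continuous $\mu\colon X_0\to B$ restricting to a genuine toric momentum map on each $X_\sigma$; this avoids the ad hoc boundary interpolation entirely.

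For the non-projective case one cannot embed $X_0$ equivariantly in projective space, and moreover the partial compactification $X_0\supset X_0^\circ$ may not even exist as a scheme. Here the plan is simply to observe that all of the above patching only ever used the strata of codimension at most one: $X_0^\circ$ by definition is covered by the big toric pieces $X_\sigma\setminus(\text{codim }\ge 2)$ together with the codimension one strata $X_\rho^\circ$, and the compatibility conditions that need to be checked to glue the $\mu_\sigma'$ are exactly the codimension one consistency conditions, which hold by hypothesis. Thus the same facet-by-facet construction of diffeomorphisms $\psi_\sigma$, performed only over the relative interiors of the facets $\rho$ and their mutual intersections inside $X_0^\circ$, produces a continuous $\mu\colon X_0^\circ\to B\setminus(\text{codim }\ge 2\text{ skeleton of }\P)$, and this extends continuously over $B$ since $\mu_\sigma'$ was already defined on all of $\sigma$. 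The main obstacle I expect is the bookkeeping in step two: ensuring that the diffeomorphisms $\psi_\sigma$ correcting the momentum maps along the various facets of a fixed maximal cell $\sigma$ are mutually compatible near the codimension two faces of $\sigma$ without introducing new mismatches, which is precisely where one must invoke the cocycle/consistency property of the gluing data rather than treat each facet in isolation; in the projective setting this obstacle dissolves because the theta function embedding produces $\mu$ in one stroke.
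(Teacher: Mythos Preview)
Your proposal is essentially correct and, for the projective case, matches the paper's proof: the paper also embeds $X_0$ into $\PP^N$ via the theta functions $\vartheta_m$, pulls back the Fubini--Study form, and checks the momentum map property by factoring through the standard momentum map on $\PP^N$ and the linear projection $\mathfrak{\tilde t}^*\to\mathfrak{t}^*$ dual to $m\mapsto e_m$.

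For the non-projective case your idea is right but the execution differs from the paper's in a way that makes your ``main obstacle'' disappear. You propose to build, for each maximal cell $\sigma$, a diffeomorphism $\psi_\sigma:\sigma\to\sigma$ supported near $\partial\sigma$ that corrects $\mu_\sigma$ along every facet simultaneously; this forces you to reconcile the corrections near codimension-two faces of $\sigma$. The paper instead leaves each $\mu_\sigma$ untouched and only modifies the \emph{gluing of the target}: for each codimension-one cell $\rho=\sigma\cap\sigma'$, the two restrictions $\mu_\sigma|_{X_\rho}$ and $\mu_{\sigma'}|_{X_\rho}$ differ by a torus translation $g\in T_\rho$, and equivariance gives a diffeomorphism $\Phi:\rho\to\rho$ with $\mu_\rho(g\cdot z)=\Phi(\mu_\rho(z))$. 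One then re-glues $\rho$ into $\sigma'$ via $\Phi$ (keeping the inclusion $\rho\hookrightarrow\sigma$ unchanged), removes all faces of dimension $\le n-2$, and takes the colimit of the resulting directed system of polyhedra. That colimit is homeomorphic to $B\setminus\Delta_2$, and the $\mu_\sigma$ patch tautologically. Because no extension across codimension-two faces is ever attempted, there is no compatibility condition to check there, and no cocycle condition on gluing data in codimension two is needed. Your last clause (``this extends continuously over $B$'') should accordingly be dropped: in the non-projective statement the target is only $B\setminus\Delta_2$.
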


\begin{proof}
In the projective case, the central fiber $X_0$ can be constructed as $\Proj S$
with $S$ a graded $\CC$-algebra generated by one rational function $\vartheta_m$
on $X_0^\circ$ for each integral point $m$ of $B$, see \cite{theta},
\S5.2 (where $S$ is denoted $S[B](\tilde{ \bar{\mathbf{s}}})$). If $m$ lies in a
maximal cell $\sigma$ then $\vartheta_m$ restricts to a non-zero \emph{multiple} of the
monomial $z^m$ on $X_\sigma$ defined by toric geometry. Define the K\"ahler form
$\omega$ on $X_0$ as the pull-back of the Fubini-Study form $\omega_{\rm FS}$ on
projective space under the embedding $\Phi: X_0\to \PP^N$ defined by the
$\vartheta_m$, with $N+1$ the number of integral points on $B$. Denote by
$\mathfrak{t}$ the Lie algebra of the $n$-torus $T_\sigma$ acting on
$X_\sigma$ and write $\sigma$ as a momentum polytope in $\mathfrak{t}^*$. Now
define $\mu$ on $X_\sigma$ by
\begin{equation}\label{Eqn: mu_sigma}
\mu_\sigma: X_\sigma\lra\sigma\subset \mathfrak{t}^*,\quad
\mu_\sigma(z)= \frac{\sum_{m\in\sigma\cap\Lambda_\sigma}
|\vartheta_m(z)|^2\cdot m}{\sum_{m\in\sigma\cap\Lambda_\sigma}
|\vartheta_m(z)|^2}.
\end{equation}
We claim that $\mu_\sigma$ is a momentum map for the $U(1)^n$-action on
$X_\sigma$ with respect to $\omega|_{X_\sigma}$. Indeed, denote by
$\Delta_N$ the $N$-simplex with one vertex $v_m$ for each integral point $m\in
B(\ZZ)$. We view $\Delta_N$ as an integral polytope in $\mathfrak{\tilde t}^*$
with $\mathfrak{\tilde t}$ the Lie algebra of the torus $U(1)^{N+1}$ acting
diagonally on $\PP^N$. Let $\mu:\PP^N\to \mathfrak{\tilde t}^*$ be the usual
momentum map defined by a formula analogous to \eqref{Eqn: mu_sigma} with
$\vartheta_m$ replaced by the monomials of degree~$1$. Then there is an integral
affine map $\Delta_N\to \mathfrak{t}^*$, which for $m\in\sigma$ maps the
vertex $v_m$ to $m$ and the other vertices to arbitrary integral points. The
induced map $\mathfrak{\tilde t}^*\to\mathfrak{t}^*$ defines a morphism
of tori $\kappa:T_\sigma\to U(1)^{N+1}$ for which the composition $X_\sigma\to
X_0\stackrel{\Phi}{\to}\PP^N$ is equivariant.

Now $\mu_\sigma$ factors over the momentum map $\mu$ for $\PP^N$ as follows:
\[
\mu_\sigma: X_\sigma\lra X_0\stackrel{\Phi}{\lra} \PP^N
\stackrel{\mu}{\lra} \mathfrak{\tilde t}^*\stackrel{\kappa^*}{\lra}
\mathfrak{t}^*.
\]
Thus if $\xi\in\mathfrak{t}$ and $\tilde\xi$
is the induced vector field on $X_\sigma$, we can check the momentum
map property for $\mu_\sigma$ as follows:
\[
d(\xi\circ\mu_\sigma) =d(\xi\circ\kappa^*\circ\mu\circ\Phi)
= \Phi^*d(\kappa_*(\xi)\circ\mu)
= \Phi^*(\iota_{\Phi_*\tilde\xi}\omega_{\rm FS})
=\iota_{\tilde\xi}\omega.
\]

If $X_0$ is not projective, the complement $X_0^\circ$ of the codimension two
locus is the fibered sum of its irreducible components, with a toric divisor
$X^\circ_\rho$ contained in two components $X^\circ_\sigma$, $X^\circ_{\sigma'}$
identified via a toric automorphism, that is, by multiplication with an element
$g$ of the $(n-1)$-torus $T_\rho$ acting on $X_\rho$.\footnote{This fibered sum
is the description of $X_0^\circ$ in terms of closed gluing data discussed in
\cite{theta}, \S5.1. This description may not extend over the codimension two
locus.} Let $\mu_\sigma: X_\sigma\to\sigma$ and $\mu_{\sigma'}: X_{\sigma'}\to
\sigma'$ be the standard toric momentum maps. Since $\sigma\cap\sigma'=\rho$,
the restrictions of $\mu_\sigma$, $\mu_{\sigma'}$ to $X_\rho$, viewed as a toric
divisor in $X_\sigma$ and $X_{\sigma'}$, agree with the standard momentum map
$\mu_\rho: X_\rho\to\rho$. By equivariance of $\mu_\rho$ with respect to the
torus action there exists a diffeomorphism $\Phi:\rho\to\rho$ such that
\[
\mu_\rho(g\cdot z)= \Phi\big(\mu_\rho(z)\big)
\]
holds for any $z\in X_\rho$. Use $\Phi$ to change the identification of $\rho$
as a facet of $\sigma'$, but leave the embedding $\rho\to\sigma$ unchanged.
Repeating this construction for all $\rho$ leads to a directed system of all
polyhedra $\rho,\sigma\in \P$ of dimensions $n-1$ and $n$. After removing all
faces of dimension strictly less than $n-1$, a colimit of this directed system
in the category of topological spaces exists and is a topological manifold. It
is also not hard to see that this colimit is homeomorphic and cell-wise
diffeomorphic to the complement $B\setminus\Delta_2\subset B$ of the
$(n-2)$-skeleton of $\P$. Since we have a compatible description of $X_0^\circ$
as a colimit, we obtain the desired momentum map $\mu: X_0^\circ\lra B\setminus\Delta_2$ that
on $X_\sigma$ is the composition of $\mu_\sigma$ with the restriction $\sigma\to
B$ of the cell-wise diffeomorphism.
\end{proof}

Note that by \cite{delzant}, Th\'eor\`eme~2.1, two momentum maps on
a toric variety are related by a homeomorphism that is a
diffeomorphism at smooth points. In particular, for non-trivial
gluing data our global momentum map restricts on any irreducible
component $X_\sigma\subset X_0$ to the standard toric momentum map
$X_\sigma\to\sigma$ composed with a homeomorphism of $X_\sigma$ that
is a diffeomorphism away from strata of codimension at least
two. Note also that in the K\"ahler setting, the Hamiltonian vector field
on $X_\sigma$ defined by a co-vector $\xi\in \Lambda_\sigma^*$ is
given by the action of the algebraic subtorus $\GG_m= \Spec\CC[\ZZ]
\subseteq \Spec\CC[\Lambda_\sigma]$ defined by
$\xi:\Lambda_\sigma\to\ZZ$.


\subsection{Canonical affine structure on \texorpdfstring{$B\setminus(\Delta_2\cup\shA)$}{B\D2uA}}

Let $\mu: X_0^\circ\to B\setminus\Delta_2$ be a generalized momentum map as
produced by Proposition~\ref{Prop: momentum map}. Denote by $Z\subset X_0^\circ$
the log singular locus, an algebraic subset of dimension $n-2$ defined by the
vanishing of the slab functions. Further denote by $\Delta_k\subset B$ the
$(n-k)$-skeleton of $\P$, that is, the union of all cells of $\P$ of dimensions
at most $n-k$. The amoeba image $\mathcal A:=\mu(Z)$ is contained in the
$(n-1)$-skeleton $\Delta_1\subset B$. For $\rho\in\P$ an $(n-1)$-cell, $\mathcal
A\cap\Int\rho$ is diffeomorphic to the classical amoeba in $\RR^{n-1}$ defined by any of the slab
functions $f_{\fob}$ for $\fob\subset\rho$, viewed as an element of the ring of
Laurent polynomials $\CC[\Lambda_\rho]$. For the following discussion only the
reduction $f_{\ul\rho}$ of $f_\fob$ modulo $t$ is relevant. The notation
$f_{\ul\rho}$ is justified because by consistency in codimension one the
reduction of $f_\fob$ modulo $t$ only depends on the cell $\ul\rho\subset\rho$ of the
barycentric subdivision containing $\fob$. The fiber of $\mu$ over a point $x\in
\rho\setminus\mathcal A$ is the torus fiber of $X_\rho\to \rho$ over $x$. We
will now show that there is a natural extension of the integral affine structure
on $B\setminus\Delta_1$, the union of the interiors of maximal cells, to
$B\setminus (\Delta_2\cup\mathcal A)$ as follows.

\begin{construction}
\label{Affine structure on B minus A}
(Construction of the affine structure on $B\setminus
(\Delta_2\cup\mathcal A)$.) On the interior of a maximal cell $\sigma\subseteq
B$ define the integral affine structure by the Arnold-Liouville theorem for the
restriction of our momentum map from Proposition~\ref{Prop: momentum map}. For
$\rho$ an $(n-1)$-cell with $\sigma,\sigma'$ the adjacent maximal cells and
$\fob\subseteq\ul\rho$ a slab, recall from \S\ref{subsec-make-Xk} that the
defining equation
\begin{equation}
\label{Eqn: Std eqn codim 1}
Z_+ Z_-= f_\fob t^{\kappa_{\ul\rho}}
\end{equation}
involves monomials $Z_+= c_+ z^\zeta$, $Z_-= c_- z^{\zeta'}$ on the toric
varieties $X_\sigma, X_{\sigma'}$. Here $\zeta,\zeta'$ generate the normal spaces
$\Lambda_\sigma/ \Lambda_\rho$ and $\Lambda_{\sigma'}/\Lambda_\rho$,
respectively, and parallel transport through any point in $\ul\rho\supseteq\fob$
in the affine structure on $B\setminus\Delta$ carries $\zeta$ to $-\zeta'$. The
constants $c_\pm\in\CC^*$ are determined by gluing data, namely
$c_+=s_{\sigma\ul\rho}(\zeta)$, $c_-= s_{\sigma'\ul\rho}(\zeta')$. This equation
depends on the choice of $\ul\rho\subset\rho$, a cell of the subdivision of
$\rho$ defined by $\Delta\cap\rho$, but any other choice just leads to a
multiplication of the equation with a monomial $c z^{m_{\ul\rho\ul\rho'}}$ with
$m_{\ul\rho\ul\rho'}\in\Lambda_\rho$ and $c\in\CC$.

We now use these local models to define an adapted affine structure on $B$
outside $\Delta_2\cup\shA$. Since the integral affine structures on
$\sigma,\sigma'$ already agree on $\rho$, for the definition of a chart at
$x\in\Int(\ul\rho) \setminus\mathcal A$ it remains to declare the parallel
transport of $\zeta$ through $\Int\ul\rho$ as $-\zeta'+m_x$ for some
$m_x\in\Lambda_\rho$. The restriction of the reduction $f_{\ul\rho}$ of $f_\fob$
modulo $t$ to $\mu^{-1}(x)=\Hom(\Lambda_\rho, U(1)) \,\simeq\, (S^1)^{n-1}$
defines a map
\[
\Hom(\Lambda_\rho, U(1)) \lra \CC^*\stackrel{\arg}{\lra} U(1).
\]
The image of the first arrow lies in $\CC^*$ because
$f_{\ul\rho}|_{\mu^{-1}(x)}$ has no zeroes for $x\not\in\mathcal A$. The
positive generator of $H^1(U(1),\ZZ)\simeq\ZZ$ pulls back to the desired element
\begin{equation}
\label{Eqn: m_x}
m_x\in \Lambda_\rho = H^1\big( \Hom(\Lambda_\rho, U(1)),\ZZ\big).
\end{equation}
It is worthwhile noticing that $m_x$ agrees with the \emph{order}
of the amoeba complement selected by $x$, as defined in \cite{Forsbergetal},
Definition~2.1. In particular, $m_x$ is locally constant on
$\Int\rho\setminus\mathcal A$.
\end{construction}

\begin{remark}
\label{Rem: tilde Z_+ tilde Z_-}
With the definition of the affine structure in Construction~\ref{Affine
structure on B minus A} we are now in a position to rewrite the local equation
\eqref{Eqn: Std eqn codim 1} in a form suitable for the local construction of
$n$-cycles from tropical curves. For $x\in \Int(\ul\rho)\setminus\mathcal A$ let
$\tilde\zeta\in\Lambda_\sigma$ be any tangent vector generating
$\Lambda_\sigma/\Lambda_\rho$ and pointing from $\rho$ into $\sigma$. Then
$\tilde\zeta=\zeta+m$ for some $m\in\Lambda_\rho$. Thus defining
\[
\tilde Z_+= z^m\cdot Z_+,\quad \tilde Z_-= z^{-m-m_x} Z_-,
\]
Equation~\eqref{Eqn: Std eqn codim 1} can also be written as
\begin{equation}
\label{Eqn: Adapted equation codim 1}
\tilde Z_+ \tilde Z_-= (z^{-m_x}f_\fob) t^{\kappa_{\ul\rho}}.
\end{equation}
The point is that by the definition of $m_x$ in \eqref{Eqn: m_x}, this equation
differs from a standard normal crossings equation $zw=t^{\kappa_{\ul\rho}}$ by
the factor $z^{-m_x}f_\fob$. This factor is homotopically trivial as a map from
$\mu^{-1}(x)$ to $\CC^*$. This is a crucial property in the construction of an
$n$-cycle in Lemma~\ref{Lem: charts for cycle} below fulfilling the condition
(Cy~II) needed in our treatment of finite order period integrals in
Appendix~\ref{Sect: Period integrals}.

We emphasize that while these conventions look technical, our formula
\eqref{Eqn: Main period formula} for the period integral involves the Ronkin
function associated to $z^{-m_x} f_\fob$ and hence is sensitive to the
definition of $m_x$. See \S\ref{sec-localP1} and \S\ref{sec-localP2} for two
simple examples.
\end{remark}

\begin{remark}
\label{Rem: Comparison of shA and Delta} 
We defined an affine structure on $B\setminus(\Delta_2\cup\shA)$ in Construction~\ref{Affine
structure on B minus A}. On the other hand, \cite{theta} works with an affine
structure on $B\setminus\Delta$ for the formulation of the wall structure. These
two affine structures are related in the following way. Recall that
$\Delta\subset B$ is the $(n-2)$-skeleton of the barycentric subdivision of the
$(n-1)$-skeleton of $\P$.
\begin{figure}
\captionsetup{width=.8\linewidth}
\begin{center}
\includegraphics[width=0.27\textwidth]{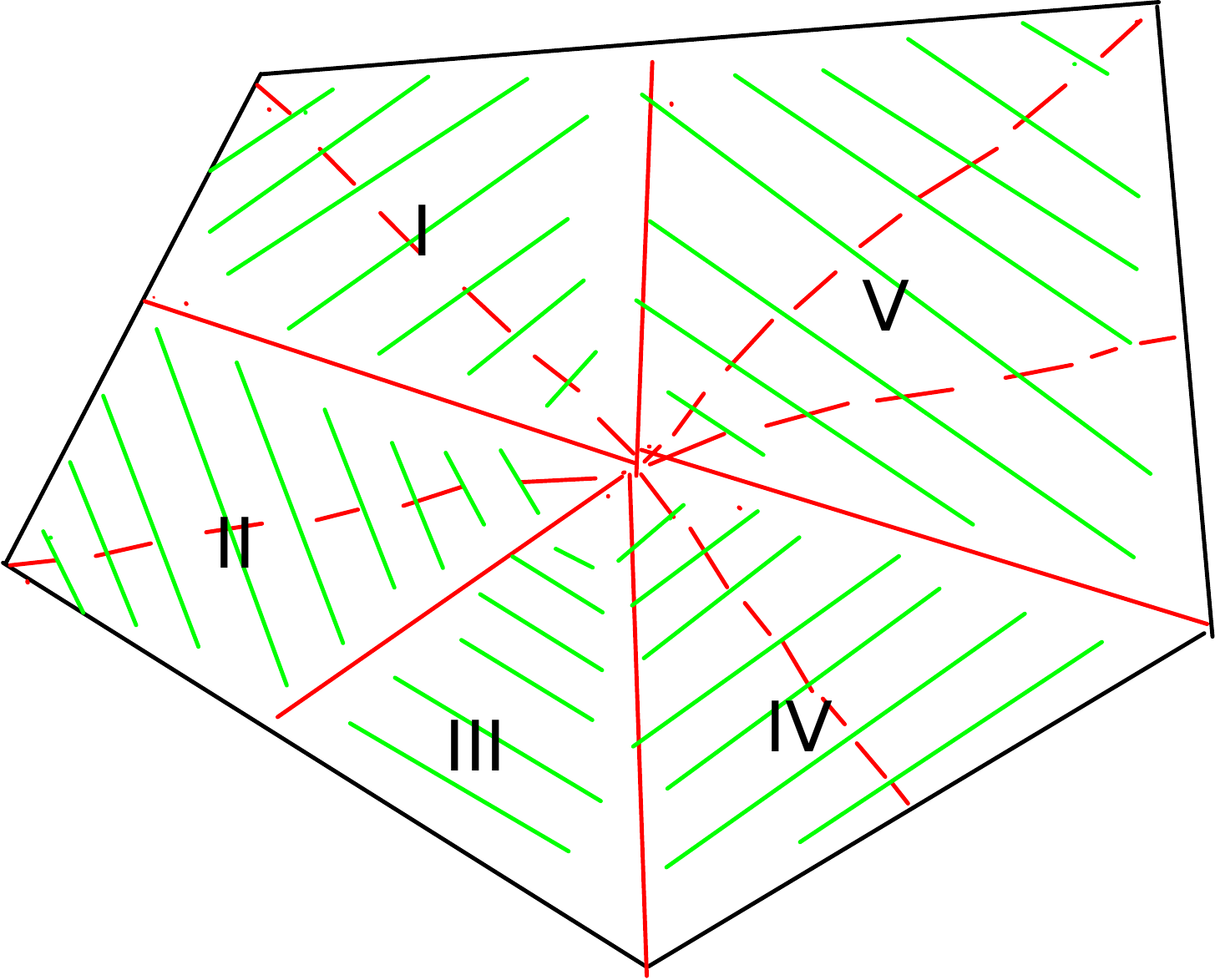}\hspace{5ex}
\includegraphics[width=0.27\textwidth]{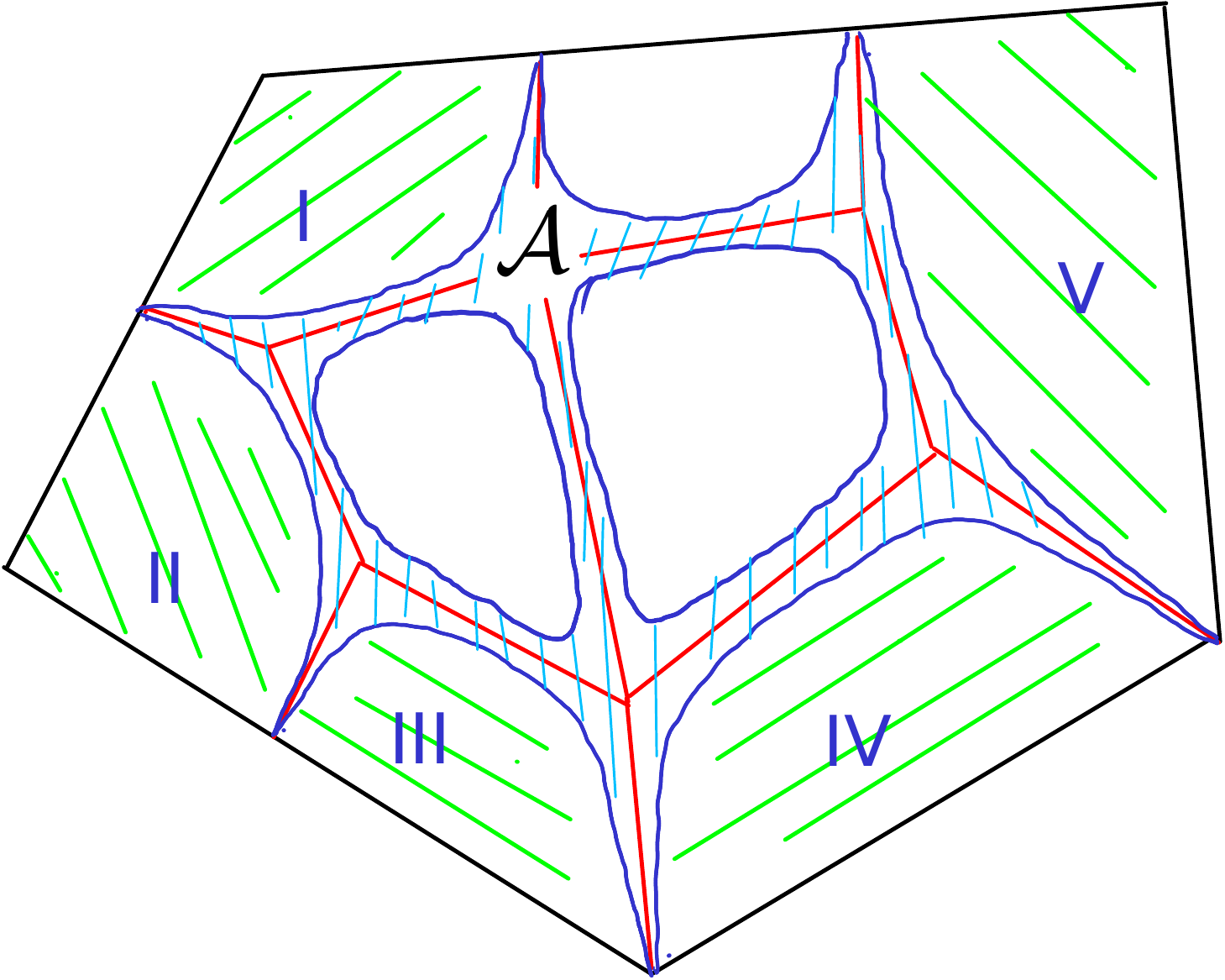}\hspace{5ex}
\includegraphics[width=0.27\textwidth]{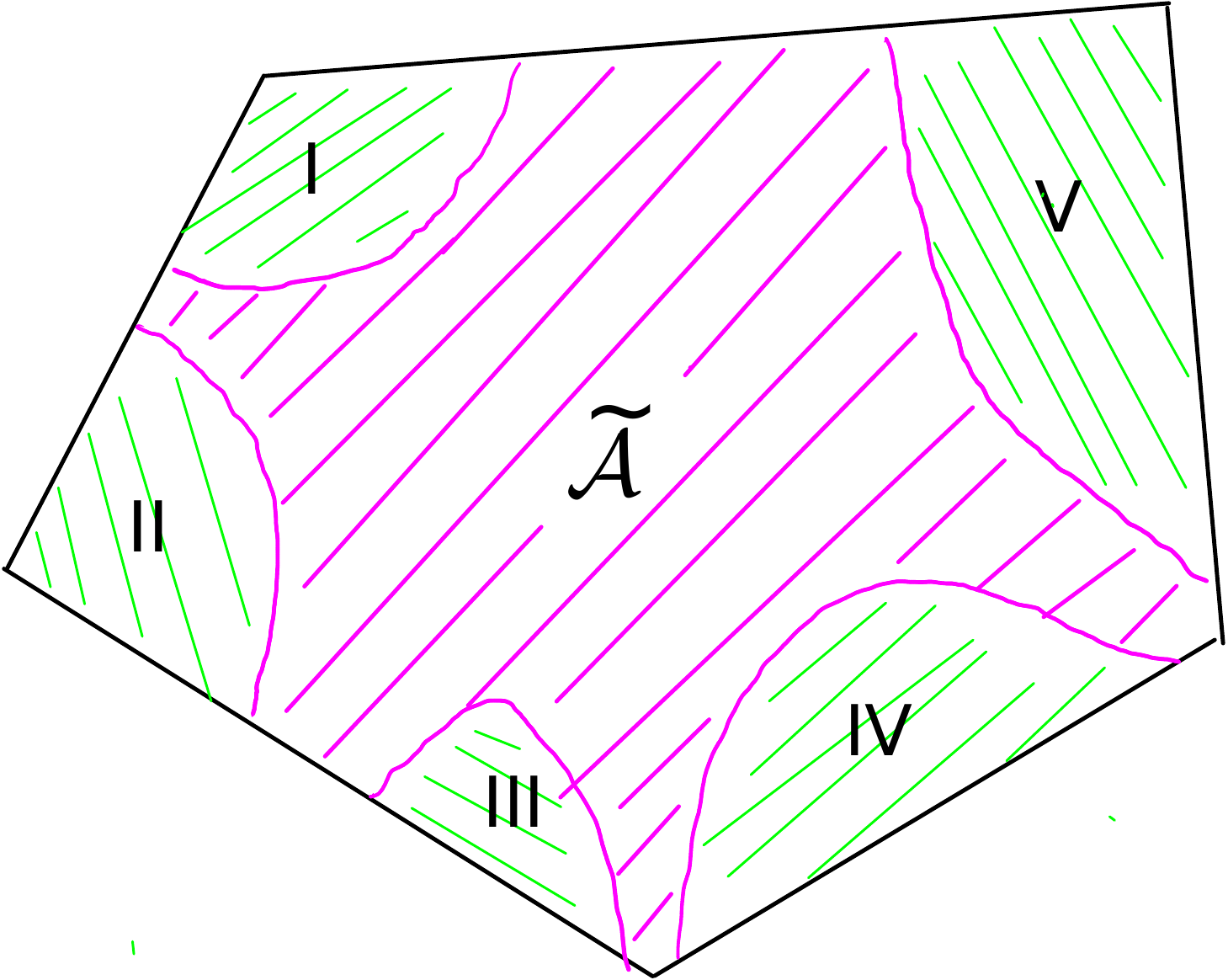}
\end{center}
\caption{Refinement of the affine structure of \protect\cite{theta} and the
common enlargement $\tilde\shA$ of $\shA$ and $\Delta$. Parallel transport
through the shaded areas with the same labels agrees.}
\label{Fig: tilde shA}
\end{figure}
In many cases, including \cite{affinecomplex} and \cite{GHK}, there is an
enlargement $\tilde\shA$ of $\Delta_2\cup\shA$ with $\Delta\subset \tilde\shA$
and such that there is a deformation retraction $\tilde\shA\to\Delta$. See
Figure~\ref{Fig: tilde shA} for a sketch of a typical situation. In particular,
tropical $1$-cycles on $B\setminus\Delta$ can be identified with tropical
$1$-cycles on $B\setminus\tilde\shA$. Replacing $\tilde\shA$ by $\shA$ then
potentially allows the consideration of further tropical cycles, those that are
not homologous to tropical cycles on $B\setminus\tilde\shA$, that is,
``passing through holes of $\shA$''. In this sense our affine structure on
$B\setminus \mathcal A$ is a refinement of the affine structure used in
\cite{theta}.
\end{remark}


\subsection{Construction of \texorpdfstring{$n$}{n}-cycles on
\texorpdfstring{$X_0^\circ$}{Xo} from tropical cycles on
\texorpdfstring{$B\setminus (\Delta_2\cup\shA)$}{B\D2uA}}

We consider tropical cycles $\beta_\trop$ as defined in
Definition~\ref{Def: Tropical $1$-cycles} for the integral affine structure from
Construction~\ref{Affine structure on B minus A}. The purpose of this section is
to construct an $n$-cycle $\beta$ on $X^\circ_0$ suitable for applying the results of
Appendix~\ref{Sect: Period integrals} for the computation of the period
integral $\int_\beta\Omega$ on $X^\circ_k$.

\begin{assumption}
\label{Ass: beta}
For our computation we make a few more assumptions on $\beta_\trop$ that with
hindsight can be imposed without restriction and with no influence on the period
integral.
\begin{enumerate}
\item[($\beta_{\mathrm{I}}$)]
Each point of intersection of $\beta_\trop$ with a wall is a vertex
of $\beta_\trop$ and an interior point of the wall. Any edge
contains at most one vertex contained in a wall.
\item[($\beta_{\mathrm{II}}$)]
Any vertex of $\beta_\trop$ of valency at least three is contained in
the interior of a chamber. 
\item[($\beta_{\mathrm{III}}$)]\label{ass}
Let $v$ be a vertex of $\beta_\trop$ contained in an $(n-1)$-cell $\rho\in\P$.
Denote by $e$, $e'$ the edges adjacent to $v$ with $\beta_\trop$ oriented from
$e$ to $e'$. Then $v$ is an interior point of a unique slab $\fob$, $v\in
\fob\setminus\mathcal A$, and all vertices of $e$ and $e'$ are bivalent. We also
assume that $e$ is contained in the image under the momentum map
$\mu:X_\sigma\to\sigma$ of the closure of an orbit of the one-dimensional
algebraic torus acting on $X_\sigma$ and fixing the toric divisor $X_\rho\subset
X_\sigma$ corresponding to $\rho$ point-wise, and similarly for $e'$ and
$\sigma'$.\footnote{In coordinates $\CC[\Lambda_\sigma]\simeq
\CC[z_1,\ldots,z_n]$ with $z_2,\ldots,z_n\in \CC[\Lambda_\rho]$, this
one-dimensional torus acts trivially on $z_2,\ldots,z_n$ and with weight~$\pm1$
on $z_1$. Hence the orbits are the level sets of $z_2,\ldots,z_n$ and their
$\mu$-image defines an integrable foliation of $\Int\sigma$ by real curves. Our
assumption says that locally $\beta_e$ maps to the closure of a leaf of this
foliation.}

Finally, we assume $\xi_e$ to be a primitive vector and if
$\xi_e\not\in \Lambda_\rho$ then $\xi_e$ generates
$\Lambda_\sigma/\Lambda_\rho$.
\end{enumerate}
\end{assumption}
All these assumptions can be realized without changing the class of
$\beta_\trop$ in $H_1(B\setminus(\Delta_2\cup\shA),\Lambda)$. For example, the last
assumption stated in ($\beta_{\mathrm{III}}$) can always be achieved as follows.
Write $\xi_e= a\zeta+ b m$ in $\Lambda_\sigma$ with $\zeta\in\Lambda_\sigma$ a
generator of $\Lambda_\sigma/\Lambda_\rho$, $a,b\in\ZZ$ and $m\in\Lambda_\rho$
primitive. Then replace both $e,e'$ by $a+b$ copies, with the first $a$ copies
carrying $\zeta$ and the last $b$ copies carrying $m$. This replacement
preserves the cycle property, that is, the balancing condition at vertices
\eqref{balancing}. A further subdivision of each new edge is needed to obtain
bivalent vertices for all edges intersecting the slab.

Note in particular that each edge $e$ of our tropical cycles $\beta_\trop$ is
considered a subset of $B$ and the map $h:\beta_\trop\to B$ mentioned after
Definition~\ref{Def: Tropical $1$-cycles} is defined on $e$ by the inclusion
$e\to B$.

\begin{construction}\label{Constr: beta}
(Construction of the $n$-cycle $\beta$ on $X_0^\circ$.)
For each edge $e$ of $\beta_\trop$ let $S(e)\subseteq X_0^\circ$ be a
differentiable section of $\mu$ over $e$, chosen compatibly over vertices. Note
that for trivial gluing data and $\sigma$ the maximal cell containing $e$, one
may choose for $S(e)$ the intersection of $\mu^{-1}(e)$ with the positive real
locus of $X_\sigma$, but in general there is no such canonical choice. For
arbitrary gluing data, we make an arbitrary choice, except if the edge $e$ has a
vertex $v$ on an $(n-1)$-cell $\rho$. In this case, if $\sigma$ denotes the
maximal cell containing $e$, we require $S(e)$ to be contained in the closure of
an orbit of the action of the one-dimensional subtorus of $\Spec
\CC[\Lambda_\sigma]$ fixing $X_\rho\subset X_\sigma$ point-wise. Note that this
condition is in agreement with the conditions imposed on $e$ in
($\beta_{\mathrm{III}}$) of Assumption~\ref{Ass: beta}. Note also that an
equivalent way to state this additional condition is to ask any monomial
$z^m\in\CC[\Lambda_\sigma]$ with $m\in\Lambda_\rho$ to take constant values on
$S(e)$.

For an edge $e$ of $\beta_\trop$ contained in a maximal cell $\sigma$ and
carrying the tangent vector $\xi_e\in\Lambda_\sigma$, define
$\ol\beta_e\subseteq X_\sigma$ as the orbit of $S(e)$ under the subgroup
\begin{equation}\label{tilde T_e} \tilde T_e=\big\{\phi\in\Hom(\Lambda_\sigma,
U(1))\,\big|\, \phi(\xi_e)=1\big\}
\end{equation}
of the real $n$-torus $T_\sigma= \Hom(\Lambda_\sigma, U(1))$ acting on
$X_\sigma$. {In coordinates, the orbits of
$\Hom(\Lambda_\sigma,U(1))\simeq U(1)^n$ are the loci of constant absolute
values of all monomials, that is, $|z^m|=\mathrm{const}$ for all
$m\in\Lambda_\sigma$. An orbit of $\tilde T_e$ inside such an orbit is then
given by $\arg(z^{\xi_e})=\mathrm{const}$. Note also that} if $\xi_e$ is an
$m_e$-fold multiple of a primitive vector $\bar\xi_e$ then $\tilde T_e$ is a
product of $\ZZ/m_e\ZZ$ with the real $(n-1)$-torus
\begin{equation}
\label{Eqn: torus T_e}
T_e= \big\{ \phi\in\Hom(\Lambda_\sigma, U(1))\,\big|\, \phi(\bar\xi_e)=1\big\}
= \xi_e^\perp\otimes_\ZZ U(1).
\end{equation}
The cyclic group $\ZZ/m_e\ZZ$ acts by multiplication by roots of unity on
$z^{\bar\xi_e}$. Thus if $e$ is disjoint from all $(n-1)$-cells, $\ol\beta_e$
is topologically a disjoint union of $m_e$ copies of the product of an interval
with $T_e$.

If one of the vertices $v$ of $e$ is contained in an $(n-1)$-cell
$\rho$, then over $v$ one has to replace $T_e$ by its image $T_{e,v}=
(\xi_e^\perp/\ZZ\check d_\rho)\otimes_\ZZ U(1)$ under the restriction map
\begin{equation}
\label{Eqn: restriction map of tori}
\Hom(\Lambda_\sigma, U(1))\lra \Hom(\Lambda_\rho, U(1)).
\end{equation}
Here $\check d_\rho\in\check\Lambda_\sigma= \Hom(\Lambda_\sigma,\ZZ)$ is a
primitive normal vector to $\Lambda_\rho\subset\Lambda_\sigma$. Note that
$T_e\to T_{e,v}$ is a finite cover of degree\footnote{By our last condition in
($\beta_{\mathrm{III}}$) of Assumption~\ref{Ass: beta}, $\xi_e$ generates
$\Lambda_\sigma/\Lambda_\rho$, so $\langle \check d_\rho,\xi_e\rangle=1$.}
$|\langle\check d_\rho,\xi_e\rangle|$ unless $\xi_e\in\Lambda_\rho$. In the
latter case $T_e\to T_{e,v}$ contracts the circle generated by $\check d_\rho$.

To define the orientation of $\ol\beta_e$ note that $\Lambda_\sigma$ is oriented
since $B$ is. Then $\xi_e$ induces a distinguished orientation on $T_{e}$ as
follows. Define a basis $\bar v_2,\ldots,\bar v_n$ of
$\Lambda_\sigma/\ZZ\bar\xi_e$ to be oriented if $\bar\xi_e,v_2,\ldots,v_n$ is an
oriented basis of $\Lambda_\sigma$, for any lift
$v_2,\ldots,v_n\in\Lambda_\sigma$ of $\bar v_2,\ldots,\bar v_n$. Then also
$\xi_e^\perp= (\Lambda_\sigma/\ZZ\bar\xi_e)^*$ is oriented and in turn $T_e$.
Now define the orientation of $\ol\beta_e$ by means of the identification
\[
\ol\beta_e\simeq S_e\times \tilde T_e\simeq S_e\times T_e\times(\ZZ/m_e\ZZ),
\]
with $S_e$ oriented by $e$. After triangulating we can view
$\ol\beta_e$ as a singular chain. If $e$ in the interior of a maximal
cell is oriented from vertex $v_-$ to $v_+$, the boundary
$\partial\ol\beta_e$ decomposes as follows:
\begin{equation}
\label{Eqn: Orientation of beta_e}
\partial\ol\beta_e= \partial_+\ol\beta_e-\partial_-\ol\beta_e,\quad
\partial_+\ol\beta_e=\{v_+\}\times \tilde T_e,\ 
\partial_-\ol\beta_e=\{v_-\}\times \tilde T_e.
\end{equation}
If $e$ intersects the codimension one cell $\rho$ in one of the vertices
$v_\pm$, the same formula holds with the factor $\tilde T_e$ in the boundary
component over $v_\pm$ replaced by the image $\tilde T_{e,{v_\pm}}$ under the
map \eqref{Eqn: restriction map of tori} above, with multiplicity $|\langle
\check d_\rho,\xi_e\rangle|$. In particular, for $\xi_e\in\Lambda_\rho$,
we have $\partial_\pm\ol\beta_e=0$ for the appropriate index $\pm$.

In any case, if $v\in\beta_\trop$ is a vertex of valency two with adjacent edges
$e,e'$ ordered according to the orientation of $\beta_\trop$, then
$\partial_+\ol\beta_e= \partial_-\ol\beta_{e'}$, so these two parts of the
boundary cancel in $\partial(\ol\beta_e+\ol\beta_{e'})$.

By ($\beta_{\textrm{II}}$), a vertex $v$ of valency at least three is contained
in the interior of a maximal cell $\sigma$. Denote by $S(v)$ the point of
intersection of $S(e)$ with $\mu^{-1}(v)$, for any edge $e$ adjacent to $v$. As
discussed above, $\tilde T_e\cdot S(v)$ is a union of translations of the
$(n-1)$-dimensional subtorus $T_e$ of $\mu^{-1}(v)= \Hom(\Lambda_\sigma, U(1))$.
The class of $\tilde T_e$ in $H_{n-1}(\mu^{-1}(v),\ZZ)$ is Poincar\'e-dual to
$\xi_e\in \Lambda_\sigma = H^1(\mu^{-1}(v),\ZZ)$. Define $\eps_{e,v}=1$ if
$\tilde T_e\cdot S(v)$ is positively oriented as part of the boundary of
$\ol\beta_e$ and $\eps_{e,v}=-1$ otherwise. By \eqref{Eqn: Orientation of
beta_e} we have $\eps_{e,v}=1$ if $e$ is oriented
toward $v$. Now the balancing condition for $\beta_\trop$ at $v$ says
\begin{equation}
\sum_{e\ni v} \eps_{e,v} \xi_e=0. \label{balancing} 
\end{equation}
Hence $\sum_{e\ni v} \eps_{e,v}\cdot[\tilde T_e\cdot S(v)] =0$ in
$H_{n-1}(\mu^{-1}(v),\ZZ)$. Thus there exists an $n$-chain
$\Gamma_v\subset\mu^{-1}(v)$ whose boundary equals the negative of this sum. The
chain $\Gamma_v$ is unique up to adding integral multiples of $\mu^{-1}(v)$. For
brevity of notation we define $\Gamma_v=0$ if $v$ is a bivalent vertex. By
construction, the sum of chains $\sum_e \ol\beta_e +\sum_v \Gamma_v$ defines an
$n$-cycle on $X_0^\circ$.

To arrive at a cycle of the form treated in Construction~\ref{Constr: finite
order periods}, we may need to adjust some of the boundary components of edges
adjacent to slabs, as will become clear in the proof of Lemma~\ref{Lem: charts
for cycle} below. To this end we admit the insertion of chains on some of such
edges as follows. Let $v$ be a vertex contained in a slab and $e$, $e'$ the
adjacent edges, with $\beta$ oriented from $e$ to $e'$. We now admit to subtract
from $\ol\beta_{e'}$ a chain $\Gamma_{e'}$, while adding $\Gamma_{e'}$ in the
chart $\Spec R^k_{\fou'}$ defined the chamber $\fou'$ containing $e'$ to the
collection of chains. For notational convenience we define
$\Gamma_e=0$ for all other edges $e$ and $\Gamma_v=0$ for all two-valent
vertices $v$. The resulting chain for any edge $e$ (modified or not) is now
denoted $\beta_e$. Thus we have $\beta_e=\ol\beta_e$ unless $e$ is oriented away
from a vertex $v_-$ lying on a slab.

Finally we define
\begin{equation}
\label{eqn:beta-sum-of-chains}
\beta:=\sum_e \beta_e + \sum_v \Gamma_v + \sum_e \Gamma_e.
\end{equation}
It follows from the construction that $\partial \beta=0$, so $\beta$ is
a singular cycle on $X_0^\circ$. Up to specifying the slab add-ins $\Gamma_{e'}$ in
Lemma~\ref{Lem: charts for cycle} below, this ends the construction of $\beta$.
\medskip

To decompose $\beta$ in the form $\sum_i \beta_i$ demanded in
Construction~\ref{Constr: finite order periods}, take for the constituents
$\beta_i$ one of the following.
\begin{enumerate}
\item
$\beta_e$ with $e$ disjoint from
$(n-1)$-cells;
\item
$\Gamma_v$ for $v$ a vertex of valency at least three;
\item
The sum $\beta_e+\beta_{e'}$ for the two edges $e,e'$ adjacent to a vertex $v$
contained in an $(n-1)$-cell;
\item
A slab add-in $\Gamma_{e'}$ whenever this chain is
non-zero.
\end{enumerate}
\end{construction}

\begin{lemma}
\label{Lem: charts for cycle}
For a cycle $\beta=\sum_i\beta_i$ from Construction~\ref{Constr: beta}
there exist charts $\Phi_i:\tilde U_i\to X_k^\circ$ and a choice of slab
add-ins $\Gamma_{e'}$, such that the charts $\Phi_i$ and chains $\beta_i$
fulfill \eqref{ChI}, \eqref{ChII} and \emph{(Cy~I), (Cy~II)} of
Construction~\ref{Constr: finite order periods}, respectively.
\end{lemma}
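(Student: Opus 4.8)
The plan is to go through the four kinds of constituents $\beta_i$ listed at the end of Construction~\ref{Constr: beta} one at a time, in each case producing the chart $\Phi_i\colon \tilde U_i\to X_k^\circ$ directly from the local model of $X_k^\circ$ around $\beta_i$ and fixing the slab add-ins $\Gamma_{e'}$ on the way. For constituents of type~(1) and~(2) --- an edge chain $\beta_e$ with $e$ disjoint from all $(n-1)$-cells, or a chain $\Gamma_v$ over a vertex of valency $\ge 3$, which by ($\beta_{\mathrm{II}}$) lies in the interior of a maximal cell $\sigma$ --- the chain meets only the big torus part of a single component $X_\sigma$ and, since by ($\beta_{\mathrm{I}}$) an edge crosses a wall only at a vertex, is contained in a single toric chart $\Spec R_\fou=\Spec\CC[\Lambda_\sigma][t]/(t^{k+1})$; I would take $\tilde U_i$ to be a neighbourhood of $\beta_i$ in its analytification and $\Phi_i$ the inclusion. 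As $\Omega$ restricts on each component to $\tfrac{dz_1}{z_1}\wedge\cdots\wedge\tfrac{dz_n}{z_n}$ up to a nonzero constant (irrelevant for the logarithmic derivative), conditions~\eqref{ChI} and~\eqref{ChII} hold, and the $\beta_i$ are by their very construction --- an orbit of a section under the subtorus $\tilde T_e$, respectively an $n$-chain inside a torus fibre $\mu^{-1}(v)$ --- already of the product shape required by (Cy~I), (Cy~II). Type~(4) constituents, the slab add-ins $\Gamma_{e'}$ themselves, also lie in a single toric chart $\Spec R^k_{\fou'}$ and are treated the same way once $\Gamma_{e'}$ is fixed below.

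The substantive case is~(3): a pair of edges $e\subset\sigma$, $e'\subset\sigma'$ meeting at a vertex $v$ in a slab $\fob\subseteq\ul\rho$ with $\rho=\sigma\cap\sigma'$ and $v\notin\shA$. First I would pass to the adapted coordinates $\tilde Z_+,\tilde Z_-$ of Remark~\ref{Rem: tilde Z_+ tilde Z_-}, so that near $\mu^{-1}(v)$ the scheme $X_k^\circ$ is cut out by $\tilde Z_+\tilde Z_-=(z^{-m_v}f_\fob)\,t^{\kappa_{\ul\rho}}$ as in~\eqref{Eqn: Adapted equation codim 1}. Since $v\notin\shA$ the factor $g:=z^{-m_v}f_\fob$ is invertible on a neighbourhood $W$ of $\mu^{-1}(v)$, and on $W$ it admits a well-defined logarithm over $A_k$: its reduction $z^{-m_v}f_{\ul\rho}\colon\mu^{-1}(v)\to\CC^*$ is null-homotopic by the defining property~\eqref{Eqn: m_x} of $m_v$, so $\log(z^{-m_v}f_{\ul\rho})$ exists on $W$, and the remaining series for $\log(1+t(\cdots))$ terminates because $t^{k+1}=0$. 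Substituting $z:=\tilde Z_+$ and $w:=\tilde Z_-/g$ turns~\eqref{Eqn: Adapted equation codim 1} into the standard normal-crossings equation $zw=t^{\kappa_{\ul\rho}}$, and together with toric coordinates $z_1,\dots,z_{n-1}$ on $\Spec\CC[\Lambda_\rho]$ this is a chart of the form demanded in Construction~\ref{Constr: finite order periods}, which I would extend along $e$ and $e'$ into the chambers $\fou\subseteq\sigma$, $\fou'\subseteq\sigma'$ via the open embeddings~\eqref{Eqn: Canonical open emb} twisted by the gluing data (no codimension-zero wall interferes, since by ($\beta_{\mathrm{I}}$) the edges $e,e'$ meet walls only at $v$). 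On the locus $zw=t^{\kappa_{\ul\rho}}$ one has $\tfrac{dz}{z}=\tfrac{d\tilde Z_+}{\tilde Z_+}$, whence $\Phi_i^*\Omega=\tfrac{dz}{z}\wedge\tfrac{dz_1}{z_1}\wedge\cdots\wedge\tfrac{dz_{n-1}}{z_{n-1}}$, giving~\eqref{ChI} and~\eqref{ChII}.

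It then remains to bring $\beta_e+\beta_{e'}$ into the product form of (Cy~II) and to read off $\Gamma_{e'}$. By ($\beta_{\mathrm{III}}$) and the extra condition imposed on $S(e)$, $S(e')$ in Construction~\ref{Constr: beta}, in these coordinates $S(e)$ lies where all $z_j$ have constant modulus and, at the $\rho$-end, $w=0$, so $\tilde T_e\cdot S(e)$ is the product of the torus over $v$ with an arc in the $z$-disk, and symmetrically for $e'$; when $\xi_e\in\Lambda_\rho$ the two disks are transverse and the chain is already standard, so assume $\xi_e\notin\Lambda_\rho$. The union over $e\cup e'$ is then the torus over $v$ times a path on $zw=t^{\kappa_{\ul\rho}}$ joining a point with $w=0$ to one with $z=0$, and this path differs from the standard curve prescribed in (Cy~II) only by a path lying over the chamber $\fou'$, i.e.\ inside $\Spec R^k_{\fou'}$; I would \emph{define} $\Gamma_{e'}$ to be that correcting path swept by the full torus $T_{e',v}$. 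Subtracting $\Gamma_{e'}$ from $\ol\beta_{e'}$ and reintroducing it as a separate type-(4) constituent replaces the actual arc by the standard one and leaves the boundary of $\beta_e+\beta_{e'}$ over the far endpoints of $e$ and $e'$ unchanged, so $\partial\beta=0$ persists and (Cy~I), (Cy~II) hold. The main obstacle lies entirely in case~(3): making the substitution $w=\tilde Z_-/g$ legitimate over the Artinian base $A_k$ --- which is exactly where homotopy-triviality of $z^{-m_v}f_\fob$, built into the affine structure in Construction~\ref{Affine structure on B minus A}, gets used --- and then reconciling the concrete section-defined cycle $\beta_e+\beta_{e'}$ with the rigid standard shape of (Cy~II) by an explicit $\Gamma_{e'}$ without disturbing the global cycle condition; cases~(1), (2) and~(4) are routine bookkeeping with toric charts.
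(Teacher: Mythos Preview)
Your overall architecture matches the paper's proof: toric charts of type~\eqref{ChI} for constituents (1), (2), (4); the adapted chart via $z=\tilde Z_+$, $w=\tilde Z_-/(z^{-m_v}f_\fob)$ for constituent (3); and a slab add-in built from the null-homotopy guaranteed by the choice of $m_v$. The chart construction, the handling of the $e$-side, and your identification of case~(3) as the only substantive one are all correct.

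There is, however, a genuine gap in your treatment of the $e'$-side. You assert that $\tilde T_{e'}\cdot S(e')$ is ``symmetrically'' a product of the torus over $v$ with an arc in the $w$-disk, so that the required correction is merely a one-dimensional path on the curve $zw=t^{\kappa_{\ul\rho}}$. This is false. The torus $T_{e'}$ acts diagonally in the toric coordinates $\hat z_1,\ldots,\hat z_n$ of $\Spec R^k_{\fou'}$, but the chart coordinate is $w=c_1\hat z_1/(\hat z^{-m_v}\hat f_\fob)$, whose denominator depends on $\hat z_2,\ldots,\hat z_n$. Hence in the $(w,z_2,\ldots,z_n)$-coordinates of $\Phi_i$ the orbit $\ol\beta_{e'}$ is a warped cylinder whose $w$-coordinate varies with the torus parameter; it is \emph{not} a product, and the same warping occurs when $\xi_e\in\Lambda_\rho$, so that case is not already standard either. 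The slab add-in therefore is not ``a correcting path swept by the torus'': it must interpolate between two genuinely different $(n-1)$-cycles over $v_+$, namely the toric orbit $\partial_+\ol\beta_{e'}$ and the product-form boundary $\partial_+\hat\beta_{e'}$ that (Cy~II) demands. The paper carries this out explicitly (its Steps~III and~IV): it writes both boundaries in the $\hat z_j$-coordinates, observes that they differ in the first entry by the factor $\theta\mapsto(\hat f_{\ul\rho}/\hat z^{m_v})(\theta\cdot a)$ viewed as a map $T_{e'}\to\CC^*$, and uses the null-homotopy of \emph{this} map --- only the reduction $f_{\ul\rho}$ modulo $t$ enters, so no logarithm over $A_k$ is needed at this stage --- to parametrize $\Gamma_{e'}$ as an $n$-chain in $\Spec R^k_{\fou'}$. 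Your invocation of the null-homotopy is the right ingredient, but it is what builds $\Gamma_{e'}$ directly, not a path-level correction downstream of an (incorrect) product description.
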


\noindent
\emph{Proof.} 
\underline{\emph{Step~I: Construction of adapted charts.}}
By ($\beta_{\mathrm{I}}$) of Assumption~\ref{Ass: beta} for the tropical cycle
$\beta_\trop$, the constituents of the form $\beta_i=\beta_e$ or
$\beta_i=\Gamma_v$ are contained in a single chamber $\fou$. Denote by $\sigma$
the maximal cell containing $\fou$. As explained in \S\ref{subsec-make-Xk}, the
chart of $X_k^\circ$ defined by $\fou$ provides an open embedding 
\begin{equation}
\label{Eqn: chart of type I}
\Spec R^k_\fou =\Spec\big(\CC[\Lambda_{\sigma}]\big)\times O_k
\lra X_k^\circ,
\end{equation}
which, viewed as a morphism of analytic log spaces, we take for $\Phi_i$. Thus
$\tilde U_i=\Spec\big(\CC[\Lambda_{\sigma}]\big)\times O_k$. Here we write $O_k=\Spec
\CC[t]/(t^{k+1})$ as in Appendix~\ref{Sect: Period integrals}. The reduction
$U_i$ of $\tilde U_i$ is isomorphic to $(\CC^*)^n$. In the notation of
Construction~\ref{Constr: finite order periods}, such $\Phi_i$ are charts of
type~I.

In the third instance of two edges $e,e'$ with $\beta_i=\eta_e+\beta_{e'}$ and
$\beta_\trop$ oriented from $e$ to $e'$ and meeting in a vertex $v$ on a slab
$\fob\subseteq\ul\rho$, by \eqref{Eqn: ring in codim one} we similarly have an
open embedding
\[
\Spec R^k_\fob \lra X_k^\circ,
\]
with $R^k_\fob= CC[\Lambda_\fob][\tilde Z_+,\tilde Z_-,t]/ (\tilde Z_+\tilde Z_-
- z^{-m_v}f_\fob t^{\kappa_i}, t^{k+1})$, $\kappa_i=\kappa_{\ul\rho_\fob}$. Here
we use the adapted coordinates $\tilde Z_\pm$ from \eqref{Eqn: Adapted equation
codim 1} with $\tilde\zeta=\pm\xi_e$ in case
$\xi_e\not\in\Lambda_\rho$.\footnote{Note that we use here
condition~($\beta_{\mathrm{III}}$) of Assumption~\ref{Ass: beta} that if
$\xi_e\not\in\Lambda_\rho$ then $\xi_e$ is a generator of
$\Lambda_\sigma/\Lambda_\rho$.} Denote also by $\fou,\fou'$ and $\sigma,\sigma'$
the chambers and maximal cells containing the images of $e,e'$, respectively.

To bring the ring $R^k_\fob$ into the form required by \eqref{ChII} of
Construction~\ref{Constr: finite order periods}, we now set
\begin{equation}
\label{Eqn: z,w}
z=\tilde Z_+,\quad w=\tilde Z_-/(z^{-m_v}f_\fob),
\end{equation}
to obtain an open embedding of the open neighborhood $\Spec
(R^k_\fob)_{f_{\fob}}\subseteq \Spec R^k_\fob$ of $\mu^{-1}(v)$ into
\begin{equation}
\label{Eqn: our chart}
\Spec\big(\CC[\Lambda_\rho][z,w,t]/(zw-t^{\kappa_i}, t^{k+1})\big).
\end{equation}
A further shrinking of neighborhood leads to the desired chart of the form
$\tilde U_i= V_i\times H_{\kappa_i}$ with $V_i\subset
\Hom(\Lambda_\fob,\CC^*) \simeq (\CC^*)^{n-1}$ open and
$H_{\kappa_i}$ the base change to $O_k$ of an appropriate bounded
open subset of $\big\{ (z,w,t)\in \CC^3 \big|\,
zw=t^{\kappa_i}\big\}$. With the possible rescaling of $z,w$ from
Remark~\ref{Rem: adjustments for non-standard starting points} understood, this
is a chart of type~II with $\kappa_i=\kappa_i$. Denote by $\sigma$
the maximal cell containing $e$, and by $\sigma'$ the other maximal cell
adjacent to $\fob$.

Note also that the projection $V_i\times H_{\kappa_i}\to V_i$ is a restriction
of the map
\begin{equation}
\label{Eqn: projection to codim one}
\Spec R^k_\fob \lra \Spec\CC[\Lambda_\rho]
\end{equation}
induced by the inclusion $\Lambda_\rho\subset \Lambda_\sigma$, and this map is
equivariant for the homomorphism of tori\footnote{We have $T_e=\tilde
T_e$ by the assumption of primitivity of $\xi_e$ according to
($\beta_{\mathrm{III}}$) in Assumption~\ref{Ass: beta}.} $T_e\to T_{e,v}$
discussed in Construction~\ref{Constr: beta}. By construction, $S(e)$ lies in
the fiber of this projection since any monomial $z^m$ with $m\in\Lambda_\rho$ is
constant on $S(e)$.

\noindent
\underline{\emph{Step II: Checking (Cy~I),(Cy~II) for $\beta_e=\bar\beta_e$ and for $\Gamma_v$.}}
We need to check that $\beta_i$ is of the form specified in (Cy~II) of
Construction~\ref{Constr: finite order periods}. This discussion is
entirely on the central fiber $X_0^\circ$, with the toric local model
$\CC[\Lambda_{\sigma'}]=\CC[\Lambda_\rho][\tilde Z_-^{\pm1}]$ and the non-toric one
$\CC[\Lambda_\rho][z,w]/(zw)$.

Condition~(Cy~I) is readily fulfilled if $\beta_i=\beta_e$ and $e$ is disjoint
from all slabs and for $\beta_i=\Gamma_v$. It remains to consider the case
$\beta_i=\beta_e$ and $e\cap\rho\neq\emptyset$ for some $\rho$.

The part $\beta_e$ of $\beta_i$ lying over $\sigma$ is again easily seen to be
of the required form, with the added flexibility of Remark~\ref{Rem: adjustments
for non-standard starting points} understood:

(i)~If $\xi_e\in\Lambda_\rho$ then the action of the $(n-1)$-torus $\tilde
T_e=T_e$ defined in \eqref{tilde T_e} on $\CC[\Lambda_\rho]$ has a
one-dimensional kernel, while the action on $z=\tilde Z_+$ is non-trivial.
Hence, in the chart \eqref{Eqn: our chart}, we have
$\beta_e=\gamma_i\times\big\{ |z|\le \varepsilon_i\big\}$ with
$\gamma_i$ an $(n-2)$-dimensional orbits of the action of $T_{e,v}\simeq
U(1)^{n-1}$ on $\Spec\CC[\Lambda_\rho]\simeq (\CC^*)^{n-1}$ and some
$\varepsilon_i\in\RR_{>0}$. We do not bother to compute $\gamma_i$ explicitly
because our integral over such chains vanishes in any case.

(ii)~If $\xi_e\not \in\Lambda_\rho$ then $\xi_e=\pm\tilde\zeta$. Hence
the action of $\tilde T_e= T_e$ is trivial on $z=\tilde Z_+$ and the restriction map
$T_e\to T_{e,v}$ of \eqref{Eqn: restriction map of tori} is an isomorphism.
Thus $\beta_e= \gamma_i\times z(S(e))$ with $\gamma_i$ a $T_e$-orbit and $z(S(e))$
a curve inside $\CC$ connecting $z\big(S(v_-)\big)$ to $0$ for $v_-$ the
other vertex of $e$.

\noindent
\underline{\emph{Step III: Construction of $\beta_{e'}$.}} 
The situation for the other constituent $\beta_{e'}$ of $\beta_i$ is less
straightforward. Recall that $\beta_{e'}=\ol\beta_{e'}-\Gamma_{e'}$ with
$\ol\beta_{e'}$ constructed above via the torus action and the momentum map
$\mu$, while the slab add-in $\Gamma_{e'}$ was still to be determined. Denote by
$v_+$ the vertex of $e'$ mapping to the interior of $\sigma'$, the maximal cell
containing $e'$. By construction, $\ol\beta_{e'}$ has the boundary component
$\partial_+\ol\beta_{e'}$ mapping to $v_+$ by the momentum map. This boundary
component $\partial_+\ol\beta_{e'}$ is the torus orbit $T_{e'}\cdot S(v_+) =
\tilde T_{e'}\cdot S(v_+)$ in $\Hom(\Lambda_{\sigma'},\CC^*)$, the reduction
modulo $t$ of the chart $\Spec R^k_{\fou'}$.

For the following discussion, let $e_1,\ldots,e_n\in\Lambda_{\sigma'}$ be an
oriented basis with $e_1=\tilde\zeta'$ the parallel transport of
$-\tilde\zeta\in \Lambda_\sigma$ through $v$, and $e_2,\ldots,e_n\in\Lambda_\rho$,
and let $\hat z_1,\ldots,\hat z_n\in R^k_{\fou'}$ denote the corresponding
monomials. Then $T_{e'}$ is identified with a subtorus of $U(1)^n$ acting
diagonally on $\hat z_1,\ldots,\hat z_n$. For $\theta\in T_{e'}$ denote by
$(\theta_1,\ldots,\theta_n)$ the corresponding image in $U(1)^n$, with $U(1)=
\big\{z\in\CC\,\big|\, |z|=1\big\}$.

In these coordinates, $\partial_+\ol\beta_{e'}$ has the parametrization
\[
T_{e'}\ni \theta=(\theta_1,\ldots,\theta_n)\longmapsto 
\theta\cdot a=\big(\theta_1 a_1, \theta_2 a_2,\ldots,\theta_n a_n\big),
\]
with $a_\mu=\hat z_\mu\big(S(v_+)\big)$ for $\mu=1,\ldots,n$ and $a=(a_1,\ldots,a_n)$.

On the other hand, Condition~(Cy~II) of Construction~\ref{Constr: finite order
periods} tells us that $\beta_{e'}$ must be homologous relative to its boundary
to the chain $\hat\beta_{e'}$ defined analogously, but using the chart $\Phi_i$
modeled on $\CC[\Lambda_\rho][z,w]/(zw-t^{\kappa_i}, t^{k+1})$ and $w$ replacing $\hat
z_1$. Explicitly, in the coordinates $w,z_2,\ldots,z_n$ of the chart $\Phi_i$, with
$z_2,\ldots,z_n$ defined by $e_2,\ldots,e_n\in\Lambda_\rho$, the chain
$\hat\beta_{e'}$ is defined by the parametrization
\begin{equation}
\label{Eqn: hat beta_{e'}}
S(e')\times T_{e'}\lra (\CC^*)^n,\quad
(p,\theta)\longmapsto \big(\theta_1 w(p),\theta_2 z_2(p),\ldots,\theta_n z_n(p)\big).
\end{equation}
The two sets of coordinates are related by
\begin{equation}
\label{Eqn: hatted versus unhatted coords}
c_1 \hat z_1= (f_\fob/z^{m_v})\,w,\quad
c_2 \hat z_2= z_2,\ldots,c_n \hat z_n= z_n,
\end{equation}
with constants $c_\mu= s_{\sigma'\ul\rho}(e_\mu)\in\CC^*$ given by gluing data.
The action of $U(1)^n$ on $z_2,\ldots,z_n$ is compatible with the action on
$\hat z_2,\ldots,\hat z_n$, but not so on $w$. Let $\hat f_\fob\in \CC[t][\hat
z_2,\ldots,\hat z_n]$ be a Laurent polynomial with the property that the
reduction of $\hat z^{-m_v} \hat f_\fob$ modulo $t^{k+1}$ is the image of
$z^{-m_v} f_\fob$ under the gluing map $R^k_\fob\to R^k_{\fou'}$, and denote by
$\hat f_{\ul\rho}$ the reduction of $\hat f_\fob$ modulo $t$. Then the first
equation in \eqref{Eqn: hatted versus unhatted coords} can be rewritten as
\begin{equation}
\label{Eqn: hat z_1 in terms of w}
\hat z_1= c_1^{-1} (\hat f_\fob/\hat z^{m_v})\, w.
\end{equation}
To describe $\partial_+ \hat\beta_{e'}$ only the reduction modulo $t$ is
relevant and hence $f_\fob$ reduces to $f_{\ul\rho}$. Thus in the coordinates
$\hat z_1,\ldots,\hat z_n$, the boundary $\partial_+ \hat\beta_{e'}$ has the
parametrization
\begin{equation}
\label{Eqn: bdry of hat beta_{e'}}
T_{e'}\ni \theta= (\theta_1,\ldots,\theta_n)\longmapsto 
\Big(c_1^{-1}\big(\hat f_{\ul\rho}/\hat z^{m_v}\big)(\theta a)\cdot \theta_1 b_1,
\theta_2 a_2,\ldots,\theta_n a_n\Big),
\end{equation}
where $b_1=w\big(S(v_+)\big)$. For simplicity of notation we view here $\hat f_{\ul\rho}/\hat
z^{m_v}$ as a Laurent polynomial in $n$ variables by the inclusion
$\Lambda_\rho\subset \Lambda_{\sigma'}$.

\noindent
\underline{\emph{Step IV: Construction of slab add-ins $\Gamma_{e'}$.}}
Now that we have explicit parametrizations of both $\partial\hat\beta_{e'}$ and
$\partial\bar\beta_{e'}$, we are ready to construct the slab add-in
$\Gamma_{e'}$ as a chain connecting these boundary cycles. Note that the factor
in the first entry of the right-hand side of \eqref{Eqn: bdry of hat beta_{e'}}
agrees with the restriction of $f_{\ul\rho}/z^{m_v}$ on the fiber of the
momentum map $\mu:X_0^\circ\to B$ over $v$:
\[
\big(\hat f_{\ul\rho}/\hat z^{m_v}\big)(\theta a)
= \big(\hat f_{\ul\rho}/\hat z^{m_v}\big)(\theta_2 a_2,\ldots,\theta_n a_n)
=\big(f_{\ul\rho}/z^{m_v}\big)\big(\theta_2 b_2,\ldots,\theta_n b_n\big),
\]
with $b_2=z_2(S(v)),\ldots, b_n=z_n(S(v))$. The point is that, by the definition
of $m_v$ in Construction~\ref{Affine structure on B minus A}, this map is
homotopically trivial as a map $T_{e'}\to \CC^*$. Thus there is a
differentiable homotopy $\gamma: [0,1]\times T_{e'}\lra \CC^*$ with\\[-6ex]
\begin{wrapfigure}[11]{r}{0.38\textwidth}
\captionsetup{width=.75\linewidth}
\begin{center}
{\includegraphics[width=0.34\textwidth]{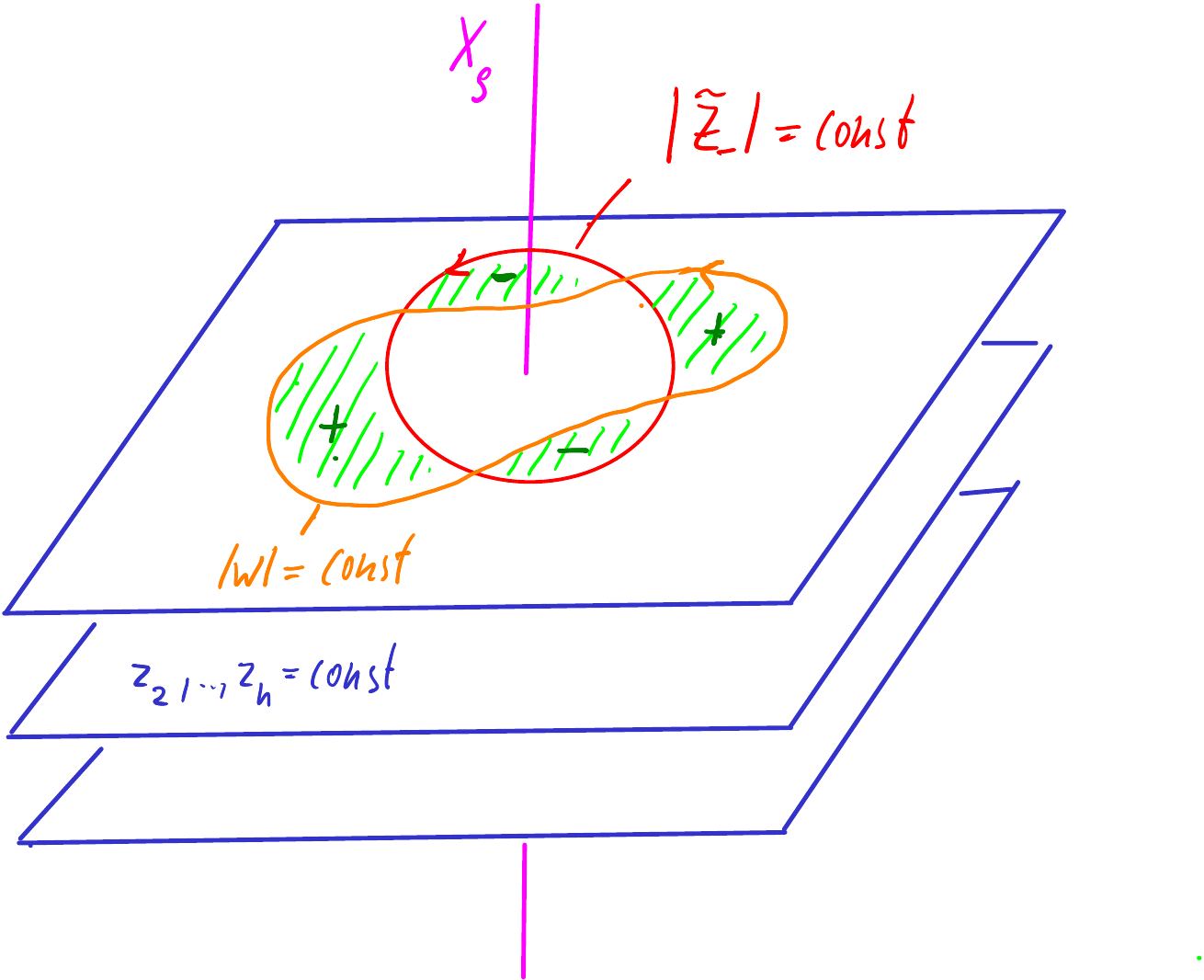}}
\end{center}
\caption{A slab add-in (case $\xi_{e'}\in\Lambda_\rho$)}
\label{Fig: slab add-in}
\end{wrapfigure}
\begin{eqnarray}
\label{Eqn: homotopy gamma}
\lefteqn{\gamma(0,\theta)= \big(\hat f_{\ul\rho}/\hat z^{m_v}\big)(\theta a),\quad
\gamma(1,\theta)= a_1 c_1/b_1,}\hspace{65ex}
\end{eqnarray}
where $a_1=\hat z_1\big(S(v_+)\big)$, $b_1=w\big(S(v_+)\big)$ and $c_1=s_{\sigma'\ul\rho}(\tilde\zeta')$.
We now define the slab add-in $\Gamma_{e'}$ in the coordinates $\hat
z_1,\ldots,\hat z_n$ of $R^k_{\fou'}$ by the parametrization\\
\begin{minipage}{0.6\textwidth}
\begin{eqnarray}
\label{Eqn: parametrization of slab add-in}
\lefteqn{\hspace{8ex}[0,1]\times T_{e'} \lra \Hom(\Lambda_{\sigma'},\CC^*)}\\
&&(s,\theta)\longmapsto \big(\gamma(s,\theta)\cdot
\theta_1 b_1/c_1, \theta_2 a_2,\ldots,\theta_n
a_n\big),\nonumber
\end{eqnarray}
\end{minipage}\\[2ex]
with the given orientation of the domain. Figure~\ref{Fig: slab add-in} provides
a sketch for the case $\xi_{e'}\in\Lambda_\rho$. The horizontal planes indicate
the level sets of $\hat z_2,\ldots,\hat z_n$, which for both $\ol\beta_{e'}$ and
$\hat\beta_{e'}$ vary in the same $U(1)^{n-2}$-orbit, the orbit
containing $(\hat z_2,\ldots,\hat z_n)\big(S(v_+)\big)=(\hat z_2,\ldots,\hat
z_n)(S(e'))$. The shaded region is the part of $\Gamma_e$ in this level set. The
circle and curve show the intersection of $\partial_+\ol\beta_{e'}$ and
$\partial_+\hat\beta_{e'}$ with one of the level sets in this
$U(1)^{n-2}$-orbit. In the other case $\xi_{e'}\not\in\Lambda_\rho$ there is a
$U(1)^{n-1}$-orbit of level sets and on each level set in this orbit,
$\partial_+\ol\beta_{e'}$, $\partial_+\hat\beta_{e'}$ define two points in the
$\hat z_1$-plane, connected by the homotopy $\gamma$.

By construction, $\partial\Gamma_{e'}= \partial_+\ol\beta_{e'}
-\partial_+\hat\beta_{e'}$ and hence $\beta_{e'}= \ol\beta_{e'}-\Gamma_{e'}$ has
boundary $\partial_+\hat\beta_{e'}-\partial_-\ol\beta_{e'}$. Letting the
endpoint $S(v_+)$ vary as $s\in S(e')$, $\gamma$ can be extended to a continuous
family $\gamma_s$ of homotopies between the $T_{e'}$-orbits in
$\ol\beta_{e'}$ and in $\hat\beta_{e'}$ containing $s$. The corresponding family of
$n$-chains sweeps out an $(n+1)$-chain $\tilde\Gamma$ with
\[
\partial\tilde\Gamma= \ol\beta_{e'}-\Gamma_{e'}-\hat\beta_{e'}.
\]
Thus $\beta_{e'}$ and $\hat\beta_{e'}$ are homologous relative to their
boundaries as needed in (Cy~II).

We add the slab add-in $\Gamma_{e'}$ as an additional chain, taken inside the
chart \eqref{Eqn: chart of type I} for the chamber $\fou'$ containing $e'$. With
this definition for the slab add-ins, we have verified the requirements of
Construction~\ref{Constr: finite order periods} for all constituents $\beta_i$
of $\beta$.
\qed

\begin{proposition}
\label{Lem: monodromy action}
Let $\shX\ra \DD$ be a family with $X_0^\circ$ as central fiber and locally
analytically isomorphic to \eqref{Eqn: adapted local eqn}, together with a
family of $n$-cycles $\beta(t)$ with $\beta(0)=\beta$ as in
Proposition~\ref{holomorphic-periods-induce-formal-periods}. Then the
Picard-Lefschetz monodromy $T$ along a counter-clockwise loop in $\DD$ based at
$t_0\neq 0$ acting on $n$-homology classes is given by
\[
(T-\id)(\beta_{t_0})=\langle c_1(\varphi),\beta_\trop\rangle \cdot[\alpha]
=\sum_i \kappa_i\langle \check d_{\rho_i},\xi_i\rangle [\alpha].
\]
Here $\alpha$ denotes the vanishing cycle and the sum is over the
points $v$ of intersection of $\beta_\trop$ with codimension one cells
$\rho=\rho_i$ as explained in \S\ref{Par: PL trf}.
\end{proposition}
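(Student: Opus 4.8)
Here is a plan for the proof of Proposition~\ref{Lem: monodromy action}.

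\smallskip

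Both sides of the asserted identity depend only on the class of $\beta_\trop$ in $H_1(B\setminus(\Delta_2\cup\shA),\Lambda)$: the right-hand side because $\langle c_1(\varphi),-\rangle$ is a cap product, and the left-hand side because $\beta_\trop\mapsto\beta_{t_0}$ is homological by Lemma~\ref{lemma-homo-trop-to-ordinary} and $T$ acts on $H_n(X_{t_0},\ZZ)$. Hence I would first replace $\beta_\trop$ by a homologous representative satisfying Assumption~\ref{Ass: beta}. Then each vertex $v_i$ of $\beta_\trop$ lying on a codimension one cell sits in the interior of a slab $\fob_i\subseteq\rho_i$, the adjacent edge $e_i$ (ordered by the orientation of $\beta_\trop$, contained in a maximal cell $\sigma_i$) carries a primitive vector $\xi_i$, and either $\xi_i\in\Lambda_{\rho_i}$ (so $\langle\check d_{\rho_i},\xi_i\rangle=0$) or $\xi_i$ generates $\Lambda_{\sigma_i}/\Lambda_{\rho_i}$ (so $\langle\check d_{\rho_i},\xi_i\rangle=\pm1$). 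By ($\beta_{\mathrm{I}}$) the $v_i$ are the only points where $\beta_\trop$ meets the $(n-1)$-skeleton, hence where $\beta$ meets the singular locus of $X_0$ and where $X_{t_0}$ differs from a product near the degenerating locus.

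\smallskip

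Next I would localize the monodromy. Away from a neighborhood of the log singular locus $Z\subset X_0$ the family $\shX\to\DD$ is a smooth fibre bundle, so its monodromy is supported near $Z$. Choosing pairwise disjoint open sets $W_i\subset\shX$ around the components of $Z$ meeting $\mu^{-1}(v_i)$ on which the adapted local model $zw=t^{\kappa_i}$ of \eqref{Eqn: adapted local eqn} holds, one may arrange the family $\beta(t)$ from Proposition~\ref{holomorphic-periods-induce-formal-periods} (which exists over the slit disk $\DD\setminus\RR_{<0}$) to be constant outside $\bigcup_i W_i$ in suitable local trivialisations. Then transport along the counter-clockwise loop fixes $\beta_{t_0}$ outside $\bigcup_i W_i$, so $T\beta_{t_0}-\beta_{t_0}=\sum_i\delta_i$ with $\delta_i$ supported in $W_i$; since the monodromy is the identity near $\partial W_i$, each $\delta_i$ is a cycle, and $(T-\id)(\beta_{t_0})=\sum_i[\delta_i]$ in $H_n(X_{t_0},\ZZ)$.

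\smallskip

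Then I would evaluate $[\delta_i]$ using the local pictures recalled in \S\ref{Sub: construction of beta}. If $\xi_i\in\Lambda_{\rho_i}$, near $v_i$ one has $\beta(t)=\gamma\times C_t$ with $\gamma\approx U(1)^{n-2}$ and $C_t=\{zw=t^{\kappa_i},\ |z|,|w|\le 1\}$ a cylinder on which the monodromy acts trivially; hence $T C_{t_0}-C_{t_0}$ bounds a $2$-chain in $W_i$ and, $\gamma$ being a cycle, $\delta_i=\gamma\times(TC_{t_0}-C_{t_0})$ bounds as well, so $[\delta_i]=0=\kappa_i\langle\check d_{\rho_i},\xi_i\rangle[\alpha]$. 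If instead $\xi_i\not\in\Lambda_{\rho_i}$, near $v_i$ one has $\beta(t)=\gamma\times\iota(t)$ with $\gamma\approx U(1)^{n-1}$ the torus in the $\Lambda_{\rho_i}$-directions and $\iota(t)$ an arc with fixed endpoints on the cylinder $zw=t^{\kappa_i}$; as $t$ runs once counter-clockwise around $0$ the monomial $t^{\kappa_i}$ runs $\kappa_i$ times, so $\iota(t)$ is transformed by $\kappa_i$ Dehn twists about the waist circle, giving $T\iota(t_0)-\iota(t_0)=\kappa_i\langle\check d_{\rho_i},\xi_i\rangle\cdot[\text{waist}]$ as $1$-chains relative to their endpoints, the sign being governed by the orientation of $\iota$ inherited from $\beta_\trop$ and the sign convention for $\check d_{\rho_i}$. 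Since $\gamma\times[\text{waist}]\approx(S^1)^n$ is the collapsed momentum-map fibre over $v_i$ opened up in $X_{t_0}$, it represents the vanishing cycle class $[\alpha]$ (Lemma~\ref{lemma-intalpha} and \S\ref{Sub: construction of beta}), so $\delta_i=\gamma\times(T\iota(t_0)-\iota(t_0))$ gives $[\delta_i]=\kappa_i\langle\check d_{\rho_i},\xi_i\rangle[\alpha]$. Summing over $i$ and recognising $\sum_i\kappa_i\langle\check d_{\rho_i},\xi_i\rangle$ as the pairing $\langle c_1(\varphi),\beta_\trop\rangle$ described in \S\ref{Par: PL trf} finishes the argument.

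\smallskip

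The main obstacle is the sign in the twisted case: one must match the orientation of the arc $\iota(t)$ (coming from the orientation of $\beta_\trop$), the orientation of the waist/vanishing cycle (from the orientation of $B$) and the sign convention for $\check d_{\rho_i}$ so that the Dehn twists are counted with sign exactly $\langle\check d_{\rho_i},\xi_i\rangle$ and not merely its absolute value. A second, more routine point requiring care is the bookkeeping in the localisation step—choosing the $W_i$ compatibly with the charts of Lemma~\ref{Lem: charts for cycle} and checking that $\beta(t)$ can indeed be taken constant outside $\bigcup_i W_i$ with the slab add-ins $\Gamma_{e'}$ not interfering with the local product structure near the $v_i$—together with the identification $\gamma\times[\text{waist}]\approx[\alpha]$, which should be extracted cleanly from the normal-crossing models rather than re-derived.
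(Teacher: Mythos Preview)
Your proposal is correct and follows essentially the same approach as the paper: localize the monodromy to the slab crossings, observe that only the case $\xi_i\notin\Lambda_{\rho_i}$ contributes, compute the $\kappa_i$-fold Dehn twist on the arc $\iota(t)$ in the local model $zw=t^{\kappa_i}$, and identify $\gamma_i\times[\text{waist}]$ with the vanishing cycle $\alpha$ up to the sign $\langle\check d_{\rho_i},\xi_i\rangle$. The only organizational difference is that the paper packages the local monodromy computation into a separate abstract lemma (Lemma~\ref{lemma-on-monodromy}) stated in the framework of the charts (A), (B), (C) from the proof of Proposition~\ref{holomorphic-periods-induce-formal-periods}, and then the proof of Proposition~\ref{Lem: monodromy action} just invokes that lemma and does the sign/orientation matching; you do both steps at once with ad hoc open sets $W_i$, which is fine but slightly less clean since the chart decomposition of $\beta(t)$ is already available from Lemma~\ref{Lem: charts for cycle}.
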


\begin{proof}
We apply Lemma~\ref{lemma-on-monodromy} to the specific situation laid out in
the proof of Lemma~\ref{Lem: charts for cycle}. This already justifies what we
are summing over. For each summand, we are in the situation (ii) stated in
Step~II of the proof, where $\xi_e\not \in\Lambda_\rho$ and
$\beta_e=\gamma_i\times z(S(e))$. A clockwise loop in the $w$-plane gives a
counter-clockwise loop in the $z$-plane, call this $S^1$. Hence, by
Lemma~\ref{lemma-on-monodromy}, $(T-\id)(\beta_e+\beta_{e'})=
\kappa_i[\gamma_i\times S^1]$, on the level of chains up to homology. The
vanishing cycle $\alpha$ is represented by an orbit of the diagonal action of
$U(1)^n$ in the coordinates $z^{\xi_e},z_2,\ldots,z_n$ obtained from an oriented
basis $\xi_e,e_2,\ldots,e_n$ with $e_j\in\Lambda_\rho$. Relating this orbit to
$\gamma_i\times S^1$, we have $z=z^{\eps\xi_e}$ for $\eps=\langle \check
d_\rho,\xi_e\rangle=\pm 1$. Recall that $\gamma_i$ is a $T_e$-orbit, so taken
together with $S^1$, the circle in the $z$-plane, this cycle is indeed
homologous to $\alpha$ up to sign. The signs work out as stated.
\end{proof}

%
%

\section{Computation of the period integrals}
\label{section-compute-periods}
The purpose of this section is to prove Theorem~\ref{Thm: period thm}. We
continue to use the setup from Section~\ref{section-tropical-to-homology}. In
particular, we have $X_k^\circ$, an $n$-dimensional log scheme over $O_k= \Spec
\CC[t]/(t^{k+1})$ with restriction to $X_k^\circ\setminus Z$ log smooth over
$O_k$. Here $O_k$ is given the log structure induced from the toric log
structure on $\Spec\CC[t]$. Denote by $\Omega^n_{X_k^\circ/O_k}$ the sheaf of
relative log differentials of degree $n$, which is locally free away from $Z$.
The construction of $X_k^\circ$ comes with a canonical relative logarithmic
$n$-form $\Omega\in \Gamma(X_k^\circ, \Omega^n_{X_k^\circ/O_k})$. If $\sigma$ is
a maximal cell and $e_1,\ldots,e_n$ is an oriented lattice basis of
$\Lambda_\sigma$, then, in the corresponding local coordinates
$z_1=z^{e_1},\ldots, z_n=z^{e_n}$ of $\Spec (R^k_\sigma)= \Spec \big(\CC[t,z_1,
\ldots,z_n]/ (t^{k+1})\big)$, it holds
\begin{equation}\label{Eqn: Omega}
\Omega= \dlog z_1\wedge\ldots\wedge \dlog z_n
= z_1^{-1} dz_1\wedge\ldots\wedge z_n^{-1} dz_n.
\end{equation}
We often also work with polar coordinates $z_j= r_j e^{\sqrt{-1}\alpha_j}$.
To avoid cluttering some formulas with exponentials, we work with
$\theta_j=e^{\sqrt{-1}\alpha_j}\in U(1)=S^1$ rather than with $\alpha_j\in
\RR/2\pi$, as already in the proof of Lemma~\ref{Lem: charts for cycle}. In particular, $\dlog z_j= \dlog r_j+\sqrt{-1} d\alpha_j$ now reads
$\dlog z_j= \dlog r_j+\dlog\theta_j$ and it holds
\begin{equation}
\label{Eqn: int dlog theta}
\int_{S^1}\dlog\theta_j = \sqrt{-1}\int_{S^1} d\alpha_j = 2\pi\sqrt{-1}.
\end{equation}
Recall that $Z$ intersected with the interior of a codimension one stratum
$X_\rho\cap X_0^\circ\subset X_0^\circ$ is given by the zero locus of
$f_{\ul\rho}$, the reduction of $f_\fob$ modulo $t$ for any slab
$\fob\subseteq\ul\rho\subset\rho$. In Construction~\ref{Affine structure on B
minus A} we defined an adapted affine structure on
$B\setminus(\Delta_2\cup\shA)$ for $\shA=\mu(Z)$ the image of $Z$ under the
generalized momentum map $\mu: X_0^\circ\to B$. For a tropical cycle
$\beta_\trop$ on $B\setminus(\Delta_2\cup\shA)$ fulfilling Assumption~\ref{Ass:
beta} and $\beta$ the associated $n$-cycle from Construction~\ref{Constr: beta},
we now compute $\int_\beta\Omega$ in the form $h+g\log t$ with $h,g\in
\CC[t]/(t^{k+1})$ following Appendix~\ref{Sect: Period integrals}.

We first compute the period of $\Omega$ over a general fiber of
the momentum map $\mu: X_0^\circ\to B$ of Proposition~\ref{Prop:
momentum map}.

\begin{lemma}
\label{lemma-intalpha}
Let $v\in B$ be contained in the interior of a maximal cell $\sigma$
and $\alpha=\mu^{-1}(v)$, viewed as an $n$-cycle in $X_0^\circ$ with the
natural orientation. Then, in the sense of finite order period
integrals (Construction~\ref{Constr: finite order periods}),
\[
\displaystyle\int_\alpha\Omega=(2\pi \sqrt{-1})^n \in \CC[t]/(t^{k+1}).
\]
\end{lemma}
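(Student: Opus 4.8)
The plan is to reduce the finite order period integral to an elementary integral of the standard toric logarithmic $n$-form over a real torus inside a single coordinate chart. First I would use the homological invariance of the finite order period (Proposition~\ref{Prop: fiber integral well-defined}) together with the connectedness of $\Int\sigma$ to replace the given $v$ by any other point of $\Int\sigma$: by Proposition~\ref{Prop: momentum map} the fibers $\mu^{-1}(v)$, $v\in\Int\sigma$, are orbits of the $n$-torus $T_\sigma=\Hom(\Lambda_\sigma,U(1))$ acting on the algebraic torus $\Spec\CC[\Lambda_\sigma]\subseteq X_\sigma$, hence form an isotopy and in particular are mutually homologous in $X_0^\circ$. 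Since $\Int\sigma\subseteq B\setminus(\Delta_2\cup\shA)$ and the finitely many walls meet $\Int\sigma$ in a nowhere dense set, I may assume $v$ lies in the interior of a chamber $\fou\subseteq\sigma$. Then $\alpha=\mu^{-1}(v)$ is entirely contained in the single chart of type~I from \eqref{Eqn: chart of type I}, namely $\Spec\big(\CC[\Lambda_\sigma]\big)\times O_k\to X_k^\circ$, and in fact in its reduction $(\CC^*)^n$.

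Next, in this chart I would fix an oriented lattice basis $e_1,\dots,e_n$ of $\Lambda_\sigma$ and the corresponding coordinates $z_j=z^{e_j}$, in which $\alpha$ is the real torus $\{\,|z_1|=r_1,\dots,|z_n|=r_n\,\}$ for suitable $r_j\in\RR_{>0}$ (positivity because $\alpha$ lies in the big torus), carrying the product orientation determined by $e_1,\dots,e_n$; by definition this is the natural orientation of $\mu^{-1}(v)$. Writing $z_j=r_j\theta_j$ with $\theta_j\in S^1$ and using \eqref{Eqn: Omega}, the functions $r_j$ are constant on $\alpha$, so $\dlog z_j|_\alpha=\dlog\theta_j$ and hence
\[
\Omega\big|_\alpha=\dlog z_1\wedge\dots\wedge\dlog z_n\big|_\alpha=\dlog\theta_1\wedge\dots\wedge\dlog\theta_n .
\]
Because the coefficients of $\Omega$ in a toric chart involve no $t$, the finite order period integral of $\Omega$ over $\alpha$ (Construction~\ref{Constr: finite order periods}) coincides with the ordinary real integral of this closed form, which by Fubini and \eqref{Eqn: int dlog theta} equals $\prod_{j=1}^n\int_{S^1}\dlog\theta_j=(2\pi\sqrt{-1})^n$, an element of $\CC\subseteq\CC[t]/(t^{k+1})$.

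The one genuinely delicate point is the sign: I must check that the orientation of $\mu^{-1}(v)$ as a $T_\sigma$-orbit, induced from the orientation of $B$ via $\Lambda_\sigma$, is exactly the product orientation of $(S^1)^n$ associated to an oriented basis $e_1,\dots,e_n$, so that the $+$ sign in $(2\pi\sqrt{-1})^n$ is correct and no further permutation sign intervenes. Everything else --- the reduction to a chamber, the observation that $\Omega$ restricted to a toric chart is the pure toric log form and therefore $t$-independent (so that the finite order period collapses to an honest integral), and the computation $\int_{S^1}\dlog\theta=2\pi\sqrt{-1}$ --- is routine.
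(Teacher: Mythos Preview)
Your proof is correct and follows essentially the same approach as the paper: place $\alpha$ in a single type~I chart $\Spec R^k_\fou$, write $\Omega$ in toric coordinates, restrict to the torus fiber, and compute via \eqref{Eqn: int dlog theta} and Fubini. One minor point: the homotopy step moving $v$ into the interior of a chamber is unnecessary, since the chart $\Spec R^k_\fou=\Spec(\CC[\Lambda_\sigma])\times O_k$ already has underlying space the full big torus $(\CC^*)^n\subset X_\sigma$, which contains $\mu^{-1}(v)$ for every $v\in\Int\sigma$; the paper simply takes any chamber $\fou\subseteq\sigma$ without moving $v$.
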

\begin{proof}
The cycle $\alpha$ is contained in a single chart $\tilde
U_1=\Spec R^k_\fou$ of type~\eqref{ChI}, for any chamber
$\fou\subseteq\sigma$. Using \eqref{Eqn: int dlog theta}, we obtain
\[
\int_\alpha\Omega = \int_{(S^1)^n}
\dlog\theta_1\wedge\ldots\wedge \dlog\theta_n = (2\pi\sqrt{-1})^n.
\]\\[-6ex]
\end{proof}

According to Proposition~\ref{holomorphic-periods-induce-formal-periods},
Lemma~\ref{lemma-intalpha} proves the ambiguity of $\int_\beta\Omega$ up to
multiples of $(2\pi\sqrt{-1})^n$, hence the stated well-definedness of the
exponentiated period integral in Theorem~\ref{Thm: period thm}.

We now turn to the computation of $\int_\beta \Omega$ for $\beta$ as in
\eqref{eqn:beta-sum-of-chains}.


\subsection{Integration over \texorpdfstring{$\beta_i=\beta_e$}{bi=be} with \texorpdfstring{$\Phi_i$}{Pi} a chart of type~I}
\label{section-chamber-integral}

Let $e$ be an edge of $\beta_\trop$ in the interior of a maximal cell $\sigma$,
with vertices $v_\pm$ and $e$ oriented from $v_-$ to $v_+$. As in
Construction~\ref{Constr: beta} write $\xi_e=m_e\cdot\bar\xi_e$ with $m_e\in\NN$
and $\bar\xi_e\in\Lambda_\sigma$ primitive. Complete $\bar\xi_e=e_1$ to an oriented basis
$e_1,\ldots,e_n$ of $\Lambda_\sigma$. Then the inclusion $\ZZ^{n-1}\to
\Lambda_\sigma$ defined by $e_2,\ldots,e_n$ induces an identification of $T_e$
with $U(1)^{n-1}$ acting diagonally on $(\CC^*)^{n-1}$ with coordinates
$z_2=z^{e_2},\ldots,z_n=z^{e_n}$ and acting trivially on $z^{\ol\xi_e}$. Recall also from
Construction~\ref{Constr: beta} that $\beta_e=\ol\beta_e$ is defined as the
orbit of $S(e)$ under $\tilde T_e= T_e\times \ZZ/m_e\ZZ$, with $\ZZ/m_e\ZZ$ acting on
$z^{\ol\xi_e}$ by roots of unity.

According to Definition~\ref{Def: Phi^+} and \eqref{Eqn: Omega}, it holds 
\[
\Phi_i^+(\Omega) = \dlog z^{\bar\xi_e}\wedge \dlog z_2\wedge \ldots\wedge \dlog z_n.
\]
In view of \eqref{Eqn: integrals for Ch I}, we now compute
\begin{eqnarray*}
\int_{\beta_i} \Phi_i^+(\Omega) &=&
\int_{S(e)\times\tilde T_e} \dlog z^{\ol\xi_e}\wedge \dlog z_2\wedge\ldots\wedge \dlog z_n\\
&=& \int_{S(e)\times T_e\times\ZZ/m_e\ZZ}
\dlog z^{\ol\xi_e}\wedge \dlog\theta_2\wedge\ldots\wedge \dlog\theta_n\\
&=& (2\pi\sqrt{-1})^{n-1} \int_{S(e)\times\ZZ/m_e}\dlog z^{\ol\xi_e}\\
&=& (2\pi\sqrt{-1})^{n-1} \sum_{\nu=0}^{m_e-1}
\Big(\log \big(\epsilon^\nu z^{\ol\xi_e}\big(S(v_+)\big)\big)
- \log\big(\epsilon^\nu z^{\ol\xi_e}\big(S(v_-)\big)\big) \Big),
\end{eqnarray*}
where $\epsilon$ denotes a primitive $m_e$-th root of unity. Expanding $\log
(\epsilon^\nu z^{\ol\xi_e}(S(v_\pm)) = \log \epsilon^\nu+\log
z^{\ol\xi_e}(S(v_\pm))$, each term $\log \epsilon^\nu$ in the sum occurs twice
with opposite signs, leaving us with an $m_e$-fold sum of $\log
z^{\ol\xi_e}\big(S(v_+)\big) - \log z^{\ol\xi_e}\big(S(v_-)\big)$. Thus the sum
equals the difference of $\log z^{\xi_e}=m_e\log z^{\bar\xi_e}$ at the two
endpoints of $S(e)$, that is,
\begin{equation}
\label{Eqn: integral over edges of type I}
\int_{\beta_e}\Omega = (2\pi\sqrt{-1})^{n-1} \Big( \log z^{\xi_e}\big(S(v_+)\big)
-\log z^{\xi_e}\big(S(v_-)\big)\Big),
\end{equation}
for $e$ oriented from $v_-$ to $v_+$.


\subsection{Integration over \texorpdfstring{$\Gamma_v$}{Gv}}
\label{section-vertex-integral}

We need the following lemma.
\begin{lemma} 
\label{lemma-alternating-roots}
Let $\mu_k\subset U(1)$ denote the subgroup of $k$-th roots of
unity. For any two positive integers $n$ and $m$, the subsets
\[
A=\mu_m\cup \mu_n \quad\hbox{ and }\quad
B=\mu_{m+n}\setminus ((\mu_m\cup \mu_n)\cap \mu_{m+n})
\]
of $U(1)=S^1$ alternate, that is, following the circle, we
alternately cross a point from $A$ and $B$.
\end{lemma}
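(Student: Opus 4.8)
The plan is to reduce the statement to a simple counting argument about arcs on the circle. Place all roots of unity on $U(1) = S^1$ and write $d = \gcd(m,n)$; note that $\mu_m \cap \mu_n = \mu_d$, so the points of $A = \mu_m \cup \mu_n$ that are genuinely ``doubled'' are exactly the $d$-th roots of unity, and $|A| = m + n - d$. Similarly, $(\mu_m \cup \mu_n) \cap \mu_{m+n} = \mu_{\gcd(m,m+n)} \cup \mu_{\gcd(n,m+n)} = \mu_d$ (using $\gcd(m, m+n) = \gcd(m,n) = d = \gcd(n, m+n)$), so $|B| = (m+n) - d$. Thus $|A| = |B|$, which is the necessary numerical balance; the real content is the alternation.

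First I would set up the combinatorics cleanly: the $m+n$ points of $\mu_{m+n}$ cut $S^1$ into $m+n$ equal arcs, and I claim each such arc contains exactly one point of $A$ in its interior, except for the $d$ arcs whose endpoints are consecutive $d$-th roots of unity — wait, that over-counts, so instead I would argue directly. The cleanest route: parametrize $S^1$ by $x \in \RR/\ZZ$, so $\mu_N$ becomes $\tfrac1N\ZZ/\ZZ$. Then $A$ corresponds to $\tfrac1m\ZZ \cup \tfrac1n\ZZ$ and $B = (\mu_{m+n})\setminus A$ to the subset of $\tfrac{1}{m+n}\ZZ$ not lying in $\tfrac1m\ZZ\cup\tfrac1n\ZZ$. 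Alternation means: between any two cyclically consecutive points of $A$ there is exactly one point of $B$, and vice versa. Since $|A| = |B|$ this is equivalent to just one of the two inclusions, say: each open arc between consecutive elements of $A$ contains exactly one element of $\mu_{m+n}$. I would prove this by a direct estimate on arc lengths: a gap of $A$ has length at most $\tfrac1{\min(m,n)} \le \tfrac{1}{?}$ and at least $\tfrac{1}{m+n}$ in the right normalization — more precisely, I expect each $A$-gap to have length strictly between $\tfrac{1}{m+n}$ and $\tfrac{2}{m+n}$ after accounting for how $\tfrac1m\ZZ$ and $\tfrac1n\ZZ$ interleave, forcing exactly one multiple of $\tfrac1{m+n}$ inside.

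A slicker alternative, which I would actually pursue first, is the ``three-distance / Stern–Brocot'' style identity: consecutive elements of $\mu_m \cup \mu_n$ differ (cyclically) by gaps that are translates of intervals of the form $\big[\tfrac{j}{m}, \tfrac{k}{n}\big]$, and the mediant $\tfrac{j+?}{m+n}$ lands inside. Concretely, I would observe that a point of $\mu_{m+n}$ at $\tfrac{\ell}{m+n}$ lies strictly between the nearest points $\tfrac{a}{m}$ below and $\tfrac{b}{n}$ above (or vice versa) precisely because $\tfrac{\ell}{m+n}$ is a mediant-type fraction; and conversely each $A$-gap, being bounded by one multiple of $\tfrac1m$ and one of $\tfrac1n$ that are ``adjacent'' in this refined lattice, contains exactly the mediant $\tfrac{\ell}{m+n}$. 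The main obstacle I anticipate is handling the overlap set $\mu_d$ correctly: at a point of $\mu_d$ both a point of $\mu_m$ and a point of $\mu_n$ coincide, and I must make sure the ``alternation'' statement is interpreted with $A$ as a \emph{set} (so coincident points count once) — once that bookkeeping is fixed, the length estimate or mediant argument closes the proof. I would finish by noting $A$ and $B$ are disjoint by construction, so together with $|A|=|B|$ and the one-point-per-gap claim, alternation around $S^1$ follows immediately.
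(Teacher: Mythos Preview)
Your counting is correct and the overall strategy — establish $|A|=|B|$ and then prove a one-point-per-gap statement — is sound and close in spirit to what the paper does. But both of the concrete mechanisms you propose for the alternation step fail.

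\textbf{The length estimate is false.} Take $m=2$, $n=3$. Then $A=\{0,\tfrac13,\tfrac12,\tfrac23\}$ and the $A$-gap $(\tfrac13,\tfrac12)$ has length $\tfrac16 < \tfrac15 = \tfrac{1}{m+n}$, contradicting your expected lower bound. (It still happens to contain $\tfrac25\in\mu_5$, but not for length reasons.) So a pure arc-length bound cannot force exactly one $\mu_{m+n}$-point per $A$-gap.

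\textbf{The mediant argument rests on a false structural claim.} You assume each $A$-gap is bounded by one point of $\tfrac1m\ZZ$ and one of $\tfrac1n\ZZ$. This is not true in general: for $m=1$, $n=5$ the gap $(\tfrac15,\tfrac25)$ has both endpoints in $\mu_5$. The naive mediant $\tfrac{1+2}{5+5}=\tfrac{3}{10}$ is not in $\mu_6$; the actual $\mu_6$-point in that gap is $\tfrac13$, which is not a mediant of the endpoints. So the Stern--Brocot heuristic does not close the argument.

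What the paper does instead is first reduce to $\gcd(m,n)=1$ by passing to the $d$-fold cover (this cleans up your $\mu_d$ bookkeeping entirely), and then argues by contradiction on the dual side: if two consecutive points $\tfrac{c}{m+n},\tfrac{c+1}{m+n}$ of $B$ had no $A$-point between them, there would exist integers $a,b$ with
\[
\frac{a}{m},\ \frac{b}{n}\ <\ \frac{c}{m+n}\ <\ \frac{c+1}{m+n}\ <\ \frac{a+1}{m},\ \frac{b+1}{n}.
\]
Clearing denominators and summing the resulting four inequalities yields $0<(a+b+2-(c+1))(m+n)<m+n$, which has no integer solution. Together with $|A|=|B|=m+n-1$ this forces alternation. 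If you want to stay on your side of the duality, an equivalent clean argument is: for $1\le c\le m+n-2$, the fractional parts satisfy $\{cm/(m+n)\}+\{cn/(m+n)\}=1$, from which one checks that exactly one of the intervals $[\tfrac{cm}{m+n},\tfrac{(c+1)m}{m+n}]$, $[\tfrac{cn}{m+n},\tfrac{(c+1)n}{m+n}]$ contains an integer — hence each $\mu_{m+n}$-arc contains exactly one $A$-point. Either route replaces your length/mediant heuristics with an honest integer argument.
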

\begin{proof} 
First assume $m=n$. Then we have $A=\{\exp(2\pi\sqrt{-1}\frac{2k}{2m})\mid
k\in\ZZ\}$ and $B=\{\exp(2\pi\sqrt{-1}\frac{2k+1}{2m})\mid k\in\ZZ\}$ and the
assertion holds. Next assume $m\neq n$. Set $d=\gcd(m,n)$. We may view the
situation as a $d$-fold cover of the case where $m$ and $n$ are coprime. As the
assertion transfers to the cover, we may assume that $\gcd(m,n)=1$ and then
$\mu_m\cap \mu_n =\{1\}$ and $\operatorname{lcm}(m+n,n)=n(m+n)$ and
$\operatorname{lcm}(m+n,m)=m(m+n)$. Hence
\[
\mu_{m+n} \cap (\mu_m\cup \mu_n)= \{1\},
\]
so in particular $A$ and $B$ have the same number of elements,
$m+n-1$. Now assume to the contrary of the assertion that there are consecutive
elements in $B$ with no element of $A$ in between. This means there are integers
$a,b,c$ such that
\[
\frac{a}m, \frac{b}n < \frac{c}{m+n} < \frac{c+1}{m+n} <
\frac{a+1}m, \frac{b+1}n.
\]
Multiplying common denominators yields
\begin{eqnarray*}
&0 < (a+1)(m+n)-(c+1)m,\quad
0 < (b+1)(m+n)-(c+1)n,\\
&a(m+n) < cm,\quad
b(m+n) < cn.
\end{eqnarray*}
Plugging the third and fourth inequalities into the first and second, respectively,
with subsequent summation of the resulting equations yields
\[
0 < (a+b+2)(m+n) - (c+1)(m+n) < m+n
\]
which has no solution with $a,b,c\in\ZZ$.
\end{proof}

Recall the definition of $\Gamma_v\subset \mu^{-1}(v)=
\Hom(\Lambda_v, U(1))$ from Construction~\ref{Constr: beta}.
\begin{lemma} 
\label{lemmaav}
Let $v\in\beta_\trop$ be a vertex of valency $\upnu\ge 3$. Then
\begin{equation}\label{Eqn: Integral over Gamma_v}
\frac{1}{(2\pi\sqrt{-1})^n}\int_{\Gamma_v}\Omega
= \left\{ \begin{array}{ll} 0, & \upnu\hbox{ is even,}\\
1/2, & \upnu\hbox{ is odd.}\end{array} \right.
\end{equation}
up to adding integers.
\end{lemma}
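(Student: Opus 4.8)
The plan is to reduce the integral $\int_{\Gamma_v}\Omega$ over the $n$-chain $\Gamma_v\subset\mu^{-1}(v)\simeq\Hom(\Lambda_v,U(1))\simeq(S^1)^n$ to a purely combinatorial computation on the circle via the preceding two lemmas. Recall that $\partial\Gamma_v=-\sum_{e\ni v}\eps_{e,v}[\tilde T_e\cdot S(v)]$, where each $\tilde T_e\cdot S(v)$ is (a translate of) the $(n-1)$-torus $T_e=\xi_e^\perp\otimes U(1)$, Poincar\'e-dual to $\xi_e$, together with the $\ZZ/m_e\ZZ$-factor when $\xi_e$ is not primitive. Since $\Omega=\dlog\theta_1\wedge\cdots\wedge\dlog\theta_n$ is the (normalized) invariant volume form on the torus $\mu^{-1}(v)$ and its periods over all of $\mu^{-1}(v)$ equal $(2\pi\sqrt{-1})^n$ (Lemma~\ref{lemma-intalpha}), the value of $(2\pi\sqrt{-1})^{-n}\int_{\Gamma_v}\Omega$ modulo $\ZZ$ is exactly the normalized volume (i.e.\ Haar measure) of $\Gamma_v$ inside $\mu^{-1}(v)$, and the only ambiguity (adding multiples of $\mu^{-1}(v)$ to $\Gamma_v$) contributes integers. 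So the content of the lemma is: \emph{the region bounded by the oriented cycle $\sum_e\eps_{e,v}[T_e\cdot S(v)]$ has Haar measure $0$ or $1/2$ according to the parity of $\upnu$}.

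First I would reduce to the two-dimensional case. Using the balancing condition $\sum_{e\ni v}\eps_{e,v}\xi_e=0$ in $\Lambda_\sigma$, the tangent vectors $\xi_e$ span a subspace; after an integral change of basis we may arrange coordinates so that $\Lambda_\sigma=\ZZ^n$ with all $\xi_e$ lying in the rank-$\le 2$ sublattice $\ZZ^2\times\{0\}$ (if they span only rank $1$ the two boundary tori are parallel and $\Gamma_v$ is a product of a $1$-chain with a $(n-1)$-torus, a degenerate case one checks directly; generically the rank is $2$). Then each $T_e$ is $(\xi_e^\perp\cap\ZZ^2)\otimes U(1)$ times the full torus $(S^1)^{n-2}$ in the remaining coordinates, so $\Gamma_v=\Gamma_v'\times(S^1)^{n-2}$ with $\Gamma_v'$ a $2$-chain in $(S^1)^2$, and its Haar measure in $(S^1)^n$ equals that of $\Gamma_v'$ in $(S^1)^2$. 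This reduces everything to computing the signed area, modulo $1$, of the region in $(S^1)^2=\RR^2/\ZZ^2$ bounded by the closed polygonal path whose $e$-th edge is the primitive circle in direction $\eps_{e,v}\xi_e$ (traversed $m_e$ times if $\xi_e$ is $m_e$-divisible, but the $\ZZ/m_e$-factor only rescales counting and does not affect the area mod $1$ — this point needs a brief check).

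The heart is then the $n=2$ count, and this is where Lemma~\ref{lemma-alternating-roots} enters: after projecting onto one $S^1$-factor, the endpoints of the arcs cut out by the various $T_e$ are roots of unity of orders determined by the $\xi_e$, and Lemma~\ref{lemma-alternating-roots} governs how the forward-oriented and backward-oriented boundary pieces interleave around the circle, so that consecutive arcs cancel in pairs. The outcome is that the total signed length telescopes, and the parity of $\upnu$ controls whether the telescoping closes up to $0$ or leaves a half-circle's worth of area, giving $0$ when $\upnu$ is even and $1/2$ when $\upnu$ is odd. \textbf{The main obstacle} I expect is precisely this combinatorial bookkeeping: organizing the signs $\eps_{e,v}$, the cyclic order of the edges $e$ around $v$, and the orientation induced on each $T_e$ as part of $\partial\Gamma_v$ so that Lemma~\ref{lemma-alternating-roots} applies cleanly — i.e.\ verifying that the $A$-points and $B$-points of that lemma really correspond to the $\eps_{e,v}=+1$ and $\eps_{e,v}=-1$ boundary tori — and handling the divisibility factors $m_e$ without them spoiling the mod-$1$ accounting. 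The rest (the reduction to $(S^1)^2$ and the identification of period mod $(2\pi\sqrt{-1})^n$ with Haar measure) is routine once set up.
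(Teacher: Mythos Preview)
There is a genuine gap in your reduction step. You claim that the balancing condition $\sum_{e\ni v}\eps_{e,v}\xi_e=0$ forces all the $\xi_e$ to lie in a rank-$\le 2$ sublattice, but this is simply false once $\upnu\ge 4$: for instance in $\ZZ^3$ take $\xi_1=e_1$, $\xi_2=e_2$, $\xi_3=e_3$, $\xi_4=-(e_1+e_2+e_3)$, which span all of $\ZZ^3$. The single linear relation only bounds the rank by $\upnu-1$, not by $2$. So your product decomposition $\Gamma_v=\Gamma_v'\times(S^1)^{n-2}$ with $\Gamma_v'$ a $2$-chain in $(S^1)^2$ is unavailable in general, and with it the entire projection-to-$S^1$ argument using Lemma~\ref{lemma-alternating-roots} collapses for higher valency.

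The paper avoids this problem by an extra step you are missing: it \emph{first} reduces to the trivalent case, inserting $\upnu-3$ new edges to replace $v$ by a chain of $\upnu-2$ trivalent vertices $w_1,\ldots,w_{\upnu-2}$, and checking that the associated $n$-cycles are homologous so that $\int_{\Gamma_v}\Omega\equiv\sum_j\int_{\Gamma_{w_j}}\Omega$ modulo $(2\pi\sqrt{-1})^n\ZZ$. Only at a \emph{trivalent} vertex do three vectors summing to zero necessarily span rank $\le 2$, and there the splitting $\Lambda_v\simeq V\oplus W$ genuinely reduces to a $1$- or $2$-dimensional computation. In each trivalent case the answer is $1/2$ (Lemma~\ref{lemma-alternating-roots} is used for the $1$-dimensional case, and a separate lattice-triangle argument with the symmetry $x\mapsto -x$ handles the $2$-dimensional case). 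The parity statement then drops out from the count $\upnu-2$ of trivalent pieces, not from any telescoping on the circle. Your identification of $(2\pi\sqrt{-1})^{-n}\int_{\Gamma_v}\Omega$ with Haar measure modulo $\ZZ$ is correct and matches the paper, but the hard content is the trivalent reduction followed by the explicit $1$- and $2$-dimensional symmetry arguments.
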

\noindent\emph{Proof.}
By construction, $\Gamma_v$ is a singular $n$-chain on the $n$-torus
$\mu^{-1}(v)$. The restriction of $\Omega$ to this torus is
$\dlog\theta_1\wedge\ldots\wedge \dlog\theta_n$, which agrees with
$(2\pi\sqrt{-1})^n$ times the $U(1)^n$-invariant volume form $d\mathrm{vol}$ of
total volume $1$. Thus the statement concerns the volume of $\Gamma_v$ as a
fraction of the volume of $\mu^{-1}(v)$.

We have $\sum_{e\ni v} \eps_{e,v} \xi_e=0$. Set $\xi_j:=\eps_{e_j,v} \xi_{e_j}$
for $e_1,\ldots,e_r$ an enumeration of the edges containing $v$.
\begin{figure}
\resizebox{0.9\textwidth}{!}{
\includegraphics{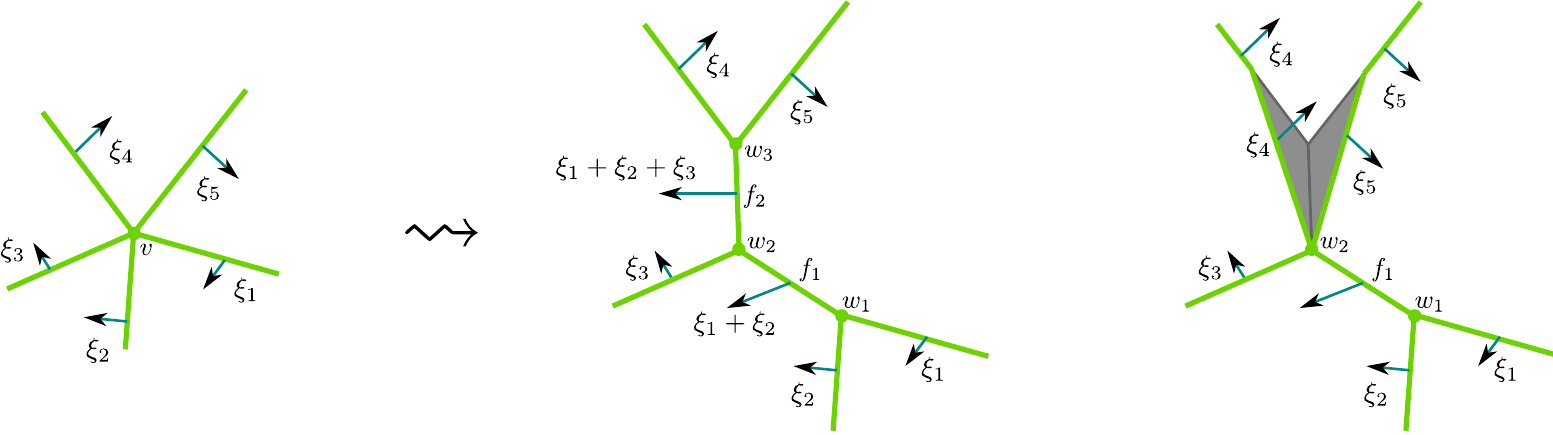}
}
\caption{Making a vertex trivalent by the insertion of new edges and a 2-chain (in grey) that deletes one of the new edges: $f_2$}
\label{Fig: make-trivalent}
\end{figure}
We decompose $v$ into trivalent vertices via insertion of $\upnu-3$ new edges
$f_1,\ldots,f_{\upnu-3}$ meeting the existing edges in the configuration, as
depicted in Figure~\ref{Fig: make-trivalent}. Precisely, we replace $v$ by a
chain of new edges $f_1,\ldots,f_{\upnu-3}$ such that the ending point of $f_j$
is the starting point of $f_{j+1}$. Let $w_1,\ldots,w_{\upnu-2}$ denote the
vertices in this chain, the indices arranged so that $w_1$ meets $e_1,e_2$,
$w_2$ meets $e_3$, $w_3$ meets $e_4$ and so forth, finally $w_{\upnu-2}$ meets
$e_{\upnu-1},e_\upnu$. The edge $f_j$ is decorated with the section
$\xi_1+\ldots+\xi_{j+1}$. One checks that at each vertex $w_j$ the balancing
condition \eqref{balancing} holds. One also checks that the new tropical curve
is homologous to the original one. Indeed, adding boundaries of suitable
2-cycles, we can successively slide down the edges $e_3,e_4,\ldots$ to $w_1$.
In this process the sections along $f_1,\ldots,f_{\upnu-3}$ get modified and
when all $e_j$ have been moved to the first vertex, the sections of the $f_j$
are all trivial and so we end up in the original setup by setting $w_1=v$. Since
there is an injection of groups of chains
\[
C_j(\RR^n,\Lambda) \ra
C_j(\RR^n\times \Hom(\ZZ^n,U(1))),\qquad (c,\xi)\mapsto c\times
\Hom(\ZZ^n/\xi,U(1))
\]
compatible with boundary maps, we conclude that the
associated $n$-cycles to the original and modified $\beta_\trop$ are homologous
as well. Hence
\[
\int_{\Gamma_v}\Omega =  \int_{\Gamma_{w_1}}\Omega +\ldots +
\int_{\Gamma_{w_{\upnu-2}}}\Omega \quad \mod (2\pi\sqrt{-1})^n\ZZ.
\]
We have reduced the assertion to the case where $v$ is trivalent. So
we assume $\upnu=3$ now. As before, set $\xi_j:=\eps_{e_j,v}
\xi_{e_j}$ for $j=1,2,3$.  By the balancing condition \eqref{balancing}, the saturated
integral span $V$ of $\xi_1,\xi_2,\xi_3$ has either rank one or two.
In either case, we have a product situation where we can split
$\Lambda_v\simeq V\oplus W$ which yields a splitting of the torus
\[
\Hom(\Lambda_v,U(1))\simeq \Hom(V,U(1))\times \Hom(W, U(1)),
\]
and $\Gamma_v$ also splits as $\bar\Gamma_v\times \Hom(W, U(1))$. The
integral over the invariant volume form splits similarly with the integral over
$\Hom(W,U(1))$ giving a factor of $1$. It remains to treat the case
$\Lambda_v=V$.

We treat the one-dimensional case first. Let $e$ be a primitive generator of $V$
and $\xi_j=a_j e$. We have $-a_3=a_1+a_2$. Canceling coincidental points (as
these have opposite orientation) between the multi-sets $\hat
A=\exp(2\pi\sqrt{-1}\frac1{a_1}\ZZ)\cup \exp(2\pi\sqrt{-1}\frac1{a_2}\ZZ)$ and
$\hat B=\exp(2\pi \sqrt{-1}\frac1{a_1+a_2}\ZZ)$, we obtain sets $A$
and $B$ as in the setup of Lemma~\ref{lemma-alternating-roots}. The lemma
implies that $\Gamma_v$ up to addition of multiples of the fundamental class is
homologous to a union of non-intersecting intervals with the union of endpoints
being $A\cup B$. This implies that $\Gamma_v$ is homologous to the sum of every
other interval between the pairs of points in $A\cup B$. We claim that
the area of $\Gamma_v$ is half the area of $S^1$. Indeed, the sets $A$
and $B$ are both invariant under conjugation $\kappa: z\mapsto \bar z$.
Moreover, $\kappa$ takes $\Gamma_v$ to the closure of its complement,
so $\Gamma_v$ and $\kappa(\Gamma_v)$ have the same area. Thus
\[
\int_{\Gamma_v}d\mathrm{vol} = \frac12\int_{S^1}d\mathrm{vol} = \frac12
\]
up to adding integers.

We next turn to the case where $V$ is two-dimensional.
In the universal cover $V_\RR^*=\Hom(V,\RR)$ of $\Hom(V,U(1))$, the cycles in
$\Hom(V,U(1))$ given by requiring $\xi_j\mapsto 1$ for $j=1,2,3$, respectively,
pull back to the infinite, discrete union of distinct straight lines
$\bigcup_{j=1}^3 (\xi_j^\perp+\ZZ^2)$. Let $U\subset V_\RR^*$ denote the open
complement of these lines. We claim that the pullback $\tilde\Gamma_v$ of
$\Gamma_v$ to $V_\RR^*$ can be taken as the closure in $V_\RR^*$ of a set of
components of $U$ such that $-\tilde\Gamma_v$ is the closure of
$V_\RR^*\setminus \tilde\Gamma_v$. If this holds then by a similar argument as
in the one-dimensional case we obtain $\int_{\Gamma_v} d\mathrm{vol} = \frac12$
up to integers.

To see the claim, consider the map of lattices $\ZZ^2\to V$ mapping $e_1$
to $\xi_1$ and $e_2$ to $\xi_2$.
\begin{wrapfigure}[11]{r}{0.3\textwidth}
\captionsetup{width=.8\linewidth}
\begin{center}\vspace{-.3cm}
\includegraphics[width=0.2\textwidth]{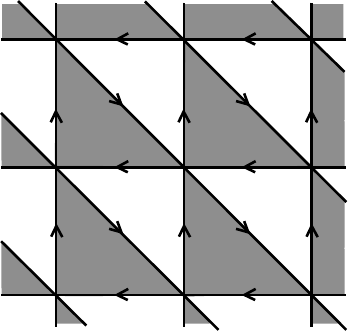}
\end{center}
\caption{Two $\ZZ^2$-invariant sets of lattice triangles in $\ZZ^2\otimes_\ZZ\RR$.}
\label{Fig: triangle-tiles}
\end{wrapfigure}
By the balancing condition, $-e_1-e_2$ then maps
to $\xi_3$. Dually we obtain an inclusion of lattices $V^*\to \ZZ^2$ of the same
index as the sublattice $\ZZ\xi_1+\ZZ\xi_2\subseteq V$. Now $\xi_j^\perp$
maps to the lines in directions $(0,1)$, $(-1,0)$ and $(1,-1)$, respectively,
with the stated orientations. Together with their $\ZZ^2$-translations these
lines subdivide $\RR^2= \ZZ^2\otimes_\ZZ\RR$ into triangular domains, see
Figure~\ref{Fig: triangle-tiles}:
$\ZZ^2$-translations of the two
triangles with vertices $(0,0)$, $(1,0)$, $(0,1)$ and
$(1,0)$, $(1,1)$, $(0,1)$. The first triangle with the natural
orientation of $\RR^2$ and its $\ZZ^2$-translations define a $\ZZ^2$-invariant
chain $A\subset \RR^2$ with the union of lines as its boundary. Moreover,
multiplication by $-1$ leads to the other triangle and its $\ZZ^2$-translations.
Take for $\tilde\Gamma_v\subset V_\RR^*$ the preimage of $A$ under the map
$V_\RR^*\to \RR^2$. Then $\tilde\Gamma_v\cup (-\tilde\Gamma_v)=V_\RR^*$ and
$\tilde\Gamma_v\cap (-\tilde\Gamma_v)$ is the infinite union of lines, as
claimed. 
\qed


\subsection{Integration over \texorpdfstring{$\beta_i=\beta_e+\beta_{e'}$}{bi=be+be'}
with \texorpdfstring{$\Phi_i$}{Pi} a chart of type~II}
\label{section-slab-integral}

Let $v$ be a vertex of $\beta_\trop$ in the interior of a slab
$\fob\subseteq\ul\rho $ with adjacent edges $e$, $e'$ and
$\beta_i=\beta_e+\beta_{e'}$. We use the notation from Construction~\ref{Constr:
beta} and in addition denote $v_-,v_+$ the vertices of $e,e'$ different from
$v$. The chart $\Phi_i$ was defined in the proof of Lemma~\ref{Lem: charts for
cycle} from $R^k_\fob=\CC[\Lambda_\fob][\tilde Z_+,\tilde Z_-,t]/ (\tilde Z_+
\tilde Z_-- z^{-m_v}f_\fob t^{\kappa_{\ul\rho}},t^{k+1})$ by
substituting $z=\tilde Z_+$, $w= \tilde Z_-/(z^{-m_v}f_\fob)$. Denote by
$\tilde\zeta\in\Lambda_\sigma$ the exponent with $\tilde Z_+=c_+
z^{\tilde\zeta}$ for some $c_+\in\CC^*$ as discussed in Construction~\ref{Affine
structure on B minus A} and Remark~\ref{Rem: tilde Z_+ tilde Z_-}. Let
$e_1,\ldots,e_{n-1}\in\Lambda_\rho$ be such that $e_1,\ldots,e_{n-1},
\tilde\zeta\in\Lambda_\sigma$ is an oriented basis. Differing from the choice in
Construction~\ref{Constr: beta} and Lemma~\ref{Lem: charts for cycle}, we now
take $\tilde\zeta$ as the last element of the basis to turn our cycles into the
form required in Appendix~\ref{Sect: Period integrals}. In these coordinates, the
logarithmic $n$-form $\Omega$ reads
\[
\Omega =  \dlog z_1\wedge\ldots\wedge\dlog z_{n-1}\wedge \dlog \tilde Z_+
= -\dlog z_1\wedge\ldots\wedge\dlog z_{n-1}\wedge \dlog\tilde Z_-,
\]
and hence, since $f_\fob$ does not depend on $\tilde Z_+,\tilde Z_-$,
\[
\Phi_i^+(\Omega) = \dlog z_1\wedge\ldots\wedge\dlog z_{n-1}\wedge \dlog z
= -\dlog z_1\wedge\ldots\wedge\dlog z_{n-1}\wedge \dlog w.
\]
In the notation of \eqref{Eqn: Type II contribution}, all the coefficients $g_r, h_r$ of the
Laurent expansion vanish and we have
\[
\Phi_i^+(\Omega)=0,\quad \res_{\Phi_i}(\Omega)= \dlog z_1\wedge\ldots\wedge\dlog z_{n-1}.
\]
Recall from Lemma~\ref{Lem: charts for cycle} that $\beta_e= \ol\beta_e$, while
$\beta_{e'}=\ol\beta_{e'}-\Gamma_{e'}$ is homologous relative to its boundary to
the chain $\hat\beta_{e'}$ defined in \eqref{Eqn: hat beta_{e'}}. Let
us first assume $\xi_e\not\in\Lambda_\rho$. Applying Formula~\eqref{Def:
finite order integrals-beta_i} then gives
\begin{equation}
\label{Eqn: preliminary integral beta_i chart II}
\int_{\beta_i}\Omega = (-1)^{n-1}\Big(\int_{T_e}
\dlog z_1\wedge\ldots\wedge \dlog z_{n-1}\Big)
\big( \kappa_{\ul\rho} \log t -\log b-\log a \big).
\end{equation}
Here the terms with $b=w\big(S(v_+)\big)$ and $a=z\big(S(v_-)\big)$ adjust for
$z(S(e))$ and $w(S(e'))$ to be curves not starting or ending at $1$, see
Remark~\ref{Rem: adjustments for non-standard starting points}. The factor
$(-1)^{n-1}$ comes from the fact that we oriented $\beta_i$ as $S(e)\times T_e$
rather than as $T_e\times S(e)$ as done in the appendix. With
$\xi_e\not\in\Lambda_\rho$ we have $\xi_e=\ol\xi_e=\pm\tilde\zeta$ by
($\beta_\mathrm{III}$) in Assumption~\ref{Ass: beta}. Hence, up to orientation,
$T_e$ acts as the diagonal $U(1)^{n-1}$ on $(z_1,\ldots,z_{n-1})$. Thus by
Lemma~\ref{lemma-intalpha} in dimension $n-1$, the integral over $T_e$ equals
$\pm(2\pi\sqrt{-1})^{n-1}$. To determine the sign recall that we oriented $T_e$
from an adapted oriented basis of $\Lambda_\sigma$ with first element
$\ol\xi_e=\xi_e$. Placing $\ol\xi_e$ at the last place rather than the first
changes the orientation of $T_e$ by $(-1)^{n-1}$, canceling the sign factor in
\eqref{Eqn: preliminary integral beta_i chart II}. Finally,
$\xi_e=\pm\tilde\zeta$ with the sign positive iff $\xi_e$ points into the
maximal cell $\sigma$ containing $e$. Thus denoting by
$\check d_e\in\check\Lambda_\sigma$ the generator of $\Lambda_\rho^\perp\simeq
\ZZ$ with $\langle \check d_e,\tilde\zeta\rangle=1$ we have
\begin{equation}
\label{Eqn: tilde zeta versus xi_e}
\xi_e= \ol\xi_e= \langle \check d_e,\ol\xi_e\rangle \cdot\tilde\zeta.
\end{equation}
With this discussion, \eqref{Eqn: preliminary integral beta_i chart II} yields the following:
\begin{equation}
\label{Eqn: integral over edges of type II, v1}
\int_{\beta_i}\Omega= (2\pi\sqrt{-1})^{n-1} \langle\check d_e,\xi_e\rangle
\Big(\kappa_{\ul\rho} \log t -\log w\big(S(v_+)\big) - \log z\big(S(v_-)\big)\Big).
\end{equation}
The coordinate $z=\tilde Z_+$ maps to $s_{\sigma\ul\rho}
(\tilde\zeta)z^{\tilde\zeta}$ under the generization map $R^k_{\fob}\to
R^k_\fou$. Using \eqref{Eqn: tilde zeta versus xi_e} we can thus rewrite \eqref{Eqn: integral
over edges of type II, v1} for later use as
\begin{equation}
\label{Eqn: integral over edges of type II}
\begin{aligned}
\frac{1}{(2\pi\sqrt{-1})^{n-1}}\int_{\beta_i}\Omega
=&  \langle\check d_e,\xi_e\rangle\Big(\kappa_{\ul\rho} \log t -\log w\big(S(v_+)\big)\Big)\\
&- \log s_{\sigma\ul\rho}(\xi_e)-\log z^{\xi_e}\big(S(v_-)\big).
\end{aligned}
\end{equation}
In the other case, $\xi\in\Lambda_\rho$, our chain $\beta_i$ is of type (ii) in
(Cy~II) of Construction~\ref{Constr: finite order periods} and
$\int_{\beta_i}\Omega=0$ by Formula~\eqref{Def: finite order integrals-beta_i}.
Thus \eqref{Eqn: integral over edges of type II} also holds in this case because
$\langle \check d_e,\xi_e\rangle=0$.


\subsection{Integration over a slab add-in \texorpdfstring{$\beta_i=\Gamma_{e'}$}{bi=Ge'}}
\label{section-slab add-in-integral}

Let $v$ be a vertex of $\beta_\trop$ mapping to a slab $\fob\subseteq\ul\rho$,
with adjacent edges $e,e'$. For the following computation we adopt the notation
of the construction of a slab add-in $\Gamma_{e'}$ in Step~IV of the proof of
Lemma~\ref{Lem: charts for cycle}. Formula~\ref{Eqn: parametrization of slab
add-in} gives the parametrization of $\Gamma_{e'}$ with respect to coordinates
$\hat z_1,\ldots,\hat z_n$ of $\Hom(\Lambda_{\sigma'},\CC^*)$, the reduction modulo
$t$ of the relevant chart $\Spec R^k_{\fou'}$, where $\fou'$ is the chamber containing
the image of $e'$:
\[
[0,1]\times T_{e'} \lra \Hom(\Lambda_\sigma,\CC^*),\quad
(\theta,s)\longmapsto \big(\gamma(s,\theta)\cdot \theta_1 b_1/c_1,
\theta_2 a_2,\ldots,\theta_n a_n\big).
\]
The map $\gamma:[0,1]\times T_{e'}\to \CC^*$ is a differentiable homotopy
with
\begin{equation}
\label{Eqn: reminder homotopy}
\gamma(0,\theta)=(\hat f_{\ul\rho}/\hat z^{m_v})(\theta a),\quad
\gamma(1,\theta)=a_1c_1/b_1.
\end{equation}
Since the chart $\Phi_i$ for $\beta_i$ is of type (Ch~I) of
Construction~\ref{Def: finite order integrals-beta_i}, the first case of
Definition~\ref{Def: Phi^+} gives
\[
\Phi_i^+\Omega= \dlog \hat z_1\wedge \ldots\wedge\dlog \hat z_n.
\]
If $\xi_e\in\Lambda_\rho$, the period integral $\int_{[0,1]\times
T_{e'}}\Phi_i^+\Omega$ involves two-dimensional integrals over level sets of
$z_2,\ldots,z_n$ and hence vanishes, see Figure~\ref{Fig: slab add-in}. In the
other case $\xi_e\not\in\Lambda_\rho$, the torus $T_{e'}$ acts trivially on
$z_1$ and the restriction map \[ T_{e'}\lra U(1)^{n-1},\quad \theta\longmapsto
(\theta_2,\ldots,\theta_n)
\]
is an isomorphism. Since $\hat z_1= z^{\tilde\zeta'}$, this isomorphism is
orientation preserving if $\xi_e$ points into the same maximal cell as
$\tilde\zeta'$, and then $\tilde\zeta'=\xi_e$. In terms of $\check
d_e\in\check\Lambda_\sigma$ used in \eqref{Eqn: tilde zeta versus xi_e} this is
the case if and only if $\langle \check d_e,\xi_e\rangle =-1$. We can now
compute
\begin{equation}
\label{Eqn: Integral over slab add-in, v1}
\begin{aligned}
\lefteqn{\int_{\Gamma_{e'}} \Omega\ =\ 
\int_{[0,1]\times T_{e'}} \Phi_i^+(\Omega)}\hspace{4ex}\\
&= -\langle \check d_e,\xi_e\rangle \int_{[0,1]\times U(1)^{n-1}} \partial_s\log \gamma(s,\theta)\, ds\wedge \dlog\theta_2\wedge\ldots\wedge \dlog\theta_n\\
&= -\langle \check d_e,\xi_e\rangle \int_{ U(1)^{n-1}}
\big(\log \gamma(1,\theta)- \log\gamma(0,\theta)\big)\,\dlog\theta_2\wedge\ldots\wedge \dlog\theta_n\\
&= -\langle \check d_e,\xi_e\rangle(2\pi\sqrt{-1})^{n-1}
\big(\log (a_1c_1/b_1) -\shR\big(z^{-m_v}f_{\ul\rho},v)\big).
\end{aligned}
\end{equation}
As in \eqref{Eqn: integral over edges of type II} the sign $-\langle \check
d_e,\xi_e\rangle$ adjusts the orientation of $T_{e'}$ with the orientation of
$U(1)^{n-1}$. Note that this factor renders the formula also correct in the case
$\xi_e\in\Lambda_\rho$. The last equality follows from \eqref{Eqn: reminder
homotopy} and the definition of the complex Ronkin function in \eqref{Eqn: complex
Ronkin fct}, see \S\ref{Sub: Ronkin fct}. The term $a_1c_1/b_1\in\CC^*$ is the
constant endpoint of the homotopy $\gamma$ defined in \eqref{Eqn: homotopy
gamma}. In the notation used there, provided $\xi_e\not\in\Lambda_\rho$, we have
\[
a_1=\hat z_1(S(v_+))= z^{-\langle \check d_e,\xi_e\rangle \xi_{e'}}\big(S(v_+)\big),\ \ 
b_1=w\big(S(v_+)\big),\ \ 
c_1=s_{\sigma'\ul\rho}(\tilde\zeta')= s_{\sigma'\ul\rho}(\xi_{e'})^{-\langle \check d_e,\xi_e\rangle}.
\]
Thus we can write \eqref{Eqn: Integral over slab add-in, v1} more intrinsically as
\begin{equation}
\label{Eqn: Integral over slab add-in}
\begin{aligned}
\frac{1}{(2\pi\sqrt{-1})^{n-1}}\int_{\Gamma_{e'}} \Omega
=&\langle \check d_e,\xi_e\rangle \Big(\shR(z^{-m_v}f_{\ul\rho},v) +\log w\big(S(v_+)\big)\Big)\\
&+\log z^{\xi_{e'}}\big(S(v_+)\big) +\log s_{\sigma'\ul\rho}(\xi_{e'})
\end{aligned}
\end{equation}
Note that this formula also holds if $\xi_e\in\Lambda_\rho$ and that the term
$w\big(S(v_+)\big)$ appears in \eqref{Eqn: integral over edges of type II} with
opposite sign.


\subsection{Interpolation between charts}
\label{section-wall-integral}

There are two cases where we work with different charts at a vertex $v$ of
$\beta$. First, if $v$ lies on a wall $\fop$ and the two edges $e,e'$ adjacent
to $v$ according to $(\beta_{\rm II})$ are contained in different chambers
$\fou$, $\fou'$. Second, if $v$ is adjacent to an edge intersecting a slab. In
these cases there is a potentially non-trivial contribution of the interpolation
term $\int_{[0,1]\times\gamma_i^\mu} \Phi_{ij}^+(\Omega)$ in \eqref{Def: finite
order integrals}. In all other cases, intersecting chains $\beta_i$ and
$\beta_j$ lie in the interior of the same chamber and hence $\Phi_i=\Phi_j$. We
now determine the contribution of the interpolation term in the remaining cases.

Let us first treat the case that $v$ lies on a wall $\fop$ separating chambers
$\fou$, $\fou'$. Let $\Phi_i :\tilde U_i\to X_k^\circ$, $\Phi_j: \tilde U_j\to X_k^\circ$ be
the charts for the adjacent edges $e\subseteq \fou$, $e'\subseteq\fou'$,
respectively, as defined in the proof of Lemma~\ref{Lem: charts for cycle}. Then
$\tilde U_i=\tilde U_j=\Spec \CC[t]/(t^{k+1})[\Lambda_\sigma]$ and
$\Phi_j=\Phi_i\circ \Psi_{ij}$ with $\Psi_{ij}$ defined by the wall crossing
isomorphism
\begin{equation}
\label{Eqn: Wall crossing formula}
\theta_{\fop}: R^k_{\fou}\lra R^k_{\fou'},\quad
z^m\longmapsto f_\fop^{\langle \check d_\fop, m\rangle} z^m.
\end{equation}
Here $\check d_\fop\in\check\Lambda_\sigma$ is the generator of
$\Lambda_\fop^\perp\simeq\ZZ$ evaluating positively on tangent vectors pointing
from $\fop$ into $\fou$. Writing $f_\fop=1+t\tilde f_\fop$, the homotopy
$\Phi_{ij}: [0,1]\times U_i\times O_k\to X_k^\circ$ between $\Phi_i$ and
$\Phi_j$ of \eqref{Eqn: homotopy Phi_{ij}} can be defined by the family of
$\CC$-algebra homomorphisms
\[
\theta_{\fop}(s): R^k_{\fou}\lra R^k_{\fou'},\quad
z^m\longmapsto (1+st\tilde f_\fop)^{\langle \check d_\fop, m\rangle} z^m,
\]
$s\in[0,1]$. Let $e_1,\ldots,e_n$ be an oriented basis of $\Lambda_\sigma$ with
$\langle\check d_\fop, e_1\rangle =1$ and $e_2,\ldots,e_n$ spanning
$\Lambda_\fop$. Then in the corresponding coordinates $z_1,\ldots,z_n$, the
function $\tilde f_\fop$ does not depend on $z_1$, while
$\theta_\fop(s)(z_1)= (1+st\tilde f_\fop)z_1$ and
$\theta_\fop(s)(z_\mu)=z_\mu$ for $\mu=2,\ldots,n$. Hence
\begin{equation}\label{Eqn: Phi_{ij}^+(Omega)} \Phi_{ij}^+(\Omega) = \big(\dlog
z_1 + \partial_s \log(1+st\tilde f_\fop)ds\big)\wedge \dlog
z_2\wedge\ldots\wedge\dlog z_n.
\end{equation}
With $a_i=z_i(S(v))$ the coordinates of $S(e)$ over $v$, integrating
out $s$, we obtain
\begin{eqnarray}
\label{Eqn: wall interpolation}
\lefteqn{\int_{[0,1]\times T_e} \Phi_{ij}^+(\Omega)\ =\ 
\int_{T_e} \log\big(1+t\tilde f_\fop(t,z_2,\ldots,z_n)\big)
\dlog z_2\wedge\ldots\wedge\dlog z_n}\hspace{2ex}\\ \nonumber
&=& \int_{T_e} \log \big(1+t\tilde f_\fop
(t,\theta_2 a_2,\ldots, \theta_n a_n) \big)
\dlog\theta_2\wedge\ldots\wedge \dlog\theta_n
\end{eqnarray}
Expanding the logarithm yields a finite sum of constant multiples of
$t^\ell\theta_2^{\ell_2}\ldots\theta_n^{\ell _n}$ with
$\ell,\ell_2,\ldots,\ell_n\in\NN$ and $\ell_\mu>0$ for at least one $\mu$.
If $\xi_e\in\Lambda_\fop$, then similar to the situation along codimension one
cells discussed in Construction~\ref{Constr: beta}, the action of $T_e$ on
$(z_2,\ldots,z_n)$ has a kernel and the integral in \eqref{Eqn: wall
interpolation} vanishes for trivial reasons. In the other case
$\xi_e\not\in\Lambda_\rho$, the torus $T_e$ acts on $(z_2,\ldots,z_n)$ via a
finite covering $T_e\to U(1)^{n-1}$ and the integral vanishes because
$\int_{S^1} \theta_\mu^{\ell_\mu}\dlog\theta_\mu=0$ for an index $\mu$ with
$\ell_\mu\neq0$. Hence in any case, there is no interpolation contribution from
changing chambers at walls.
\medskip

In the second case, $\beta_i=\beta_e+\beta_{e'}$ maps to a chart of type~(Ch~II). Let
$e,e'$ map to chambers $\fou,\fou'$ separated by the slab $\fob$ containing the
common vertex $v$ of $e$ and $e'$. As in the construction of $\beta_i$ in
Lemma~\ref{Lem: charts for cycle}, assume $\beta$ is oriented from $e$ to $e'$
and hence $e'$ attaches to the non-trivial slab add-in $\Gamma_{e'}$. Then
$\beta_e$ was constructed with the toric coordinate $z=\tilde Z_+$ and the chart
$\tilde U_i$ is compatible with $R^k_\fou$ in that the localization map
$R^k_\fob\to R^k_\fou$ is toric.\footnote{With non-trivial gluing data the
localization map $R^k_\fob\to R^k_\fou$ identifies monomials only up to scale,
but for this argument it only matters that the map commutes with the torus
action.} In particular, both charts provide the same local product decomposition
with respect to $t$ and hence the change of coordinates map $\Psi_{ij}$ in
Construction~\ref{Constr: finite order periods} is the identity. Thus there is
also no interpolation contribution from this boundary of $\beta_i$.

The interesting change of coordinates happens between $\beta_{e'}$ and the slab
add-in $\Gamma_{e'}$. According to Construction~\ref{Constr: finite order
periods} we need to interpolate between the chart $\Phi_i:\tilde U_i\to
X_k^\circ$, modeled on $\CC[\Lambda_\rho][z,w,t](zw-t^{\kappa_i}, t^{k+1})$ and
used for $\beta_{e'}$, and the chart $\Phi_j:\tilde U_j\to X_k^\circ$, modeled
on $R^k_{\fou'}$ and used for $\Gamma_{e'}$. In \eqref{Eqn: hatted versus
unhatted coords} this change of coordinates has already been made explicit, by
using toric coordinates $\hat z_1,\ldots,\hat z_n$ for $\tilde U_j$ and
$w,z_2,\ldots,z_n$ for $\tilde U_i\setminus (w=0)$. In the notation of
Construction~\ref{Constr: finite order periods} and of \eqref{Eqn: hatted versus
unhatted coords}\eqref{Eqn: hat z_1 in terms of w}, the pull-back by
$\Psi_{ij}:U_{ij}\times O_k\to U_{ij}\times O_k$ is the map
\[
w\longmapsto c_1 \hat z_1/(\hat z^{-m_v}\hat f_\fob) ,\quad
z_2\longmapsto c_2 \hat z_2,\,\ldots,\,z_n\longmapsto c_n \hat z_n,
\]
while the map denoted ``$\id$'' in the appendix has the same form, but with
$\hat f_\fob$ replaced by the reduction $f_{\ul\rho}$ modulo $t$. Indeed,
``$\id$'' is defined as the map $U_{ij}\times O_k\to
U_{ij}\times O_k$ induced by the identity map of $U_{ij}$ as a subset of $X_0^\circ$
and extended by the product structure in the charts $\tilde U_i$ and $\tilde
U_j$, respectively. Writing $\hat f_\fob=\hat f_{\ul\rho}+t g_\fob$, define for
$s\in[0,1]$,
\[
\hat f_\fob(s)= \hat f_{\ul\rho}+st g_\fob.
\]
Then the family of maps $\Psi_{ij}(s)$, $s\in[0,1]$, defined by
\begin{equation}
\label{Eqn: Psi_ij(s) on ring level}
w\longmapsto c_1 \hat z_1/(\hat z^{-m_v}\hat f_\fob(s)) ,\quad
z_2\longmapsto c_2 \hat z_2,\,\ldots,\,z_n\longmapsto c_n \hat z_n
\end{equation}
is a homotopy connecting $\id$ to $\Psi_{ij}$. Thus we can take
$\Phi_{ij}(s)=\Phi_i^{(j)}\circ\Psi_{ij}(s)$ as the homotopy between the two
restrictions of charts $\Phi_i^{(j)}=\Phi_i|_{U_{ij}\times O_k}$ and
$\Phi_j^{(i)}=\Phi_j|_{U_{ij}\times O_k}$. Since $\hat z_1,\ldots,\hat z_n$ was
defined by an oriented basis $e_1=\tilde\zeta',e_2,\ldots,e_n$ of
$\Lambda_{\sigma'}$, replacing $\hat z_1$ by $w= c_1\hat z_1/(\hat z^{-m_v}\hat
f_\fob)$ shows $\Phi_i^+(\Omega)= \dlog
w\wedge\dlog z_2\wedge\ldots\wedge \dlog z_n$. Pulling back by \eqref{Eqn:
Psi_ij(s) on ring level} then gives
\[
\Phi_{ij}^+(\Omega)= \big(\dlog \hat z_1 - \partial_s \log(\hat z^{-m_v} \hat f_\fob(s))\,ds\big)
\wedge \dlog \hat z_2\wedge\ldots\wedge \dlog \hat z_n.
\]
Similar to \eqref{Eqn: Integral over slab add-in}, the interpolation contribution to the
period integral is now computed as
\begin{eqnarray}
\label{Eqn: interpolation contribution}
\hspace{6ex}\lefteqn{\int_{[0,1]\times T_{e'}}\Phi_{ij}^+(\Omega)\ =\ 
\int_{T_{e'}}\big(-\log(\hat z^{-m_v}\hat f_\fob)
+\log(\hat z^{-m_v}\hat f_{\ul\rho} )\big)\,
\dlog \hat z_2\wedge\ldots\wedge\dlog\hat z_n}\nonumber\\ 
&=&\langle \check d_e,\xi_e\rangle \int_{ U(1)^{n-1}}
\big(\log(\hat z^{-m_v}\hat f_\fob) -\log(\hat z^{-m_v}\hat f_{\ul\rho} )\big)\,
\dlog\theta_2\wedge\ldots\wedge \dlog\theta_n \\
&=& \langle \check d_e,\xi_e\rangle (2\pi\sqrt{-1})^{n-1}
\big(\shR(z^{-m_v}f_\fob,v) - \shR(z^{-m_v}f_{\ul\rho},v)\big). \nonumber
\end{eqnarray}
As in \eqref{Eqn: integral over edges of type II}, a factor $-\langle \check
d_e,\xi_e\rangle$ was inserted for the second equality to adjust for the
orientation of $T_{e'}$ and for the non-trivial kernel of the map to
$U(1)^{n-1}$ in case $\xi_e\in\Lambda_\rho$, respectively.

Note that the Ronkin function for $z^{-m_v}f_{\ul\rho}$
in this result cancels with the contribution \eqref{Eqn: Integral over slab
add-in} from the slab add-in, thus only leaving the Ronkin function for
$z^{-m_v}f_\fob$ to contribute to the global period integral.


\subsection{Proof of Theorem~\ref{Thm: period thm}}
\label{Subsect: proof of period thm}

To compute $\frac{1}{(2\pi\sqrt{-1})^{n-1}}\int_\beta\Omega$, it remains to take
the sum over all the computed terms. We had contributions from $\beta_e$ for
edges disjoint from slabs \eqref{Eqn: integral over edges of type I}, from
$\Gamma_v$ for a vertex of higher valency \eqref{Eqn: Integral over Gamma_v},
from $\beta_i=\beta_e+\beta_{e'}$ for pairs of edges crossing a slab \eqref{Eqn:
integral over edges of type II}, from slab add-ins $\Gamma_{e'}$ \eqref{Eqn:
Integral over slab add-in}, and from interpolation terms \eqref{Eqn:
interpolation contribution}. Note that in view of Lemma~\ref{lemma-intalpha} and
Proposition~\ref{Prop: fiber integral well-defined} the result is only
well-defined up to adding integral multiples of $2\pi\sqrt{-1}$.

First, for a vertex of valency $\val(v)\ge 3$ the chain $\Gamma_v$ contributes
$\val(v)\cdot \pi\sqrt{-1}$ up to adding integral multiples of $2\pi\sqrt{-1}$. But a
graph without one-valent vertices can be built inductively by successively
connecting two vertices (possibly equal) by an edge. Each such addition
increases $\sum_v \val(v)$ by $2$. Thus $\sum_v \int_{\Gamma_v}\Omega$ is a
multiple of $(2\pi\sqrt{-1})^n$ and hence can be omitted.

The other terms are easiest to gather according to the types of vertices. For a
vertex $v$ in the interior of a maximal cell $\sigma$ and each edge $e$ with
vertex $v$ we have a contribution $\pm \log z^{\xi_e}(S(v))$ from \eqref{Eqn:
integral over edges of type I}, \eqref{Eqn: integral over edges of type II} or
\eqref{Eqn: Integral over slab add-in}. The sign $\varepsilon_{e,v}=1$ is
positive if $e$ is oriented towards $v$ and $\varepsilon_{e,v}=-1$ otherwise. By
the balancing condition \eqref{balancing}, the sum over all these terms vanishes:
\[
\sum_{e\ni v} \log z^{\eps_{e,v}\xi_e}= \log z^{\sum_{e\ni v} \eps_{e,v}\xi_e} =0.
\]

Collecting the remaining terms now gives
\begin{equation}
\label{Eqn: Putting together}
\begin{aligned}
\lefteqn{\frac{1}{(2\pi\sqrt{-1})^{n-1}} \int_\beta \Omega}\hspace{6ex}\\
&= \sum_v \Big(\langle \check d_e,\xi_e\rangle \shR(z^{-m_v}f_\fob,v)
+\log \frac{s_{\sigma'\ul\rho}(\xi_{e'})}{s_{\sigma\ul\rho}(\xi_{e})}
+\langle \check d_e,\xi_e \rangle\cdot \kappa_{\ul\rho}\log t\Big).
\end{aligned}
\end{equation}
The sum runs over all vertices $v$ of $\beta_\trop$ mapping to a slab, and in
the sum $e,e'$, $\fob$, $\ul\rho$ denote the corresponding incoming and outgoing
edges, the slab containing $v$ and the corresponding
codimension one cell of the barycentric subdivision, respectively. The sum over
the terms containing the gluing data gives $\log \langle s,\beta_\trop\rangle$,
while the sum involving $\kappa_{\ul\rho}$ yields
\[
\sum  \langle \check d_e,\xi_e \rangle\cdot \kappa_{\ul\rho}
=\langle c_1(\varphi),\beta_\trop\rangle.
\]
Thus \eqref{Eqn: Putting together} can be written more intrinsically as
\begin{equation}
\label{Eqn: Final period integral}
\begin{aligned}
\lefteqn{\frac{1}{(2\pi\sqrt{-1})^{n-1}} \int_\beta \Omega}\hspace{6ex}\\
&= \log\langle s,\beta_\trop\rangle +\langle c_1(\varphi),\beta_\trop\rangle\cdot\log t
+\sum_v \langle \check d_e,\xi_e\rangle \shR(z^{-m_v}f_\fob,v).
\end{aligned}
\end{equation}
Exponentiating finally gives the expression for
$\exp\big((2\pi\sqrt{-1})^{-(n-1)}\int_\beta\Omega \big)$ claimed in Theorem~\ref{Thm:
period thm}.

%
%

\section{Analyticity of formal toric degenerations}
\label{Sect: analyticity}

As an application of the period computations we prove analyticity of the
canonical toric degenerations constructed in \cite{affinecomplex} in the case
that $(B,\P)$ as simple singularities. Simple singularities are locally
indecomposable from the affine geometric point of view and they give rise to locally
rigid logarithmic singularities. We won't need any details of simple singularities in this paper and refer to \cite{logmirror1}, Definition~1.60 for the formal
definition and to \cite{logmirror2}, \S2.2, for the local algebraic description
and deformation theory. For $(B,\P)$ with simple singularities and a choice of
multivalued, strictly convex piecewise affine function $\varphi$ on $B$, it has
been shown in \cite{affinecomplex} and \cite{theta}, Theorem~A.2, that there is
a canonical toric degeneration
\begin{equation}\label{Eqn: GS family}
\foX\lra \foS=\Spf \big(A\lfor t\rfor\big).
\end{equation}
Here $A$ is a Laurent polynomial ring, so the base of this family is the product
of an algebraic torus with $\Spf\big(\CC\lfor t\rfor\big)$.\footnote{Assuming
projectivity of the central fiber, Theorem~A.2 of \cite{theta} constructs a
projective scheme over a closed subspace $\Spec \big(A_\PP\lfor
t\rfor\big)\subseteq\Spec\big(A\lfor t\rfor\big)$. Our analyticity holds more
generally in the formal setup, only requiring properness of the map in
\eqref{Eqn: GS family}.} If $\partial B\neq\emptyset$, by \cite[Remark~2.18 and
Remark~4.13]{theta}, the family in \eqref{Eqn: GS family} comes equipped with a
divisor $\foD\subset \foX$ that is flat over $\foS$.

To describe the ring $A$, recall from \cite{logmirror1}, Theorem 5.4, that for
simple singularities, the affine cohomology group\footnote{Note that here we
have $\iota_*\check\Lambda$ rather than $\iota_*\Lambda$ as in loc.cit.\ because
we work in the cone picture rather than in the fan picture, that is, for us a
polyhedron $\tau\in\P$ indexes a closed stratum of $X_0$ isomorphic to the toric
variety with momentum polytope $\tau$.} $H^1(B, \iota_*\check\Lambda
\otimes\CC^*)$ is canonically in bijection with the set of isomorphism classes
of log schemes $(X_0,\M_{X_0})$ over the standard log point with associated
discrete data $(B,\P,\varphi)$. The bijection works by identifying the set of
isomorphism classes of log schemes with the set of equivalence classes of
lifted, normalized gluing data, which in turn can be identified with
the mentioned affine cohomology group. The base ring is
\[
A=\CC[H^1(B,\iota_*\check\Lambda)^*],
\]
the Laurent polynomial ring over $H^1(B,\iota_*\check\Lambda)^*=
\Hom(H^1(B,\iota_*\check\Lambda),\ZZ)$. Thus $\Spec A$ parametrizes choices of
lifted, normalized gluing data.

The construction of the family \eqref{Eqn: GS family} depends on the choice of a
splitting $\sigma_0$ of the quotient map $q_f: H^1(B,\iota_*\check\Lambda)\to
H^1(B,\iota_*\check\Lambda)_f$ by the torsion submodule
$H^1(B,\iota_*\check\Lambda)_t\subseteq H^1(B,\iota_*\check\Lambda)$. Here
$\iota:B_0\to B$ is the inclusion of the regular locus and
$\check\Lambda=\shHom(\Lambda,\ul\ZZ)$ is the sheaf of integral cotangent
vectors on $B_0$.\footnote{In the case with simple singularities, the singular
locus can be taken to be the union of the $(n-2)$-cells of the barycentric
subdivision of $\P$ not containing barycenters of vertices or of maximal cells.}
Such a splitting $\sigma_0$ is unique only up to a homomorphism
$H^1(B,\iota_*\check\Lambda)_f\to H^1(B,\iota_*\check\Lambda)_t$. Thus if
$H^1(B,\iota_*\check\Lambda)$ has non-trivial torsion, there are finitely many
such canonical families. We fix $\sigma_0$ and the resulting canonical family
throughout this Chapter.

If $H^2(B,\iota_*\check\Lambda)$ has torsion, the set of gluing data
$H^1(B,\iota_*\check\Lambda\otimes\CC^*)$ is a disjoint union of torsors for
$H^1(B,\iota_*\check\Lambda)\otimes\CC^*$, with only one of them containing
trivial gluing data. Indeed, the construction of the family also depends on the
choice of a possibly non-trivial element $s_0\in
H^1(B,\iota_*\check\Lambda\otimes\CC^*)$, which selects one of these torsors. If
$H^2(B,\iota_*\check\Lambda)$ is torsion-free, trivial gluing data $s_0=1$ is a
canonical choice. In any case, we fix $s_0$ throughout. 

As a further ingredient in this section, recall from \cite{logmirror1},
Definition~1.45 (using the notation of \cite{theta}, \S A.2) that the short
exact sequence
\begin{equation}\label{Eqn: MPA sequence}
0\lra \iota_*\check\Lambda\lra \breve\shPL(B)\lra \breve\shMPA(B)\lra 0,
\end{equation}
gives rise to the connecting homomorphism
\[
c_1:\breve\MPA(B)\lra H^1(B,\iota_*\check\Lambda).
\]
This homomorphism sends a multivalued piecewise affine function $\varphi$ to
its characteristic class $c_1(\varphi)$. Dually, we have
\begin{equation}\label{Eqn: c_1^*}
c_1^*: H^1(B,\iota_*\check\Lambda)^*\lra \breve\MPA(B)^*.
\end{equation}
The trace homomorphism $\iota_*\Lambda\otimes\iota_*\check\Lambda\to \ul\ZZ$
combined with the sheaf homology-cohomology pairing gives a bilinear map
\begin{equation}
\label{Eqn: Pairing}
\langle\ ,\,\rangle: H^1(B,\iota_*\check\Lambda)\otimes H_1(B,\iota_*\Lambda)\lra \ZZ.
\end{equation}
The induced homomorphism
\begin{equation}\label{Eqn: H_1->(H^1)^*}
H_1(B,\iota_*\Lambda)\lra H^1(B,\iota_*\check\Lambda)^*,\quad
\beta_\trop \longmapsto \beta_\trop^*,
\end{equation}
is an isomorphism over $\QQ$ by the following result from \cite{affinecoh}, Theorem 2.

\begin{theorem} 
\label{Thm: perfect pairing}
Let $(B,\P)$ be an oriented simple tropical manifold. Then \eqref{Eqn: Pairing}
tensored with $\QQ$ is a perfect pairing of $\QQ$-vector spaces.
\end{theorem}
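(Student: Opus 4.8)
The statement to prove is Theorem~\ref{Thm: perfect pairing}: for an oriented simple tropical manifold $(B,\P)$, the pairing \eqref{Eqn: Pairing} becomes a perfect pairing after tensoring with $\QQ$. Since the paper attributes this to \cite{affinecoh}, Theorem~2, I would present it essentially as a citation-driven reduction, but sketch the structural argument behind it. The plan is to reduce the assertion to a Poincar\'e--Lefschetz-type duality for the sheaves $\iota_*\Lambda$ and $\iota_*\check\Lambda$ on the pseudomanifold $B$, using the simplicity hypothesis to control the singular locus.

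\textbf{Key steps.} First I would recall that $B$ is an $n$-dimensional oriented pseudomanifold and that, for a \emph{simple} tropical manifold, the discriminant locus $\Delta\subset B$ (the codimension-$2$ set where the affine structure degenerates) admits a decomposition into ``legs'' and ``joints'' whose local monodromy is as simple as possible; in particular $\iota_*\Lambda$ and $\iota_*\check\Lambda$ are constructible and the stalks over points of $\Delta$ are the monodromy-invariant sublattices, which have corank exactly one transverse to each codimension-one piece of $\Delta$. Second, I would invoke the trace map $\iota_*\Lambda\otimes\iota_*\check\Lambda\to\ul\ZZ$ together with the fundamental class in $H_n(B,\ul\ZZ)$ (using orientability) to define a duality morphism in the derived category, $\iota_*\check\Lambda\to \mathbb{D}(\iota_*\Lambda)[-n]$ or the analogous statement relating $R\Gamma$ and $R\Gamma_c$; over $\QQ$ the point is that $\iota_*\Lambda\otimes\QQ$ and $\iota_*\check\Lambda\otimes\QQ$ are, up to the duality functor, interchanged. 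Third, I would check that this duality morphism is a quasi-isomorphism by a local computation: away from $\Delta$ it is the classical statement that $\Lambda$ and $\check\Lambda$ are dual local systems on an oriented $n$-manifold, and along $\Delta$ one uses the explicit simple local models (from \cite{logmirror1}, \S1, and \cite{logmirror2}, \S2.2) to verify that pushing forward the mutually dual local systems from the complement still yields mutually dual objects — this is exactly where simplicity is used, since for non-simple singularities the pushforwards can fail to be dual even rationally. Fourth, taking hypercohomology and using the universal coefficient spectral sequence over $\QQ$ (so all $\Ext$-terms vanish), the quasi-isomorphism yields $H^1(B,\iota_*\check\Lambda)\otimes\QQ \cong \big(H_1(B,\iota_*\Lambda)\otimes\QQ\big)^*$ compatibly with \eqref{Eqn: Pairing}, which is the claim.

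\textbf{Main obstacle.} The hard part is the local duality verification along the discriminant $\Delta$: one must show that for the standard simple local models — in dimension $2$ the focus-focus point, in dimension $3$ the ``positive'' and ``negative'' vertices and the edges of $\Delta$ — the stalk cohomology of $\iota_*\Lambda$ and of $\iota_*\check\Lambda$ (and of their Verdier duals) match up, degree by degree, under the trace pairing. This requires an honest computation of local cohomology groups $H^\bullet$ of a small punctured neighbourhood of each stratum of $\Delta$ with coefficients in the relevant local systems, together with the statement that the monodromy-invariants pair perfectly with the monodromy-coinvariants rationally (true because unipotent monodromy gives $\ker(T-1)\otimes\QQ \cong \operatorname{coker}(T-1)\otimes\QQ$ only up to the nilpotent part, so one must be careful and instead use that invariants of $\Lambda$ pair with invariants of $\check\Lambda$ via a compatible choice). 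All of this is carried out in \cite{affinecoh}; here I would state it as a consequence of that reference and indicate the local models, rather than reproduce the computation. I would also note that orientability is needed precisely to have the fundamental class realizing the duality, and simplicity is needed precisely to keep the pushforward sheaves ``self-dual enough'' for the argument to close over $\QQ$.
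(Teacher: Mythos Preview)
The paper does not prove this theorem at all: it is stated purely as an import from \cite{affinecoh}, Theorem~2, with no argument given in the present paper. Your proposal correctly identifies this and, like the paper, ultimately defers to that reference; in that sense your treatment matches the paper's.

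What you add beyond the paper is a sketch of a Verdier-duality argument. This is a reasonable heuristic and is indeed the shape of argument one expects, but since the paper itself contains nothing to compare it to, there is no ``paper's own proof'' to weigh it against. A small caution on your sketch: the passage about invariants versus coinvariants under unipotent monodromy is muddled --- for unipotent $T$ one has $\dim\ker(T-1)=\dim\operatorname{coker}(T-1)$ over $\QQ$ automatically, but what is actually needed for the local duality along $\Delta$ is that the stalks of $\iota_*\Lambda$ and $\iota_*\check\Lambda$ (both consisting of monodromy \emph{invariants}) pair perfectly via the trace, and that the higher derived pushforwards behave compatibly. That is precisely the computation carried out in \cite{affinecoh} using the simple local models, and your sketch correctly flags it as the substantive step without claiming to reproduce it.
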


The composition of $c_1^*$ from \eqref{Eqn: c_1^*} with the map from \eqref{Eqn:
H_1->(H^1)^*} and evaluation on $\varphi$ yields the homomorphism
\begin{equation}\label{Eqn: Picard-Lefschetz monodromy}
H_1(B,\iota_*\Lambda)\lra \ZZ,\quad
\beta_\trop\longmapsto\langle c_1(\varphi),\beta_\trop\rangle,
\end{equation}
with $\langle c_1(\varphi),\beta_\trop\rangle$ given explicitly after
\eqref{Eqn: pairing with c_1}. By Proposition~\ref{Lem: monodromy action}, this
map measures the monodromy of the $n$-cycle associated to $\beta_\trop$ in
the base space of the universal family about $t=0$. Denote by
\begin{equation}
\label{Eqn: (H_1)_+}
H_1(B,\iota_*\Lambda)_+\subseteq H_1(B,\iota_*\Lambda)
\end{equation}
the preimage of $\NN\subset \ZZ$ under \eqref{Eqn: Picard-Lefschetz monodromy}.
If $c_1(\varphi)\neq0$, this subset is a half-space and in any case,
$H_1(B,\iota_*\Lambda)_+$ spans $H_1(B,\iota_*\Lambda)$. If $B$ is compact
without boundary, $c_1(\varphi)\neq0$ holds always:

\begin{proposition}
\label{Prop: c_1(varphi) nonzero}
Let $(B,\P,\varphi)$ be a compact polarized affine manifold with singularities of the affine structure disjoint from the vertices of $\P$.
We have $c_1(\varphi)\neq 0$ in each of the following situations
\begin{enumerate}[label=(\roman*)]
  \item \label{c11} $H^1(B,\QQ)=0$ and $\partial B$ is again an affine manifold (including $\partial B=\emptyset$), 
  \item \label{c12} $(B,\P)$ is simple and $\partial B=\emptyset$.
\end{enumerate}
\end{proposition}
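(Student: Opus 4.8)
The plan is to prove the contrapositive in a form adapted to each case: if $c_1(\varphi)=0$, then $\varphi$ lifts to a global \emph{single-valued} piecewise affine function on $B$, and I will derive a contradiction with compactness and strict convexity. Concretely, the exact cohomology sequence of \eqref{Eqn: MPA sequence} shows that $c_1(\varphi)=0$ is equivalent to $\varphi$ lying in the image of the restriction map $H^0(B,\breve\shPL(B))\to H^0(B,\breve\shMPA(B))$, i.e.\ to the existence of a global section $\tilde\varphi\in H^0(B,\breve\shPL(B))$ mapping to $\varphi$. Such a $\tilde\varphi$ is a continuous function on $B$ that is integral affine on the complement of the singular locus of the affine structure (which by hypothesis is disjoint from the vertices of $\P$), piecewise affine with respect to $\P$, and whose kink across each codimension-one cell $\rho$ equals the prescribed kink $\kappa_{\ul\rho}\geq 0$ of $\varphi$; strict convexity of $\varphi$ forces these kinks to be strictly positive across every $\rho$, so $\tilde\varphi$ is a strictly convex piecewise-affine function in a neighborhood of every $(n-1)$-cell.

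In case \ref{c11}, I would argue via the long exact sequence more directly. From \eqref{Eqn: MPA sequence} we get
\[
H^0(B,\breve\shPL(B))\lra H^0(B,\breve\shMPA(B))\stackrel{c_1}{\lra} H^1(B,\iota_*\check\Lambda).
\]
I want to show $c_1$ is injective, which gives $c_1(\varphi)\neq0$ since $\varphi\neq 0$ (a strictly convex MPA function is nonconstant). Injectivity of $c_1$ is equivalent to surjectivity of $H^0(B,\breve\shPL(B))\to H^0(B,\breve\shMPA(B))$. Now there is the further exact sequence $0\to\ul\RR\to\breve\shPL(B)\to\breve\shPL(B)/\ul\RR\to 0$ (globally affine functions modulo constants), and the point is that an \emph{affine} function on the simply-connected-in-cohomology manifold $B$ — more precisely, the obstruction to integrating a closed $\iota_*\check\Lambda$-valued $1$-cocycle to a single-valued affine function lives in $H^1(B,\iota_*\check\Lambda\otimes\RR)$, hence (tensoring with $\RR$, using that $H^1(B,\QQ)=0$ and a computation that $H^1(B,\iota_*\check\Lambda\otimes\RR)$ vanishes when $H^1(B,\RR)=0$ for these affine manifolds) vanishes. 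Here the boundary hypothesis that $\partial B$ is again an affine manifold is what lets one carry the argument across $\partial B$ without extra obstructions. So every global MPA function lifts to a global PA function, $c_1$ is injective, and $c_1(\varphi)\neq 0$.

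In case \ref{c12}, the hypothesis $H^1(B,\QQ)=0$ is not available, so I would instead use a genuinely geometric compactness argument: suppose $c_1(\varphi)=0$ and let $\tilde\varphi\colon B\to\RR$ be the resulting global continuous piecewise-affine lift. Since $B$ is compact, $\tilde\varphi$ attains a maximum at some point $x_0$. Choosing $x_0$ in the interior of a maximal cell $\sigma$ if possible — and if the max is only attained on lower strata, pushing into a neighboring maximal cell using that all kinks across $(n-1)$-cells are \emph{positive} (strict convexity), so $\tilde\varphi$ strictly increases as one crosses a codimension-one wall in the appropriate direction, contradicting that $x_0$ is a max unless $x_0$ lies in a maximal cell — we find $\tilde\varphi$ restricted to $\Int\sigma$ is an affine function attaining an interior maximum, hence constant on $\sigma$. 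Propagating across walls via positivity of kinks then forces a contradiction: crossing any $(n-1)$-cell $\rho$ bounding $\sigma$ into the adjacent maximal cell $\sigma'$, the value of $\tilde\varphi$ must strictly exceed the maximum on $\sigma$ on the far side, contradicting maximality (or, if one prefers, one shows $\tilde\varphi$ has no local max at all, impossible on a compact space). The role of simplicity and $\partial B=\emptyset$ is to guarantee the lift $\tilde\varphi$ exists as an \emph{honest} function on all of $B$ (no boundary complications, and the singular locus of the affine structure has codimension $2$ so does not obstruct taking maxima or crossing walls).

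\textbf{Main obstacle.} The delicate point is case \ref{c11}: showing that $H^1(B,\iota_*\check\Lambda\otimes\RR)=0$ (equivalently, that every global MPA function lifts to a global PA function) from the hypothesis $H^1(B,\QQ)=0$, and handling the boundary correctly — one must verify that the assumption ``$\partial B$ is again an affine manifold with singularities'' really does suppress the boundary contribution to this $H^1$. In case \ref{c12} the subtlety is instead purely combinatorial-geometric: making rigorous the claim that positivity of all kinks prevents a global piecewise-affine function on compact $B$ from having a maximum, which amounts to a discrete maximum-principle argument over the dual graph of $\P$ that needs the strict convexity hypothesis used in the precise form ``$\kappa_{\ul\rho}>0$ for every codimension-one cell.''
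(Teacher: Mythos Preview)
Your overall strategy for~\ref{c11} --- assume $c_1(\varphi)=0$, lift to a single-valued piecewise affine function, run a maximum principle --- matches the paper's, but you have not correctly located where the hypothesis $H^1(B,\QQ)=0$ enters. A global section of $\breve\shPL(B)$ is \emph{not} an honest $\RR$-valued function on $B$: the kernel in \eqref{Eqn: MPA sequence} is $\iota_*\check\Lambda$ rather than $\shAff$, so the lift records only slopes on maximal cells, and the further obstruction to integrating these to an actual function lives in $H^1(B,\ul\ZZ)$ (via the column $0\to\ul\ZZ\to\shAff\to\iota_*\check\Lambda\to 0$ of the diagram in \cite{logmirror1}, Definition~1.45). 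The hypothesis $H^1(B,\QQ)=0$ is what kills this obstruction, up to replacing $\varphi$ by a multiple if $H^1(B,\ZZ)$ has torsion; only then does the maximum argument at a vertex go through, with the boundary condition ensuring every vertex lies in the interior of a straight segment. Your alternative route is confused on two points: ``injectivity of $c_1$ is equivalent to surjectivity of the previous map'' has the exact sequence backwards, and the claim ``$H^1(B,\iota_*\check\Lambda\otimes\RR)=0$ when $H^1(B,\RR)=0$'' is false --- for $B=S^2$ in the K3 situation this group has rank~$20$.

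For~\ref{c12} the gap is fatal as written. Take $B=\RR^n/\ZZ^n$ (the abelian-variety case): this is simple with $\partial B=\emptyset$ and $H^1(B,\QQ)\neq0$; the lift $\hat\varphi$ is only a section of $\shPL/\ul\ZZ$, which on the universal cover is a strictly convex piecewise affine function shifting by a nonzero constant under each deck transformation, so it descends to no function on $B$ and there is no maximum to locate. Simplicity does \emph{not} furnish an honest function. What it furnishes, via the Hodge symmetry $h^{1,0}=h^{0,1}$ from \cite{logmirror2}, is a nonzero global vector field $\xi\in H^0(B,\iota_*\Lambda)$ whenever $H^1(B,\QQ)\neq0$. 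The paper then maximizes not the ill-defined values of $\hat\varphi$ but the directional derivative $\nabla_\xi\hat\varphi|_\sigma=\langle\alpha_\sigma,\xi\rangle$ over maximal cells $\sigma$; this \emph{is} well-defined for a section of $\shPL/\ul\ZZ$, and strict convexity across any facet of $\sigma$ where $\xi$ is outward-pointing (such a facet exists since $\partial B=\emptyset$) yields the contradiction $\langle\alpha_\sigma,\xi\rangle<\langle\alpha_{\sigma'},\xi\rangle$.
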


\begin{proof}
First assume $\ref{c11}$, so in particular $H^1(B,\ZZ)$ is torsion. For now,
assume that actually $H^1(B,\ZZ)=0$. By chasing the long exact cohomology
sequences for the third row and second column of the diagram in
\cite{logmirror1}, Definition~1.45, and taking into account $H^1(B,\ZZ)=0$, it
follows that (1)~$c_1(\varphi)\in H^1(B,\iota_*\check\Lambda)$ is the image of a
class $\tilde c_1(\varphi)\in H^1(B,\shAff(B,\ZZ))$ with $\shAff(B,\ZZ)$ the
sheaf of integral affine functions on $B$, (2)~$c_1(\varphi)=0$ implies $\tilde
c_1(\varphi)=0$ and (3)~if $\tilde c_1(\varphi)=0$ then $\varphi$ can be
represented by a piecewise-affine function. Thus under the assumptions, if
$c_1(\varphi)=0$ there is a piecewise affine function $\tilde\varphi$
representing $\varphi$. If, more generally, $H^1(B,\ZZ)$ has torsion we can
still run the same argument for some suitable multiple $k\varphi$ with $k>0$
which suffices for the reasoning in the next paragraph.

Since $B$ is compact there is a point in $B$ where $\varphi$ has maximal value,
and $\varphi$ being piecewise affine, this point can be taken to be a vertex. By
assumption, there is an affine chart near this vertex, yielding a strictly
convex, piecewise affine function on the fan defined by $\P$ in this chart. But
such a function cannot have a maximum at the origin since, by assumption
$\ref{c11}$, the origin is contained in the interior of a straight line segment.
Thus $c_1(\varphi)\neq0$.

Now assume $\ref{c12}$. The case $H^1(B,\QQ)=0$ is covered by $\ref{c11}$, so
assume $H^1(B,\QQ)\neq 0$. Since $h^{1,0}=h^{0,1}$ by the Hodge theory of the
formal nearby fiber using simplicity and \cite{logmirror2},
Theorem\,3.22 and Theorem\,4.2, we also get $H^0(B,\iota_*\Lambda)\neq
0$. Let $\xi\in H^0(B,\iota_*\Lambda)$ be non-trivial. Now assume
$c_1(\varphi)=0$. By a similar diagram chase as above, we obtain a section
$\hat\varphi\in H^0(B,\shP\shL/\ZZ)$. For each maximal cell $\sigma$,
denote by $\alpha_\sigma$ the cotangent vector defined by the slope of
$\hat\varphi|_\sigma$. Let $\sigma$ be a maximal cell with
$\nabla_\xi\hat\varphi=\langle \alpha_\sigma,\xi\rangle$ maximal. Then
$\sigma$ has a facet where $\xi$ is outward-pointing to another maximal cell
$\sigma'$ and the convexity of $\hat\varphi$ leads to the contradiction $\langle
\alpha_\sigma,\xi\rangle < \langle \alpha_{\sigma'},\xi\rangle$.
\end{proof}

\begin{remark}
If $(B,\P,\varphi)$ is a regular subdivision of a
lattice polytope, viewed as an integral affine manifold without singularities,
then $H^1(B,\iota_*\check\Lambda)=0$, so in particular $c_1(\varphi)=0$. We may
call this the \emph{purely toric case} and then the resulting family \eqref{Eqn:
GS family} is trivial away from $t=0$, so this case is not very interesting
anyway. However, if one additionally straightens the boundary of $B$ by trading
corners with affine singularities, Case~(i) of Proposition~\ref{Prop:
c_1(varphi) nonzero} then shows $c_1(\varphi)\neq 0$. While the family could
then still be trivial outside $t=0$, we expect that at least the divisor
$\foD\subset\foX$ varies non-trivially. The simplest example here is $\PP^2$
with $\foD$ a toric degeneration of elliptic curves --- the $j$-invariant of the
elliptic curve varies with $t$, see \cite{theta}, Example~6.2.
\end{remark}

From now on, we restrict to the case $c_1(\varphi)\neq0$. Here is the main
result of this section.

\begin{theorem}
\label{Thm: Analyticity of GS}
Let $(B,\P,\varphi)$ be a compact orientable polarized integral affine manifold
with simple singularities and $c_1(\varphi)\neq 0$ and either $\partial
B=\emptyset$ or $\partial B$ itself an affine manifold. Denote by $\foX\to
\foS=\Spf\big(A\lfor t\rfor\big)$ the associated canonical toric degeneration
from \eqref{Eqn: GS family}. Then for every closed point $x=(a,0)\in
\Spec\big(A[t]\big)$ there exists an open neighborhood $U\subset\Spec(A)_\an$
of $a$, a disk $\DD$ and a proper, flat analytic family
\[
\shY\lra U\times \DD,
\]
with completion at $x$ isomorphic over $\foS$ to the completion of $\X\to\foS$ at $x$.
\end{theorem}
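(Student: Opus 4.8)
The plan is to construct the analytic family $\shY\to U\times\DD$ by reversing the logic of Theorem~\ref{Thm: period thm}: the canonical formal family $\foX$ has, by that theorem together with the normalization condition and Proposition~\ref{Prop: Triviality of complex Ronkin function}, \emph{monomial} exponentiated periods $\exp\bigl((2\pi\sqrt{-1})^{-(n-1)}\int_{\beta}\Omega\bigr)=\langle s,\beta_\trop\rangle\cdot t^{\langle c_1(\varphi),\beta_\trop\rangle}$ over all tropical cycles $\beta_\trop$, with no transcendental $t$-dependence. Since $\Spec A$ is the parameter space of lifted normalized gluing data and $A=\CC[H^1(B,\iota_*\check\Lambda)^*]$, the family $\foX\to\Spf(A\lfor t\rfor)$ should be recovered from a \emph{fixed} analytic family over a neighborhood of $x=(a,0)$ by a base change that is the identity on $\Spf(A\lfor t\rfor)$. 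First I would set up the comparison: fix the closed point $a\in\Spec A$ and consider the restriction $X_0(a)$, a log Calabi--Yau space over the standard log point with intersection complex $(B,\P)$. By simplicity and \cite{logmirror2}, the logarithmic divisorial deformation functor of $X_0(a)$ is unobstructed with a semi-universal formal family $\foX^{\mathrm{univ}}\to\Spf R$ whose base $R$ is a power series ring, and smoothness of Artinian deformations combined with Artin approximation (\cite{Artin}, or Bosch--Bosch for analytic spaces) yields an analytic family $\shY^{\mathrm{an}}\to V$ over an analytic neighborhood $V$ of the closed point of $\Spf R$ realizing this hull analytically.

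The key step is then to identify the formal neighborhood of $\foX$ at $x$ with the formal neighborhood of $\shY^{\mathrm{an}}$ at the corresponding point, \emph{without reparametrization}. Concretely, the tangent space to the deformation functor is $H^1(X_0(a),\Theta_{X_0(a)}^\dagger)$, which by the simplicity hypothesis and \cite{logmirror2}, Theorem~3.22, is identified with $H^1(B,\iota_*\check\Lambda)\oplus\CC\cdot[\varphi]$; the first summand is the tangent space to the gluing-data locus $\Spec A$ and the second is the ``$t$''-direction. The map from $\foS=\Spf(A\lfor t\rfor)$ to $\Spf R$ classifying $\foX$ is then an isomorphism onto a formal subscheme, because: (1) on tangent spaces it is an isomorphism by the cited Hodge-theoretic identification, and (2) by Theorem~\ref{Thm: period thm} applied to a basis $\beta_\trop^{(1)},\dots,\beta_\trop^{(r)}$ of $H_1(B,\iota_*\Lambda)$ (which spans by Theorem~\ref{Thm: perfect pairing}), the exponentiated period coordinates $h_i=\exp\bigl((2\pi\sqrt{-1})^{-(n-1)}\int_{\beta^{(i)}}\Omega\bigr)=\langle s,\beta^{(i)}_\trop\rangle\,t^{\langle c_1(\varphi),\beta^{(i)}_\trop\rangle}$ are \emph{already} a system of monomial coordinates on $\foS$ — that is, the period map $\foS\to\Spf R$ is literally the inclusion when both sides are written in period/canonical coordinates. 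Deligne--Morrison--type canonical coordinate theory guarantees that the $h_i$ restrict to coordinates on the base of a semi-universal family at a point where the Kodaira--Spencer map is an isomorphism; combined with the monomiality this forces the comparison map to be a formal isomorphism onto its image with no parameter change. Pulling back $\shY^{\mathrm{an}}$ along the resulting analytic map $U\times\DD\to V$ (defined on a genuine analytic neighborhood because $c_1(\varphi)\neq0$ makes the $t^{\langle c_1(\varphi),\beta\rangle}$-monomials honest coordinate functions, and the gluing-data monomials are coordinates on $\Spec(A)_\an$) produces the desired $\shY\to U\times\DD$.

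The main obstacle is precisely step~(2) upgraded from ``finite order'' to ``analytic'': Theorem~\ref{Thm: period thm} only computes periods on $X_k^\circ$ to order $k$, and the number of walls in $\scrS_k$ grows without bound, so one must argue that the order-by-order monomial formulas assemble to an honest analytic statement. The mechanism is that the \emph{only} $k$-dependent term in \eqref{Eqn: Main period formula} is the Ronkin term $\shR(\beta_\trop)$, and the normalization condition of \cite{affinecomplex}, \S3.6, forces $\log f_\fob$ to have no pure $t$-powers at every order, so by Proposition~\ref{Prop: Triviality of complex Ronkin function} the Ronkin term vanishes for all $k$; thus the period is exactly $\langle s,\beta_\trop\rangle\,t^{\langle c_1(\varphi),\beta_\trop\rangle}$ in $\hat A\lfor t\rfor$, genuinely monomial, and this is what permits identifying $\foS$ with a formal subscheme of the analytic hull without reparametrization. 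A secondary technical point is checking that the analytic hull $\shY^{\mathrm{an}}\to V$ exists with $V$ an honest analytic (not just formal) neighborhood and that the comparison isomorphism of formal schemes is compatible with the divisor $\foD$ when $\partial B\neq\emptyset$ and with the $\foS$-structure; for the last claim I would invoke that both the canonical toric degeneration and the analytic hull carry the tautological map to $\Spf(A\lfor t\rfor)$ recording gluing data and the deformation parameter, and these maps agree by construction on tangent spaces and hence, by smoothness, everywhere. The final sentence of Theorem~\ref{Thm: analyticity thm} — that the completion is a hull for the logarithmic divisorial log deformation functor — then follows because the analytic family was built from that hull to begin with, and the completion operation does not lose semi-universality.
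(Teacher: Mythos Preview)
Your outline captures the conceptual skeleton --- monomial periods plus a hull comparison --- but there are two genuine gaps that the paper's proof has to work around and that your sketch does not address.

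\textbf{The period functions are one short of a coordinate system.} You assert that the exponentiated periods $h_i=\langle s,\beta^{(i)}_\trop\rangle\,t^{\langle c_1(\varphi),\beta^{(i)}_\trop\rangle}$ for a basis of $H_1(B,\iota_*\Lambda)$ are ``already a system of monomial coordinates on $\foS$''. But $\foS=\Spf(A\lfor t\rfor)$ has dimension $r+1$ where $r=\rk H^1(B,\iota_*\check\Lambda)$, while $H_1(B,\iota_*\Lambda)$ has rank $r$. Worse, the periods with $\langle c_1(\varphi),\beta_\trop\rangle=0$ only span the corank-one sublattice $c_1(\varphi)^\perp\subset H^1(B,\iota_*\check\Lambda)^*$, and the remaining periods vanish on $t=0$. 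So you only get coordinates on a codimension-one slice. The paper resolves this via a $\GG_m$-action (Proposition~\ref{Prop: foX mod TT_L}): after a finite base change the canonical family is $\GG_m$-equivariantly a product $\GG_m\times\bar\foX\to\GG_m\times\bar\foS$, and the periods together with $t$ give coordinates on the slice $\bar\foS$. Your decomposition $H^1(\Theta)=H^1(B,\iota_*\check\Lambda)\oplus\CC\cdot[\varphi]$ is not how the paper organizes things: the hull is taken relative to $\CC\lfor t\rfor$, so $t$ is fixed as part of the base ring and the relevant tangent space is the \emph{relative} $H^1(B,\iota_*\check\Lambda)\otimes\CC$ (Proposition~\ref{KS-iso}).

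\textbf{The construction of the analytic family is circular as stated.} You propose to first build the log hull $\Spf R$, then ``Artin approximation yields an analytic family $\shY^{\mathrm{an}}\to V$'', then pull back along an ``analytic map $U\times\DD\to V$''. But you have not explained why the classifying map $\foS\to\Spf R$ is induced by an analytic map; a priori it is only formal. The paper runs the logic in the opposite order: it first produces an analytic family $\bar\shY\to\bar S$ by ordinary (non-log) Douady--Grauert versal deformation theory plus Artin approximation (Theorem~\ref{Thm: Analytic approximation}), agreeing with $\bar\foX$ to some finite order $k_0$. It then invokes a separate finite-determinacy result (Proposition~\ref{approx-is-toric-degen}, relying on local rigidity from simplicity) to show this approximation is still a divisorial log deformation. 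Only after that does it compute periods on $\bar\shY$, perform an explicit holomorphic coordinate change on $\bar S$ to make them monomial (Proposition~\ref{prop-prepare-inv-periods}), and finally compare both completions as hulls for the log deformation functor over $\CC\lfor t\rfor$. The upshot is that the analytic family exists \emph{before} any period or hull argument; the periods then serve to pin down the base-change isomorphism as the identity, not to construct the family.
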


\begin{remark}
In the non-orientable case one can take the orientable double cover and study
the $\ZZ/2$-quotient over the $\ZZ/2$-invariant locus in $\foS$.
\end{remark}

Combining this result with Theorem~\ref{Thm: period thm}, we obtain monomial
period integrals. To state this result, denote by $s^m\in A$ the Laurent
monomial associated to $m\in H^1(B,\iota_*\check\Lambda)^*$ as well as the
corresponding holomorphic function on $\Spec(A)_\an$ or on
$\Spec\big(A[t]\big)_\an$.

\begin{corollary}
\label{Cor: monomial analytic periods}
In the situation of Theorem~\ref{Thm: Analyticity of GS}, let $\beta_u\in
H_n(X_u,\ZZ)$, $u\in U\times \DD$, be a family of cycles in the fibers of $\shY\to U\times \DD$
constructed from a tropical cycle $\beta_\trop\in H_1(B,\iota_*\Lambda)$,
well-defined up to homology and up to adding multiples of the family
of vanishing cycles $\alpha_u$. Denote by $\Omega$ the relative holomorphic
$n$-form on $\shY$ with $\int_\alpha\Omega= (2\pi i)^n$. Then
\[
\exp\bigg(\frac{1}{(2\pi \sqrt{-1})^{n-1}}\int_{\beta_u} \Omega\bigg)
= s^{-\beta_\trop^*}\cdot t^{\langle c_1(\varphi),\beta_\trop\rangle},
\]
holds as an equality of meromorphic functions on $U\times \DD$, with $\beta_\trop^*$
introduced in \eqref{Eqn: H_1->(H^1)^*}. If $\beta_\trop\in
H_1(B,\iota_*\Lambda)_+$ then both sides are holomorphic.
\end{corollary}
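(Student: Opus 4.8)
The plan is to deduce Corollary~\ref{Cor: monomial analytic periods} directly from Theorem~\ref{Thm: Analyticity of GS} and Theorem~\ref{Thm: period thm}, the point being essentially bookkeeping: translating the intrinsic but formula-heavy right-hand side of \eqref{Eqn: Main period formula} into the monomial $s^{-\beta_\trop^*}\cdot t^{\langle c_1(\varphi),\beta_\trop\rangle}$ in the specific situation of the canonical family. First I would invoke Theorem~\ref{Thm: Analyticity of GS} to replace the formal family $\foX\to\foS$ by the genuine analytic family $\shY\to U\times\DD$; since the isomorphism of formal completions is over $\foS$ and requires no reparametrization, the relative $n$-form $\Omega$ and the family of cycles $\beta_u$ (built from $\beta_\trop$ via Construction~\ref{Constr: beta}, lifted through \eqref{Eqn: tropical cycles passing through shA}) transport to the analytic side, and the holomorphic period $\int_{\beta_u}\Omega$ on $\shY$ agrees with the finite-order periods on every $X_k^\circ$ up to the controlled ambiguity $\exp(h\cdot t^{k+1})$ recorded in Theorem~\ref{Thm: period thm}. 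Because this holds for all $k$, the analytic function $\exp\big((2\pi\sqrt{-1})^{-(n-1)}\int_{\beta_u}\Omega\big)$ is determined, as a meromorphic function on $U\times\DD$, by the right-hand side of \eqref{Eqn: Main period formula}.

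The core of the argument is then the identification of the three factors. The monomial in $t$ is immediate: it is $t^{\langle c_1(\varphi),\beta_\trop\rangle}$ by definition. The Ronkin contribution $\exp(\shR(\beta_\trop))$ must be shown to be trivial: by construction \cite{affinecomplex} the canonical family has \emph{normalized} slab functions, i.e.\ $\log f_\fob$ has no pure $t$-powers at every slab, so each $z^{-m_v}f_\fob$ (in a neighbourhood of $\mu^{-1}(v)$, in the relevant order-$k$ truncation) admits a convergent product expansion $\prod_i(1+a_i z^{m_i}t^{\ell_i})$ with all $m_i\neq 0$, and Proposition~\ref{Prop: Triviality of complex Ronkin function} gives $\shR(z^{-m_v}f_\fob,v)=0$; hence $\shR(\beta_\trop)=0$. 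Finally, $\langle s,\beta_\trop\rangle\in A^\times$ must be matched with $s^{-\beta_\trop^*}$: here $s=(s_{\sigma\ul\rho})$ are the lifted normalized gluing data parametrized by $\Spec A=\Spec\CC[H^1(B,\iota_*\check\Lambda)^*]$, so the universal gluing datum is the tautological class in $H^1(B,\iota_*\check\Lambda\otimes A^\times)$, and the pairing \eqref{Eqn: pairing with gluing data} applied to it returns precisely the value in $A^\times$ obtained by evaluating $\beta_\trop\in H_1(B,\iota_*\Lambda)$ against the universal cohomology class. Unwinding the vertex-by-vertex formula $\prod_v (s_{\sigma'\ul\rho}/s_{\sigma\ul\rho})^{\langle \check d_e,\xi_e\rangle}$ and comparing with the definition of $\beta_\trop^*\in H^1(B,\iota_*\check\Lambda)^*$ from \eqref{Eqn: H_1->(H^1)^*} (via the trace pairing \eqref{Eqn: Pairing}), one gets $\langle s,\beta_\trop\rangle = s^{-\beta_\trop^*}$ once the sign conventions are lined up; the minus sign is exactly the discrepancy between "homology evaluated on cohomology" and the way the Laurent monomial $s^m$ is indexed.

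For the last sentence, if $\beta_\trop\in H_1(B,\iota_*\Lambda)_+$ then $\langle c_1(\varphi),\beta_\trop\rangle\in\NN$ by definition of \eqref{Eqn: (H_1)_+}, so $t^{\langle c_1(\varphi),\beta_\trop\rangle}$ is a genuine holomorphic function (power of $t$), and $s^{-\beta_\trop^*}$ is always an invertible holomorphic function on $U\subset\Spec(A)_\an$; hence both sides are holomorphic on $U\times\DD$.

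The step I expect to be the main obstacle is the precise compatibility of conventions in the gluing-data computation: one has to check that the lifted, normalized gluing data used in the construction of \eqref{Eqn: GS family} really is the tautological/universal element of $H^1(B,\iota_*\check\Lambda\otimes A^\times)$ over $\Spec A=\Spec\CC[H^1(B,\iota_*\check\Lambda)^*]$, that the pairing $\langle s,\beta_\trop\rangle$ from \eqref{Eqn: pairing with gluing data} agrees with the abstract sheaf cohomology pairing \eqref{Eqn: Pairing} dualized as in \eqref{Eqn: H_1->(H^1)^*}, and that the sign (and the fan-vs.-cone/$\Lambda$-vs.-$\check\Lambda$ duality flagged in the footnotes near \eqref{Eqn: MPA sequence}) comes out as the stated $-\beta_\trop^*$. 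Everything else is a direct substitution into Theorem~\ref{Thm: period thm} combined with the vanishing of the Ronkin term via normalization.
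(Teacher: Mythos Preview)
Your proposal is correct and follows essentially the same route as the paper's proof: apply Theorem~\ref{Thm: period thm} to the order-$k$ reductions of the analytic family furnished by Theorem~\ref{Thm: Analyticity of GS}, pass to the limit $k\to\infty$ via Proposition~\ref{holomorphic-periods-induce-formal-periods}, kill the Ronkin term using the normalization condition and Proposition~\ref{Prop: Triviality of complex Ronkin function}, and identify $\langle s,\beta_\trop\rangle$ with $s^{-\beta_\trop^*}$. The paper's proof is terser on the last point---it simply attributes the minus sign to the opposite sign conventions between \cite{theta} and \cite{affinecomplex} (see \cite{theta}, \S{A.1})---whereas you unwind the universal-gluing-data picture explicitly; but this is the same content, and your anticipation that the sign/convention check is the only delicate step is exactly right.
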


\begin{proof}
The formula follows readily by applying
Proposition~\ref{holomorphic-periods-induce-formal-periods} and
Theorem~\ref{Thm: period thm} to the reductions modulo $t^{k+1}$ of $\shY\to
U\times \DD$ from Theorem~\ref{Thm: Analyticity of GS} and letting $k\to\infty$.
The term $\shR(\beta_\trop)$ does not appear since the criterion of
Proposition~\ref{Prop: Triviality of complex Ronkin function} holds for all slab
functions thanks to the normalization condition in the smoothing algorithm, see
\cite{affinecomplex}, \S3.6. The sign in $s^{-\beta_\trop^*}$ differs from the
sign in Theorem~\ref{Thm: period thm} due to opposite sign conventions in
\cite{theta} and \cite{affinecomplex}, as discussed in \cite{theta}, \S{A.1}.
\end{proof}

The proof of Theorem~\ref{Thm: Analyticity of GS} requires several preparations
and steps, which will be put together only at the end of this section.


\subsection{The \texorpdfstring{$\GG_m$}{Gm}-action on the canonical family}

Let $\beta_\trop$ be a tropical cycle with $\langle c_1(\varphi),
\beta_\trop\rangle=0$. Then Theorem~\ref{Thm: period thm} applied to the reduction modulo $t^{k+1}$ and taking $k\to\infty$ gives
\begin{equation}
\label{Eqn: Regular monomial periods}
\exp\bigg(\frac{1}{(2\pi \sqrt{-1})^{n-1}} \int_\beta \Omega\bigg)= s^{-\beta_\trop^*}
\end{equation}
for the corresponding period integral of $\foX\to\foS$. Thus such period
integrals produce the pull-back of a Laurent monomial in
$A=\CC[H_1(B,\iota_*\check\Lambda)^*]$ via the projection $\foS=\Spf\big(A\lfor
t\rfor\big)\to \Spec A$. The exponents of monomials thus obtained form the
sublattice
\begin{equation}
\label{Eqn: K^* Sublattice}
K^*=\big\{ \beta_\trop^*\in H^1(B,\iota_*\check\Lambda)^*\,\big|\, \beta_\trop\in
H_1(B,\iota_*\Lambda),\ \langle c_1(\varphi),\beta_\trop\rangle=0\big\}
\end{equation}
of $c_1(\varphi)^\perp\subset H^1(B,\iota_*\check\Lambda)^*$. Theorem~\ref{Thm:
perfect pairing} implies that $K^*\subseteq c_1(\varphi)^\perp$ has full rank.
Hence the period integrals of the form \eqref{Eqn: Regular monomial periods}
generate the coordinate ring of a finite quotient (isogenous) torus
$\Spec\big(\CC[K^*]\big)$ of $\Spec\big(\CC[c_1(\varphi)^\perp]\big)$.
Since $c_1(\varphi)\neq0$ by hypothesis, these tori have dimension one less than
$\dim A$. The explanation for the missing dimension is that the action of the
one-dimensional subtorus $\GG_m\subseteq\Spec (A)$ defined by $c_1(\varphi)\in
H^1(B,\iota_*\check\Lambda)$ extends to an action on $\foX\to\Spf\big(A\lfor
t\rfor\big)$, possibly after a finite base change:

\begin{proposition}
\label{Prop: foX mod TT_L}
There exists a finite index sublattice $H\oplus F\subseteq
H^1(B,\iota_*\check\Lambda)$ containing $c_1(\varphi)$ such that the pull-back
\[
\tilde\foX\to \tilde\foS=\Spf\big(\tilde A\lfor t\rfor\big)
\]
of $\foX\to\foS=\Spf\big(A\lfor t\rfor)$ by the induced isogeny of tori\,
$\Spec \tilde A\to \Spec A$, $\tilde A=\CC[H^*\oplus F^*]$, is equivariant for
a free $\GG_m$-action acting with weight $1$ on $t$. The $\GG_m$-action on
$\Spec \tilde A$ is defined by the $\ZZ$-grading given by evaluation
at $c_1(\varphi)$:
\[
\deg s^m= m\big(c_1(\varphi)\big),\quad m\in H^*\oplus F^*.
\]
\end{proposition}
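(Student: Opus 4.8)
The plan is to recognise the $\GG_m$-action in the statement as the one that makes the canonical family \emph{homogeneous} for a $\ZZ$-grading placing $t$ in weight $1$, the finite base change being present only to render that grading free. First I would fix the lattice $H\oplus F$. Since $\varphi$ is strictly convex, $c_1(\varphi)$ has infinite order (its class in $H^1(B,\iota_*\check\Lambda)\otimes\QQ$ is nonzero, e.g.\ by Theorem~\ref{Thm: perfect pairing} together with the non-triviality of the monodromy, Proposition~\ref{Prop: Picard-Lefschetz transformation}), so its image $\bar c$ in $L:=H^1(B,\iota_*\check\Lambda)/(\mathrm{torsion})$ is nonzero. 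I set $H:=\ZZ\bar c$, choose a $\QQ$-linear complement $W$ of $\QQ\bar c$ in $L\otimes\QQ$, and put $F:=L\cap W$; then $H\oplus F\hookrightarrow L$ is a finite-index inclusion of cocharacter lattices in which $c_1(\varphi)$ generates the summand $H$. This inclusion induces the isogeny $\Spec\tilde A\to\Spec A$ with $\tilde A=\CC[H^*\oplus F^*]$, and I put $\tilde\foX:=\tilde\foS\times_\foS\foX$.

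Next I would define the action on the base and reduce to the charts. Let $\GG_m$ act on $\tilde\foS=\Spf\big(\tilde A\lfor t\rfor\big)$ by $\lambda\cdot t=\lambda t$ and $\lambda\cdot s^m=\lambda^{m(c_1(\varphi))}s^m$ for $m\in H^*\oplus F^*$. Under the product decomposition $\Spec\tilde A\cong\GG_m\times\Spec\CC[F^*]$ induced by $H=\ZZ c_1(\varphi)$ this is scaling of the $\GG_m$-factor, hence free (the functional $m\mapsto m(c_1(\varphi))$ is onto $\ZZ$ because $c_1(\varphi)$ generates $H$), and $\lambda\mapsto c_1(\varphi)\otimes\lambda$ traces a one-parameter subgroup inside the identity component of $H^1(B,\iota_*\check\Lambda)\otimes\CC^*$, so the action preserves the torsor component of the fixed gluing datum $s_0$. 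To lift it to $\tilde\foX$ it suffices, by the way $\foX$ is assembled in \S\ref{subsec-make-Xk}, to equip the $\tilde A$-charts $\Spec R_\fou$ and $\Spec R_\fob$ with compatible $\GG_m$-actions commuting with the wall-crossing automorphisms \eqref{Eqn: Wall crossing formula} and the twisted canonical embeddings \eqref{Eqn: Canonical open emb}, after which the cocycle condition is checked order by order in $t^{k+1}$.

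The heart of the argument is then the homogeneity of the scattering. Recall that $c_1(\varphi)$ is the image of $\varphi$ under the connecting homomorphism of \eqref{Eqn: MPA sequence}, represented by the cellular $1$-cocycle $\ul\rho\mapsto\kappa_{\ul\rho}\check d_{\ul\rho}$. I would use this cocycle to assign to each toric monomial $z^m$ on a chamber $\fou\subseteq\sigma$ a weight that jumps across a slab $\ul\rho$ exactly by the kink; the failure of this cocycle to be a coboundary is absorbed precisely by the grading $\deg s^m=m(c_1(\varphi))$ on $\tilde A$, which is why the base change is needed. Declaring moreover $Z_\pm$ to have weights $w_\pm$ with $w_++w_-=\kappa_{\ul\rho}+\deg f_\fob$, one checks by induction along the construction of the wall structure in \cite{affinecomplex} that every slab function $f_\fob$ and every wall function $f_\fop=1+az^m t^\ell$ is homogeneous of weight $0$: the walls attached to simple singularities are homogeneous by the explicit local models, and each consistency step solves a \emph{homogeneous} system for the new wall functions, so homogeneity propagates. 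Granting this, the relation $Z_+Z_-=f_\fob t^{\kappa_{\ul\rho}}$, the wall-crossing automorphism \eqref{Eqn: Wall crossing formula}, and the gluing maps twisted by the $s_{\sigma\ul\rho}$ (homogeneous for the same reason) are all graded, the $\GG_m$-action patches to $\tilde\foX$ equivariantly over $\tilde\foS$, and freeness on $\tilde\foX$ follows from freeness on $\tilde\foS$ since a $\lambda$ fixing a point of $\tilde\foX$ fixes its image in $\tilde\foS$.

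The main obstacle is exactly this last induction, that is, the claim that the grading forced by ``$t$ in weight $1$'' is respected by the entire Gross--Siebert algorithm; equivalently, that $c_1(\varphi)$ is precisely the cohomological obstruction governing how powers of $t$ enter the slab and wall functions. Making this work relies on the normalization of \cite{affinecomplex} and on the identification of $c_1(\varphi)$ as the connecting map in \eqref{Eqn: MPA sequence}. If the explicit scattering proves too cumbersome, I would instead argue softly: $\lambda^{\ast}\tilde\foX$ is again a canonical toric degeneration attached to the same data $(B,\P,\varphi)$, the same splitting $\sigma_0$, and a gluing datum in the torsor component of $s_0$, so by the uniqueness built into \cite{theta}, Theorem~A.2 there is a canonical isomorphism $\lambda^{\ast}\tilde\foX\cong\tilde\foX$ over $\tilde\foS$; the resulting family of isomorphisms satisfies the cocycle identity and defines the $\GG_m$-action. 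The price of that route is having to make ``canonical'' functorial in $\lambda$ on the nose.
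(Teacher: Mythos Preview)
Your strategy is sound but takes a different route from the paper. The paper does not build the $\GG_m$-action directly from the scattering algorithm; instead it invokes the \emph{universal} construction of \cite{theta},~\S{A.3}: there the family is first built over $\Spf\big(A\lfor Q\rfor\big)$ with $Q$ the toric monoid dual to $\breve\MPA(B)$, and Propositions~A.13 and~A.14 of \cite{theta} already equip a finite cover of that universal family with an action of the torus $\Spec(\CC[L^*])$, where $L\subseteq\breve\MPA(B)$ is a complement (up to finite index) of $\ker c_1$ chosen so that $H:=c_1(L)$ lies in $\im\sigma_0$. One then arranges $\varphi\in L$ primitive and restricts the torus action to the one-parameter subgroup dual to $\varphi$; the base change $A\lfor Q\rfor\to A\lfor t\rfor$ induced by $h=\langle\,\cdot\,,\varphi\rangle:Q\to\NN$ carries this to the desired $\GG_m$-action on $\tilde\foX\to\tilde\foS$. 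In particular the paper's $H$ has rank $\rk(\im c_1)$, not rank~$1$ as in your construction, and both $H$ and $F$ are dictated by \cite{theta}'s setup (including compatibility with the splitting $\sigma_0$), not chosen ad~hoc.

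What you propose---grade each chart so that $t$ has weight~$1$ and check inductively that every slab and wall function produced by the algorithm is homogeneous of weight~$0$---is exactly what \cite{theta},~Propositions~A.13--A.14 establish in the universal setting, so you would in effect be reproving that part of \cite{theta} in the special case $Q=\NN$. Your sketch is correct in outline, but the step you flag as ``the main obstacle'' is the entire content: it requires tracking how the cocycle $\ul\rho\mapsto\kappa_{\ul\rho}\check d_{\ul\rho}$ interacts with the universal gluing data and with normalization, and this is where the role of $\sigma_0$ (which you do not mention) enters. Your alternative ``soft'' argument via uniqueness of the canonical family is also viable, and closer in spirit to how \cite{theta} packages things, but as you note it demands functoriality of the construction under rescaling. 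Either way, the cleanest path is to cite \cite{theta} rather than redo the induction.
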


\begin{proof}
The group action is defined in \cite{theta}, \S{A.3} with a universal choice of
piecewise linear function $\breve\varphi$, taking kinks in a universal monoid
$Q$. The monoid $Q$ is the toric monoid with $\Hom(Q,\ZZ)$ the group
$\breve\MPA(B)$ of multivalued piecewise affine functions on $B$ and such that
$\Hom(Q,\NN)$ is the submonoid of such functions with non-negative kinks. Our
piecewise affine function $\varphi$ is the composition of $\breve\varphi$ with a
homomorphism $h:Q\to\NN$. This universal point of view produces a canonical
family over $\Spf\big(A\lfor Q\rfor\big)$, and our family is
obtained\footnote{The universal construction also involves a choice of splitting
$\sigma_0$ of $q_f: H^1(B,\iota_*\check\Lambda)\to
H^1(B,\iota_*\check\Lambda)_f$. We assume the same $\sigma_0$ as in the construction of $\foX\to\Spf\big(A\lfor t\rfor\big)$ above has been
chosen.} by base change via the homomorphism of $\CC$-algebras
\begin{equation}
\label{Eqn: A[Q]->A[t]}
A\lfor Q\rfor \lra A\lfor t\rfor
\end{equation}
defined by $h$.

The group action in \cite{theta}, Proposition~A.13, has character lattice $L^*$
for $L\subseteq \breve\MPA(B)$ a complement to the kernel of
$c_1:\breve\MPA(B)\to H^1(B,\iota_*\check\Lambda)$, up to finite index. The
lattice $L$ has to be chosen in such a way that the isomorphic image
$H=c_1(L)\subseteq H^1(B,\iota_*\check\Lambda)$ lies in the image of the
splitting $\sigma_0$ (\cite{theta}, Lemma~A.11). Inspecting the construction of
$L$ in \cite{theta}, it is clear that we can also achieve
$\varphi\in L$. Going over to a sublattice, we may also assume that $\varphi$ is
primitive as an element of $L$. The construction in \cite{theta} then provides a
finite index sublattice $H\oplus F$ of $H^1(B,\iota_*\check\Lambda)$, thus a
finite unramified ring extension $ \tilde A=\CC[H^*\oplus F^*]$ of $A$, or
geometrically an isogeny $\Spec \tilde A\to\Spec A$ of tori. By Proposition~A.14
in \cite{theta}, the algebraic torus $\Spec\big(\CC[L^*]\big)$ acts on the
pull-back of the universal family after the corresponding base change by
\begin{equation}
\label{Eqn: Finite base change map}
\tilde\foS=\Spf\big(\tilde A\lfor Q\rfor\big)\lra \foS=\Spf\big(A\lfor Q\rfor\big).
\end{equation}
The action is given by the following $L^*$-grading on monomials. For exponents
in $Q\subset\breve{\MPA^*}$, the grading is the dual of the inclusion $L\to
\breve\MPA$, while for monomials in $\tilde A=\CC[H^*\oplus F^*]$, the grading
is the dual of the composition
\[
L\stackrel{c_1}{\lra} H\lra H\oplus F.
\]
Now since $\varphi\in L$ we can compose these gradings with the dual
$L^*\to\ZZ$ of multiplication with $\varphi$ to obtain an induced $\ZZ$-grading
on $\tilde A\lfor Q\rfor$. The composition
\[
Q\lra L^*\lra \ZZ
\]
defining the $\ZZ$-grading on $Q$ is given by evaluating at
$\varphi\in\breve\MPA$, so agrees with the homomorphism of monoids $h$ inducing
\eqref{Eqn: A[Q]->A[t]}. Combining with \eqref{Eqn: Finite base change map}, we
see that the change of base morphism
\[
\Spf\big(\tilde A\lfor t\rfor\big)\to \Spf\big(\tilde A\lfor Q\rfor\big)
\]
is equivariant with respect to the inclusion of tori $\GG_m\to
\Spec\big(\CC[L^*]\big)$. The induced $\GG_m$-action then also lifts to the
pull-back $\tilde\foX\to \Spf\big(\tilde A\lfor t\rfor\big)$ of our family as
claimed. The statements on the weights of the $\GG_m$-action are immediate from
our construction.
\end{proof}

\begin{remark}
\label{Rem: GG_m-action}
Since $\varphi\in L$ is primitive, so is $c_1(\varphi)$ in the isomorphic image
$H=c_1(L)\subseteq H^1(B,\iota_*\check\Lambda)$ of $L$. Thus there exists a
splitting $H^*=\ZZ\oplus \bar H^*$, with $\bar H^*$ the image of
$c_1(\varphi)^\perp$ under the map $H^1(B,\iota_*\check\Lambda)^*\to H^*$
dual to the inclusion of $H$. Then the $\ZZ$-grading on $H^*\oplus
F^*= \ZZ\oplus \bar H^*\oplus F^*$ is given by projection to the first factor.
This implies that we have a $\GG_m$-equivariant product decomposition
\begin{equation}
\label{Eqn: pull-back family tilde foX}
\tilde\foX= \GG_m\times\bar\foX\lra \tilde\foS=\GG_m\times\bar\foS
\end{equation}
of the family, with $\bar\foS= \Spf\big(\bar A\lfor t\rfor\big)$, $\bar
A=\CC[\bar H^*\oplus F^*]$, and $\GG_m$ acting by multiplication on the first
factor and trivially on $\bar\foX$ and $\bar\foS$. Note that this
product decomposition depends on the splitting $H^*=\ZZ\oplus \bar H^*$, which
is only unique up to changing the embedding of $\ZZ$ by an element of $\bar
H^*$. We fix one such choice from now on and denote the ring epimorphism
induced by the projection $H^*=\ZZ\oplus \bar H^*\to \bar H^*$ to the second
factor by \begin{equation}
\label{Eqn: chi}
\chi: \tilde A=\CC[H^*\oplus F^*]\lra \bar A=\CC[\bar H^*\oplus F^*].
\end{equation}
The corresponding morphism $\Spf\big(\bar A\lfor t\rfor\big)\to \Spf\big(\tilde
A\lfor t\rfor\big)$ identifies $\bar\foS$ with the slice $\{e\}\times
\bar\foS\subset \tilde \foS$ for $e\in\GG_m$ the unit point. By equivariance we
can assume that a lift $\bar a$ of $a$ to the finite cover $\tilde\foS\to \foS$
lies in the slice $\bar\foS$.

To prove Theorem~\ref{Thm: Analyticity of GS} it is therefore enough to prove
the existence of an analytic family $\bar\shY\to \bar U\times \DD$ with
completion at $(\bar a,0)$ isomorphic to the completion of $\bar\foX\to \bar\foS$
at $\bar a$. Indeed, \eqref{Eqn: pull-back family tilde foX} is the base-change of $\bar\foX\to\bar\foS$ by the completion at $t=0$ of the $\CC^*$-invariant map
\begin{equation}
\label{Eqn: base change for bar shY}
\Spec(\bar A)_\an\times \CC^*\times\CC \lra \Spec(\bar A)_\an\times\CC,\quad
(s,\lambda,t)\longmapsto (s,\lambda^{-1} t).
\end{equation}
The preimage of $\bar U\times\DD$ under this map is a neighborhood of the zero
set of $\lambda^{-1}\cdot t$. Thus we can construct $\shY\to U\times \DD$ by
base change of $\bar\shY\to\bar U\times\DD$ with the restriction of \eqref{Eqn:
base change for bar shY} to an appropriate neighborhood of
$(\bar a,1,0)$.
\end{remark}


\subsection{Analytic approximation with monomial period functions}
According to Remark~\ref{Rem: GG_m-action}, it suffices to prove local
analyticity of the slice $\bar\foX\to\bar\foS$ of the discussed $\GG_m$-action
on a finite unramified cover of our family $\foX\to\foS$. Denote by
$\bar\foD\subset\bar\foX$ the restricted divisor defined by $\partial B$.
For $\bar S\subset \Spec\big(\bar A[t]\big)_\an$ an
open neighborhood of $\bar x=(\bar a,0)$, for any $k\ge0$ write $\bar S_{k}$ for the
closed analytic subspace of $\bar S$ given by $(t^{k+1})$. Note that $\bar S_k$
agrees with an open subset of the analytification of the closed subscheme of
$\bar\foS$ given by $(t^{k+1})$. Let $\bar\foX_k$, $\bar\foD_k$ denote the
subschemes of $\bar\foX$ and $\bar\foD$ given by $(t^{k+1})$.

For the following statement recall the notion of \emph{divisorial log
deformations} from \cite{logmirror2}, Definition~2.7, a version of log smooth
deformations appropriate for our particular relatively coherent log structures.

\begin{proposition}
\label{approx-is-toric-degen}
Assume $(B,\P)$ is simple, $B$ compact and either $\partial B=\emptyset$ or
$\partial B$ is again an affine manifold. Then there is an integer $k_0>0$ with
the following property.

Let $\bar\pi: \bar\shY\to \bar S$ be a flat analytic family together with a
Cartier divisor $\bar\shD\subset \bar\shY$ that is also flat over $\bar S$.
Assume that for some $k\ge k_0$ there is an isomorphism $f_k$ over $\bar S_k$ of
the base change of the pair $(\bar\shY,\bar\shD)$ to $\bar S_k$ with
$\big((\bar\foX_k)_\an,(\bar\foD_k)_\an\big)$. Then
$(\bar\shY,\bar\shD)\ra \bar\shS$ with the divisorial log structure defined by
$t=0$ is a \emph{divisorial log deformation}\footnote{The case $\partial
B\neq\emptyset$ ($\Leftrightarrow\bar\foD\neq\emptyset$) wasn't actually covered
in \cite{logmirror2}, Definition 2.7, but its inclusion is straightforward.} in
the sense of \cite{logmirror2}, Definition 2.7.

Furthermore, $f_k$ induces an isomorphism of the fibers over $t=0$ as
log spaces when equipped with the restriction of the divisorial log structures
obtained from the divisors $\{t=0\}\cup \bar\shD\subset \bar\shY$ and
$\{t=0\}\cup \bar\foD \subset \bar\foX$ respectively, compatible with the log
morphism to $\bar S_0$ also given the restriction of the divisorial log
structure via $t$ on $\bar S$.
\end{proposition}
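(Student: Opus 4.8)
The plan is to reduce the proposition to the \emph{local rigidity} of the finitely many standard local models underlying a divisorial log deformation with intersection complex $(B,\P)$, this rigidity being exactly what simplicity of the singularities provides through the deformation theory of \cite{logmirror2}, \S2.2. First recall that the canonical family $\bar\foX\to\bar\foS$ is itself a divisorial log deformation, by its construction in \cite{affinecomplex} and \cite{theta}, Theorem~A.2. Hence, analytically locally along its central fibre, $\big((\bar\foX)_\an,(\bar\foD)_\an\big)\to \bar S$ is isomorphic, compatibly with the divisor and the divisorial log structure, to one of a fixed finite list of standard models $\V\to \bar S_{\mathrm{loc}}$: (a) a family smooth over $\bar S$ at a regular point of the central fibre; (b) the normal-crossing model, \'etale-locally $zw=u\,t^{\kappa_{\ul\rho}}$ with $u$ a local unit, along a codimension-one cell $\rho$, cf.\ \eqref{Eqn: ring in codim one},\eqref{Eqn: adapted local eqn}; (c) the codimension-two models at simple points listed in \cite{logmirror2}, \S2.2; and, if $\partial B\neq\emptyset$, (d) their boundary counterparts, handled in the same way (cf.\ the footnote to the statement). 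That only finitely many isomorphism types occur uses that $\P$ is finite and the singularities are simple.

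The key point is that each such $\V\to \bar S_{\mathrm{loc}}$ is \emph{locally rigid}: there is an integer $k(\V)\ge 0$ such that any flat family over $\bar S_{\mathrm{loc}}$ whose reduction modulo $t^{k(\V)+1}$ is isomorphic, as a family with divisor, to $\V$ modulo $t^{k(\V)+1}$ is already analytically isomorphic to $\V$, compatibly with the divisor and the divisorial log structure. For type~(a) one may take $k(\V)=0$, since a flat family over the smooth analytic space $\bar S_{\mathrm{loc}}$ with smooth central fibre is smooth in a neighbourhood of that fibre. For type~(b) one may take $k(\V)=\kappa_{\ul\rho}$: a flat deformation of the node $zw=0$ over the smooth base $\bar S_{\mathrm{loc}}$ is cut out by $zw=h$ for some $h\in\O_{\bar S_{\mathrm{loc}}}$, and $h\equiv u\,t^{\kappa_{\ul\rho}}\pmod{t^{\kappa_{\ul\rho}+1}}$ with $u$ invertible forces $h$ to be a unit times $t^{\kappa_{\ul\rho}}$, which is the standard model after rescaling $z,w$. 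For type~(c), and likewise type~(d), this is precisely the local rigidity of the simple local models established in \cite{logmirror2}, \S2.2, i.e.\ the finiteness of the relevant tangent and obstruction modules. Since only finitely many $\V$ occur,
\[
k_0:=\max_{\V}\,k(\V)
\]
is a finite number, and this will be the $k_0$ in the statement.

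Now let $\bar\pi:\bar\shY\to \bar S$ be flat, $\bar\shD\subset\bar\shY$ a Cartier divisor flat over $\bar S$, and $f_k$ an isomorphism of pairs over $\bar S_k$ with $k\ge k_0$ as in the statement. Fix a point $y$ in the central fibre of $\bar\shY$ and set $x=f_k(y)\in(\bar\foX_0)_\an$. Near $x$ the pair $\big((\bar\foX)_\an,(\bar\foD)_\an\big)\to\bar S$ is one of the standard models $\V$, so via $f_k$ the pair $(\bar\shY,\bar\shD)\times_{\bar S}\bar S_k$ near $y$ is isomorphic to $\V$ modulo $t^{k+1}$; since $\bar\shY\to\bar S$ is flat over the smooth space $\bar S$ and $k\ge k_0\ge k(\V)$, the local rigidity of the previous step produces an analytic isomorphism near $y$ of $(\bar\shY,\bar\shD)$ with $\V$ itself, compatibly with the divisor and the divisorial log structure. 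As $y$ was arbitrary, $(\bar\shY,\bar\shD)\to\bar\shS$, with the divisorial log structure defined by $t=0$, is \'etale-locally along its central fibre a standard model, hence a divisorial log deformation at every finite order, compatibly in the order; in particular the relevant relative coherence, being a condition near the central fibre, holds because it holds for $\bar\foX$. For the final assertion, $f_k$ being an isomorphism of pairs over $\bar S_k$ with $k\ge 1$ carries $(t)$ to $(t)$ and $\bar\shD$ to $(\bar\foD_k)_\an$, so its restriction to $t=0$ is an isomorphism $\bar\shY_0\xrightarrow{\sim}(\bar\foX_0)_\an$ of the underlying spaces taking $\bar\shD_0$ to $(\bar\foD_0)_\an$ and matching the divisorial log structures induced on the central fibres by $\{t=0\}\cup\bar\shD$ and $\{t=0\}\cup\bar\foD$ (which depend only on the first-order neighbourhood of the central fibre, respected by $f_k$), compatibly with the log morphism to $\bar S_0$ endowed with its $t$-divisorial log structure.

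The step I expect to be the main obstacle is establishing type~(c) of the local rigidity with a \emph{uniform finite} order $k_0$: one must show that beyond some explicit order a flat deformation of each simple codimension-two local model is rigid, and it is here that the hypothesis of simple singularities is used in an essential way, via the local deformation theory of \cite{logmirror2}, \S2.2. Everything else is either formal (the locality and patching) or an elementary hypersurface computation (types (a), (b), and the restriction to $t=0$).
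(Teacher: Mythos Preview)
Your strategy matches the paper's: reduce to finitely many local models and invoke a finite-determinacy statement for each. The gap is exactly where you place it. What \cite{logmirror2}, \S2.2 supplies is local \emph{rigidity} of the simple models (vanishing of the relevant logarithmic $T^1$), not finite determinacy over a thickened base; bridging the two is the content of \cite{logness}, Theorem~2.4, which the paper applies after first arranging $t^N\shT^1_{\bar\foX/\bar\foS}=0$ and stationarity of the $t^N$-torsion in $\shT^2$ (via properness, \cite{logmirror2}, Proposition~2.2, and \cite{Al95}), and then takes $k_0>4N$. The divisor is dealt with by running that theorem twice: once for $\bar\shD$ alone, then for the pair $(\bar\shY,\bar\shD)$ with $Z=\bar\shD$. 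Your sketch does not provide this bridge, and your case-by-case bounds $k(\V)$ for types~(a),(b) do not by themselves control the genuinely nontrivial models along $Z$.

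Your final parenthetical, that the induced divisorial log structure on the central fibre ``depends only on the first-order neighbourhood,'' is not correct: for $zw=t^\kappa$ the ghost sheaf at the node records $\kappa$, which is invisible modulo $t^2$ once $\kappa\ge2$. The paper instead deduces agreement of the log structures on the central fibres from \cite{logness}, Theorem~5.5, using the full order-$k$ isomorphism $f_k$ rather than only its first-order restriction.
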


\begin{proof} 
Since $\partial B$ is again an affine manifold, it is also simple and $\bar\shD$
is the corresponding canonical deformation. By simplicity,
\cite{logmirror2},\,Proposition 2.2 and \cite{Al95},\,(6.5)\,Corollary, the
fibers in both $\bar\foX$ and $\bar\foD$ away from $t=0$ are locally rigid.
Hence, by \cite{logness},\,Lemma 2.5 and properness, there is $N>0$ such that
$t^N\shT^1_{\bar\foX/\bar\foS}=0$ and $t^N\shT^1_{\bar\foD/\bar\foS}=0$
where we refer to \cite{logness}\,\S3 for the definition of the sheaves
$\shT^i$. Let $I$ be the ideal sheaf of $\bar\foD$ in $\bar\foX$. By increasing
$N$ if needed and using properness again, we may assume that the kernel of
multiplication by $t^N$ in $\shT^2({\bar\foX/\bar\foS},I)$ and in
$\shT^2_{\bar\foD/\bar\foS}$ are stationary, that is, do not change with larger
$N$ (see \cite{logness}\,\S3.8). Choose $k_0>4N$ and assume
$(\bar\shY,\bar\shD)\ra \bar\shS$ satisfies the assumptions for this $k_0$. If
$\bar\foX_\an\leftarrow V\ra U$ is a local model at a point $y\in \bar\foX_\an$
then by \cite{logness}, Theorem 2.4, possibly after shrinking $V,U$, we find that
also $y\in \bar\shY$ has this local model. The case $y\not\in\bar\foD$ follows
directly $(Z=\emptyset)$. For $y\in\bar\foD$, let $D\subset U$ denote the divisor in
the local model. We first apply \cite{logness}, Theorem 2.4 to $D$ and $\bar\shD$ to
obtain an isomorphism $\varphi:D\ra\bar\shD$ locally at $y$. Then use this
isomorphism $\varphi$ as input in a second application of \cite{logness},
Theorem 2.4, now with $Z=D,Z'=\bar\shD$, to find the pair $(\bar\shY,\bar\shD)$
isomorphic to $(U,D)$ locally at $y$. This implies that $\bar\shY$ has the same
local models as $\bar\foX$ and since the latter is a divisorial deformation, so
is the former. That the log structures on the central fibers agree follows from
\cite{logness},\,Theorem 5.5.
\end{proof}

By Theorem~\ref{Thm: Analytic approximation}, a flat analytic family
$\bar\pi:\bar\shY\to \bar S$ and $\bar\shD\subset \bar\shY$ satisfying the
assumptions in Proposition~\ref{approx-is-toric-degen} exists and we take one.
Without loss of generality we may assume $k_0>\delta$ with $\delta\in\NN$ the
positive generator of the image of the map in \eqref{Eqn: Picard-Lefschetz
monodromy}. Thus $\delta$ is the minimal strictly positive value of $\big\langle
c_1(\varphi),\beta_\trop\big\rangle$ for $\beta_\trop\in H_1(B,\iota_*\Lambda)$.

For both families, $\bar\foX\to\bar\foS$ and $\bar\shY\to \bar S$ we have our
exponentiated period integrals constructed from certain $n$-cycles on
$X_{\bar x}$. In the first case these are formal rational
functions on $\bar\foS$, in the second case germs of meromorphic functions on
$\bar S$ at $\bar x$. To obtain regular functions, we restrict to those
$n$-cycles constructed from tropical cycles $\beta_\trop\in
H_1(B,\iota_*\Lambda)_+$ from \eqref{Eqn: (H_1)_+}, that is, with $\langle
c_1(\varphi),\beta_\trop\rangle\ge 0$. By Theorem~\ref{Thm: period thm}, on
$\bar\foX\to\bar\foS$, the exponentiated period integral for such cycles equals
the monomial
\begin{equation}\label{Eqn: regular period integrals for canonical family}
s^{-\bar\beta_\trop^*}\cdot t^{\langle c_1(\varphi),\beta_\trop\rangle}\in
\bar A\lfor t\rfor.
\end{equation}
Here we write $s^{-\bar\beta_\trop^*}$ for the monomial in $\bar A$
defined by the image of $s^{-\beta_\trop^*}\in \tilde A=\CC[H^*\oplus F^*]$
under the projection $H^*\oplus F^*\to \bar H^*\oplus F^*$ defining $\chi$
\eqref{Eqn: chi}. We now want to apply a holomorphic coordinate change to $\bar
S$ to achieve the same formula for the period integrals on $\bar\shY\to \bar S$.

For the following statement, we assume without loss of generality that the
neighborhood $\bar S$ of $\bar x$ is of the form $\bar U\times \DD$ with
$\bar U\subset \Spec(\bar A)_\an$ an analytic open set and $\DD\subset\CC$ a
small open disc with coordinate~$t$.

\begin{proposition} 
\label{prop-prepare-inv-periods}
After a holomorphic change of coordinates of $\bar S=\bar U\times \DD$ at $\bar
x=(\bar a,0)$ restricting to the identity on $\bar U\times\{0\}$ and leaving $t$
unchanged modulo $t^2$, for every tropical cycle $\beta_\trop\in
H_1(B,\iota_*\Lambda)_+$, the exponentiated period integral on $\bar\shY\to \bar
S$ is the monomial function
\begin{equation} 
\label{period-formula-on-barS}
h_{\beta_\trop}=s^{-\bar\beta^*_\trop}\cdot t^{\langle c_1(\varphi),\beta_\trop\rangle}.
\end{equation}
\end{proposition}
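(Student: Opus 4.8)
The plan is to compare the period integrals of the two families and use them as a near-identity coordinate change. First I would fix a finite set of tropical cycles $\beta_\trop^{(1)},\ldots,\beta_\trop^{(r)}\in H_1(B,\iota_*\Lambda)_+$ whose classes $\bar\beta_\trop^{(1),*},\ldots,\bar\beta_\trop^{(r),*}$ generate the lattice $\bar H^*\oplus F^*$; this is possible because $H_1(B,\iota_*\Lambda)_+$ spans $H_1(B,\iota_*\Lambda)$ (as noted after \eqref{Eqn: (H_1)_+}) and, by Theorem~\ref{Thm: perfect pairing}, the map $\beta_\trop\mapsto\beta_\trop^*$ is an isomorphism over $\QQ$; after passing to the finite cover $\tilde\foS$ (absorbed into $\bar A$) we may arrange integral generation. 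On $\bar\shY\to\bar S$ each such cycle yields, by Proposition~\ref{holomorphic-periods-induce-formal-periods} and Theorem~\ref{Thm: period thm} applied to the reductions modulo $t^{k+1}$ (valid for $k\ge k_0>\delta$ since $\bar\shY$ agrees with $\bar\foX$ to order $k$), an exponentiated period that is a germ of a meromorphic function on $\bar S$ of the shape $s^{-\bar\beta_\trop^{(i),*}}\cdot t^{\langle c_1(\varphi),\beta_\trop^{(i)}\rangle}\cdot g_i$, where $g_i$ is a unit in $\O_{\bar S,\bar x}$ with $g_i\equiv 1$ modulo $t^{k_0+1}$ (the Ronkin term vanishes by the normalization condition and Proposition~\ref{Prop: Triviality of complex Ronkin function}). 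The key point is that the period integral is \emph{canonically attached} to the relative $n$-form and the cycle, so the two families' periods can differ only by the ambiguity described in Theorem~\ref{Thm: period thm}, i.e.\ by the $g_i$.

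Next I would build the coordinate change. Writing $s_i:=s^{\bar\beta_\trop^{(i),*}}$ for the corresponding monomial coordinates on $\Spec(\bar A)_\an$ (so $s_1,\ldots,s_r$ together with $t$ are coordinates on $\bar S=\bar U\times\DD$ near $\bar x$, after relabeling), the equations
\begin{equation*}
s_i'\cdot (t')^{\langle c_1(\varphi),\beta_\trop^{(i)}\rangle}=s_i\cdot t^{\langle c_1(\varphi),\beta_\trop^{(i)}\rangle}\cdot g_i(s,t),\qquad i=1,\ldots,r,
\end{equation*}
together with $t'=t$, define the desired change of variables: one solves for the new coordinates $s_i'$ in terms of the old ones by $s_i'=s_i\cdot g_i(s,t)$, which is holomorphic, restricts to the identity on $t=0$ (since $g_i\equiv 1$ there), and is the identity modulo $t^{k_0+1}$, hence in particular modulo $t^2$, and leaves $t$ unchanged. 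Because the $g_i$ are units near $\bar x$, this map is a biholomorphism of a neighborhood of $\bar x$ onto its image, and I would shrink $\bar U\times\DD$ so it is defined on all of $\bar S$. Pulling back $\bar\shY\to\bar S$ along the inverse of this map produces a new family — still flat, still carrying the flat divisor — whose exponentiated period for $\beta_\trop^{(i)}$ is now exactly $s^{-\bar\beta_\trop^{(i),*}}(t')^{\langle c_1(\varphi),\beta_\trop^{(i)}\rangle}$. Since the $\bar\beta_\trop^{(i),*}$ generate the lattice and the exponentiated period is multiplicative in $\beta_\trop$ (additive before exponentiation, by linearity of $\beta_\trop\mapsto\int_\beta\Omega$ in the formula \eqref{Eqn: Main period formula}, using $\langle s,-\rangle$ multiplicative and $\langle c_1(\varphi),-\rangle$ additive), the monomial formula \eqref{period-formula-on-barS} follows for \emph{every} $\beta_\trop\in H_1(B,\iota_*\Lambda)_+$.

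The main obstacle, and the step deserving care, is the precise control of the discrepancy units $g_i$: one must check that comparing the holomorphic period on $\bar\shY$ with the formal period on $\bar\foX$ really produces a \emph{well-defined holomorphic} factor $g_i$ near $\bar x$, congruent to $1$ to high order in $t$. This rests on three inputs from the appendix and Section~\ref{section-compute-periods}: that for $k\ge k_0$ the reduction of $\bar\shY$ agrees with that of $\bar\foX$ (hypothesis of Proposition~\ref{approx-is-toric-degen}), so the order-$k$ periods coincide by Theorem~\ref{Thm: period thm}; that the ambiguity in the holomorphic period is exactly by $\exp(h\cdot t^{k+1})$ with $h$ holomorphic (Proposition~\ref{holomorphic-periods-induce-formal-periods} and the well-definedness statement in Theorem~\ref{Thm: period thm}); and that the cycle $\beta_\trop\in H_1(B,\iota_*\Lambda)_+$ gives a period with $\langle c_1(\varphi),\beta_\trop\rangle\ge 0$ so no genuine pole appears and $g_i$ is a unit at $\bar x$. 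Granting these, the Ronkin contribution is identically zero by the normalization condition (Proposition~\ref{Prop: Triviality of complex Ronkin function}, as in Corollary~\ref{Cor: monomial analytic periods}), so $g_i$ has no constant correction and the coordinate change is genuinely near-identity; the remaining verifications (that the exponents $\langle c_1(\varphi),\beta_\trop^{(i)}\rangle$ and the monomials $\bar\beta_\trop^{(i),*}$ match the combinatorics, and that solving for $s_i'$ gives a biholomorphism) are routine.
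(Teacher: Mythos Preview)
There is a genuine gap. Your coordinate change fixes $t'=t$ and adjusts only the $r=\operatorname{rank}(\bar H^*\oplus F^*)$ torus coordinates $s_i$, but $H_1(B,\iota_*\Lambda)$ has rank $r+1$: the extra direction is spanned by cycles $\beta_\trop$ with $\bar\beta_\trop^*=0$ and $\langle c_1(\varphi),\beta_\trop\rangle\neq 0$ (these correspond to the $\ZZ$-summand in $H^*\oplus F^*=\ZZ\oplus\bar H^*\oplus F^*$). Such a cycle is \emph{not} a $\ZZ$-linear combination of your $\beta_\trop^{(1)},\ldots,\beta_\trop^{(r)}$, because its image $\bar\beta_\trop^*=0$ would force all coefficients to vanish while $\langle c_1(\varphi),\beta_\trop\rangle\neq 0$. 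Hence your multiplicativity argument does not reach it: the exponentiated period depends on the pair $(\bar\beta_\trop^*,\langle c_1(\varphi),\beta_\trop\rangle)$, not on $\bar\beta_\trop^*$ alone. For such a cycle the period on $\bar\shY$ is $t^{\delta}\cdot g_0(s,t)$ with $g_0\equiv 1\bmod t^{k_0+1}$, and after your substitution $s_i'=s_i\cdot g_i$, $t'=t$, it becomes $t^{\delta}\cdot g_0(s(s',t),t)$, which has no reason to equal $t^{\delta}$.

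This is precisely why the paper's proof uses \emph{two} steps and why the statement only requires $t$ unchanged modulo $t^2$ rather than exactly: first $\Phi_1$ (an implicit-function-theorem change in the $s$-variables with $t$ fixed) makes the periods for $r$ cycles in $c_1(\varphi)^\perp$ monomial; then $\Phi_2$ replaces $t$ by $(1+a(s,t)t)\cdot t$ to force the period of one additional cycle $\beta_\trop^0$ with $\langle c_1(\varphi),\beta_\trop^0\rangle=\delta$ to be monomial as well. Only then do $\beta_\trop^0,\beta_\trop^1,\ldots,\beta_\trop^r$ span the image of $\beta_\trop\mapsto\beta_\trop^*$, and multiplicativity finishes the argument. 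Your proof can be repaired by inserting exactly this second step; without it the conclusion fails for the missing rank-one direction. (A minor secondary point: the image of $\beta_\trop\mapsto\bar\beta_\trop^*$ is in general only a finite-index sublattice of $\bar H^*\oplus F^*$, so ``generate the lattice'' should be ``span a finite-index sublattice''; this is harmless for producing local coordinates but should be stated correctly.)
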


\begin{proof}
The choice of slice $\bar\foS\subset\tilde \foS$ was made in order for the periods of
$\bar\foX\to\bar\foS$ to yield a system of coordinates at $\bar{x}\in\bar\foS$. The
statement will follow by showing that the same is also true for $\bar\shY\to\bar
S$.

By the theory of period integrals developed in Appendix~\ref{Sect: Period
integrals}, the exponentiated period integral $h_{\beta_\trop}$ for a tropical
cycle $\beta_\trop\in H_1(B,\iota_*\Lambda)_+$ on $\bar U\times \DD^*$ extends
holomorphically to the fiber over $t=0$; moreover, the restriction to $t=0$
depends only on the restriction of $\bar\shY\to \bar S$ to $t=0$. Since the
analytic family $\bar\shY\to \bar S$ agrees with the canonical family
$\bar\foX\to \bar\foS$ to order $k_0\ge 1$, the exponentiated period integrals
for $\beta_\trop\in c_1(\varphi)^\perp$ agree with the monomial exponentiated
periods $s^{-\bar\beta_\trop^*}$ for the canonical family modulo $t^{k_0+1}$.
The exponents $-\bar\beta_\trop^*$ thus obtained cover the image under the
projection $H^*\oplus F^*\to \bar H^*\oplus F^*$ of the sublattice $K^*\subset
H^1(B,\iota_*\check\Lambda)^*\subseteq H^*\oplus F^*$ from \eqref{Eqn: K^*
Sublattice}. Now $K^*$ agrees with $c_1(\varphi)^\perp$ up to finite index and
$c_1(\varphi)^\perp$ maps onto a finite index sublattice of $\bar H^*\oplus
F^*$. Thus since $\bar A=\CC[\bar H^*\oplus F^*]$ and $\bar U$ is an open subset of
$\Spec(\bar A)_\an$, period functions $h_{\beta_\trop}$ for $\beta_\trop$ with
$\langle c_1(\varphi), \beta_\trop\rangle=0$ span the relative cotangent space
$T^*_{\bar S/\DD,\bar x}= T^*_{\bar U,\bar a}$.

Let $\beta_\trop^1,\ldots,\beta_\trop^{r}\in H_1(B,\iota_*\Lambda)$ map to a
basis of $K^*$. Then $h_{\beta_\trop^1},\ldots,h_{\beta_\trop^r}$ are a basis of
the relative cotangent space $T^*_{\bar S/\DD,\bar a}$, hence define local coordinates
on $\bar U\times\{0\}$. Additionally pick some $\beta^0_\trop\in
H_1(B,\iota_*\Lambda)_+$ with $\langle c_1(\varphi),\beta_\trop^0
\rangle=\delta$. By \eqref{Eqn: regular period integrals for canonical family}
and since $\bar\shY\to\bar S$ agrees with $\bar\foX\to\bar\foS$ to order
$k_0>\delta$, the exponentiated period integral for $\beta_\trop^0$ on $\bar S$
has the form
\[
h_{\beta_\trop^0}=s\cdot t^\delta,
\]
with $s$ an invertible function on $\bar S$ restricting to
$s^{-(\bar\beta_\trop^0)^*}$ on the fiber over $t=0$. We claim that there exists a local
biholomorphism $\Phi$ of $\bar S$ with
\[
\Phi^*\big(h_{\beta^0_\trop}\big)=s^{-(\bar\beta_\trop^0)^*}\cdot t^\delta,\quad
\Phi^*\big(h_{\beta_\trop^i}\big)=s^{-(\bar\beta_\trop^i)^*},\ i=1,\ldots,r,
\]
restricting to the identity on $t=0$ and leaving $t$ unchanged modulo $t^2$. In
fact, since $h_{\beta^1_\trop},\ldots, h_{\beta^r_\trop}$ restrict to local
coordinates $s_1=s^{-(\beta^1_\trop)^*},\ldots,s_r=s^{-(\beta^r_\trop)^*}$ on
$\bar U\times\{0\}$, the implicit function theorem applied with $t$ as a
parameter produces a local biholomorphism $\Phi_1$ with
\[
\Phi_1^*(t)=t,\quad \Phi_1^*\big( h_{\beta^i_\trop}\big)= s_i=s^{-(\beta^i_\trop)^*},\ i=1,\ldots,r.
\]
Another application of the implicit function theorem with parameters
$s=(s_1,\ldots,s_r)$ finds a local holomorphic function $a(t,s)$ such that the
local biholomorphism $\Phi_2$ defined by
\[
\Phi_2^*(t)=\big(1+a(t,s)t\big)\cdot t,\quad \Phi_2^*(s_i)=s_i,\ i=1,\ldots, r
\]
fulfills
\[
\Phi_2^*\big( h_{\beta^0_\trop}\big) = s^{-(\beta^0_\trop)^*}\cdot t^\delta.
\]
Note that the two sides of the last equation already agree modulo $t^{\delta+1}$; the
equation to solve to find $a$ is the difference of the two sides divided by
$t^\delta$. Then $\Phi=\Phi_2\circ\Phi_1$ defines the sought-after local
biholomorphism.

Finally, the claimed identity \eqref{period-formula-on-barS} holds for all
$\beta_\trop$ since $\beta^0_\trop,\ldots, \beta^r_\trop$ span the image of the
map $\beta_\trop\mapsto\beta_\trop^*$ from \eqref{Eqn: H_1->(H^1)^*}.
\end{proof}

In the proof, we have also shown the following statement.

\begin{corollary} \label{inv-per-span-rel-cotan}
Restricted to $t=0$, the functions $s^{-\bar\beta_\trop^*}$ for $\beta_\trop\in
H_1(B,\iota_*\Lambda)$ with $\langle c_1(\varphi), \beta_\trop\rangle=0$ span
the relative cotangent space of $\bar S$ over $\DD$.
\end{corollary}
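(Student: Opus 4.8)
The plan is to reduce the statement to pure lattice theory, invoking Theorem~\ref{Thm: perfect pairing} and the structural facts recorded in Remark~\ref{Rem: GG_m-action}. First I would pin down the target: since $\bar S=\bar U\times\DD$ with $\bar U\subseteq\Spec(\bar A)_\an$ open and $\bar A=\CC[\bar H^*\oplus F^*]$, the relative cotangent space over $\DD$ at $\bar x=(\bar a,0)$ is $T^*_{\bar S/\DD,\bar x}=T^*_{\bar U,\bar a}$. As $\bar a$ lies in the algebraic torus $\Spec(\bar A)_\an$, every Laurent monomial $s^m$ with $m\in\bar H^*\oplus F^*$ is non-vanishing at $\bar a$, so the differentials $d(s^m)|_{\bar a}$ (equivalently the classes $d\log s^m$) span $T^*_{\bar U,\bar a}\cong(\bar H^*\oplus F^*)\otimes_\ZZ\CC$; moreover the monomials whose exponents range over any finite-index sublattice of $\bar H^*\oplus F^*$ already suffice.

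Next I would identify the exponents that actually occur. Writing $\pi\colon H^*\oplus F^*\to\bar H^*\oplus F^*$ for the projection inducing $\chi$ in \eqref{Eqn: chi}, one has $\bar\beta_\trop^*=\pi(\beta_\trop^*)$ with $\beta_\trop^*$ as in \eqref{Eqn: H_1->(H^1)^*}. By Theorem~\ref{Thm: perfect pairing} the homomorphism $\beta_\trop\mapsto\beta_\trop^*$ becomes an isomorphism after tensoring with $\QQ$, and under the pairing \eqref{Eqn: Pairing} the condition $\langle c_1(\varphi),\beta_\trop\rangle=0$ is equivalent to $\beta_\trop^*\in c_1(\varphi)^\perp\subseteq H^1(B,\iota_*\check\Lambda)^*$. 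So the exponents $-\bar\beta_\trop^*$ arising from cycles with $\langle c_1(\varphi),\beta_\trop\rangle=0$ form precisely $-\pi(K^*)$, where $K^*$ is the finite-index sublattice of $c_1(\varphi)^\perp$ from \eqref{Eqn: K^* Sublattice}.

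It then remains to check that $\pi(K^*)$ has finite index in $\bar H^*\oplus F^*$, and for this it suffices to see the same for $\pi(c_1(\varphi)^\perp)$. For the $\bar H^*$-factor this is immediate from Remark~\ref{Rem: GG_m-action}, where $\bar H^*$ is by definition the image of $c_1(\varphi)^\perp$ under the restriction map $H^1(B,\iota_*\check\Lambda)^*\to H^*$, the splitting $H^*=\ZZ\oplus\bar H^*$ being available because $c_1(\varphi)$ is primitive in $H$. For the $F^*$-factor I would use that $c_1(\varphi)$ lies in the $H$-summand of $H\oplus F$, so any functional supported on $F^*$ pairs trivially with $c_1(\varphi)$; since $H^1(B,\iota_*\check\Lambda)^*$ has finite index in $H^*\oplus F^*$, its intersection with $\{0\}\oplus F^*$ is finite index in $F^*$ and lies in $c_1(\varphi)^\perp$, whence $\pi(c_1(\varphi)^\perp)$ contains a finite-index copy of $F^*$. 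Combining the two factors shows $\pi(c_1(\varphi)^\perp)$, and hence $\pi(K^*)$, contains a finite-index sublattice of $\bar H^*\oplus F^*$, so the differentials at $\bar a$ of the functions $s^{-\bar\beta_\trop^*}$ (restricted to $t=0$) with $\langle c_1(\varphi),\beta_\trop\rangle=0$ span $T^*_{\bar S/\DD,\bar x}$, which is the claim. The hard part is the index bookkeeping in this last step --- in particular, ensuring that $\pi$ does not collapse $c_1(\varphi)^\perp$ onto a sublattice of smaller rank --- and that is exactly what the primitivity of $c_1(\varphi)$ together with its position in the $H$-summand deliver.
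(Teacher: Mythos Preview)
Your proposal is correct and follows essentially the same approach as the paper: the corollary is extracted from the proof of Proposition~\ref{prop-prepare-inv-periods}, where one observes that the exponents $-\bar\beta_\trop^*$ sweep out $\pi(K^*)$, that $K^*$ has finite index in $c_1(\varphi)^\perp$ by Theorem~\ref{Thm: perfect pairing}, and that $c_1(\varphi)^\perp$ maps onto a finite-index sublattice of $\bar H^*\oplus F^*$. The paper simply asserts this last fact, whereas you spell out the factor-by-factor verification using primitivity of $c_1(\varphi)$ in $H$ and $c_1(\varphi)\in H$; this extra detail is sound and makes the index bookkeeping explicit.
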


We endow all our spaces with the divisorial (analytic or formal) log structures
defined by the divisors given by $t=0$ and $\foD\subset\foX$ or
$\shD\subset\shY$ and write $\shM_\foX$, $\shM_{\shY}$ etc.\ for the respective
monoid sheaves. By Proposition~\ref{approx-is-toric-degen} and Remark~\ref{Rem:
GG_m-action}, the restriction of $(\shY,\shM_\shY)\to (S,\shM_S)$ to $t=0$ is
isomorphic to the restriction of $(\foX,\shM_\foX)\to (\foS,\shM_\foS)$ to $t=0$
as a morphism of log spaces over the standard log point. Note that for
this last statement to be true it is important that in
Proposition~\ref{prop-prepare-inv-periods} we left $t$ unchanged modulo $t^2$.
\medskip

For the final step of the proof we need to restore the $\GG_m$-factor from
Remark~\ref{Rem: GG_m-action} from the logarithmic perspective as follows.

\begin{proposition}
\label{prop-iso-k0-including-C*}
With the log structures defined as in Proposition~\ref{approx-is-toric-degen},
there is an isomorphism of analytic log spaces over the standard log point
between the restrictions to the closed subspace $U\times
\Spec\big(\CC[t]/(t^{k_0+1})\big)_\an$ of the analytic family $\shY\ra U\times
\DD\subseteq \Spec \big(A[t]\big)_\an$ constructed in Remark~\ref{Rem:
GG_m-action} and of $\foX_\an\ra\foS_\an$, respectively.
\end{proposition}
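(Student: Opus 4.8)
The plan is to derive this from the order~$k_0$ isomorphism of Proposition~\ref{approx-is-toric-degen} by base change along the twisting map that reinstates the $\GG_m$-factor, and then to descend along the isogeny $\Spec\tilde A\to\Spec A$. First I would unwind the two constructions to be compared. By Remark~\ref{Rem: GG_m-action}, writing $p$ for the map $(s,\lambda,t)\mapsto(s,\lambda^{-1}t)$ of \eqref{Eqn: base change for bar shY}, the $\GG_m$-equivariant family $\tilde\foX=\GG_m\times\bar\foX\to\tilde\foS=\GG_m\times\bar\foS$ is, after completion at $t=0$, the base change of $\bar\foX\to\bar\foS$ along $p$, whereas the analytic family $\shY\to U\times\DD$ was defined as the base change of $\bar\shY\to\bar U\times\DD$ along the restriction of the same $p$ to a neighborhood of $(\bar a,1,0)$, the base being identified with an analytic open $U\times\DD\subseteq\Spec(A[t])_\an$ via $\tilde A\cong\bar A[\lambda^{\pm1}]$ and the isogeny of tori $\Spec\tilde A=\Spec\CC[H^*\oplus F^*]\to\Spec A$ of Proposition~\ref{Prop: foX mod TT_L}. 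Since an isogeny of tori is \'etale, it is a local biholomorphism on analytifications near the point over $a$, so after shrinking $U$ I may regard it as an isomorphism there; in particular $\tilde\foX_{k_0}$ is, locally over $a$, identified with $\foX_{k_0}$.

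Next I would transport the order~$k_0$ comparison across $p$. By Theorem~\ref{Thm: Analytic approximation} and Proposition~\ref{approx-is-toric-degen}, the family $\bar\shY\to\bar S$ carries an isomorphism $\bar f_{k_0}$ of analytic log spaces over the standard log point between $(\bar\shY,\bar\shD)|_{\bar S_{k_0}}$ and $\big((\bar\foX_{k_0})_\an,(\bar\foD_{k_0})_\an\big)$ over $\bar S_{k_0}$, for the divisorial log structures attached to $t=0$ together with $\bar\shD$ resp.\ $\bar\foD$; one checks (enlarging $k_0$ if necessary) that the change of coordinates of Proposition~\ref{prop-prepare-inv-periods}, being the identity on $t=0$ and leaving $t$ fixed modulo $t^2$, does not disturb this isomorphism. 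Because $p^{*}t=\lambda^{-1}t$ with $\lambda$ a unit, the pullback under $p$ of the ideal $(t^{k_0+1})$ is again $(t^{k_0+1})$, so truncation at order~$k_0$ commutes with base change along $p$; hence $\bar f_{k_0}$ pulls back to an isomorphism $\shY|_{U\times O_{k_0}}\cong(\tilde\foX_{k_0})_\an$ over $U\times O_{k_0}$, where $O_{k_0}=\Spec(\CC[t]/(t^{k_0+1}))_\an$. Formation of divisorial log structures commutes with base change along $p$, since the reduced locus of $p^{*}\{t=0\}$ is $\{t=0\}$ and $p$ carries the divisors $\bar\shD\times\GG_m$, $\bar\foD\times\GG_m$ to themselves; thus this is an isomorphism of log spaces, and it is still over the standard log point because $t\mapsto\lambda^{-1}t$ merely rescales the log parameter by the unit $\lambda$, which is absorbed into the $\CC^{*}$-part of the standard log point.

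Finally, composing with the local biholomorphism $\Spec\tilde A\to\Spec A$ over a neighborhood of $a$ identifies $(\tilde\foX_{k_0})_\an$ with $(\foX_{k_0})_\an$ as log spaces over the standard log point, and produces the asserted isomorphism $\shY|_{U\times O_{k_0}}\cong(\foX_{k_0})_\an$. I expect the principal obstacle to be the bookkeeping of the second step: verifying that the order~$k_0$ agreement survives both the earlier change of coordinates of Proposition~\ref{prop-prepare-inv-periods} and the base change along $p$, which mixes the $\GG_m$-coordinate with $t$, and that everything remains compatible with the divisorial log structures and the structure morphisms to the standard log point. The remaining points are just functoriality of base change and the fact that an \'etale morphism is a local isomorphism in the analytic category.
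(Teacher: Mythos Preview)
Your proposal is correct and follows exactly the paper's route: both $\shY$ and $\tilde\foX$ (hence $\foX$ locally, via the \'etale isogeny $\Spec\tilde A\to\Spec A$) arise as base changes of $\bar\shY$ and $\bar\foX$ along the same twisting map $p$, so the order-$k_0$ isomorphism $\bar f_{k_0}$ pulls back; the paper's proof is just the one-line version of your argument. One small point: your worry about the coordinate change of Proposition~\ref{prop-prepare-inv-periods} is unnecessary and the fix ``enlarging $k_0$'' is the wrong remedy---since $\Phi^*(t)=t\cdot(\text{unit})$, the ideal $(t^{k_0+1})$, the divisorial log structure, and the structure map to the standard log point are all unchanged, so $\bar f_{k_0}$ survives as an isomorphism of log spaces over the standard log point without any adjustment.
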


\begin{proof}
In Remark~\ref{Rem: GG_m-action} the family $\shY\to U\times\DD$ was constructed
from $\bar\shY\to \bar U\times\DD$ by a base-change with completion at $t=0$ the
base change producing $\tilde\foX\to \tilde\foS$ out of $\bar\foX\to\bar\foS$.
The statement therefore follows from the corresponding statement for
$\bar\shY\to \bar U\times\DD$ and $\bar\foX\to\bar\foS$.
\end{proof}

\begin{remark}
\label{Rem: different slices and spaces of C[[t]]}
Recall that the base change $\CC[t]\to \CC[\lambda^{\pm1},t]$ from \eqref{Eqn:
base change for bar shY} maps $t$ to $\lambda^{-1} t$. Thus as a log space over the
standard log point, the fiber of $\shY\to U\times\DD$ over $t=0$ is not the
product of the fiber of $\bar \shY\to \bar U\times\DD$ with the standard log
point, the product structure is modified by rescaling the pull-back of $t$ as a
generator of the log structure of the standard log point by the coordinate
$\lambda^{-1}$ of the $\CC^*$-orbit.
\end{remark}


\subsection{Log-versality of the canonical family}
\label{Par: Log versality}

To finish the proof of Theorem~\ref{Thm: Analyticity of GS} and
Corollary~\ref{Cor: monomial analytic periods}, we need to compare two formal
schemes over $\hat \foS=\Spf\big(\hat A\lfor t\rfor\big)$, where $\hat A$ is the
completion of $A=\CC[H^1(B,\iota_*\check\Lambda)^*]$ at the maximal ideal
$\fom_{a}$ defining the closed point $a$. The first is the completion
$\hat\foX\to\hat\foS$ of the formal family $\foX\to \foS=\Spf\big(A\lfor
t\rfor\big)$ at $x\in \Spec A$; the second is the completion $\hat\shY\to
\hat\foS$ of the analytic family $\shY\to U\times \DD\subseteq\Spec\big(A[t]\big)_\an$ at $x$.
To make sense of the comparison in the category of formal schemes, note also that the fiber of
$\shY\to U\times \DD$ over $x$ is the analytification of a scheme, the fiber $X_x$ of
$\foX\to \foS$ over $x$. By GAGA for proper schemes (\cite{SGA1},
Th\'eor\`eme~4.4), the restriction of $\shY\to U\times \DD$ to the $k$-th order thickening
$\Spec\big(\O_{U\times \DD}/\fom_x^{k+1} \big)$ of $x$ is then also the analytification
of a scheme. We can thus also view $\hat\shY\to \Spf\big(\hat A\lfor
t\rfor\big)$ as a morphism of formal schemes, with the same base $\hat\foS$ and
same closed fiber $X_x$ as $\hat\foX\to\hat\foS$.

We do the comparison of the two families by showing that both are a
hull\footnote{The notion of hull arises in deformation situations where the
functor may not be representable, but one has a versal object that produces any
family by pull-back. The hull is a minimal such family. A hull is unique up to
an isomorphism over the base ring $\CC\lfor t\rfor$. The isomorphism may not be
unique, but its differential at the closed point is (\cite{Schlessinger},
Proposition~2.9).} for a certain functor of log deformations of
$(X_x,\shM_{X_x})$ as log spaces over $\CC\lfor t\rfor$, with the log structure
on $\CC\lfor t\rfor$ defined by the chart $\NN\to \CC\lfor t\rfor$ mapping
$1\in\NN$ to $t$. This situation fits into the traditional framework of functors
of ordinary Artinian $\CC\lfor t\rfor$-algebras as treated by Schlessinger
\cite{Schlessinger}, by defining the log structure on an Artinian $\CC\lfor
t\rfor$-algebra by pull-back from $\CC\lfor t\rfor$. Uniqueness of the hull in
\cite{Schlessinger}, Proposition~2.9, then implies that $\hat\foX\to \hat\foS$
and $\hat\shY\to\hat\foS$ are isomorphic as formal schemes over
$\CC\lfor t\rfor$. Moreover, since the period integrals only depend on the
formal family, by Proposition~\ref{prop-prepare-inv-periods} and
Corollary~\ref{inv-per-span-rel-cotan}, this isomorphism turns out to be a
morphism even over $\hat\foS=\Spf\big( \hat A\lfor t\rfor\big)$. We now carry
out the details of this idea of proof.

Recall that $\hat\foX$ and $\hat\foS$ come with log structures and the morphism
$\hat\foX\to\hat\foS$ indeed lifts to a morphism of formal log schemes. For the
closed point $ x\in \Spf\big( \hat A\lfor t\rfor\big)$ and $X_x=\pi^{-1}(x)$,
consider the deformation functor $\shD$ that sends a local Artinian $\CC\lfor
t\rfor$-algebra $R$, viewed as a log ring by the structure homomorphism
$\CC\lfor t\rfor \to R$, to the set $\shD(R)$ of isomorphism classes of flat
divisorial log deformations of $X_x^\dagger=(X_x,\M_{X_x})$, defined in
\cite[Definition 2.7]{logmirror2}. As in \cite{logmirror2} we now use a dagger
superscript to indicate log spaces. We check in Theorem~\ref{Thm:
divisorial hull} in Appendix~\ref{App: log hull} that the deformation functor
$\shD$ has a pro-representable hull. Thus there exists a complete local
$\CC\lfor t\rfor$-algebra $R$ and a divisorial log deformation $\xi\in \shD(R)$
that is a hull for $\shD$.

By the defining property of pro-representable hulls, our two formal divisorial
log deformations $\hat\foX\to\hat\foS$ and $\hat\shY\to\hat\foS$ now arise as
respective pull-backs of $\xi$ by two classifying morphisms
\begin{equation}
\label{Eqn: base change from hull}
h_{\hat\foX}:\hat\foS\lra \Spf R,\quad
h_{\hat\shY}:\hat\foS\lra \Spf R
\end{equation}
of formal schemes over $\CC\lfor t\rfor$. We claim that both $h_{\hat\foX}$ and
$h_{\hat\shY}$ are isomorphisms. Since $\hat\foS$ is smooth, it suffices to
check this statement at the level of cotangent spaces. We provide a proof in the
needed setup for lack of a reference.

\begin{lemma}
\label{Lem: impl fct thm over CC[[t]]}
Let $\varphi:(A,\mathfrak{m})\ra (B,\mathfrak{n})$ be a local map of complete local
Noetherian $\CC\lfor t\rfor$-algebras with residue field $\CC$ and
$B$ flat over $\CC\lfor t\rfor$. Assume that $\varphi$ induces an isomorphism
$\mathfrak{m}/( tA+\mathfrak{m}^2)\ra \mathfrak{n}/(t B+\mathfrak{n}^2)$ of relative Zariski
cotangent spaces. Then $\varphi$ is an isomorphism.
\end{lemma}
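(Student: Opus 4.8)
The plan is to reduce the statement to the classical fact that a surjective endomorphism of a Noetherian complete local ring is an isomorphism, together with the completed Nakayama lemma. First I would prove surjectivity of $\varphi$. Since $A$ is complete local Noetherian with maximal ideal $\mathfrak m$, the topology on $B$ is the $\mathfrak n$-adic topology and $B$ is $\mathfrak n$-adically complete; moreover $\varphi(\mathfrak m)\subseteq\mathfrak n$. By the topological (complete) Nakayama lemma, to show $\varphi$ is surjective it suffices to show that the induced map $\mathfrak m/\mathfrak m^2\to \mathfrak n/\mathfrak n^2$ together with the residue field identification generates $\mathfrak n/\mathfrak n^2$ — equivalently, that $A\to B/\mathfrak n^2$ is surjective. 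Now $t\in\CC\lfor t\rfor$ maps into both $\mathfrak m$ and $\mathfrak n$ (as $B$ is a local $\CC\lfor t\rfor$-algebra with residue field $\CC$), so the hypothesis that $\varphi$ induces an isomorphism $\mathfrak m/(tA+\mathfrak m^2)\xrightarrow{\ \sim\ }\mathfrak n/(tB+\mathfrak n^2)$ gives $\mathfrak n = \varphi(\mathfrak m) + tB + \mathfrak n^2 = \varphi(\mathfrak m)+\mathfrak n^2$, since $t\in\varphi(\mathfrak m)$. Hence $\mathfrak n/\mathfrak n^2$ is generated by the image of $\mathfrak m$, so by Nakayama $\mathfrak n$ is generated as an ideal by $\varphi(\mathfrak m)$, and then completeness of $B$ gives that $\varphi$ is surjective: any element of $B$ can be approximated modulo $\mathfrak n^k$ by an element of $\varphi(A)$ for all $k$, and the limit lies in $\varphi(A)$ because $A$ is complete and $\varphi$ continuous (one checks $\varphi(A)$ is closed, being the image of a complete ring under a continuous map into a separated complete ring).

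Next I would establish injectivity, which is where the flatness of $B$ over $\CC\lfor t\rfor$ enters and which I expect to be the main obstacle. Write $\mathfrak a=\ker\varphi$, so $B\cong A/\mathfrak a$. The goal is $\mathfrak a=0$. Reducing modulo $t$: set $A_0=A/tA$, $B_0=B/tB$, $\bar\varphi:A_0\to B_0$. The hypothesis says exactly that $\bar\varphi$ induces an isomorphism of Zariski cotangent spaces $\mathfrak m_{A_0}/\mathfrak m_{A_0}^2\xrightarrow{\sim}\mathfrak m_{B_0}/\mathfrak m_{B_0}^2$, and $\bar\varphi$ is surjective by the previous paragraph; I would then invoke the classical statement (e.g.\ via the associated graded rings, or directly) that a surjection of complete local Noetherian rings inducing an isomorphism on cotangent spaces is an isomorphism — here this uses that $A_0$ is a complete local ring, so $\mathrm{gr}_{\mathfrak m_{A_0}}A_0\twoheadrightarrow \mathrm{gr}_{\mathfrak m_{B_0}}B_0$ is a surjection of graded rings which is an isomorphism in degree $1$, hence (being generated in degree $1$) an isomorphism, hence $\bar\varphi$ is an isomorphism. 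Therefore $\mathfrak a\subseteq tA$. Now I use flatness of $B$ over $\CC\lfor t\rfor$: multiplication by $t$ on $B=A/\mathfrak a$ is injective, i.e.\ if $ta\in\mathfrak a$ then $a\in\mathfrak a$. Combined with $\mathfrak a\subseteq tA$, for any $x\in\mathfrak a$ we may write $x=ta_1$; then $ta_1\in\mathfrak a$ forces $a_1\in\mathfrak a\subseteq tA$, so $a_1=ta_2$, $x=t^2a_2$, and iterating, $x\in\bigcap_n t^nA$. Since $A$ is Noetherian local and complete, by Krull's intersection theorem $\bigcap_n t^n A\subseteq\bigcap_n\mathfrak m^n=0$. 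Hence $\mathfrak a=0$ and $\varphi$ is an isomorphism.

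The one subtlety to treat carefully — the "hard part" in the sense of it being the only place a hypothesis is genuinely needed rather than formal — is the passage $\mathfrak a\subseteq tA\implies \mathfrak a=0$, which is precisely where $t$-flatness of $B$ is used; without it the statement is false (take $A=\CC\lfor t\rfor$, $B=\CC$). I would spell out that the flatness hypothesis is equivalent to $t$ being a nonzerodivisor on $B$ (flatness over the PID $\CC\lfor t\rfor$ localized, i.e.\ over the DVR, is torsion-freeness), which is what makes the inductive descent argument work, and then the Krull intersection theorem closes the argument. Everything else — surjectivity, the reduction mod $t$, and the graded-ring argument for the special-fiber isomorphism — is standard commutative algebra requiring only that $A$ be complete local Noetherian with residue field $\CC$. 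I would present the write-up in the three stages above: surjectivity via completed Nakayama, isomorphism on the special fiber via graded rings, and the descent $\mathfrak a\subseteq tA\Rightarrow\mathfrak a=0$ via $t$-regularity plus Krull.
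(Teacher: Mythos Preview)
Your surjectivity argument is fine and matches the paper's (which simply cites Schlessinger's Lemma~1.1). The injectivity argument, however, has a genuine gap: the ``classical statement'' you invoke---that a surjection of complete local Noetherian rings inducing an isomorphism on cotangent spaces is an isomorphism---is false. The map $\CC\lfor x\rfor\to\CC\lfor x\rfor/(x^2)$ is a surjection of complete local Noetherian $\CC$-algebras inducing an isomorphism on the one-dimensional cotangent spaces, yet it is not an isomorphism. Your graded-ring justification fails for the same reason: the induced surjection $\CC[x]\to\CC[x]/(x^2)$ of associated graded rings is an isomorphism in degrees $0$ and $1$ but not in degree $2$; being generated in degree~$1$ says nothing about relations in higher degree. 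Consequently you cannot conclude $\bar\varphi$ is an isomorphism, hence not $\mathfrak a\subseteq tA$, and your descent never starts. Notice that your argument for $\bar\varphi$ nowhere uses flatness of $B$, which should be a warning: the cotangent hypothesis alone only gives $\mathfrak a\subseteq\mathfrak m^2+tA$, and there is no evident way to improve this to $\mathfrak a\subseteq tA$ without bringing in flatness earlier.

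The paper's proof uses flatness from the outset rather than only at the end. Since $B$ is flat over the DVR $\CC\lfor t\rfor$, it is free; choose a free $\CC\lfor t\rfor$-basis $\{b_i\}$ of $B$ and lift to $a_i\in A$ via surjectivity. The cotangent hypothesis together with Nakayama in $A$ shows the $a_i$ generate $A$ as a $\CC\lfor t\rfor$-module, and they satisfy no nontrivial $\CC\lfor t\rfor$-relation since their images $b_i$ do not. Hence $\{a_i\}$ is a free basis of $A$ carried to a free basis of $B$, and $\varphi$ is an isomorphism. The point is that freeness of $B$ is leveraged to impose structure on $A$, not merely to ensure $t$-regularity on the quotient.
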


\begin{proof}
Surjectivity of $\varphi$ is \cite{Schlessinger},\,Lemma~1.1. By flatness, $B$
is a free $\CC\lfor t\rfor$-module. Let $b_i\in B$, $i\in I$, be a freely
generating set. According to surjectivity, for any $i\in I$ we can choose
$a_i\in A$ with $\varphi(a_i)=b_i$. Then by the assumption on the map of
cotangent spaces and Nakayama's lemma in $A$, the $a_i$ generate $A$ as a
$\CC\lfor t\rfor$-module. But the $a_i$ also cannot fulfill any non-trivial
relation for the $b_i$ do not. Thus $A$ is freely generated by $a_i$, $i\in I$,
and $\varphi$ is an isomorphism.
\end{proof}

To finish the proof of Theorem~\ref{Thm: Analyticity of GS} it essentially remains to
show that the differentials of the maps in \eqref{Eqn: base change from hull}
relative $\CC\lfor t\rfor$ are isomorphisms. Note that by the definition of the
hull, the Zariski-tangent space of $R$ is the tangent space to our functor
\[
t_{\shD}=\shD\big(\CC[\varepsilon]\big)=\Hom_{\CC\lfor t\rfor}\big(R,\CC[\varepsilon])
=\Hom_\CC(\fom_R/\fom_R^2,\CC).
\]
By \cite{logmirror2}, Theorem~2.11,2, we
furthermore have a canonical isomorphism
\[
t_\shD= H^1(X_x,\Theta_{X_x^\ls/\CC^\ls}),
\]
where $\CC^\ls$ denotes the standard log point $(\Spec\CC,\NN\oplus\CC^*)$. With
this identification, the differentials $Dh_{\hat\foX}$, $Dh_{\hat\shY}$ relative
$\CC\lfor t\rfor$ of \ref{Eqn: base change from hull} are the Kodaira-Spencer
maps of our two families:
\begin{equation}
\label{Eqn: Kodaira-Spencer map}
Dh_{\hat\foX}, Dh_{\hat\shY}: T_{\hat\foS/\CC\lfor t\rfor,x}\lra H^1(X_x,\Theta_{X_x^\ls/\CC^\ls}).
\end{equation}

\begin{proposition}
\label{KS-iso}
The relative differential
\[
Dh_{\hat\foX}: T_{\hat\foS/\CC\lfor t\rfor,x}=H^1(B,\iota_*\check\Lambda)\otimes\CC\ra
H^1(X_x,\Theta_{X_x^\dagger/\CC^\dagger})
\]
of the classifying morphism $h_{\hat\foX}: \hat\foS\to \Spf R$ for
$\hat\foX\to\hat\foS$ from \eqref{Eqn: base change from hull} coincides with the
natural map given in Proposition~\ref{open-cover-is-iso}. In particular, this
map is an isomorphism. 
\end{proposition}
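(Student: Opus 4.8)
The plan is to unwind the definition of the relative Kodaira--Spencer map and match it, term by term, with the explicit cocycle description underlying Proposition~\ref{open-cover-is-iso}. Recall that with $A=\CC[H^1(B,\iota_*\check\Lambda)^*]$ the scheme $\Spec A$ parametrizes lifted, normalized gluing data via the identification of $\Spec(A)_\an$ with an open subset of the torsor $H^1(B,\iota_*\check\Lambda\otimes\CC^*)$, and that under this identification the relative tangent space $T_{\hat\foS/\CC\lfor t\rfor,x}$ is canonically $H^1(B,\iota_*\check\Lambda)\otimes\CC$, the derivative of the exponential $H^1(B,\iota_*\check\Lambda)\otimes\CC\to H^1(B,\iota_*\check\Lambda\otimes\CC^*)$ at $s(a)$. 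A tangent vector $c\in H^1(B,\iota_*\check\Lambda)\otimes\CC$ thus corresponds to replacing the gluing data $s_{\sigma\ul\rho}$ by $s_{\sigma\ul\rho}\cdot\exp(\varepsilon c_{\sigma\ul\rho})$ with $\varepsilon^2=0$, where $(c_{\sigma\ul\rho})$ is a \v Cech representative of $c$.

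First I would spell out the first-order deformation $X_\varepsilon^\dagger$ of $X_x^\dagger$ obtained by restricting $\hat\foX\to\hat\foS$ along $c$. By the construction recalled in \S\ref{Par: Gluing data}, $X_x^\dagger$ is glued from the standard pieces $\Spec R_\fou$ and $\Spec R_\fob$ by the open embeddings \eqref{Eqn: Canonical open emb} twisted by $s(a)$; deforming the twist by $1+\varepsilon c_{\sigma\ul\rho}$ changes the gluing isomorphism on the overlaps by the automorphism $z^m\mapsto (1+\varepsilon\langle c_{\sigma\ul\rho},m\rangle)\,z^m$. Hence the \v Cech $1$-cocycle of $X_\varepsilon^\dagger$ with respect to this cover, valued in $\Theta_{X_x^\dagger/\CC^\dagger}$, is exactly the image of $(c_{\sigma\ul\rho})$ under the sheaf map $\iota_*\check\Lambda\to\Theta_{X_x^\dagger/\CC^\dagger}$ sending a cotangent vector $\check m$ to the toric logarithmic derivation $z^{(\cdot)}\mapsto\langle\check m,(\cdot)\rangle z^{(\cdot)}$ --- this is precisely the sheaf map whose induced map on $H^1$ is the natural map of Proposition~\ref{open-cover-is-iso}. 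By definition the relative Kodaira--Spencer map $Dh_{\hat\foX}$ sends $c$ to the class of $X_\varepsilon^\dagger$ in $H^1(X_x,\Theta_{X_x^\dagger/\CC^\dagger})$ (using the identification $t_\shD=H^1(X_x,\Theta_{X_x^\ls/\CC^\ls})$ from \cite{logmirror2}, Theorem~2.11), so $Dh_{\hat\foX}$ agrees with the natural map; the isomorphism assertion is then Proposition~\ref{open-cover-is-iso}.

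The main point to be careful about is the bookkeeping of the cover and the log structures: the standard pieces $\Spec R_\fou$, $\Spec R_\fob$ cover $X_x^\dagger$ only away from the codimension-two strata, so one must check that $H^1$ of $\Theta_{X_x^\dagger/\CC^\dagger}$ --- and the divisorial log deformation functor $\shD$ --- is insensitive to that locus, which is built into the setup of Proposition~\ref{open-cover-is-iso}. A secondary verification is that the deformation produced by the $1+\varepsilon c$-twist is genuinely a divisorial log deformation over $\CC[\varepsilon]$ and not merely a flat one; this holds because twisting by gluing data preserves the local models, the same fact underlying the construction of $\foX$ itself, and because the resulting cohomology class is independent of the chosen \v Cech representative. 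Finally one should confirm that the sign and normalization conventions match those of \cite{theta} and of Proposition~\ref{open-cover-is-iso}, but no further ideas are required.
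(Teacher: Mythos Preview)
Your proposal is correct and follows essentially the same route as the paper: identify a tangent vector with a first-order perturbation of the gluing data, observe that the resulting deformation of $X_x$ is described on overlaps by the automorphism $z^m\mapsto(1+\varepsilon\langle c,m\rangle)z^m$, and read off the Kodaira--Spencer cocycle as the logarithmic vector field $\partial_c$, matching the explicit cocycle-level description of the natural map in Proposition~\ref{open-cover-is-iso}.

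The one substantive difference is the choice of cover. You work with the chamber/slab pieces $\Spec R_\fou$, $\Spec R_\fob$ and the open gluing data $s_{\sigma\ul\rho}$ from \S\ref{Par: Gluing data}, which only cover $X_x^\circ$; the paper instead works directly with the cover $\{V_\tau\}_{\tau\in\P}$ and the \v Cech representative $(n_{\omega\tau})$ indexed by pairs of cells $\omega\subset\tau$, which is exactly the cover and indexing appearing in the statement of Proposition~\ref{open-cover-is-iso}. This lets the paper avoid the detour through the codimension-two locus entirely and makes the final identification of the cocycle $(\partial_{n_{\omega\tau}})$ with the map of Proposition~\ref{open-cover-is-iso} immediate. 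Your acknowledgement that the $R_\fou,R_\fob$ cover misses codimension two and that one must check insensitivity of $H^1(\Theta)$ to that locus is the right caveat, but note that Proposition~\ref{open-cover-is-iso} does not itself supply this insensitivity---it is stated for the full cover $\{V_\tau\}$---so in your version you would still owe a comparison of the two \v Cech models (or a direct appeal to depth/normality along the deeper strata). No new idea is needed, only the translation between the two indexings of gluing data.
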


\begin{proof}
Since we work with relative tangent spaces only the restriction to $t=0$ is
relevant. For $\bar n\in H^1(B,\iota_*\check\Lambda)\otimes\CC$ denote by
\[
\partial_{\bar n}: A=\CC[H^1(B,\iota_*\check\Lambda)^*]\lra \CC
\]
the associated $\CC$-linear derivation defined by $\partial_{\bar
n}(s^m)=\langle m,\bar n\rangle a(m)$ for $m\in H^1(B,\iota_*\check\Lambda)^*$.
Here $a(m)\in\CC$ is the reduction of $s^m$ modulo $\maxid_a$ defining the given
closed point $a\in \Spec A$. The pair $(a,\bar n)$ is equivalent to the
associated $\CC$-algebra map 
\[
\psi:A \lra\CC[\eps]/(\eps^2),\quad
s^m\longmapsto a(m) + \partial_{\bar n}(s^m)\eps
= a(m)\big(1+\langle m,\bar n\rangle\eps\big).
\]
We are going to describe the pullback $X_\eps\to
\Spec\big(\CC[\eps]/(\eps^2)\big)$ of $\foX\to \foS$ under $\psi$. By
functoriality in the base $S$ of the construction \cite{logmirror1}, Definition
2.28, $X_\eps$ is the toric log CY space constructed from $(B,\P,\varphi)$ for
the image under $\psi$ of the gluing data $(s_0,\sigma_0)$ for $\foX$. In
writing these gluing data as a pair, we used the identification
\[
H^1(B,\iota_*\check\Lambda\otimes A^\times)= H^1(B,\iota_*\check\Lambda\otimes\CC^*)
\oplus \big(H^1(B,\iota_*\check\Lambda)\otimes H^1(B,\iota_*\check\Lambda)^*\big),
\]
observing that $A^\times=\CC^*\oplus H^1(B,\iota_*\check\Lambda)^*$, the set of
monomials with coefficients in $\CC^*$.

For the gluing data describing $X_\eps$, we have
\begin{equation}
\label{Eqn: CC[eps]^x}
\big(\CC[\eps]/(\eps^2)\big)^\times= \CC^*\oplus\CC,
\end{equation}
as an abelian group, mapping the pair $(\lambda,c)\in \CC^*\oplus\CC$
to $\lambda(1+c\eps)$. Thus we have the decomposition
\[
H^1\big(B,\iota_*\check\Lambda\otimes(\CC[\eps]/(\eps^2))^\times\big)
=H^1(B,\iota_*\check\Lambda \otimes \CC^*)\oplus
\big(H^1(B,\iota_*\check\Lambda)\otimes\CC\big),
\]
to describe the gluing data of $X_\eps$ as a pair as well. Since the map on invertibles induced by $\psi$
\[
\CC^*\oplus H^1(B,\iota_*\check\Lambda)^*  \lra \CC^*\oplus\CC,\quad
(\lambda,m)\longmapsto \big( \lambda a(m),\langle m,\bar n\rangle\big),
\]
respects the decompositions as pairs, so does the map on cohomology induced by
$\psi$. The first summand maps $s_0$ to $a s_0$, the translation of $s_0\in
H^1(B,\iota_*\check\Lambda\otimes\CC^*)$ by $a$ as an element of the algebraic
torus $H^1(B,\iota_*\check\Lambda)\otimes\CC^*$ acting on gluing data. This is
expected since $a s_0$ is the gluing data giving rise to the central fiber $X_x$
of $\foX$.

To describe the image of $X_\eps$ in $H^1(X_x,\Theta_{X_x^\ls/\CC^\ls})$ under
the Kodaira-Spencer map, we need to work on the level of cocycles. We use the
coverings by the open sets $W_\tau\subset B$ and $V_\tau\subset X_x$ from
Appendix~\ref{appendix-C}. Let $\bar n$ and $a s_0$ be represented by the
cocycles $n=(n_{\omega\tau})\in \check C^1(\{ W_\tau\}_\tau,\iota_*\check\Lambda
\otimes\CC)$ and $s=(s_{\omega\tau})\in \check C^1(\{
W_\tau\}_\tau,\iota_*\check\Lambda \otimes\CC^*)$, respectively. Write
$s(1+n\eps)$ for the image of $(s,n)$ under the identification
\[
\check C^1(\{ W_\tau\}_\tau,\iota_*\check\Lambda \otimes \CC^*)\oplus
\check C^1(\{ W_\tau\}_\tau,\iota_*\check\Lambda \otimes \CC)=
\check C^1\big(\{ W_\tau\}_\tau,\iota_*\check\Lambda \otimes (\CC[\eps]/(\eps^2))^\times\big)
\]
induced by \eqref{Eqn: CC[eps]^x}. Then $X_\eps$ is canonically isomorphic to
the toric log CY space for $(B,\P,\varphi)$ defined by gluing data $s(1+n\eps)$.
Now for $\omega\subset\tau$ the section $n_{\omega\tau}$ of
$\check\Lambda\otimes \CC$ over $W_{\omega\tau}= W_\omega\cap W_\tau$ defines a
logarithmic vector field $\partial_{n_{\omega\tau}}$ on $V_{\omega\tau}= V_\omega\cap
V_\tau$, and this vector field describes the infinitesimal deformation $X_\eps$ of
$X_x$ on $V_\omega\cup V_\tau$. It thus follows from the \v Cech description of
the Kodaira-Spencer map that the image of $X_\eps$ under the Kodaira-Spencer map
is the cohomology class of the \v Cech 1-cocycle
$\big(\partial_{n_{\omega\tau}}\big)_{\omega,\tau}$. Now the map
\[
\check C^1(\{ W_\tau\}_\tau,\iota_*\check\Lambda \otimes \CC)\lra
\check C^1(\{ V_\tau\}_\tau,\Theta_{X_x^\ls/\CC^\ls}),\quad
(n_{\omega\tau})\longmapsto (\partial_{n_{\omega\tau}})
\]
indeed agrees with the natural isomorphism in
Proposition~\ref{open-cover-is-iso} at the level of cochains, as claimed.
\end{proof}

\begin{proof}[Proof of Theorem~\ref{Thm: Analyticity of GS} and of Corollary~\ref{Cor: monomial analytic periods}.]
Proposition~\ref{KS-iso} and Lemma~\ref{Lem: impl fct thm over CC[[t]]} show
that the classifying map $h_{\hat\foX}: \hat\foS\to \Spf R$ for
$\hat\foX\to\hat\foS$ from \eqref{Eqn: base change from hull} is an isomorphism.
The argument in Proposition~\ref{KS-iso} only required knowing
$\hat\foX\to\hat\foS$ as a divisorial deformation to first order on the fiber
over $t=0$ and, by Proposition~\ref{prop-iso-k0-including-C*}, hence also
applies to $\hat\shY\to\hat\foS$. Thus also the classifying map $h_{\hat\shY}:
\hat\foS\to \Spf R$ for $\hat\shY\to\hat\foS$ is an isomorphism. Taking the
composition $h_{\hat\shY}\circ h_{\hat\foX}^{-1}$, we now obtain an isomorphism
of formal divisorial deformations of $(X_x,\M_{X_x})$, that is, a cartesian
diagram
\begin{equation}
\label{Eqn: iso of formal families}
\begin{CD}
\hat\foX@>>> \hat\shY\\
@VVV@VVV\\
\hat\foS@>h_{\hat\shY}\circ h_{\hat\foX}^{-1}>>\hat\foS
\end{CD}
\end{equation}
over $\CC\lfor t\rfor$ with horizontal maps isomorphisms. But by
Proposition~\ref{prop-prepare-inv-periods} and \eqref{Eqn: regular period
integrals for canonical family}, the exponentiated period functions for
$\hat\foX\to\hat\foS$ and $\hat\shY\to\hat\foS$ agree and contain a system of
coordinate functions on the fiber over $t=0$ of $\bar\foS$. Since furthermore $t$ maps to $t$, in view of Remark~\ref{Rem: different slices and spaces of C[[t]]}, the lower
horizontal arrow $h_{\hat\shY}\circ h_{\hat\foX}^{-1}$ in \eqref{Eqn: iso of
formal families} is the identity. This finishes the proof of both
Theorem~\ref{Thm: Analyticity of GS} and of Corollary~\ref{Cor: monomial
analytic periods}.
\end{proof}


\begin{appendix}

%
%

\section{Finite order period integrals}
\label{Sect: Period integrals}

The main result of this paper computes certain period integrals of a relative
logarithmic holomorphic $n$-form for a flat analytic map $X_k\to \Spec
\CC[t]/(t^{k+1})$ over a family of $n$-cycles. The result is given in the form
$g\log t+ h$ with $g,h\in \CC[t]/(t^{k+1})$. The purpose of this section is to
define such integrals unambiguously despite only working in a finite order
deformation and despite the appearance of the log-pole. It is also
straightforward to incorporate analytic parameters by replacing the ground field
$\CC$ by an analytic $\CC$-algebra $S=\CC\{s_1,\ldots,s_n\}/ (f_1,\ldots,f_k)$.
For the sake of readability all formulas are given over $\CC$.

The log-pole arises by the intersection of the cycle with the singular locus
$(X_0)_\sing\subset X_0$, where locally $X_0$ is assumed to be normal crossings
and $(X_0)_\sing$ smooth. As a preparation, we take a closer look at relative
logarithmic differential forms near a double locus. We work analytically and
denote by $\DD$ the unit disk in $\CC$ and by $\hat \DD$ a slightly larger disk. Let
$\kappa\in\NN\setminus\{0\}$ and denote
\begin{equation}
\label{Eqn: H_kappa}
\hat H_\kappa= \big\{ (z,w,t)\in \hat \DD^2\times \DD\, \big|\, zw=t^\kappa\big\},
\end{equation}
viewed as an analytic log space with log structure
induced by the divisor with normal crossings $t=0$. The function $t$ defines a
log morphism $\hat H_\kappa\to \DD$, for $\DD$ endowed with the divisorial log
structure for $\{0\}$. To not overburden the notation, the log structure is not
made explicit in the notation, but should always be clear from context. A
crucial fact for the following is that a holomorphic function $f$ on $\tilde
H_\kappa$ can be written uniquely as a sum
\begin{equation}
\label{Eqn: decompose hol functions}
f(z,w,t)=z\cdot g(z,t)+ w\cdot h(w,t)+c(t)
\end{equation}
with $g\in \CC\{z,t\}$, $h\in \CC\{w,t\}$ and $c\in\CC\{t\}$, by replacing
mixed terms $zw$ by $t^\kappa$ and then collecting the respective monomials. 

By definition, the sheaf of relative logarithmic $1$-forms $\Omega^1_{\hat
H_\kappa/\DD}$ is the invertible $\O_{\tilde H_\kappa}$-submodule of the sheaf
of relative meromorphic differential forms on $\hat H_\kappa$ generated by
\[
\frac{dz}{z}= -\frac{dw}{w}.
\]
Recall that this relation arises by applying $\dlog$ to the equation
$zw=t^\kappa$ and modding out by $\frac{dt}{t}$. Together with
\begin{align*}
w^{l+1}dz&= -w^{l+1}zw^{-1}dw= -t^\kappa w^{l-1}dw,\\
z^{l+1}dw&= -z^{l+1}wz^{-1}dz= -t^\kappa z^{l-1}dz
\end{align*}
for $l\ge 0$, we see that similarly to \eqref{Eqn: decompose hol functions}, any
$\alpha\in \Gamma(\tilde H_\kappa,\Omega^1_{\hat H_\kappa/\DD})$ can be uniquely
written in the form
\begin{equation}\label{Eqn: decompose 1-form}
\alpha=g(z,t) dz +h(w,t)dw + c(t) \frac{dz}{z}
= g(z,t)dz+h(w,t) dw -c(t)\frac{dw}{w}
\end{equation}
with $g,h$ holomorphic functions on $\hat \DD\times \DD$ and $c$ a holomorphic
function on $\DD$.

A similar statement holds after reduction modulo $t^{k+1}$ and for forms of
higher degree in higher dimensions as follows. Fix $k>0$ throughout this
appendix. Let $O_k$ be the zero-dimensional analytic log space $\Spec
\CC[t]/(t^{k+1})$ with the restriction of the log structure on $\DD$. Let
$H_\kappa$ be the base change of $\hat H_\kappa$ to $O_k$. Then the reduction
of \eqref{Eqn: decompose 1-form} modulo $t^{k+1}$ also yields a unique
decomposition, now for $\alpha\in\Gamma(H_\kappa,\Omega^1_{H_\kappa/O_k})$ and
with $g,h\in \O(\hat \DD)[t]/(t^{k+1})$, $c\in\CC[t]/(t^{k+1})$. 

For the higher dimensional case consider $\tilde U= V\times H_\kappa$ with $V$ a
complex manifold of dimension $n-1$ and let $U$ denote the reduction of $\tilde
U$ by $t$. If $U=U'\cup U''$ is the decomposition of $U$ into the two
irreducible components defined by $w=0$ and $z=0$ respectively, and $\tilde
V=V\times O_k$, various combinations of the functions $z,w,t$ and the product
structure of $\tilde U$ define projections
\begin{align*}
p_V:\tilde U&\lra V&
p_{H_\kappa}: \tilde U&\lra H_\kappa,\\
p_{\tilde V}:\tilde U&\lra \tilde V= V\times O_k,&
p_1: \tilde U&\lra U'\times O_k,&
p_2: \tilde U&\lra U''\times O_k.
\end{align*}
With this notation, the sheaf $\Omega^p_{\tilde U/O_k}$ of relative holomorphic
logarithmic $p$-forms on $\tilde U$ decomposes as a direct sum,
\[
\Omega^p_{V\times H_\kappa/O_k}=
\big(p_V^*\Omega^{p-1}_V \otimes_{\O_{\tilde U}}
p_{H_\kappa}^*\Omega^1_{H_\kappa/O_k} \big)
\oplus p_V^*\Omega^p_V.
\]
Note also that this formula can be rewritten using $p_V^*\Omega^r_V
= p_{\tilde V}^*\Omega^r_{\tilde V/O_k}$ with $r=p-1,p$.
In view of the decomposition of relative (holomorphic) logarithmic $1$-forms of
$H_\kappa/O_k$ arising from~\eqref{Eqn: decompose 1-form}, a logarithmic
$p$-form $\alpha$ on $\tilde U$ can thus be written uniquely as a sum
\begin{equation}\label{Eqn: decompose p-form}
\alpha=  \big(p_1^* \alpha'\big)\wedge dz +
\big(p_2^*\alpha''\big)\wedge dw +
\big(p_{\tilde V}^*\alpha_\restxt\big)\wedge \frac{dz}{z} +\alpha_{\tilde V},
\end{equation}
with $\alpha'\in \Gamma(U'\times O_k, \Omega^{p-1}_{U'\times O_k/O_k})$,
$\alpha''\in \Gamma(U''\times O_k,\Omega^{p-1}_{U''\times O_k/O_k})$,
$\alpha_\restxt\in \Gamma(\tilde V, \Omega^{p-1}_{\tilde V/O_k})$ and
$\alpha_{\tilde V}\in \Gamma(\tilde U, p_V^*\Omega^p_V)$. All these differential
forms can be expanded as polynomials in $t$ by means of the canonical
isomorphisms
\[
\begin{array}{rclrcl}
\Omega^{p-1}_{U'\times O_k/O_k}&=& \Omega^{p-1}_{U'}\otimes\CC[t]/(t^{k+1}),&\quad
\Omega^{p-1}_{U''\times O_k/O_k}&=& \Omega^{p-1}_{U''}\otimes\CC[t]/(t^{k+1})\\
\Omega^{p-1}_{\tilde V/O_k}&=&  \Omega^{p-1}_V\otimes\CC[t]/(t^{k+1}),&\quad
p_V^*\Omega^p_V&=& \O_{\tilde U}\otimes_{p_V^{-1}\O_V} p_V^{-1}\Omega^p_V.
\end{array}
\]
In the last instance, for $\alpha_{\tilde V}$, we use the analogue of
\eqref{Eqn: decompose hol functions} on $\tilde U= V\times H_\kappa$ to write the
coefficient functions as polynomials in $t$.

\begin{definition}\label{Def: Phi^+}
Let $\Phi: \tilde U\to X_k$ be a logarithmic morphism relative $O_k$
with $\tilde U=U\times O_k$ and $U$ non-singular, or $\tilde U=
V \times H_\kappa$ and $V$ a complex manifold of dimension $n-1$. In the
first case define
\[
\Phi^+: \Gamma(X_k,\Omega^p_{X_k/O_k})\lra \Gamma(U,\Omega^p_U)
\otimes_\CC \CC[t]/(t^{k+1} )
\]
by composing $\Phi^*$ with the canonical isomorphism
$\Omega^p_{U\times O_k/O_k}= \Omega^p_U\otimes_\CC
\CC[t]/(t^{k+1})$. In the second case define
\[
{\Phi^+: \Gamma(X_k,\Omega^p_{X_k/O_k})}\lra
\big[\big(\Gamma(U',\Omega^{p-1}_{U'})\oplus \Gamma(U'',\Omega^{p-1}_{U''})\big)
\otimes_\CC \CC[t]/(t^{k+1})\big]
\oplus \Gamma(\tilde U ,p_V^* \Omega^{p-1}_V),
\]
by decomposing $\alpha\in \Gamma(X_k,\Omega^p_{X_k/O_k})$ according
to \eqref{Eqn: decompose p-form} and omitting the term with the simple
pole:
\[
\Phi^+(\alpha):=  \big(\alpha',\alpha'',\alpha_{\tilde V}\big).
\]
We call $\Phi^+(\alpha)$ the \emph{special pull-back of $\alpha$}.

In the second case, the $\alpha_\restxt$-component of $\Phi^*\alpha$ in the
decomposition~\eqref{Eqn: decompose p-form} also provides a homomorphism
\[
\res_\Phi: \Gamma(X_k,\Omega^p_{X_k/O_k}) \lra
\Gamma(V,\Omega^{p-1}_V)\otimes_\CC \CC[t]/(t^{k+1}).
\]
\end{definition}

Note that $\res_\Phi(\alpha)=\alpha_\restxt$ in Definition~\ref{Def: Phi^+} agrees
with the residue of the restriction of $\Phi^*(\alpha)$ to the branch $w=0$.
Restricting to the other branch $z=0$ changes the sign, but up to the choice of
branch, $\res_\Phi(\alpha)$ is well-defined as a $(p-1)$-form on the thickened
double locus $(X_0)_\sing\times O_k$.

\begin{lemma}\label{Lem: Phi^+ commutes with d}
The homomorphism $\Phi^+$ commutes with the exterior differential
$d$.
\end{lemma}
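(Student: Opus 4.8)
The plan is to verify the claim separately on each of the two kinds of chart appearing in Definition~\ref{Def: Phi^+}, since $\Phi^+$ is defined by a case distinction. In the first case, where $\tilde U = U\times O_k$ with $U$ nonsingular, $\Phi^+$ is simply $\Phi^*$ followed by the canonical identification $\Omega^p_{U\times O_k/O_k} = \Omega^p_U\otimes_\CC\CC[t]/(t^{k+1})$; both $\Phi^*$ (pullback of a morphism of log spaces relative $O_k$) and that canonical identification are maps of complexes, so there is essentially nothing to prove beyond citing functoriality of $d$ under log pullback.

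The substance is in the second case, $\tilde U = V\times H_\kappa$. Here I would start from a relative logarithmic $p$-form $\alpha$ on $X_k$, pull it back, and write $\Phi^*\alpha$ in the canonical decomposition \eqref{Eqn: decompose p-form} as $(p_1^*\alpha')\wedge dz + (p_2^*\alpha'')\wedge dw + (p_{\tilde V}^*\alpha_{\restxt})\wedge \frac{dz}{z} + \alpha_{\tilde V}$. Since $\Phi^*$ commutes with $d$, it suffices to show that the decomposition map sending $\Phi^*\alpha \mapsto (\alpha',\alpha'',\alpha_{\tilde V})$ intertwines $d$ with the exterior differential on the target (and, as a byproduct, that $\res_\Phi$ also commutes with $d$, which drops out of the same computation). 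Applying $d$ to \eqref{Eqn: decompose p-form} and using $d(dz)=d(dw)=0$, $d(\tfrac{dz}{z})=0$, and the Leibniz rule, I get
\[
d(\Phi^*\alpha) = (p_1^*d\alpha')\wedge dz + (p_2^*d\alpha'')\wedge dw + (p_{\tilde V}^*d\alpha_{\restxt})\wedge \tfrac{dz}{z} + d\alpha_{\tilde V},
\]
using that $p_1,p_2,p_{\tilde V}$ are morphisms and pullback commutes with $d$. The one point needing care is that this expression is already in the canonical form \eqref{Eqn: decompose p-form}, i.e. that $d\alpha_{\tilde V}$ has no hidden $\tfrac{dz}{z}$-component and that the first three terms involve forms pulled back from the correct factors: $d\alpha'$ from $U'\times O_k$, $d\alpha''$ from $U''\times O_k$, $d\alpha_{\restxt}$ from $\tilde V$, and $d\alpha_{\tilde V}\in p_V^*\Omega^{p+1}_V$. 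This holds because $d$ preserves each of the summands in the direct-sum decomposition $\Omega^p_{V\times H_\kappa/O_k} = \big(p_V^*\Omega^{p-1}_V\otimes p_{H_\kappa}^*\Omega^1_{H_\kappa/O_k}\big)\oplus p_V^*\Omega^p_V$ up to the expected shifts — concretely, $d$ applied to a form pulled back from $V$ stays pulled back from $V$, and $d$ of the $H_\kappa$-factor $1$-forms vanishes. By uniqueness of the decomposition \eqref{Eqn: decompose p-form}, reading off the components of $d(\Phi^*\alpha)$ gives $\Phi^+(d\alpha) = (d\alpha',d\alpha'',d\alpha_{\tilde V}) = d\Phi^+(\alpha)$, as desired.

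The main obstacle I anticipate is purely bookkeeping: making sure that after applying $d$ the resulting form is genuinely in the normal form \eqref{Eqn: decompose p-form}, rather than requiring further reduction via the relations $w^{l+1}dz = -t^\kappa w^{l-1}dw$ etc. This is where one must check that no new $\tfrac{dz}{z}$-term is generated — equivalently, that $d$ of the $\alpha_{\tilde V}$-part (a form in $p_V^*\Omega^p_V$ with coefficients expanded in $t$ via \eqref{Eqn: decompose hol functions}) lands in $p_V^*\Omega^{p+1}_V$ again. Since the coefficient functions there are polynomials in $t$ with coefficients pulled back from $V$, and $dt$ is killed in relative forms, differentiation only produces $V$-directions, so no $\tfrac{dz}{z}$ appears. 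Once this is confirmed, the rest is immediate from uniqueness.
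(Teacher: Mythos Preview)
Your overall approach is correct and matches what the paper has in mind (the paper's own proof is the single line ``This follows easily from the definition''). There is, however, one genuine slip in the bookkeeping for the type~II case.

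You assert that the coefficient functions of $\alpha_{\tilde V}$ are ``polynomials in $t$ with coefficients pulled back from $V$'', and hence that $d\alpha_{\tilde V}$ lands again in $p_V^*\Omega^{p+1}_V$. This is not correct: by definition $\alpha_{\tilde V}\in\Gamma(\tilde U,p_V^*\Omega^p_V)$ has coefficients in $\O_{\tilde U}=\O_{V\times H_\kappa}$, so they genuinely depend on $z$ and $w$. (The paper's remark about writing them ``as polynomials in $t$'' via the analogue of \eqref{Eqn: decompose hol functions} means expanding each coefficient as $z\,g+w\,h+c$, not that they are independent of $z,w$.) Consequently $d\alpha_{\tilde V}$ does acquire $dz$- and $dw$-pieces, your displayed expression for $d(\Phi^*\alpha)$ is \emph{not} yet in the canonical form \eqref{Eqn: decompose p-form}, and one cannot read off $\Phi^+(d\alpha)=(d\alpha',d\alpha'',d\alpha_{\tilde V})$ componentwise as you do.

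This does not, however, invalidate the lemma. What actually matters --- and what you correctly identified as the crux --- is that no $\tfrac{dz}{z}$-term is produced. That holds simply because the coefficients of $\alpha_{\tilde V}$ (and likewise of the $dz$- and $dw$-slots) are holomorphic on $\tilde U$, so their exterior derivatives are \emph{regular} $1$-forms with no log pole. Thus $d$ preserves both the residue summand $(p_{\tilde V}^*\Omega^{\bullet-1}_{\tilde V/O_k})\wedge\tfrac{dz}{z}$ and its complement in the decomposition \eqref{Eqn: decompose p-form}. Since $\Phi^+$ is just $\Phi^*$ followed by the projection killing the residue summand, it commutes with $d$. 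If you replace the false claim ``$d\alpha_{\tilde V}\in p_V^*\Omega^{p+1}_V$'' by the correct and sufficient observation ``$d\alpha_{\tilde V}$ has no $\tfrac{dz}{z}$-component because its coefficients are holomorphic'', your argument goes through.
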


\begin{proof}
This follows easily from the definition.
\end{proof}

With the notion of special pull-back at hand we are now in position
to define our finite order period integrals.

\begin{construction}
\label{Constr: finite order periods}
Let $X_k\to O_k$ be a morphism of analytic log spaces with $O_k$ the
fat log point introduced above. Denote by $X_0$ the central fiber
and let $\beta$ be a singular differentiable $p$-cycle on
$X_0$. Here differentiability is defined on each singular simplex by
locally composing with an embedding of $X_0$ into some $\CC^N$. In a
neighborhood of the image $|\beta|\subset X_0$ of $\beta$ we assume
$X_0$ to be normal crossings and $\pi$ log smooth. Since the
discussion is local around $|\beta|$ we may just as well assume
these conditions to hold everywhere. We assume $\beta=\sum_i
\beta_i$ with each $\beta_i$ a chain mapping into the image of
$\Phi_i: \tilde U_i\to X_k$, a logarithmically strict open embedding over $O_k$
with either
\begin{align} 
\tilde U_i&= U_i\times O_k\hbox{ with }U_i\subset\CC^n\hbox{ open, or} \label{ChI}\tag{Ch I}\\
\tilde U_i&=V_i\times H_{\kappa_i}\hbox{ with }V_i\subset \CC^{n-1}\hbox{ open.}\label{ChII}\tag{Ch II}
\end{align}
We identify the reduction $U_i$ of $\tilde U_i$ with its image in
$X_0$ and we assume the $U_i$ for the second type are mutually
disjoint. The index $i$ runs over a finite subset of $\NN$.

Concerning $\beta$ we assume that
\begin{itemize}[leftmargin=8ex]
\item[(Cy I)\ ]
For either type of chart, $\partial\beta_i=\sum_\mu \gamma_i^\mu$
with $(p-1)$-cycles $\gamma_i^\mu$, the number of summands depending
on $i$. Moreover, for each $(i,\mu)$ there exist exactly one $j\neq
i$ and one $\nu$ with $|\gamma_i^\mu|\cap |\gamma_j^{\nu}|\neq
\emptyset$. For such $(i,\mu),(j,\nu)$, it necessarily holds
$\gamma_i^\mu=- \gamma_{j}^{\nu}$ since $\partial\beta=0$.
\item[(Cy II)]
If $\Phi_i$ is of type~II (i.e., $U_i\cap (X_0)_\sing\neq\emptyset$) then
$\beta_i$ is homologous relative to $\partial\beta_i$ either to
(i)~$\gamma_i\times \Sigma$ with $\Sigma$ the two-chain
$\big[\ol \DD\times\{0\}\big] + \big[\{0\} \times \ol \DD\big]$ in
$H_{\kappa_i},$ or to (ii)~$\gamma_i\times\iota$, with
\[
\iota:[-1,1]\lra H_{\kappa_i},\quad
\lambda\longmapsto \begin{cases}
(-\lambda,0,0),&-1\le \lambda\le0\\
(0,\lambda,0),&0\le \lambda\le 1.
\end{cases}
\]
In the two cases, $\gamma_i$ is a $(p-2)$- and $(p-1)$-cycle in
$V_i$, respectively. In particular, $\partial
\beta_i=\gamma_i^1-\gamma_i^2$ with $\gamma_i^\mu= \gamma_i\times S^1$ homologous to zero in the first case and $\gamma_i^\mu$ homologous to $\gamma_i\times \{0\}$ in the second case.
\end{itemize}
For $i\neq j$ denote $U_{ij}= U_i\cap U_j=U_{ji}$. We then have two open
embeddings $\Phi_i^{(j)},\Phi_j^{(i)}: U_{ij}\times O_k\to X_k$, defined by the
restrictions of $\Phi_i$ and $\Phi_j$, respectively. Note that if
$U_{ij}\neq\emptyset$, at most one of the two charts can be of type~II, say
$U_j$. In this case, $U_j= U'_j\cup U''_j$ decomposes into
two irreducible components with only one of them intersecting $U_{ij}$, say
$U'_j$. In the coordinates $z,w,t$ for $H_{\kappa_j}$ assume that
$U'_j$ is defined by $w=0$. Then for $z\neq0$ we can
eliminate $w$ via $w=z^{-1}t^{\kappa_j}$ to obtain an
identification $\tilde U_j\setminus U''_j= (U'_j\setminus U''_j)\times
O_k$. The map $\Phi_j^{(i)}: U_{ij}\times O_k\to X_k$ is then defined by the
composition
\[
U_{ij}\times O_k\lra  (U'_j\setminus U''_j)\times O_k\lra
V_j\times  H_{\kappa_j}\stackrel{\Phi_j}{\lra} X_k,
\]
with the first two arrows the canonical open embeddings.

In any case, since $\Phi_i^{(j)}$, $\Phi_j^{(i)}$ agree on the reduction
$U_{ij}$, there is a biholomorphism $\Psi_{ij}$ of $U_{ij}\times
O_k=U_{ji}\times O_k$ fulfilling
$\Phi_j^{(i)}=\Phi_i^{(j)}\circ\Psi_{ij}$. Using the linear structure on
$U_i\subset \CC^n$ we may then define a homotopy between $\Phi_i^{(j)}$ and
$\Phi_j^{(i)}$ as follows:
\begin{equation}
\label{Eqn: homotopy Phi_{ij}}
\Phi_{ij}: [0,1]\times U_{ij}\times O_k\lra X_k,\quad
\Phi_{ij}(s,.)= \Phi_i^{(j)}\big((1-s)\cdot\id +s\Psi_{ij}\big). 
\end{equation}
Note that $\Phi_{ij}$ is really a homotopy of the homomorphism between the
structure sheaves, the underlying map of topological spaces stays constant
throughout the homotopy.\footnote{The particular form of homotopy is
not important and can be chosen according to convenience.} For a relative
logarithmic $p$-form $\alpha$ on $X_k$ we define $\Phi_{ij}^+(\alpha)$ by
using the product structure of $U_{ij}\times O_k$.
\medskip

Now let $\alpha$ be a closed relative logarithmic $p$-form on $X_k/O_k$.
If $\Phi_i$ is a chart of type~I, we can easily define
$\int_{\beta_i}\alpha$ by integrating over the first factor in
$\Gamma(U_i,\Omega^p_{U_i})\otimes_\CC \CC[t]/(t^{k+1})$.
Explicitly, expanding $\Phi_i^+(\alpha)=\sum_l \alpha_l t^l$, we 
have
\begin{equation}
\label{Eqn: integrals for Ch I}
\int_{\beta_i}\Phi_i^+(\alpha)=\sum_l \Big(\int_{\beta_i}\alpha_l\Big) t^l. 
\end{equation}
An analogous formula defines $\int_{[0,1]\times\gamma_i^\mu}
\Phi_{ij}^+(\alpha)$ needed for the treatment of $\partial\beta_i$ below.

For charts of type~II we need a different definition of the integral to take into
account the change of topology that $\beta_i$ would undergo under
deformation to $t\neq0$. Expanding the three entries of
$\Phi_i^+(\alpha)$ in power series yields
\begin{equation}
\label{Eqn:dform-powerseries-expansion}
\Phi_i^+(\alpha)= \Big({\textstyle \sum_{r\ge 0} z^r p_{\tilde V_i}^* g_r,
\sum_{r\ge 0} w^r p_{\tilde V_i}^*h_r}, \alpha_{\tilde V_i}\Big),
\end{equation}
with $g_r, h_r\in \Omega_{\tilde V_i}^{p-1}(\tilde V_i)=
\Omega_{V_i}^{p-1}(V_i)\otimes \CC[t]/(t^{k+1})$, $\alpha_{\tilde V_i}\in
\Gamma(\tilde U_i, p_{V_i}^*\Omega_{V_i}^p)$. The two power series are absolutely
and uniformly convergent for $|z|\le 1$ and $|w|\le 1$, respectively. For the
two cases listed in (Cy~II), define now
\begin{equation}
\label{Eqn: Type II contribution}
\int_{\beta_i}\Phi_i^+(\alpha) =
\begin{cases}
0,& \beta_i =\gamma_i\times\Sigma\\
\sum_{r\ge 0}\frac{1-t^{(r+1)\kappa_i}}{r+1}
\int_{\gamma_i}\big(h_r-g_r\big),&\beta_i=\gamma_i\times\iota.
\end{cases}
\end{equation}
The motivation for this definition will become clear in the proof of
Proposition~\ref{holomorphic-periods-induce-formal-periods}. The factor in front
of the integral should be recognized as the integral of $w^r dw$ over a curve in
$\DD$ connecting $t^{\kappa_i}$ and $1$. But note that here $t$ is only defined up
to order $k$, so this interpretation should be taken with care.

Finally define $\int_\beta \alpha$ as a formal linear combination
$g+h\log t$ with coefficients $g,h\in\CC[t]/(t^{k+1})$ as
follows:
\begin{equation}\label{Def: finite order integrals}
\int_\beta\alpha:= \sum_i \int_{\beta_i} \Phi_i^+(\alpha)
+\sum_{i,\mu} \int_{[0,1]\times\gamma_i^\mu} \Phi_{ij}^+\alpha
+\Big(\sum_i \kappa_i \int_{\gamma_i} \res_{\Phi_i}(\alpha)\Big) \log t.
\end{equation}
Here the first sum runs over all $i$. The second sum runs over all $(i,\mu)$
with $\Phi_i$ of type~I; in the summand, $j$ is the unique index with $j\neq i$
and $\gamma_i^\mu$ mapping also to $U_j$ as explained in
Construction~\ref{Constr: finite order periods}, (Cy~I); if also $\Phi_j$ is of
type~I we assume $i<j$. The third sum runs over all $i$ with $\Phi_i$ of
type~II. Note that the integral over the residue vanishes if $\beta_i$ is
a cycle of type~(i), that is, of the form $\gamma_i\times\Sigma$.
\end{construction}

\begin{remark}
\label{Rem: adjustments for non-standard starting points}
Formula~\eqref{Def: finite order integrals} depends on the specific choice of
$\iota$ for chains of type (ii) in (Cy~II) above as a curve connecting $z=1$ to
$w=1$. For curves connecting $z=a$ to $w=b$, the term $\kappa_i\log t$ in
\eqref{Def: finite order integrals} has to be replaced by $\kappa_i\log t- \log
b -\log a$, the result of computing $\int_a^{t^{\kappa_i} b^{-1}} \dlog z$.
Varying $a$ and $b$ implies that the result depends on the choice of a branch of
the logarithm, and hence can only be well-defined up to changing any of the
terms $\kappa_i \int_{\gamma_i}\res_{\Phi_i}(\alpha)$ by integral multiples of
$2\pi\sqrt{-1} \int_{\gamma_i}\res_{\Phi_i}(\alpha)$.

This generalized formula also shows that by replacing $z,w,t$ by
$\varepsilon z,\varepsilon w, \varepsilon^{2/\kappa_i} t$ for a small $\varepsilon\in
\CC^*$, the same Formula~\eqref{Def: finite order integrals} applies if we
replace the unit disks above by disks with any radius.
\end{remark}

For convenient referencing, for charts $\Phi_i$ of type (Ch~II) we also
introduce the notation
\begin{equation}\label{Def: finite order integrals-beta_i}
\int_{\beta_i}\alpha:=\int_{\beta_i} \Phi_i^+(\alpha)
+\kappa_i \Big(\int_{\gamma_i} \res_{\Phi_i}(\alpha)\Big)\log t.
\end{equation}
Note that this definition depends on $\Phi_i$ whereas \eqref{Def: finite order
integrals} does not depend on choices, as we show next.

\begin{proposition}
\label{Prop: fiber integral well-defined}
The integral of the closed logarithmic $p$-form $\alpha$ on $X_k/O_k$ over the
$p$-cycle $\beta$ on $X_0$ defined in Equation~\eqref{Def: finite order
integrals} of Construction~\ref{Constr: finite order periods} as a formal
expression
\[
\int_\beta \alpha = g\log t+h 
\]
with $g,h\in \CC[t]/(t^{k+1})$, does not depend on any choices up to changing
$g$ by adding integral multiples of $2\pi\sqrt{-1}
\int_{\gamma_i}\res_{\Phi_i}(\alpha)$ for any $i$. Moreover, up to this
ambiguity, the result is invariant under changing $\alpha$ by an exact form or
under homotopy of $\beta$ through cycles of the same form.
\end{proposition}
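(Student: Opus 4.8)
The statement asserts several independence properties of the formal expression $\int_\beta\alpha = g\log t + h$ defined by \eqref{Def: finite order integrals}. I would organize the proof around the different types of choices made in Construction~\ref{Constr: finite order periods}: (a)~the choice of the decomposition $\beta=\sum_i\beta_i$ and the charts $\Phi_i$; (b)~the choice of representatives of the $\beta_i$ within their homology classes relative boundary (as allowed in (Cy~II)); (c)~the choice of homotopies $\Phi_{ij}$ interpolating between overlapping charts; (d)~changing $\alpha$ by an exact form; and (e)~homotoping $\beta$ through cycles of the same form. The strategy is to check each type of change contributes nothing to $g$ and $h$, modulo the stated ambiguity of integral multiples of $2\pi\sqrt{-1}\int_{\gamma_i}\res_{\Phi_i}(\alpha)$. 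The key technical inputs are Lemma~\ref{Lem: Phi^+ commutes with d} (that $\Phi^+$ commutes with $d$), Stokes' theorem applied to chains and homotopies, and the explicit local normal forms \eqref{Eqn: decompose p-form} for logarithmic forms near the double locus.

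\textbf{Key steps.} First, for exactness: if $\alpha = d\psi$ with $\psi$ a relative logarithmic $(p-1)$-form, then on each type~I chart $\int_{\beta_i}\Phi_i^+(d\psi) = \int_{\beta_i} d(\Phi_i^+\psi) = \int_{\partial\beta_i}\Phi_i^+\psi$ by Lemma~\ref{Lem: Phi^+ commutes with d} and Stokes. On type~II charts one must additionally track the residue term, using that $\res_{\Phi_i}$ also commutes with $d$ in the appropriate sense and that the formula \eqref{Eqn: Type II contribution} is built precisely so that $\int_{\beta_i}\alpha$ for a type~II chart agrees with the naive integral of a primitive over the boundary curve; here the factor $(1-t^{(r+1)\kappa_i})/(r+1)$ is the antiderivative evaluated between the endpoints $t^{\kappa_i}$ and $1$. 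The boundary contributions $\int_{\partial\beta_i}\Phi_i^+\psi$ then recombine: for each matched pair $(i,\mu),(j,\nu)$ with $\gamma_i^\mu=-\gamma_j^\nu$, the difference $\Phi_i^+\psi - \Phi_j^+\psi$ on the overlap equals $d$ of something along the homotopy $\Phi_{ij}$, which is exactly what the interpolation terms $\int_{[0,1]\times\gamma_i^\mu}\Phi_{ij}^+\psi$ absorb (again by Stokes on $[0,1]\times\gamma_i^\mu$). So everything cancels, proving independence under exact forms; the same bookkeeping, run with $\alpha$ itself rather than a primitive, shows independence of the chart decomposition: given two decompositions one passes to a common refinement and checks the value is unchanged using that $\Phi_i^+$ and the interpolation terms patch consistently.

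\textbf{Remaining cases.} For homotopy of $\beta$ through cycles of the same form: a homotopy is an $(p+1)$-chain $W$ with $\partial W = \beta^{(1)} - \beta^{(0)} + (\text{degenerate pieces})$, and since $\alpha$ is closed, $\int_{\beta^{(1)}}\alpha - \int_{\beta^{(0)}}\alpha = \int_W d\alpha + (\text{boundary terms}) = 0$ after the same type~I/type~II bookkeeping; one must verify that along the homotopy the type~II chains stay of the prescribed form $\gamma_i\times\Sigma$ or $\gamma_i\times\iota$, which is part of the hypothesis ``cycles of the same form.'' For the choice of representative within the relative homology class in (Cy~II), two representatives differ by a boundary $\partial\Xi$ with $\Xi$ supported in a single type~II chart, and $\int_{\partial\Xi}\Phi_i^+\alpha = \int_\Xi d(\Phi_i^+\alpha) = 0$; the residue term is manifestly unchanged since it only depends on $\gamma_i$ and $\Phi_i$. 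Finally, the $2\pi\sqrt{-1}$-ambiguity: by Remark~\ref{Rem: adjustments for non-standard starting points}, changing the parametrization of the curve $\iota$ (its endpoints $a,b$) or choosing a different branch of $\log$ in evaluating $\int_a^{t^{\kappa_i}b^{-1}}\dlog z$ replaces $\kappa_i\log t$ by $\kappa_i\log t - \log a - \log b$, and different branch choices differ by $2\pi\sqrt{-1}\ZZ$; multiplying through by the residue $\int_{\gamma_i}\res_{\Phi_i}(\alpha)$ gives exactly the claimed ambiguity.

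\textbf{Main obstacle.} The delicate point is the matching of the interpolation terms across overlapping charts when one of them is type~II: there the change-of-coordinates map $\Psi_{ij}$ involves eliminating $w = z^{-1}t^{\kappa_j}$, so the homotopy $\Phi_{ij}$ lives on $U_{ij}\times O_k$ after this elimination and one must check that the special pull-back $\Phi_{ij}^+$ and the residue are compatible with this, i.e.\ that no spurious $\log t$ contribution is produced or lost when passing between the type~II description and the type~I description on the overlap. Verifying this compatibility carefully — essentially tracking how the $\alpha_\restxt$ component of the decomposition \eqref{Eqn: decompose p-form} transforms under $\Psi_{ij}$ — is where the real work lies; the rest is systematic Stokes-theorem bookkeeping.
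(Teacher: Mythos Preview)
Your overall plan is sound and covers the right cases, but you have misidentified the main technical obstacle and consequently left a genuine gap. The hard step is not the interface between a type~I and a type~II chart on an overlap $U_{ij}$: there the type~II chart restricts to a type~I chart (via $w=z^{-1}t^{\kappa_j}$), so the comparison is the same as between two type~I charts. The real difficulty is comparing two \emph{different} type~II charts $\Phi_i,\hat\Phi_i:V_i\times H_{\kappa_i}\to X_k$ on the \emph{same} open set. Your phrase ``passes to a common refinement and checks the value is unchanged'' skips this entirely: after refinement the open sets agree, but the chart maps to $X_k$ still differ by an automorphism of $V_i\times H_{\kappa_i}$ over $O_k$ reducing to the identity modulo~$t$, and you must show this automorphism does not change the integral.

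For type~I charts this is easy (linear interpolation in $\CC^n$ gives a homotopy $\Psi_i$). For type~II charts linear interpolation fails because it does not preserve the equation $zw=t^{\kappa_i}$. The paper's key observation is that any such automorphism has the form $(u,z,w,t)\mapsto(U,zh,wh^{-1},t)$ for an invertible function $h\equiv 1\bmod t$; then $\log h$ is defined (with $\log h\equiv 0\bmod t$), so $h^s=\exp(s\log h)$ makes sense for $s\in[0,1]$, and $(s,u,z,w,t)\mapsto((1-s)u+sU,\,zh^s,\,wh^{-s},\,t)$ gives a homotopy $\Psi_i$ \emph{through maps preserving $zw=t^{\kappa_i}$}. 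Once you have $\Psi_i$ of this form, $\Phi_i^+(\alpha)-\hat\Phi_i^+(\alpha)=d\!\int_0^1\Psi_i^+(\alpha)$ by Lemma~\ref{Lem: Phi^+ commutes with d}, and Stokes on each $\beta_i$ produces boundary terms $\sum_\mu\int_{\gamma_i^\mu}\int_0^1\Psi_i^+(\alpha)$ that cancel against the corresponding differences of interpolation terms (using analogous homotopies $\Psi_{ij}$ built to restrict to $\Psi_i$ and $\Psi_j$ at the endpoints). Without this step your argument for chart-independence is incomplete. The remaining items---exactness of $\alpha$, homotopy of $\beta$, the $2\pi\sqrt{-1}$ ambiguity---are, as you indicate, handled by the same Stokes bookkeeping and the paper dispatches them in a single sentence.
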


\begin{proof}
First observe that for a given cycle $\beta$, we can make $\beta_i$ with
$\Phi_i$ of type~II arbitrarily small. Indeed, let $\Phi_i$ be of type~II and
$\beta_i$ split into a sum $\beta'_i+\beta^{\mathrm{smaller}}_i$ with $\beta'_i$
mapping to $\breve{U}_i:=U'_i\setminus U''_i\subset X_0$. Viewing $\breve
U_i\times O_k$ as an open subspace of $\tilde U_i=V_i\times H_{\kappa_i}$ by
means of $p_1:\tilde U_i\to U'_i\times O_k$, the restriction
$\Phi'_i:=\Phi_i|_{\breve{U}_i\times O_k}$ defines a chart of type~I. A
straightforward check now shows
\[
\int_{\beta_i}\Phi_i^+\alpha = \int_{\beta'_i}(\Phi'_i)^+\alpha
+\int_{\beta^{\mathrm{smaller}}_i}\Phi_i^+\alpha.
\]
A similar refinement argument holds if we swap the roles of $U_i'$ and $U_i''$
and also for charts of type~I. Thus given two systems of open embeddings
$\Phi_i$, $\hat\Phi_j$ we may go over to a larger indexing set and shrink the
domains of definition to arrive at the situation that the indexing sets and the
open sets $U_i\subset X_0$ agree. 

We use the notation from Construction~\ref{Constr: finite order periods}, with a
hat indicating the use of $\hat\Phi_i$. If $\Phi_i$, $\hat\Phi_i$ are charts of
type~I, the same argument as in the definition of $\Phi_{ij}$ defines a homotopy
\[
\Psi_i:[0,1]\times \tilde U_i\lra X_k,
\]
between $\Phi_i=\Psi_i(0,.)$ and $\hat\Phi_i=\Psi_i(1,.)$. There also exists
such a homotopy $\Psi_i$ for charts $\Phi_i, \hat\Phi_i: V_i\times H_{\kappa_i}
\to X_k$ of type~II, but the construction has to be modified to preserve the
equation $zw=t^{\kappa_i}$ as follows. By composing with $\Phi_i^{-1}$ we may
replace $X_k$ by $ V_i \times H_{\kappa_i}$ and assume $\Phi_i=\id$ for the
construction of the homotopy. Write $\hat\Phi_i: V_i\times H_{\kappa_i}\to
V_i\times H_{\kappa_i}$ component-wise as
\[
\hat\Phi_i(u,z,w,t)= \big( U(u,z,w,t), Z(u,z,w,t), W(u,z,w,t), t\big).
\]
Note that $\hat\Phi_i$ commutes with the map to $O_k$ and reduces to the
identity modulo $t$. Hence $ZW=t^{\kappa_i}$ and $Z, W$ reduce to $z,w$ modulo
$t$. A straightforward induction on the degree in $t$ shows that there
exists an invertible function $h$ on $V_i\times H_{\kappa_i}$ with $Z=z\cdot h$,
$W=w\cdot h^{-1}$ and $h\equiv 1$ modulo $t$. Thus we can define $\log h$
uniquely with $\log h\equiv 0$ modulo $t$, and in turn $h^s=\exp(s\cdot\log h)$
for any $s\in \RR$ is also defined. Then
\[
\Psi_i:[0,1]\times V_i\times H_{\kappa_i}\lra V_i\times H_{\kappa_i},\quad
(s,u,z,w,t)\longmapsto \big((1-s)u+ sU, z\cdot h^s, w\cdot h^{-s},t) \big)
\]
defines the desired homotopy between $\Phi_i=\id$ and $\hat\Phi_i$.

Similarly, there exist homotopies
\[
\Psi_{ij}:[0,1]\times[0,1]\times U_{ij}\times O_k\lra X_k
\]
between $\Phi_{ij}=\Psi_{ij}(0,.\,,.)$ and $\hat\Phi_{ij}= \Psi_{ij}(1,.\,,.)$.
By constructing $\Psi_{ij}$ by linear interpolation between $\Psi_i$ and
$\Psi_j$ as we have done, we can also achieve $\Psi_{ij}(.\,,0,.)= \Psi_i$,
$\Psi_{ij}(.\,,1,.)= \Psi_j$. Since $d\alpha=0$ by assumption, these homotopies
give rise to exact forms in the usual way by integration over the first entry:
\[
\Phi_i^*\alpha-\hat\Phi_i^*\alpha= d\Big(
\int_0^1\Psi_i^*\alpha\Big),\quad
\Phi_{ij}^*\alpha-\hat\Phi_{ij}^*\alpha= d\Big(
\int_0^1\Psi_{ij}^*\alpha\Big).
\]
Taking the respective parts of the product decomposition of
$\tilde U_i$ yields the analogous formulas for special pull-back:
\[
\Phi_i^+(\alpha)-\hat\Phi_i^+(\alpha)= d\Big(
\int_0^1\Psi_i^+(\alpha)\Big),\quad
\Phi_{ij}^+(\alpha)-\hat\Phi_{ij}^+(\alpha)= d\Big(
\int_0^1\Psi_{ij}^+(\alpha)\Big),
\]
where again we view the parameters first complex-valued and then
restrict to $[0,1]\times[0,1]\subset D^2$. Note this computation
requires Lemma~\ref{Lem: Phi^+ commutes with d}.

The difference of the terms appearing in the first sum on the right-hand side of
\eqref{Def: finite order integrals} now can be written as
\[
\int_{\beta_i}\big(\Phi_i^+(\alpha)-\hat\Phi_i^+(\alpha)\big)
=\int_{\beta_i} d\Big(\int_0^1\Psi_i^+(\alpha)\Big)
=\int_{\partial\beta_i} \int_0^1\Psi_i^+(\alpha)
= \sum_\mu \int_{\gamma_i^\mu} \int_0^1\Psi_i^+(\alpha).
\]
For the second term one computes similarly
\[
\int_{[0,1]\times \gamma_i^\mu}
\big(\Phi_{ij}^+(\alpha)- \hat\Phi_{ij}^+(\alpha)\big)=
\int_{[0,1]\times\gamma_i^\mu} d\Big(\int_0^1 \Psi_{ij}^+(\alpha)\Big)=
\int_{\gamma_i^\mu} \int_0^1 \big(\Psi_j^+(\alpha)-
\Psi_i^+(\alpha)\big).
\]
Now each $\gamma_i^\mu$ from $\tilde U_i$ of type~I equals a
unique $-\gamma_j^\nu$ with $j\neq i$. If $\tilde U_j$ is of type~I
the contribution of $\gamma_j^\nu$ occurs with opposite sign in
$\int_{\beta_j}\big(\Phi_i^+(\alpha)-\hat\Phi_i^+(\alpha)\big)$.
If $\tilde U_j$ is of type~II a similar cancellation arises with a
contribution of the second term in \eqref{Def: finite
order integrals}, and each summand in the latter
occurs exactly once. Thus the first two terms in \eqref{Def: finite
order integrals} give the same result for $\Phi_i$ and $\hat\Phi_i$,
while the integral over the residue is already defined independently of choices.

A similar argument shows invariance under homotopies of
$\beta$ and the vanishing of $\int_\beta\alpha$ for exact $\alpha$.
\end{proof}

If $X_k\to O_k$ is the reduction modulo $t^{k+1}$ of an analytic family, our
period integral agrees with the usual period integral, up to order $k$,
assuming $\int_{\gamma_i} \res_{\Phi_i}(\alpha)\in\ZZ$. Otherwise we
have agreement up to integral multiples of $\big(2\pi\sqrt{-1}
\int_{\gamma_i}\res_{\Phi_i}(\alpha)\big)\log(t)$.

\begin{proposition} 
\label{holomorphic-periods-induce-formal-periods}
In the situation of Construction~\ref{Constr: finite order periods},
assume that $X_k\to O_k$ and $\alpha$ are the reductions modulo
$t^{k+1}$ of a holomorphic map $\shX\to \DD$ to the unit disk and of a
closed, relative logarithmic $p$-form $\tilde\alpha$ on $\shX$, respectively.
Let $\beta_t$ be a continuous extension of the $p$-cycle $\beta$ on
$X_0=\pi^{-1}(0)$ to the fibers $\shX_t$ for $t\in \DD\setminus (\RR_{>
0}e^{\sqrt{-1}\zeta})$ for some $\zeta\in [0,2\pi)$. Then possibly after
replacing $\DD$ by a smaller disk, there are holomorphic functions
$\tilde g,\tilde h\in \O(\DD)$ with
\[
\int_{\beta_t}\alpha_t = \tilde g\log(t)+\tilde h, \quad
t\in \DD\setminus \RR_{\ge 0}e^{\sqrt{-1}\zeta},
\]
whose reductions modulo $t^{k+1}$ agree with $g,h\in \CC[t]/(t^{k+1})$ from
\eqref{Def: finite order integrals}, respectively, for some choice of branch of
$\log t$ on $\DD\setminus \RR_{\ge 0}e^{\sqrt{-1}\zeta}$, and up to
changing $g$ by integral multiples of $2\pi\sqrt{-1}
\int_{\gamma_i}\res_{\Phi_i}(\alpha)$ for any $i$.
\end{proposition}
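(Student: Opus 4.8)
The plan is to reduce the comparison of the finite-order period integral \eqref{Def: finite order integrals} with the genuine analytic period $\int_{\beta_t}\alpha_t$ to a chart-by-chart local analysis, exactly paralleling the structure of Construction~\ref{Constr: finite order periods}. First I would choose, by shrinking $\DD$, an analytic thickening of $\beta$ to a continuous family $\beta_t$ inside $\shX_t$ compatible with a decomposition $\beta_t = \sum_i \beta_{i,t}$ where each $\beta_{i,t}$ deforms $\beta_i$ inside the (now honest analytic) chart $\tilde U_i \to \shX$; the interpolation chains $[0,1]\times\gamma_i^\mu$ also deform, and by (Cy~I) the boundary pieces continue to cancel in pairs. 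On a type-I chart, $\beta_{i,t}$ is (up to homotopy) a genuine cycle in the analytic manifold $U_i$, so $\int_{\beta_{i,t}}\tilde\alpha = \int_{\beta_i}\tilde\alpha_t$ holds literally as a convergent power series in $t$, and its truncation is \eqref{Eqn: integrals for Ch I}; the homotopy-invariance established in Proposition~\ref{Prop: fiber integral well-defined} handles the discrepancy between different choices of the deforming family. The main content is the type-II charts, where the topology of $\beta_{i,t}$ genuinely changes as $t\to 0$.

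On a type-II chart $\tilde U_i = V_i \times H_{\kappa_i}$ with $zw=t^{\kappa_i}$, I would work out the analytic integral directly. Write $\Phi_i^*\tilde\alpha$ in the form \eqref{Eqn: decompose p-form}: the residue component contributes $\res_{\Phi_i}(\tilde\alpha)$ and, integrated over $\gamma_i$ times a path on the cylinder $zw=t^{\kappa_i}$ winding from $z=1$ to $w=1$, produces $\int_{\gamma_i}\res_{\Phi_i}(\tilde\alpha)\cdot \int_{1}^{t^{\kappa_i}}\tfrac{dz}{z} = \kappa_i \log t \cdot \int_{\gamma_i}\res_{\Phi_i}(\tilde\alpha)$ (up to a $2\pi\sqrt{-1}\ZZ$-ambiguity coming from the branch of $\log$ and from the endpoints of $\iota$, exactly as in Remark~\ref{Rem: adjustments for non-standard starting points}). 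The non-polar part $\Phi_i^+(\tilde\alpha) = (\sum_r z^r g_r, \sum_r w^r h_r, \alpha_{\tilde V_i})$ contributes, over the cycle $\gamma_i\times\iota_t$ (where $\iota_t$ is the path on $zw=t^{\kappa_i}$ connecting $z=1$ to $w=1$, i.e.\ $z$ running from $1$ down to $t^{\kappa_i}$ along the branch $w=1$, equivalently $w$ running from $t^{\kappa_i}$ up to $1$), the term $\sum_r \big(\int_1^{t^{\kappa_i}} z^r\,dz\big)\int_{\gamma_i} g_r + \sum_r \big(\int_{t^{\kappa_i}}^{1} w^r\,dw\big)\int_{\gamma_i} h_r = \sum_r \tfrac{1-t^{(r+1)\kappa_i}}{r+1}\int_{\gamma_i}(h_r - g_r)$, which is precisely the second case of \eqref{Eqn: Type II contribution}; the $\alpha_{\tilde V_i}$-term integrates to zero over $\gamma_i\times\iota_t$ for dimension reasons. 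For the first case of (Cy~II), $\beta_{i,t}$ can be taken as $\gamma_i$ times the vanishing-cycle-like $2$-chain on the smoothed cylinder, and $\int_{\gamma_i\times\Sigma_t}\Phi_i^*\tilde\alpha = 0$ because the relevant integrand restricted to $\Sigma_t$ is exact on each disk; this matches the ``$0$'' in \eqref{Eqn: Type II contribution}. In every case the convergence of the power series (absolute and uniform for $|z|,|w|\le 1$, already noted after \eqref{Eqn:dform-powerseries-expansion}) guarantees that $\tilde g, \tilde h$ extend holomorphically over a possibly smaller $\DD$.

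Assembling the pieces: summing the chart contributions and the interpolation contributions $\int_{[0,1]\times\gamma_i^\mu}\Phi_{ij}^+(\tilde\alpha)$ — which for the analytic family are the genuine ``transition integrals'' making the $\beta_{i,t}$ glue into the global $\beta_t$ — reproduces term by term the right-hand side of \eqref{Def: finite order integrals}, with the $\log t$ coefficient being $\sum_i \kappa_i \int_{\gamma_i}\res_{\Phi_i}(\tilde\alpha) =: \tilde g$ and the rest being $\tilde h \in \O(\DD)$. Reducing modulo $t^{k+1}$ recovers exactly $g,h$ from \eqref{Def: finite order integrals}, and Proposition~\ref{Prop: fiber integral well-defined} shows the comparison is insensitive to all auxiliary choices up to the stated $2\pi\sqrt{-1}\,\ZZ$-ambiguity in $g$. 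I expect the main obstacle to be the bookkeeping in the type-II chart: one must verify carefully that the specific model chain $\iota$ (resp.\ $\Sigma$) of (Cy~II) is homologous, inside the \emph{analytic} cylinder $zw=t^{\kappa_i}$ relative to its (deformed) boundary, to the actual deformed chain $\beta_{i,t}$, and that the path $\iota_t$ realizing the integral is the natural deformation of $\iota$; this is where the transversality and the homotopy-relative-boundary conditions built into Assumption~\ref{Ass: beta} and Construction~\ref{Constr: beta} are genuinely used, and where the branch-of-$\log$ subtleties enter. The remaining estimates (holomorphic dependence on $t$, shrinking of $\DD$) are routine given the absolute uniform convergence of the expansions.
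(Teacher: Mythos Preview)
Your proposal is correct and follows essentially the same route as the paper: extend the charts analytically, deform $\beta_i$ to $\beta_i(t)$ chart-by-chart (trivially in type~I, via an explicit curve $\iota(t)$ or chain $\Sigma(t)$ on $zw=t^{\kappa_i}$ in type~II), add the interpolation chains, and then integrate the decomposition \eqref{Eqn: decompose p-form} term by term to recover \eqref{Eqn: Type II contribution} and the $\kappa_i\log t$ residue contribution. One small correction: the verification that $\beta_{i,t}$ is homologous rel boundary to the model chain uses only (Cy~I), (Cy~II) from Construction~\ref{Constr: finite order periods}, not Assumption~\ref{Ass: beta} or Construction~\ref{Constr: beta}, which belong to the tropical-cycle setup in Section~\ref{section-tropical-to-homology} and play no role in this appendix result.
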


\noindent
\emph{Proof.}
After composing $\shX\to\DD$ with multiplication by $e^{\sqrt{-1}(\pi-\zeta)}$
on $\DD$ we may assume $\zeta=\pi$. The charts $\Phi_i$ from
Construction~\ref{Constr: finite order periods} extend to analytic open
embeddings into $\shX$, possibly after shrinking $U_i\subset X_0$ slightly. To
reduce the amount of notation, we use the same symbols as before, except $O_k$
is replaced by the unit disk $\DD$. Thus $\Phi_i: \tilde U_i\to \shX$ continues
to be a logarithmically strict open embedding, but now $\tilde U_i= U_i\times
\DD$ or $\tilde U_i= V_i\times \hat H_{\kappa_i}$ with $\hat
H_{\kappa_i}=\{(z,w,t)\in \hat \DD^2\times \DD\,|\, zw=t^{\kappa_i}\}$.
Similarly, we have the homotopy $\Phi_{ij}$ between the restrictions of $\Phi_i$
and $\Phi_j$ to a neighborhood of $|\gamma_i^\mu|= |\gamma_j^\nu|$, all assumed
to agree to order $k$ with their respective versions in
Construction~\ref{Constr: finite order periods}.

We now extend $\beta=\sum_i\beta_i$ as a cycle to small $t$ by the sum of the
following three types of singular chains.

(A)~If $\tilde U_i=U_i\times \DD$ is of type~I define
$\beta_i(t)={\Phi_i}_*(\beta_i\times\{t\})$.

(B)~If $\tilde U_i= V_i\times \hat H_{\kappa_i}$ is of type~II, then by
(Cy~II) either $\beta_i= \gamma_i\times \Sigma$ or
$\beta_i=\gamma_i\times \iota$. In the first case, $\Sigma= \tilde
H_{\kappa_i}\cap (\ol \DD\times\ol \DD\times\{0\})$ and $\gamma_i$ is a chain in
$V_i$ of dimension $p-2$. In this case define $\beta_i(t)=
\gamma_i\times\Sigma(t)$ with $\Sigma(t) =\hat H_{\kappa_i}\cap (\ol
\DD\times\ol \DD\times\{t\})$. In the second case, $\iota$ is a union of two line
segments in $\hat H_{\kappa_i}$ in the fiber over $t=0$, while $\gamma_i$ is a
chain in $V_i$ of dimension $p-1$. For $t\in\RR_{>0}$ define $\beta_i(t)=
\gamma_i\times\iota(t)$ with
\begin{wrapfigure}[11]{r}{0.4\textwidth}
\captionsetup{width=.9\linewidth}
\begin{center}
\includegraphics[width=0.38\textwidth]{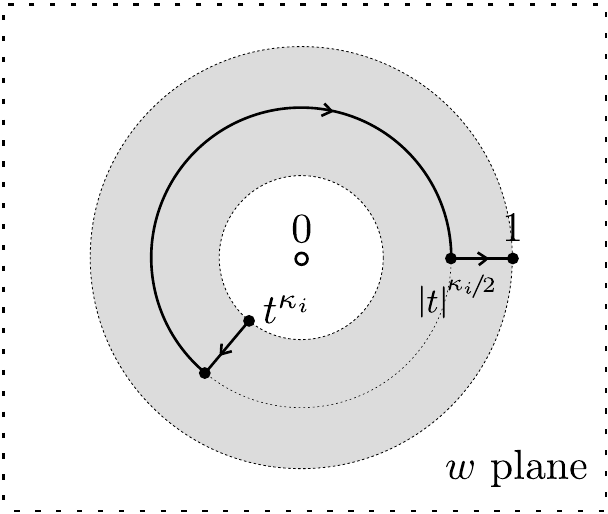}
\end{center}
\caption{The curve $\iota(t)$.}
\label{Fig: w-plane-curves6}
\end{wrapfigure}
\begin{minipage}{0.6\textwidth} 
\[
\iota(t) = \{(t^{\kappa_i}/\lambda, \lambda,t)\in
\hat H_{\kappa_i}\,|\, t^{\kappa_i}\le \lambda\le 1\}.
\]
\end{minipage}\\[2ex]
For $t\in \DD\setminus \RR_{\le 0}$ take the same definition for $\beta_i(t)$, now
with $\iota(t)$ a continuous family of curves in $\hat H_{\kappa_i}$ in the
fiber over $t$ that, projected to the $w$-plane, connects $t^{\kappa_i}$ and $1$
inside the annulus $|t^{\kappa_i}|\le |w|\le 1$. 

For example, writing
$t=|t| e^{\sqrt{-1}\theta}$ with $-\pi< \theta<\pi$, we could take
$\iota(t):[0,1]\to \big\{ w\in\CC\,\big|\, |t|^{\kappa_i}\le |w|\le 1\}$ to map
the three intervals (a) $[0,1/3]$, (b) $[1/3,2/3]$ and (c) $[2/3,1]$ to (a) the
radial line segment connecting $t^{\kappa_i}= |t|^{\kappa_i}
e^{\sqrt{-1}\theta}$ with $|t|^{\kappa_i/2}e^{\sqrt{-1}\theta}$, (b) an arc on a
circle, with endpoint $|t|^{\kappa_i/2}$, (c) the radial line segment from
$|t|^{\kappa_i/2}$ to $1$, respectively, and each interval parametrized with
constant speed. Then indeed $\iota(t)$ varies continuously with
$t\in \CC\setminus\RR_{\le0}$. Moreover, decomposing $\tilde
H_{\kappa_i}\cap \big(\hat \DD^2\times\{|t|\}\big)$ into two annuli of outer
radius $1$ and inner radius $|t|^{1/2}$, we see that $\iota(t)$ for $t\to 0$
converges to the curve $\iota$ in (Cy~II) of Construction~\ref{Constr: finite
order periods}.

(C)~For each $i$ with $\Phi_i$ of type~I and each component
$\gamma_i^\mu=-\gamma_j^\nu$ of $\partial\beta_i$ define the interpolating $p$-chain
\[
\beta_{i,\mu}(t):= {\Phi_{ij}}_*\big([0,1]\times
\gamma_i^\mu\times\{t\}\big).
\]
If also $\Phi_j$ is of type~I we only consider $\beta_{i,\mu}$ if
$i<j$.

Finally, define
\[
\beta(t) =\sum_i\beta_i(t) +\sum_{i,\mu}\beta_{i,\mu}(t).
\]
Note that $\beta(t)$ is a cycle since by construction:
$\partial\beta_i(t)=\sum_\mu \gamma_i^\mu(t)$ with $\gamma_i^\mu(t)$ a
continuous family of cycles converging to $\gamma_i^\mu$ for $t\to 0$, while by
(C) it holds $\partial\beta_{i,\mu}(t)=-\gamma_i^\mu(t)
-\gamma_j^\nu(t)$.

To finish the proof, it remains to compute $\int_{\beta(t)}\alpha$
and to match the various contributions with the terms in \eqref{Def:
finite order integrals}, modulo $t^{k+1}$. For contributions
from~(A) we have
\[
\int_{\beta_i(t)} \alpha = \int_{\beta_i\times\{t\}} \Phi_i^*\alpha.
\]
Developing the integrand in $t$ up to order $k$ yields
$\int_{\beta_i}\Phi_i^+(\alpha)$.

For (B) and $t= |t| e^{\sqrt{-1}\theta}\neq 0$ we may eliminate
$z=t^{\kappa_i}/w$ and work over the $w$-plane. The choice of $w$ over $z$ is
motivated by the fact that the curve $\gamma_i$ moves radially inwards in the
$z$-plane and outwards in the $w$-plane. In this coordinate, using $z^rdz =
-t^{(r+1)\kappa_i}w^{-r-2}dw$ and following \eqref{Eqn: decompose p-form}, \eqref{Eqn:dform-powerseries-expansion}, we can write uniquely
\begin{equation}\label{Eqn: decompose alpha}
\Phi_i^*\alpha= \sum_{r\ge0}  p_{\tilde V_i}^*h_r\wedge w^r dw +
p_{\tilde V_i}^*h_{-1} \wedge\frac{dw}{w}
-\sum_{r\ge 0}  p_{\tilde V_i}^*g_r\wedge\frac{t^{(r+1)\kappa_i}}{w^{r+2}} dw+ \alpha_{\tilde V_i},
\end{equation}
with $ h_r, g_r\in \Gamma(V_i\times \DD,\Omega^{p-1}_{V_i\times \DD/\DD})$ and
$\alpha_{\tilde V_i}\in\Gamma(\tilde U_i, p_{V_i}^*\Omega^p_{V_i})$.
Projected to the $w$-plane, $\iota(t)$ is a curve connecting
$t^{\kappa_i}$ and $1$. If $\beta_i(t)=\gamma_i\times \Sigma(t)$, the integral
over $\Phi_i^*\alpha$ involves integration of a holomorphic one-form over $\Sigma$ and hence
it vanishes identically, in agreement with the first line in \eqref{Eqn:
Type II contribution}.

For the other case, $\beta_i(t)=\gamma_i\times \iota(t)$, we have
\[
\int_{\iota(t)} w^r dw = \int_{t^{\kappa_i}}^1 w^r dw =\begin{cases}
\frac{1}{r+1}\big(1- t^{(r+1)\kappa_i}\big),&r\neq -1\\
-\kappa_i\log t,&r=-1.
\end{cases}
\]
Moreover, $\int_{\gamma_i\times \iota(t)} \alpha_{\tilde V_i}=0$ since
$\alpha_{\tilde V_i}$ vanishes on $\ker ({p_{\tilde V_i}}_*)$. Integration of
\eqref{Eqn: decompose alpha} over $\gamma_i\times \iota(t)$ now gives
\[
\left(-\kappa_i \int_{\gamma_i}  h_{-1} \right)\log t+
\sum_{r\ge 0} \frac{1-t^{(r+1)\kappa_i}}{r+1}\left( \int_{\gamma_i}
h_r- g_r\right).
\]
Since $g_r, h_r$ reduce modulo $t^{k+1}$ to the differential forms with the same
symbols in Construction~\ref{Constr: finite order periods} and since
$g_{-1}=-h_{-1}=\res_{\Phi_i}(\alpha)$, this result agrees to order $k$ with the
contributions to $\int_\beta\alpha$ in~\eqref{Def: finite order integrals}
from~\eqref{Eqn: Type II contribution} and with $\kappa_i\big(\int_{\gamma_i}
\res_{\Phi_i}(\alpha)\big)\log t$.

For the interpolation integrals~(C) it holds
\[
\int_{\beta_{i,\mu}(t)}\alpha =
\int_{[0,1]\times\gamma_i^\mu\times\{t\}}\Phi_{ij}^*\alpha,
\]
which agrees with $\int_{[0,1]\times \gamma_i^\mu}
\Phi_{ij}^+(\alpha)$ in \eqref{Def: finite order integrals} upon
reduction modulo $t^{k+1}$ by the same argument as in (A).

Any other choice of $\beta(t)$ differs from our choice up to homology by a sum
of integrals over vanishing cycles of the form $\int_{{\Phi_i}_*(\gamma_i\times
S^1\times\{t\})} \alpha$ for $\Phi_i$ of type~II. Here
$S^1\times\{t\}\subset\hat H_{\kappa_i}$ is defined by $|z|=|w|$.
Integrating over the $S^1$-factor yields $2\pi\sqrt{-1}\int_{\gamma_i}
\res_{\Phi_i}(\alpha)$, hence only changes the result as stated.
\qed

\begin{lemma} \label{lemma-on-monodromy}
In the situation of Proposition~\ref{holomorphic-periods-induce-formal-periods},
let $T$ denote the monodromy transformation on $n$-cycle classes along a
counter-clockwise simple loop in the base disk $\DD$ of the family $\shX\to \DD$ based
at a fiber $\shX_{t_0}$ for some $t_0\neq 0$. We have
\[
(T-\id)(\beta_{t_0})=\sum \kappa_i[\gamma_i\times S^1]
\]
where, in the notation of the proof of the proposition, the sum is over all
charts of type (B) for which $\beta_{i}=\gamma_i\times\iota$ and $S^1$ denotes a
clockwise simple loop around the origin in the $w$-plane, see Figure~\ref{Fig:
w-plane-curves6}. 
\end{lemma}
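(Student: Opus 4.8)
The plan is to compute the monodromy by keeping track of how the explicitly constructed family of cycles $\beta(t)$ from the proof of Proposition~\ref{holomorphic-periods-induce-formal-periods} behaves as $t$ traverses a small counterclockwise loop around the origin. First I would recall that $\beta(t)=\sum_i\beta_i(t)+\sum_{i,\mu}\beta_{i,\mu}(t)$, and observe that all the pieces of type~(A) and type~(C), as well as the type~(B) pieces of the form $\gamma_i\times\Sigma(t)$, depend on $t$ only through the product structure of the charts $\Phi_i$ (resp. $\Phi_{ij}$); as $t$ runs around a loop, these charts return to themselves continuously, so the corresponding chains return to their original positions up to boundary. Hence $(T-\id)$ acting on $[\beta_{t_0}]$ is entirely accounted for by the type~(B) pieces $\beta_i(t)=\gamma_i\times\iota(t)$, where $\iota(t)$ is the curve in $\hat H_{\kappa_i}$ that, projected to the $w$-plane, connects $t^{\kappa_i}$ to $1$ inside the annulus $|t^{\kappa_i}|\le|w|\le1$.

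The key local computation is then: as $t$ makes one counterclockwise loop around $0$, the inner endpoint $w=t^{\kappa_i}$ winds $\kappa_i$ times counterclockwise around the origin in the $w$-plane (and, correspondingly, $z=t^{\kappa_i}/w$ traverses the cylinder $zw=t^{\kappa_i}$). After the loop the new curve $\iota(t)$ differs from the old one by $\kappa_i$ loops around the hole of the annulus; equivalently, $\iota^{\mathrm{new}}(t)$ is homologous rel endpoints to $\iota^{\mathrm{old}}(t)$ together with $\kappa_i$ copies of a small clockwise circle $S^1$ around $w=0$. The sign/orientation of $S^1$ is fixed by the convention already used throughout: a clockwise loop in the $w$-plane corresponds to a counterclockwise loop in the $z$-plane, matching Figure~\ref{Fig: w-plane-curves6} and the discussion in Step~II of the proof of Lemma~\ref{Lem: charts for cycle} and in Proposition~\ref{Lem: monodromy action}. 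Multiplying by the transverse factor $\gamma_i$, the change in $\beta_i(t)$ is $\kappa_i[\gamma_i\times S^1]$, which is exactly the vanishing cycle contribution localized at that intersection point. Summing over all type~(B) charts with $\beta_i=\gamma_i\times\iota$ gives the stated formula $(T-\id)(\beta_{t_0})=\sum\kappa_i[\gamma_i\times S^1]$.

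To make the homology statement precise I would argue on the total space: form the trace $\bigcup_{|t|=\varepsilon}\beta(t)$ over the loop; all contributions except the $\iota(t)$-pieces sweep out chains whose boundary is $\beta^{\mathrm{new}}(t_0)-\beta^{\mathrm{old}}(t_0)$ plus pieces that cancel against neighbors (because the interpolation homotopies $\Phi_{ij}$ and the $\Sigma(t)$-cylinders deform rigidly), while each $\iota(t)$-piece sweeps out, after projecting $z=t^{\kappa_i}/w$, an annular region in the $w$-plane covered $\kappa_i$ times, whose boundary difference is $\kappa_i$ copies of $S^1$. This is the standard Picard–Lefschetz picture for the local model $zw=t^{\kappa}$: the only subtlety, which is where I expect to spend the most care, is bookkeeping the orientations and the factor $\kappa_i$ consistently with the conventions fixed in the appendix (in particular the choice to work over the $w$-plane for $\iota(t)$, and the direction of $S^1$). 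Once the orientations are pinned down, the result is immediate; this is also consistent with the formula already proved in Proposition~\ref{Lem: monodromy action} after pairing with $\check d_{\rho_i}$ and using $[\gamma_i\times S^1]=\langle\check d_{\rho_i},\xi_i\rangle^{-1}[\alpha]$ up to sign, though here we state the cleaner chain-level version before that identification is made.
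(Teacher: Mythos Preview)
Your proposal is correct and follows essentially the same approach as the paper's proof: decompose $\beta(t)$ into pieces of types (A), (B), (C), observe that all but the $\gamma_i\times\iota(t)$ pieces are monodromy-invariant, and then read off the $\kappa_i$-fold winding of the inner endpoint $w=t^{\kappa_i}$ to obtain $T(\iota)=\iota+\kappa_i[S^1]$. The paper's argument is terser (it simply refers to Figure~\ref{Fig: w-plane-curves6} for the last step), while you spell out the winding and add the trace-over-the-loop remark, but the substance is identical.
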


\begin{proof}
The cycle $\beta_{t_0}$ decomposes into chains $\beta_i$ according to cases
(A),(B),(C) as in the proof of
Proposition~\ref{holomorphic-periods-induce-formal-periods}. For (A) and (C), it
is straightforward to see that $\beta_i$ is invariant under monodromy because
the family is trivial here. Hence, $(T-\id)$ only yields contributions for case
(B). Note further that the factor $\gamma_i$ is also invariant under monodromy,
so we only need to consider the local situation of the map $H_{\kappa_i}\to \DD$
given by $zw=t^{\kappa_i}$. In the sub-situation where $\beta_i=\gamma_i\times
\Sigma(t)$, we find that $\Sigma(t_0)$ is the fundamental chain of the fiber of
$H_{\kappa_i}\to \DD$ which is also invariant under monodromy. Thus, only the
situation $\beta_i=\gamma_i\times \iota$ contributes, as claimed in the
assertion. Studying how $\iota$ changes when following a simple
counter-clockwise $t_0$-based loop in $\DD$, as illustrated in Figure~\ref{Fig:
w-plane-curves6}, we see that $\iota$ gets mapped to $\iota+\kappa_i[S^1]$ under
$T$. Adding the invariant factor $\gamma_i$ yields the assertion.
\end{proof}

%
%

\section{Analytic approximation of proper formal families}

\begin{theorem}\label{Thm: Analytic approximation}
Let $R=\CC\{t,z_1,\ldots,z_r\}/(g_1,\ldots,g_s)$ be a convergent power series
algebra, $(S,0)\subset (\CC^{r+1},0)$ the corresponding germ of analytic space
and $(\widetilde S,0)$ the completion in the closed subspace defined by $t$. Let
$\widetilde \pi: (\foX,X_0)\to (\widetilde S,0)$ be a proper and flat formal
analytic map and $\pi_k: X_k\to S_k$ its reduction modulo $t^{k+1}$.

Then for any $k\ge 0$ there exists a proper flat analytic map of germs of pairs
$\pi:(\shX,X_0)\to (S,0)$ with reduction modulo $t^{k+1}$ isomorphic to $\pi_k$.

Analogous approximation statements hold for morphisms of complex spaces
$(\foZ,Z_0)\to(\foX,X_0)$, both of which proper and flat over $(\tilde S,0)$,
and for the additional giving of an $(\tilde S,0)$-flat analytic subspace
$(\foD,D_0)$ of $(\foX,X_0)$.
\end{theorem}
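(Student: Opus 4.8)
\textbf{Proof plan for Theorem~\ref{Thm: Analytic approximation}.}
The plan is to derive the statement from Artin's approximation theorem for solutions of analytic equations, applied to a suitable algebraization of the deformation problem. First I would reduce to the affine-schematic situation: since $\widetilde\pi\colon(\foX,X_0)\to(\widetilde S,0)$ is proper and flat, by formal GAGA and the existence of a relatively ample bundle on a projective thickening (or, in the general proper case, by Elkik's theorem on algebraization of formal deformations), one can replace $\foX$ over $\widetilde S=\Spf R$ by a proper flat scheme over $\Spec R$, with the same reduction $X_k$ modulo $t^{k+1}$; here $R=\CC\{t,z_1,\ldots,z_r\}/(g_1,\ldots,g_s)$ is already the convergent (hence excellent Henselian) local ring we want. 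The point of this step is that $R$ is a quotient of a convergent power series ring, so Artin approximation is available for it directly; no passage to a Henselization of a polynomial ring is needed.

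Second, I would set up the functor of interest. Cover $X_0$ by finitely many affine opens $\{U_\alpha\}$ on which $\pi_k$ is described by finitely many generators and relations over $S_k$, together with gluing isomorphisms on overlaps $U_{\alpha\beta}$ satisfying the cocycle condition on triple overlaps $U_{\alpha\beta\gamma}$. A lift of $\pi_k$ to an honest flat analytic family over a neighborhood of $0$ in $S=\Spec R_\an$ is then the same as a system of convergent power series (the coefficients of the defining equations of the $U_\alpha$, the entries of the gluing maps, and the auxiliary variables witnessing flatness via a finite free resolution) satisfying a finite system of analytic equations: the relations defining each $U_\alpha$, the compatibility of the gluing maps with those relations, the cocycle identity on triple overlaps, and the exactness of the chosen resolutions (flatness). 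Crucially, the given formal family $\foX\to\widetilde S$ furnishes a \emph{formal} solution of this system of equations over $R$, and its reduction mod $t^{k+1}$ is the prescribed $\pi_k$. Artin's approximation theorem then produces a \emph{convergent} solution agreeing with the formal one modulo $t^{k+1}$ (we apply it with the ideal $(t)^{k+1}$, so the approximating solution matches to the requested order), and gluing the resulting affine pieces yields the desired proper flat analytic $\pi\colon(\shX,X_0)\to(S,0)$ with $\pi\bmod t^{k+1}\cong\pi_k$.

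Third, the variants are obtained by enlarging the system of equations. For a morphism $(\foZ,Z_0)\to(\foX,X_0)$ of proper flat families over $\widetilde S$, one additionally records the data of $\foZ$ (charts, gluings, resolutions) together with the ring maps realizing the morphism to $\foX$, adjoining the equations that say these maps are well defined and compatible with the gluings; Artin approximation applied to the combined system gives simultaneously convergent $\shZ,\shX$ and a morphism between them, matching the formal picture mod $t^{k+1}$. For an $(\widetilde S,0)$-flat subspace $(\foD,D_0)\subset(\foX,X_0)$ one adjoins generators of the ideal sheaf of $\foD_\alpha\subset U_\alpha$ and the equations expressing that these generate an ideal stable under the gluings and cutting out an $R$-flat quotient; approximating the enlarged system yields the pair $(\shX,\shD)$ as claimed. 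Since everything is encoded by finitely many analytic equations over the single ring $R$, one application of Artin approximation handles each case.

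The main obstacle, and the step I would spend the most care on, is the \textbf{algebraization} in the first step: Artin approximation is a statement about solutions of finitely many analytic equations in finitely many unknowns over $R$, whereas \emph{a priori} a proper flat family is infinite-dimensional data. Reducing it to a finite presentation requires either projectivity of the central fiber together with relative ampleness propagating to the thickenings (so that $\shX=\Proj$ of a finitely generated graded algebra, bringing us into the realm of Elkik's and Artin's algebraization theorems), or, in the general proper case, an appeal to the more delicate Artin algebraization of proper formal schemes / Grothendieck existence in the formal-analytic setting. Once a finite set of equations with a formal solution refining $\pi_k$ is in hand, the remainder is a routine, if notationally heavy, bookkeeping of charts, cocycles, and flatness resolutions, plus a single invocation of Artin's theorem with the ideal $(t)^{k+1}$.
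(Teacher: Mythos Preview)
Your overall strategy (reduce to a finite analytic system, then invoke Artin approximation) is the right endgame, but the paper takes a different and cleaner route to get there, and your Step~1 contains a genuine confusion.

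\textbf{The paper's argument.} Rather than encoding the family by charts and gluings, the paper invokes the Douady--Grauert versal deformation of the compact complex space $X_0$: a proper flat analytic family $h\colon\shY\to V$ over a germ $(V,0)\subset(\CC^n,0)$ with $h^{-1}(0)\simeq X_0$, versal for all flat deformations. The given $\pi_k$ is then classified by an analytic map $\Phi_k\colon S_k\to V$. By the formal versality results of Schuster and Wavrik, the formal family $\widetilde\pi$ is classified by a formal map $\widehat\Phi\colon\widehat S\to\widehat V$, and this can be built order by order so as to restrict to $\widehat\Phi_k$ modulo $t^{k+1}$. Now $\widehat\Phi$ is simply $n$ formal power series $\widehat\varphi_i(t,\mathbf z)$ satisfying the finitely many analytic equations $f_j(\widehat\varphi)\in(g_1,\ldots,g_s)$ defining $V$. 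Writing $\widehat\varphi_i=\varphi_i+t^{k+1}\widehat\psi_i$ with $\varphi_i$ the analytic components of $\Phi_k$, Artin approximation applied to the $\widehat\psi_i$ produces convergent $\psi_i$, hence an analytic $\Phi\colon(S,0)\to(V,0)$ agreeing with $\Phi_k$ modulo $t^{k+1}$, and $\shX:=\shY\times_V S$ is the desired family. For the morphism variant the paper uses the versal deformation of a morphism, built from the versal deformations of source and target together with Douady's relative $\Hom$-space; the subspace case is then a special instance (closed embedding being an open condition). The point is that the versal base $V$ absorbs all the infinite-dimensionality of the problem, leaving a system in only $n$ unknowns.

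\textbf{The gap in your Step~1.} You write $\widetilde S=\Spf R$ with $R=\CC\{t,\mathbf z\}/(g)$, but $\widetilde S$ is the completion of $(S,0)$ along $t=0$, so its structure ring is the $(t)$-adic completion $R^{\wedge}_{(t)}$, not $R$ itself. Grothendieck existence or Elkik-type algebraization would at best yield a proper scheme over $\Spec R^{\wedge}_{(t)}$; descending from there to $\Spec R$ is precisely the approximation problem you are trying to solve, so Step~1 as written is circular. In fact Step~1 is unnecessary: your Step~2 (affine charts, relations, gluing cocycles, flatness witnesses) can be run directly on the formal family $\foX/\widetilde S$, which already furnishes the required formal solutions. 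That direct charts-and-gluings route can in principle be made to work, but it is notationally heavy and requires care you do not spell out (encoding that gluing maps are isomorphisms on the correct localizations, compatibility of the chosen free resolutions across overlaps, etc.). The versal-deformation argument sidesteps all of this.
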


\begin{proof}
We first treat the case $(\foX.X_0)\to (\tilde S,0)$. By a result of Douady and
Grauert, the compact complex space $X_0$ admits a versal deformation, a proper
analytic map $h:\shY\to V$ with a point $v\in V$ and an isomorphism
$h^{-1}(v)\simeq X_0$ which is versal for proper flat analytic deformations of
$X_0$ (\cite{Du74}, VII.8, Th\'eor\`eme Principal and \cite{Gr74}, \S5,
Hauptsatz). Possibly by shrinking $V$, we may assume $V$ is an analytic subspace
of an open subset in $\CC^n$ given by some $f_1,\ldots,f_m\in
\CC\{x_1,\ldots,x_n\}$ and $v=0$. Thus for any given $k$ there exists a
cartesian diagram of analytic spaces
\begin{equation}
\label{Eqn: varphi_k}
\begin{aligned}
\xymatrix{
X_k\ar[r]\ar[d]_{\pi_k}&\shY\ar[d]^h\\
S_k \ar^-{\Phi_k}[r]& V.\!}
\end{aligned}
\end{equation}
We are going to construct $(\shX,X_0)\to (S,0)$ by extending $\Phi_k$ to an
analytic map $\Phi: (S,0)\to (V,0)$, first formally and then analytically using Artin
approximation.

To do so, denote by $\widehat\pi:\widehat\foX\lra \widehat S$ and by $\widehat h:\widehat
\shY\to \widehat V$ the completions of $\widetilde\pi$ and $h$ at the
origins, respectively. By results of Schuster and Wavrik \cite{Schuster71},
\cite{Wavrik}, the family $\widehat h$ is formally versal. Hence there
exists a cartesian square
\[
\xymatrix{
\widehat\foX\ar[r]\ar[d]_{\widehat \pi}&\widehat\shY\ar[d]^{\widehat h}\\
\widehat S\ar^-{\widehat\Phi}[r]&\, \widehat V.\!}
\]
We can also achieve that the reduction of $\widehat\Phi$ modulo $t^{k+1}$ agrees
with the completion $\widehat\Phi_k$ of $\Phi_k$ at $0$. Indeed, if $\fom\subset
R$ is the maximal ideal, constructing $\widehat\Phi$ amounts to finding a
compatible system of lifts
\[
\widehat\Phi_l: \O_{\widehat V,0}= \CC\lfor x_1,\ldots,x_n\rfor/(f_1,\ldots,f_m)\lra
R/\fom^{l+1},\quad l\in\NN,
\]
along with a compatible system of isomorphisms of $X_l\to S_l$ with the pull-back of
$\widehat h$ by $\widehat\Phi_l$. Assuming $\widehat\Phi_{l-1}$ given,
the construction of $\widehat\Phi_l$ can be done in two steps: First provide the lift to
\[
R/\big(\fom^{l+1}+(t^{k+1})\cap \fom^l\big) =
R/\fom^l\times_{R/(\fom^l+(t^{k+1}))} R/\big(\fom^{l+1}+(t^{k+1})\big),
\]
by gluing the family over $R/\fom^l$ given by $\widehat\Phi_{l-1}$ with the
reduction modulo $\fom^{l+1}$ of $X_k/S_k$, using the given isomorphism of the
common reductions modulo $\fom^l+ (t^{k+1})$. The fibered sum of analytic spaces
involved in this step exists due to \cite{Schuster70}, Satz~2.7. Then in the
second step use formal versality to extend from $R/\big(\fom^{l+1}+(t^{k+1})\cap
\fom^l\big)$ to $R/\fom^{l+1}$. Thus $\widehat\Phi$ with the requested
properties exists.

Writing $\mathbf{z}=(z_1,\ldots,z_r)$, the map $\widehat\Phi$ is given by
equations $x_i=\widehat\varphi_i(t,\mathbf{z})$ for $1\le i\le n$ with
$\widehat\varphi_i\in \CC\lfor t,\mathbf{z}\rfor$. Since the ideal
$(f_1,\ldots,f_m)$ gets mapped into the ideal $(g_1,\ldots,g_s)$, the
$\widehat\varphi_i$ fulfill the system of equations
\begin{equation}
\label{Eqn: Formal eqn}
f_j\big(\widehat\varphi_1(t,\mathbf{z}),\ldots,\widehat\varphi_n(t,\mathbf{z})\big)
=\sum_{\sigma=1}^s \widehat a_{j\sigma}(t,\mathbf{z}) g_\sigma(t,\mathbf{z}),\quad 1\le j\le m
\end{equation}
for some $\widehat a_{j\sigma}\in\CC\llbracket{t,\bf z}\rrbracket$. Since we
already have the analytic solution $\Phi_k$ on $S_k$, that is, an analytic
solution modulo $t^{k+1}$, we now rewrite
\[
\widehat\varphi_i=\varphi_i+ t^{k+1}\widehat\psi_i,\quad i=1,\ldots,m,
\]
with $\varphi_i\in \CC\{t,\mathbf{z}\}$ the components of $\Phi_k$ and
$\widehat\psi_i\in\CC\lfor t,\mathbf{z}\rfor$. Plugging into \eqref{Eqn: Formal
eqn} we see that $y_i=\widehat\psi_i(t,\mathbf{z})$, $x_{j\sigma}=\widehat
a_{j\sigma}(t,\mathbf{z})$ are a formal solution of the system of analytic equations
\begin{equation}
\label{Eqn: system-of-equations-to-approximate}
f_j\big(\varphi_1(t,\mathbf{z})+t^{k+1} y_1,\ldots,\varphi_n(t,\mathbf{z})+t^{k+1}y_n\big)
=\sum_{\sigma=1}^s x_{j\sigma} g_\sigma(t,\mathbf{z}),\quad 1\le j\le m.
\end{equation}
By Artin's approximation theorem \cite{Ar68}, Theorem~1.2, there exist germs of
analytic functions $\psi_1(t,\mathbf{z}),\ldots,\psi_n(t,\mathbf{z})$ and
$a_{j\sigma}(t,\mathbf{z})$ that solve \eqref{Eqn:
system-of-equations-to-approximate}. Now $\varphi_1+t^{k+1}\psi_1,\ldots,
\varphi_n+t^{k+1}\psi_n$ defines an analytic map $(S,0)\to (V,0)$ with the
property that the reduction modulo $t^{k+1}$ equals $\Phi_k$. The base change
$\shX:=\shY\times_V S$ of $\shY\to V$ by $\Phi$ is the requested analytic
approximation of $\widetilde\pi$. This finishes the proof for the case
$(\foX,X_0)\to (\tilde S,0)$.

The proof for the case of a morphism $(\foZ,Z_0)\to (\foX,X_0)$ is similar,
replacing the versal deformation of $X_0$ by the versal deformation of the
morphism $Z_0\to X_0$, with varying domain and target. This latter versal
deformation space exists by first constructing versal deformations $\shT\to W$
of $Y_0$ and $\shY\to V$ of $X_0$ separately, and then taking the relative hom
space $\Hom_{W\times V} (\shT\times V,W\times\shY)$ from \cite{Du69}, Ch.10, for the
pull-backs to $W\times V$ of the versal deformations of domain and target. The
case of an analytic subspace is a special case, noting that the condition that a
morphism is a closed embedding is an open property.
\end{proof}

%
%

\section{The divisorial log deformation functor has a hull}
\label{App: log hull}
In this section, $X_\CC$ denotes a simple toric log Calabi-Yau space over
$(\Spec\CC,\NN\times\CC^\times)$. We consider divisorial deformations of $X_\CC$
as defined in [GS2], Definition~2.7. Let $\shD:(\hbox{Artinian $\CC\lfor
t\rfor$-algebras})\to (\hbox{Sets})$ be the divisorial log deformation
functor hence that associates to an Artinian $\CC\lfor
t\rfor$-algebra $A$ the set of isomorphism classes of divisorial log
deformations $X_A$ of $X_\CC$ over $\Spec A$ equipped with the divisorial log
structure defined by $t=0$. The definition requires $X_A\to\Spec A$ to be flat
in the ordinary sense, to be log smooth away from $Z$ and to permit local models
of a particular type along $Z$. The last condition requires that each
$\bar x\in Z$ has an \'etale neighborhood $V_A$ with strict \'etale $\Spec
A$-morphisms $X_A\la V_A \to Y_A=\Spec A\times_{\CC\lfor t\rfor} U$ where $U\to
\Spec\CC\lfor t\rfor$ is a particular affine toric variety with monomial
function $t$ uniquely determined by $\bar x$. In the following we call
such an \'etale neighborhood $V_A\to X_A$ a \emph{model neighborhood}. The
only feature of these local models needed for the present discussion is the
following result from \cite{logmirror2}.

\begin{lemma} 
(\cite{logmirror2}, Lemma~2.15.)
\label{lem-local-unique}
For every $\bar x\in Z$, there exists a model neighborhood $V_\CC$ of $\bar x$
in $X_\CC$, so that for every Artinian $\CC\lfor t\rfor$-algebra $A$, any two
divisorial log deformations of $V_\CC$ over $\Spec A$ are isomorphic.
\footnote{The statement in \cite{logmirror2} only asserts the existence of some
\'etale neighborhood, but the proof in fact shows the stronger statement given
here.} 
\end{lemma}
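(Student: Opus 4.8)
The plan is to show that after replacing $V_\CC$ by a sufficiently small model neighborhood the divisorial log deformation functor $\shD_{V_\CC}$ of $V_\CC^\ls=(V_\CC,\M_{V_\CC})$ over $\CC\lfor t\rfor$ is the \emph{trivial} functor, from which the statement is immediate. Fix $\bar x\in Z$. By the definition of divisorial deformations (\cite{logmirror2}, Definition~2.7 and \S2.2) there is a distinguished affine toric variety $U=U_{\bar x}$ over $\Spec\CC[t]$, depending only on $\bar x$, such that every divisorial log deformation of $X_\CC$ over an Artinian $\CC\lfor t\rfor$-algebra $A$ admits, near $\bar x$, a model neighborhood strictly \'etale over $Y_A:=\Spec A\times_{\Spec\CC[t]}U$. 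Take $V_\CC\to X_\CC$ itself a model neighborhood, i.e.\ an affine scheme strictly \'etale over $Y_\CC:=\Spec\CC\times_{\Spec\CC[t]}U$, and shrink it (keeping it affine, keeping $Z\cap V_\CC$ affine, keeping it a model neighborhood) as much as the cohomological input below requires. The constant family $Y_A\times_{Y_\CC}V_\CC$ is always a divisorial log deformation of $V_\CC$, so $\shD_{V_\CC}(A)\neq\emptyset$ for all $A$; we must see it is a singleton. Re-running Schlessinger's criteria exactly as in Theorem~\ref{Thm: divisorial hull}, now for $V_\CC$, produces a pro-representable hull $(R_{\mathrm{loc}},\xi_{\mathrm{loc}})$ of $\shD_{V_\CC}$, whose relative Zariski cotangent space over $\CC\lfor t\rfor$ is dual, by \cite{logmirror2}, Theorem~2.11.2, to $t_{\shD_{V_\CC}}=H^1\big(V_\CC,\Theta_{V_\CC^\ls/\CC^\ls}\big)$. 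Thus if this group vanishes then $R_{\mathrm{loc}}=\CC\lfor t\rfor$, the hull map $\{*\}=\Hom_{\CC\lfor t\rfor}(\CC\lfor t\rfor,-)\to\shD_{V_\CC}$ is smooth and an isomorphism on tangent spaces, and an immediate induction on the length of $A$ along small extensions (using smoothness to get surjectivity of $\shD_{V_\CC}(A')\to\shD_{V_\CC}(A)$ and $\shD_{V_\CC}(\CC)=\{V_\CC^\ls\}$) forces $\shD_{V_\CC}(A)=\{*\}$ for every Artinian $A$.

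So everything is reduced to the vanishing $H^1\big(V_\CC,\Theta_{V_\CC^\ls/\CC^\ls}\big)=0$ for $V_\CC$ small enough, and this is the heart of the matter. Because the log structure of $V_\CC$ is only relatively coherent along $Z$, the sheaf $\Theta_{V_\CC^\ls/\CC^\ls}$ of relative log derivations is not coherent there, so Serre vanishing on the affine $V_\CC$ does not apply directly. Instead one uses the explicit local algebraic description of \emph{simple} singularities in \cite{logmirror2}, \S2.2: it exhibits $\Theta_{V_\CC^\ls/\CC^\ls}$ as built, via finitely many short exact sequences of $\O_{V_\CC}$-modules, out of coherent sheaves on $V_\CC$ (log derivations of the ambient toric model, an honest coherent log scheme) together with pushforwards $j_*\mathcal G$ along the closed immersion $j\colon Z\cap V_\CC\hookrightarrow V_\CC$ of sheaves $\mathcal G$ on $Z\cap V_\CC$ that are \emph{locally free} precisely by simplicity of the singularity — this local freeness, rather than a genuinely thickened/embedded sheaf, is exactly what encodes local rigidity. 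Shrinking $V_\CC$ so that in addition $Z\cap V_\CC$ is affine and $\mathcal G$ is free, one gets $H^{\ge1}(V_\CC,\text{coherent})=0$ and $H^{\ge1}(V_\CC,j_*\mathcal G)=H^{\ge1}(Z\cap V_\CC,\mathcal G)=0$, whence $H^1\big(V_\CC,\Theta_{V_\CC^\ls/\CC^\ls}\big)=0$ by the long exact sequences. I expect this identification of $\Theta_{V_\CC^\ls/\CC^\ls}$ with such an explicit extension — and the honest verification that it is the object computing $t_{\shD}$ — to be the main obstacle; it is also where simplicity is indispensable, since for non-simple singularities $\mathcal G$ acquires embedded contributions and the conclusion is false.

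Alternatively, and this is essentially the concrete route of \cite{logmirror2}, one can bypass the hull and argue by gluing: a divisorial log deformation $V_A$ of $V_\CC$ is by definition \'etale-locally strictly \'etale over $\Spec A\times_{\Spec\CC[t]}U_{\bar x}$, and since $U_{\bar x}$ depends only on $\bar x$, topological invariance of the \'etale site identifies each local piece of $V_A$, compatibly with its reduction over $\CC$, with the corresponding restriction of $Y_A$; the local identifications differ on overlaps by log automorphisms trivial modulo $\mathfrak m_A$, and inducting along small extensions the obstruction to gluing them into a global isomorphism $V_A\simeq Y_A\times_{Y_\CC}V_\CC$ lies in $H^1\big(V_\CC,\Theta_{V_\CC^\ls/\CC^\ls}\big)\otimes_\CC J\cong H^1\big(V_\CC,\Theta_{V_\CC^\ls/\CC^\ls}\big)$, which vanishes by the previous paragraph. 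Either way one concludes that every divisorial log deformation of $V_\CC$ over $A$ is isomorphic to the constant family $Y_A\times_{Y_\CC}V_\CC$, hence any two are mutually isomorphic, which is the assertion; and since $V_\CC$ may be taken to be any sufficiently small model neighborhood of $\bar x$ (the only requirement being that $Z\cap V_\CC$ be affine with $\mathcal G$ trivial), the proof yields the \'etale-local form claimed here rather than the mere existence of one such neighborhood.
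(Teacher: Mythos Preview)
The paper does not supply its own proof of this lemma: it is simply quoted from \cite{logmirror2}, Lemma~2.15, with a footnote remarking that the proof there actually yields the slightly sharper formulation. So there is no in-paper argument to compare against; one can only compare against the cited reference.

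Your ``alternative'' route in the final paragraph is essentially the argument of \cite{logmirror2}: induct along small extensions, use that $V_A$ is by definition \'etale-locally strict over $Y_A$, and observe that the obstruction to patching the local identifications into a global isomorphism with the constant family lies in $H^1(V_\CC,\Theta_{V_\CC^\ls/\CC^\ls})$, which vanishes for $V_\CC$ small enough by the explicit structure of $\Theta$ along simple singularities. That is the right idea, and your identification of the $H^1$-vanishing (and of simplicity as the place where it enters) is on target.

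Your first route, however, is circular as written. You invoke ``Schlessinger's criteria exactly as in Theorem~\ref{Thm: divisorial hull}'' to get a hull for $\shD_{V_\CC}$, but the verification of (H1) in that theorem goes through Lemma~\ref{lemma-prep-pushout}, whose proof in turn relies on Lemma~\ref{lem-local-unique} --- precisely the statement you are trying to establish. Concretely, to glue $V_{A_1}$ and $V_{A_2}$ into a divisorial log deformation over $A_1\times_{A_0}A_2$ one must produce \emph{compatible} local models along $Z\cap V_\CC$, and the mechanism the paper uses for that compatibility is the local uniqueness you are proving. You could try to salvage this by arguing (H1) directly for $V_\CC$ without appealing to local uniqueness, but at that point you are essentially carrying out the direct gluing argument anyway, so the detour through Schlessinger buys nothing. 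Drop the first paragraph and keep the direct approach.
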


A standard fact about \'etale maps (Remark 2.4 in \cite{logmirror2}) is the following:

\begin{lemma} 
\label{etale-equivalent-categories}
If $Y$ is a log scheme and $Y_0\subset Y$ a closed subscheme defined by a
nilpotent sheaf of ideals with restriction of the log structure from $Y$, then
the category of strict \'etale $Y$-schemes is equivalent to the category of
strict \'etale $Y_0$-schemes by means of $V\mapsto V\times_Y Y_0$.
\end{lemma}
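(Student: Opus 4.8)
The statement to prove is the last displayed lemma:

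\begin{lemma}
\label{etale-equivalent-categories-plan}
If $Y$ is a log scheme and $Y_0\subset Y$ a closed subscheme defined by a
nilpotent sheaf of ideals with restriction of the log structure from $Y$, then
the category of strict \'etale $Y$-schemes is equivalent to the category of
strict \'etale $Y_0$-schemes by means of $V\mapsto V\times_Y Y_0$.
\end{lemma}

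The plan is to reduce this log-geometric statement to the classical topological invariance of the small \'etale site and then check compatibility with log structures. First I would recall the underlying scheme-theoretic fact: since $Y_0\hookrightarrow Y$ is a nilpotent thickening (defined by a nilpotent ideal sheaf), the functor $V\mapsto V_0:=V\times_Y Y_0$ is an equivalence between the category of \'etale $Y$-schemes and the category of \'etale $Y_0$-schemes. This is the topological invariance of the \'etale site; I would cite \cite{EGA}\,IV, Th\'eor\`eme~18.1.2, or equivalently the Stacks Project tag \texttt{04DY}. Under this equivalence the structure morphism $V\to Y$ being a monomorphism/open immersion/affine etc.\ is detected on $V_0\to Y_0$, but more importantly the equivalence is compatible with fiber products over $Y$, so it preserves and reflects all properties needed below.

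The second step is to upgrade this to an equivalence of \emph{strict \'etale} $Y$-schemes, i.e.\ to track the log structures. Recall that for an \'etale morphism $f:V\to Y$ of schemes there is a canonical log structure on $V$ pulled back from $Y$, namely $f^*\shM_Y$, and endowing $V$ with this log structure makes $f$ a strict \'etale morphism of log schemes; conversely every strict \'etale $Y$-log-scheme arises this way, uniquely. The same holds over $Y_0$ with its restricted log structure $\shM_{Y_0}=i^*\shM_Y$ for $i:Y_0\hookrightarrow Y$. So the forgetful functor (strict \'etale log $Y$-schemes) $\to$ (\'etale $Y$-schemes), $(V,\shM_V)\mapsto V$, is an equivalence, and likewise over $Y_0$. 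Combining with the scheme-level equivalence $V\mapsto V_0$ from Step~1, it remains only to observe that the log structure transported is the correct one: if $V$ carries $\shM_V=f^*\shM_Y$, then $V_0=V\times_Y Y_0$ carries $(f|_{V_0})^*\shM_{Y_0}=(f|_{V_0})^*i^*\shM_Y=(i_V)^*f^*\shM_Y=(i_V)^*\shM_V$, which is precisely the restriction of $\shM_V$ along $V_0\hookrightarrow V$. Hence $V\mapsto V\times_Y Y_0$ (with the evident induced log structures) is well-defined on strict \'etale $Y$-log-schemes and lands in strict \'etale $Y_0$-log-schemes, and it is an equivalence because it is so after forgetting log structures and the log structures on both sides are determined by pullback.

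For full rigor I would spell out the inverse functor explicitly: given a strict \'etale $Y_0$-scheme $W_0\to Y_0$, Step~1 produces a unique (up to unique isomorphism) \'etale $Y$-scheme $W\to Y$ with $W\times_Y Y_0\cong W_0$ over $Y_0$, and we equip $W$ with $\shM_W=f^*\shM_Y$; one checks $W\times_Y Y_0$ with its induced log structure is $W_0$ as a log scheme, using strictness of $W_0\to Y_0$. Functoriality and the natural isomorphisms witnessing the equivalence all descend from Step~1 since the log data is rigid (no moduli). The main obstacle is essentially bookkeeping rather than a genuine difficulty: one must be careful that ``strict \'etale'' is used consistently (a morphism of log schemes that is \'etale on underlying schemes and strict), and that the restricted log structure on $Y_0$ is literally $i^*\shM_Y$ so that the pullback identities in Step~2 hold on the nose; there is no deformation-theoretic content here, only the topological invariance of the \'etale site plus the observation that strict \'etale log schemes over a base are equivalent to \'etale schemes over that base.
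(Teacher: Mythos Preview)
Your proof is correct and follows the standard argument. Note, however, that the paper does not actually prove this lemma: it is stated as ``A standard fact about \'etale maps (Remark~2.4 in \cite{logmirror2})'' and no proof is given. Your two-step reduction---first invoking the topological invariance of the \'etale site for the underlying schemes (EGA~IV, 18.1.2), then observing that strict \'etale log schemes over a base are equivalent to \'etale schemes over that base because the log structure is forced by pullback---is precisely the expected justification, and nothing is missing.
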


\begin{lemma} 
\label{lemma-prep-pushout}
Assume that $A_1\to A_0\la A_2$ are maps of Artinian $\CC\lfor t\rfor$-algebras and
$X_{A_1}\la X_{A_0}\to X_{A_2}$ maps of divisorial log deformations above these.
Given $\bar x\in Z$, there is the following commutative diagram with all squares
cartesian, rows local models at $\bar x$, left column the given maps of
deformation and the right column the maps induced via pullback by
$U\to\Spec\CC\lfor t\rfor$,
\[ 
\xymatrix@R=1.5em{
X_{A_1} & V_{A_1}\ar[l]\ar[r] &Y_{A_1}\\
X_{A_0}\ar[u]\ar[d] & V_{A_0}\ar[l]\ar[r]\ar[u]\ar[d] &Y_{A_0}\ar[u]\ar[d]\\
X_{A_2} & V_{A_2}\ar[l]\ar[r] &Y_{A_2}.
}
\]
\end{lemma}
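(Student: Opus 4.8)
The statement is essentially an exercise in combining the rigidity of the local models (Lemma~\ref{lem-local-unique}) with the equivalence of \'etale sites over nilpotent thickenings (Lemma~\ref{etale-equivalent-categories}). The plan is to start on the central fiber and propagate everything upward by strict \'etale descent. First I would choose a model neighborhood $V_\CC\to X_\CC$ of $\bar x$ as in Lemma~\ref{lem-local-unique}, so that $V_\CC$ comes with a strict \'etale map to the reduction mod $t$ of the toric model $Y_\CC = \Spec\CC\times_{\CC\lfor t\rfor} U$; here $U\to\Spec\CC\lfor t\rfor$ is the affine toric variety canonically determined by $\bar x$. Now for each index $j\in\{0,1,2\}$, pulling back $U$ along the structure map $\Spec A_j\to \Spec\CC\lfor t\rfor$ gives the toric local model $Y_{A_j} = \Spec A_j\times_{\CC\lfor t\rfor} U$, and these assemble into the right-hand vertical column of the diagram with all squares cartesian, simply because $\Spec A_1\to\Spec A_0\la\Spec A_2$ is a diagram of $\CC\lfor t\rfor$-algebras and base change is functorial and transitive.

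The middle column is produced by lifting $V_\CC$ through the thickenings $\Spec A_j$. Since the underlying topological space of $X_{A_j}$ equals that of $X_\CC$ and the ideal defining $X_\CC\subset X_{A_j}$ is nilpotent, Lemma~\ref{etale-equivalent-categories} gives, for each $j$, a unique strict \'etale $X_{A_j}$-scheme $V_{A_j}$ restricting to $V_\CC$ over the central fiber; moreover the maps $X_{A_0}\to X_{A_j}$ of deformations induce, again by the equivalence of \'etale sites (both source and target restrict compatibly to $V_\CC$ over the central fiber), canonical maps $V_{A_0}\to V_{A_j}$ making the left-hand squares cartesian. It remains to produce the horizontal maps $V_{A_j}\to Y_{A_j}$ and check the right-hand squares are cartesian. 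For this I would invoke the \emph{definition} of a divisorial log deformation: by hypothesis $X_{A_j}$ admits a model neighborhood at $\bar x$, i.e. a strict \'etale $\Spec A_j$-morphism to some $Y_{A_j}' = \Spec A_j\times_{\CC\lfor t\rfor} U'$; but the toric datum $U'$ is forced to be $U$ by the canonical dependence of the local model on $\bar x$ alone. Shrinking $V_{A_j}$ if necessary so that it lies in the overlap of this model neighborhood with the \'etale neighborhood constructed above (using that the reductions mod $t$ of both agree with $V_\CC$, and Lemma~\ref{etale-equivalent-categories} to match them over all of $\Spec A_j$), we get the required strict \'etale map $V_{A_j}\to Y_{A_j}$.

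The one point requiring genuine care — and the main obstacle — is \emph{compatibility of the horizontal maps with the vertical maps of the diagram}, i.e. that the two composites $V_{A_0}\to V_{A_j}\to Y_{A_j}$ and $V_{A_0}\to Y_{A_0}\to Y_{A_j}$ agree (for $j=1,2$), so that the right-hand squares commute and, being cartesian on the level of the toric models, force the middle-right squares to be cartesian too. Here is where Lemma~\ref{lem-local-unique} does the work: both composites are strict \'etale $\Spec A_0$-morphisms $V_{A_0}\to Y_{A_j}$ whose target, after base change to $\Spec A_0$, is the toric model $Y_{A_0}$; they agree over the central fiber (both reduce to the chosen map $V_\CC\to Y_\CC$), and any two divisorial log deformations of $V_\CC$ over $\Spec A_0$ — hence any two such strict \'etale charts into the toric model — are isomorphic, and in fact, chasing the uniqueness statement in the proof of \cite{logmirror2}, Lemma~2.15, the isomorphism can be chosen to be the identity on the central fiber, which by Lemma~\ref{etale-equivalent-categories} pins it down uniquely. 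Thus after possibly replacing $V_{A_0}$ by a common refinement and adjusting $V_{A_1},V_{A_2}$ by the induced isomorphisms, all squares in the diagram are cartesian, the rows are local models at $\bar x$, and the left and right columns are the asserted maps. This completes the construction.
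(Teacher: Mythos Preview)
Your approach is correct and matches the paper's: both proofs start from a model neighborhood $X_\CC\la V_\CC\to Y_\CC$ provided by Lemma~\ref{lem-local-unique}, use Lemma~\ref{etale-equivalent-categories} to lift it over each $X_{A_j}$, and then invoke the local rigidity of Lemma~\ref{lem-local-unique} to obtain uniqueness (up to unique isomorphism) and hence functoriality of the assignment $A\mapsto (X_A\la V_A\to Y_A)$, which immediately yields the commutative diagram with cartesian squares.

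The only difference is organizational. The paper applies Lemma~\ref{etale-equivalent-categories} directly to \emph{both} legs $V_\CC\to X_\CC$ and $V_\CC\to Y_\CC$ and then packages the result as ``the lifted model neighborhood is unique up to unique isomorphism, hence functorial''. You instead produce $V_{A_j}\to X_{A_j}$ first, then detour through the model neighborhood guaranteed by the definition of divisorial log deformation to obtain the map to $Y_{A_j}$, and talk about shrinking and common refinements. This detour is unnecessary: applying Lemma~\ref{etale-equivalent-categories} to $V_\CC\to Y_\CC$ gives a second lift of $V_\CC$ over $A_j$, and Lemma~\ref{lem-local-unique} identifies it with $V_{A_j}$ directly, with no shrinking. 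Your compatibility argument is fine but is exactly the content of ``unique up to unique isomorphism implies functoriality'', which the paper states in one sentence.
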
 

\begin{proof}
Let $X_\CC\la V_\CC\to Y_\CC$ be a model neighborhood of $\bar x\in Z$ in
$X_\CC$ provided by Lemma~\ref{lem-local-unique}. Then Lemma~\ref{etale-equivalent-categories}
implies that for any Artinian $\CC\lfor t\rfor$-algebra $A$ and divisorial log
deformation $X_A\in\shD(A)$ of $X_\CC$, there exists a model neighborhood
$X_A\la V_A\to Y_A$ restricting to $X_\CC\la V_\CC\to Y_\CC$. Moreover, this
model neighborhood is unique up to unique isomorphism. Thus the extension of
the given model neighborhood $X_\CC\la V_\CC\to Y_\CC$ to divisorial log
deformations of $X_\CC$ is functorial, which in particular gives the stated
commutative diagram.
\end{proof}

An important fact implied from the definition is that the log structure on $X_A$
has integral stalks even though it typically is not coherent. Recall that a
morphism $f:(X,\shM_X)\to (Y,\shM_Y)$ of log spaces with integral monoid stalks
is strict if and only the induced map $f^{-1}\overline\shM_Y\to \overline\shM_X$
is an isomorphism.

\begin{lemma}
\label{div-strict}
If $f:X_A\to X_{A'}$ is a map of divisorial log deformations over a
homomorphism of Artinian $\CC\lfor t\rfor$-algebras, then $f$ is strict.
\end{lemma}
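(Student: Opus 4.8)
The statement is local on $X_A$, so it suffices to check strictness of $f$ on a cover. Since both $X_A$ and $X_{A'}$ are divisorial log deformations of $X_\CC$, I would stratify $X_\CC$ into the log smooth locus $X_\CC\setminus Z$ and the ``bad'' locus $Z$, and treat each piece separately. Away from $Z$ the deformations are log smooth over the respective Artinian bases and the log structures are the canonical (toric, hence coherent) ones pulled back from $\Spec\CC\lfor t\rfor$ together with the normal-crossings structure of the central fiber; here $f$ being a morphism of deformations of the \emph{same} log smooth $X_\CC$ over a map of bases forces $f^{-1}\overline{\shM}_{X_{A'}}\to\overline{\shM}_{X_A}$ to be an isomorphism, because the ghost sheaf only sees the combinatorics of the central fiber (the strata and their codimensions), which is unchanged by the deformation. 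So the real content is at points $\bar x\in Z$.

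At $\bar x\in Z$ I would invoke Lemma~\ref{lemma-prep-pushout} (applied with $A_1=A$, $A_2=A'$ and $A_0$ either of them, or more simply the functoriality of model neighborhoods established in its proof via Lemma~\ref{lem-local-unique} and Lemma~\ref{etale-equivalent-categories}): there is a commutative diagram of strict \'etale morphisms
\[
\xymatrix@R=1.5em{
X_A & V_A\ar[l]\ar[r] & Y_A\\
X_{A'}\ar[u]^{f} & V_{A'}\ar[l]\ar[r]\ar[u] & Y_{A'}\ar[u]
}
\]
with both rows model neighborhoods at $\bar x$ and all squares cartesian, where $Y_A=\Spec A\times_{\CC\lfor t\rfor}U$ and $Y_{A'}=\Spec A'\times_{\CC\lfor t\rfor}U$ for the \emph{same} affine toric model $U$ (the toric datum at $\bar x$ depends only on $\bar x$, not on the deformation). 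The right-hand vertical map $Y_A\to Y_{A'}$ is the base change of the identity on $U$ along $\Spec A\to\Spec A'$, and the log structure on each $Y_\bullet$ is pulled back from the toric log structure on $U$; hence $Y_A\to Y_{A'}$ is visibly strict, since $\overline{\shM}_{Y_A}$ and $\overline{\shM}_{Y_{A'}}$ both equal the pullback of $\overline{\shM}_U$ and the map between them is the identity on that pullback. Strictness is stable under composition with strict maps and descends along strict \'etale covers (one uses that $\overline{\shM}$ of a strict \'etale $Y$-scheme is the pullback of $\overline{\shM}_Y$, Lemma~\ref{etale-equivalent-categories}): chasing the diagram, $V_A\to V_{A'}$ is strict because its composite to $Y_{A'}$ equals $V_A\to Y_A\to Y_{A'}$, a composite of strict maps, and $V_A\to V_{A'}\to Y_{A'}$ is strict; comparing ghost sheaves then gives that $V_A\to V_{A'}$ is strict, and since $V_A\to X_A$, $V_{A'}\to X_{A'}$ are strict \'etale and surjective onto neighborhoods of $\bar x$, strictness of $f$ follows at $\bar x$.

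The main obstacle is bookkeeping rather than depth: one must be careful that the log structures are only \emph{relatively coherent} (integral stalks but not globally coherent), so the usual criterion ``strict $\iff$ $f^{-1}\overline{\shM}_Y\xrightarrow{\sim}\overline{\shM}_X$'' must be applied stalkwise using integrality of the monoids (which is part of the definition of a divisorial log deformation, as recalled just before the lemma), and one must verify that the toric model $U$ at $\bar x$ genuinely does not vary with the deformation — this is exactly the uniqueness built into Lemma~\ref{lem-local-unique} together with the fact that the toric datum is ``uniquely determined by $\bar x$'' as stated in the setup. Once those points are in place the argument is a short diagram chase; I would write it up as: (i) reduce to a neighborhood of each point; (ii) dispose of $X_\CC\setminus Z$ directly; (iii) on $Z$, produce the cube of model neighborhoods and observe that the toric layer $Y_A\to Y_{A'}$ is strict; (iv) propagate strictness up the strict \'etale legs to conclude.
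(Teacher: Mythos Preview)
Your proof is correct, but it is far more elaborate than necessary. The paper's proof is two lines: since $X_\CC\to X_A$ and $X_\CC\to X_{A'}$ are strict (by definition of divisorial log deformation) and the underlying topological spaces of $X_\CC$, $X_A$, $X_{A'}$ all coincide (the base rings being Artinian), one has $\overline{\shM}_{X_A}=\overline{\shM}_{X_\CC}=\overline{\shM}_{X_{A'}}$ as sheaves on this common space; the compatibility $X_\CC\to X_A\xrightarrow{f}X_{A'}$ with the given strict map $X_\CC\to X_{A'}$ then forces $f^{-1}\overline{\shM}_{X_{A'}}\to\overline{\shM}_{X_A}$ to be an isomorphism, and integrality of stalks finishes.

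The point you may have missed is that the reasoning you give for the log smooth locus in step~(ii) --- ``the ghost sheaf only sees the combinatorics of the central fiber'' --- is already the whole argument and applies verbatim at points of $Z$ as well. Nothing in that step uses coherence of the log structure or log smoothness; it only uses strictness of the closed embeddings $X_\CC\hookrightarrow X_A$, $X_\CC\hookrightarrow X_{A'}$, which holds globally. Your detour through model neighborhoods and Lemma~\ref{lemma-prep-pushout} at $Z$ is valid (the functoriality you cite does give the compatible cube, and strictness propagates through strict \'etale maps as you say), but it reproves in a roundabout way exactly the ghost-sheaf identification that the direct argument gives for free. The payoff of the paper's approach is brevity; the payoff of yours is perhaps that it makes explicit why the incoherence of the log structure along $Z$ is harmless, though the paper handles this simply by noting integrality of stalks in the sentence preceding the lemma.
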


\begin{proof}
By strictness of $X_\CC\to X_A$, $X_\CC\to X_{A'}$, the map
$f^{-1}\ol\M_{X_{A'}}\to \ol\M_{X_A}$ induced by $f$ is an isomorphism. The statement now
follows by integrality of stalks.
\end{proof}

\begin{lemma} \label{pushing-out-etale-maps} 
Assume we have a commutative diagram of Noetherian rings
\[
\begin{CD}
B_1@>>>B_0@<b<< B_2\\
@VVV@VVV@VVV\\
C_1@>>> C_0@<c<< C_2
\end{CD}
\]
with $b,c$ surjective with nilpotent kernel, the squares co-cartesian and all
vertical maps flat and unramified, then the natural map $f:B_1\times_{B_0} B_2\to
C_1\times_{C_0} C_2$ is also flat and unramified.
\end{lemma}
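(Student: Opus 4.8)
The plan is to reduce the statement to the corresponding statements for flatness and unramifiedness separately, exploiting the fact that the fibre product $B_1\times_{B_0}B_2$ sits in a short exact sequence of $B_1\times_{B_0}B_2$-modules that is compatible with the analogous sequence for $C_1\times_{C_0}C_2$. Write $B=B_1\times_{B_0}B_2$ and $C=C_1\times_{C_0}C_2$. Since $b$ and $c$ are surjective with nilpotent kernel and the squares are co-cartesian, one has $C_1=C\otimes_B B_1$, $C_2=C\otimes_B B_2$ and $C_0=C\otimes_B B_0$ (this is where co-cartesianness is used: $C=B_1\times_{B_0}B_2$ modulo the ideal $J=\ker b\times_{0}\ker c$, and the projections identify $C\otimes_B B_i$ with $B_i/(\text{image of }J)=C_i$). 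So the whole lower square is the base change of the whole upper square along $B\to C$.

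First I would treat flatness. There is a canonical exact sequence of $B$-modules
\begin{equation}
0\lra B\lra B_1\oplus B_2\lra B_0\lra 0,
\end{equation}
where the first map is the diagonal and the second is the difference of the structure maps; exactness on the right holds because $B_2\to B_0$ is surjective (indeed $b$ is), and exactness on the left is the definition of the fibre product. The same sequence holds with $C$ in place of $B$. Tensoring the $B$-sequence with an arbitrary $B$-module $M$ and chasing the long $\operatorname{Tor}$-sequence, together with the hypothesis that $B_1,B_2,B_0$ are flat over the respective rows — wait, what we actually need is flatness of the \emph{vertical} maps $B_i\to C_i$. So instead I tensor the $B$-sequence $0\to B\to B_1\oplus B_2\to B_0\to 0$ over $\ZZ$ (or rather: I use that $C_i=B_i\otimes_B C$ are flat $B_i$-modules, hence... ). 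The cleaner route: to check $B\to C$ is flat it suffices to show $\operatorname{Tor}^B_1(C,M)=0$ for all $B$-modules $M$; using $0\to B\to B_1\oplus B_2\to B_0\to 0$ and that $C\otimes_B(-)$ applied to it gives $0\to C\to C_1\oplus C_2\to C_0\to 0$ (exact, by the same surjectivity argument applied to $c$), the snake/Tor long exact sequence forces $\operatorname{Tor}^B_1(C,B_0)$ to inject into $\operatorname{Tor}^B_1(C,B_1)\oplus\operatorname{Tor}^B_1(C,B_2)$, and the latter vanish because $C_i=B_i\otimes_B C$ is flat over $B_i$ while $B_i$ is... Here I would instead invoke the standard local criterion: a homomorphism is flat iff it becomes flat after base change to the reduced ring, or more directly use that $C\to C_i$ and $B\to B_i$ are ``Milnor patching'' data, and flatness of $B\to C$ follows from flatness of $B_i\to C_i$ by the gluing lemma for flatness over a Milnor square (Ferrand, Milnor patching; cf. \cite{Milnor} K-theory, \S2, or the Stacks Project tag on ``glueing flat modules along a Milnor square''). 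I expect this citation-level step to be the main obstacle, so I would be prepared to give the Tor-computation in full: the key point is that $\operatorname{Tor}^B_i(C,-)$ on the three-term sequence and on any $B$-module $M$, combined with $C\otimes_B B_j=C_j$ and the exactness of the $C$-version of the sequence, shows $\operatorname{Tor}^B_i(C,M)=0$ for $i>0$.

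Next, unramifiedness. Since $B\to C$ is surjective with nilpotent kernel (it is $B\to B/J$ with $J$ nilpotent, as $J\subseteq \ker b\oplus\ker c$ up to the fibre-product identification and both kernels are nilpotent), unramifiedness of $B\to C$ is automatic \emph{if} one only wants it as a ring map — but that is not quite the claim; the claim is that the \emph{composite} $B_1\times_{B_0}B_2\to C_1\times_{C_0}C_2$ is unramified, i.e. $\Omega_{C/B}=0$. By the conormal/Jacobi sequence it suffices that $\Omega_{C_i/B_i}=0$ propagates: the module of differentials of a Milnor-square ring satisfies $\Omega_{C/B}\otimes_C C_i=\Omega_{C_i/B_i}$ because $\Omega$ commutes with the base change $B\to B_i$ and $C\to C_i$ — more precisely $\Omega_{C/B}\otimes_B B_i=\Omega_{C\otimes_B B_i/B_i}=\Omega_{C_i/B_i}=0$ for $i=1,2$, and $\Omega_{C/B}\otimes_B B_0=\Omega_{C_0/B_0}=0$. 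Now $\Omega_{C/B}$ is a finitely generated $C$-module (since $B\to C$ is of finite type, being surjective), and a finitely generated module over the Milnor square $C=C_1\times_{C_0}C_2$ that dies after tensoring to $C_1$ and $C_2$ must itself be zero: this is the faithfulness of the pullback functor along $\operatorname{Spec} C_1\sqcup\operatorname{Spec}C_2\to\operatorname{Spec}C$ on the relevant module categories, which holds because $\ker(C\to C_1\times C_2)$ is the nilpotent ideal $\ker c\cdot\ker(\text{other})$ and a nilpotent-ideal-supported finitely generated module vanishing modulo that ideal vanishes. Hence $\Omega_{C/B}=0$, i.e. $f$ is unramified. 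Combining with the flatness step, $f$ is flat and unramified, which is the assertion. Throughout, the one genuinely delicate point to get right is the bookkeeping identifying the lower square as the base change of the upper one along $B\to C$ — i.e. verifying $C_i=B_i\otimes_B C$ from co-cartesianness plus surjectivity of $b,c$ — and then everything reduces to the Milnor-patching statements for flat modules and for coherent modules, which I would either cite or prove via the three-term exact sequence $0\to B\to B_1\oplus B_2\to B_0\to 0$ as sketched.
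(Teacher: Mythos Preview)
Your approach differs from the paper's, and the flatness step has a real gap. The paper's proof localizes (flat and unramified being stalk-local), uses the structure of \'etale maps of local rings to write $C_i = B_i[T]/(P_i)$ with $P_i$ monic with simple roots, observes that co-cartesianness makes $P_1, P_2$ have common image $P_0$ in $B_0[T]$ and hence glue to a monic $P$ over $B = B_1 \times_{B_0} B_2$, checks $P$ has simple roots (a multiple root would force one for $P_1$, since $\operatorname{Spec} B \to \operatorname{Spec} B_1$ is a bijection by surjectivity of $b$), and verifies $C_1 \times_{C_0} C_2 = B[T]/(P)$ directly. No Tor, no Milnor patching.

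Your unramified half can be fixed and is actually simpler than you present it: the identification $C \otimes_B B_1 = C_1$ from the co-cartesian right square and surjectivity of $b$ is correct, whence $\Omega_{C/B} \otimes_C C_1 = \Omega_{C_1/B_1} = 0$, and nilpotence of $\ker(C \to C_1) = 0 \times I_c$ already forces $\Omega_{C/B} = 0$ with no finiteness hypothesis. (Your claim that $B \to C$ is surjective is false --- take $B_i = B_0 = \CC$, $C_i = C_0 = \CC \times \CC$ --- so your stated reason for $\Omega_{C/B}$ being finitely generated collapses; fortunately you do not need it. The companion claim $C_2 = C \otimes_B B_2$ is also not obvious, since $B \to B_2$ need not be surjective.) The genuine problem is flatness. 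Your Tor sketch does not establish $\operatorname{Tor}^B_1(C, M) = 0$ for general $M$; the three-term sequence only relates the groups $\operatorname{Tor}^B_*(C, B_j)$ among themselves, and you abandon the computation mid-sentence. The fallback to ``Milnor patching for flat modules'' is not a clean citation either: Milnor's and Ferrand's results are for finitely generated projective modules, and \'etale algebras are not module-finite in general (e.g.\ $\CC \to \CC[t,t^{-1}]$). The local flatness criterion over the nilpotent ideal $0 \times I_b \subset B$ would reduce you to showing $\operatorname{Tor}^B_1(C, B_1) = 0$, but that is precisely the computation you have not supplied. After the paper's localization this difficulty evaporates, because \'etale maps of local rings are finite, whence everything reduces to an elementary polynomial identity.
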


\begin{proof} 
The question is local, so we can assume all rings local and $B_i\to C_i$ are
standard, that is, $C_i=B_i[T]/(P_i)$ for $P_i\in B_i[T]$ monic with
simple roots. By co-cartesianness, we may assume $P_0$ is the image of $P_1,P_2$
under $B_i[T]\to B_0[T]$. Thus $P_1, P_2$ define a polynomial $P\in
(B_1\times_{B_0} B_2)[T]$, which is clearly monic. Moreover, $P$ also
has simple roots because any double root would imply a double root also for all
the other $P_i$ using that $\Spec (B_1\times_{B_0} B_2)\to\Spec B_1$ is
bijective by surjectivity of $b:B_2\to B_0$. Finally, we find that
$C_1\times_{C_0} C_2=(B_1\times_{B_0} B_2)[T]/(P)$, which implies the assertion.
\end{proof}

Let $A_1\to A_0\la A_2$ be homomorphisms of Artinian $\CC\lfor t\rfor$-algebras.
Consider the natural map
\begin{equation}
\label{eq-DD-map}
\shD(A_1\times_{A_0}A_2)\lra \shD(A_1)\times_{\shD(A_0)}\shD(A_2).
\end{equation}
The Schlessinger criteria that provide a hull are the following
(\cite{Schlessinger}, Theorem~2.11).
\begin{itemize}
\item[(H1)] The map \eqref{eq-DD-map} is surjective whenever $A_2\to A_0$ is surjective.
\item[(H2)] The map \eqref{eq-DD-map} is bijective whenever $A_0=\CC$ and
$A_2=\CC[\eps]:=\CC[E]/E^2$.
\item[(H3)] $\dim_k (t_{\shD})<\infty$ where $t_{\shD}:=\shD(\CC[\eps])$.
\end{itemize}

\begin{theorem}
\label{Thm: divisorial hull}
The divisorial log deformation functor $\shD$ has a hull.
\end{theorem}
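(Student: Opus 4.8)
The strategy is to verify the three Schlessinger criteria (H1), (H2), (H3) using the local rigidity of divisorial deformations along $Z$ together with standard gluing of \'etale-local models. The engine behind everything is the existence of fibered sums (pushouts) in the category of analytic/affine spaces that are compatible with the \'etale model neighborhoods, which we set up in Lemmas~\ref{lemma-prep-pushout}--\ref{pushing-out-etale-maps}.

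First I would treat (H1) and (H2) simultaneously, since both concern the map \eqref{eq-DD-map}. Given $A_1\to A_0\leftarrow A_2$ with $A_2\to A_0$ surjective with nilpotent kernel (for (H2), $A_0=\CC$, $A_2=\CC[\eps]$), set $A=A_1\times_{A_0}A_2$; by Schlessinger's Lemma~1.1 the maps $A\to A_i$ are surjective with nilpotent kernel, and $A$ is again an Artinian $\CC\lfor t\rfor$-algebra. For \textbf{surjectivity} (H1): start with $X_{A_1}$, $X_{A_2}$ inducing the same $X_{A_0}$ (up to a chosen isomorphism). Away from $Z$ the deformations are log smooth, hence by the usual log-smooth gluing one produces $X_A$ over $A$ by the pushout of structure sheaves, using Lemma~\ref{etale-equivalent-categories} to handle the log structures (all maps in sight are strict by Lemma~\ref{div-strict}). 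Along $Z$, Lemma~\ref{lemma-prep-pushout} produces compatible model neighborhoods $V_{A_i}\to Y_{A_i}$, and the pushout $Y_{A_1}\times_{Y_{A_0}}Y_{A_2}$ is just $\Spec A\times_{\CC\lfor t\rfor}U$ since the toric model $U$ is fixed; the pushout $V_{A_1}\times_{V_{A_0}}V_{A_2}$ is \'etale over both (Lemma~\ref{pushing-out-etale-maps}), hence serves as a model neighborhood for the glued space, so $X_A$ is a divisorial log deformation and maps to the given pair. \textbf{Injectivity} for (H2): an isomorphism $X_{A_1}\simeq X'_{A_1}$ of deformations over $A_1$ and one over $A_2=\CC[\eps]$ agreeing over $A_0=\CC$ glue to an isomorphism over $A$, because an isomorphism of divisorial log deformations is strict (Lemma~\ref{div-strict}) and, by Lemma~\ref{lem-local-unique}, locally along $Z$ any two deformations are isomorphic, so there is no obstruction to patching the isomorphisms; one checks the glued data form a genuine isomorphism. (The full strength of bijectivity is only needed when $A_0=\CC$, $A_2=\CC[\eps]$, which keeps the bookkeeping light.)

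For (H3), I would identify $t_\shD=\shD(\CC[\eps])$ with a cohomology group and invoke finiteness from properness. Concretely, a divisorial log deformation over $\CC[\eps]$ is classified, via the \v Cech description on a cover by model neighborhoods $\{V_\tau\}$ (as in Appendix~\ref{appendix-C}), by $H^1$ of the sheaf of log derivations $\Theta_{X_\CC^\dagger/\CC^\dagger}$, where $\CC^\dagger$ is the standard log point: indeed locally the deformation is unique (Lemma~\ref{lem-local-unique}), so the only freedom is in the gluing automorphisms, giving a $1$-cocycle with values in infinitesimal log automorphisms, i.e.\ in $\Theta_{X_\CC^\dagger/\CC^\dagger}$. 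This is exactly the identification $t_\shD=H^1(X_\CC,\Theta_{X_\CC^\dagger/\CC^\dagger})$ recorded (for the relative version) in \cite{logmirror2}, Theorem~2.11,2. Since $X_\CC$ is proper (it is a proper — indeed projective in the cases of interest — scheme over $\CC$) and $\Theta_{X_\CC^\dagger/\CC^\dagger}$ is a coherent sheaf, this cohomology group is finite-dimensional, giving (H3). Having (H1), (H2), (H3), Schlessinger's theorem (\cite{Schlessinger}, Theorem~2.11) yields a pro-representable hull for $\shD$.

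\textbf{Main obstacle.} The delicate point is not the abstract Schlessinger machinery but making the local-to-global gluing honest: one must check that the \'etale model neighborhoods from Lemma~\ref{lemma-prep-pushout} can be chosen coherently on overlaps so that the pushouts $V_{A_1}\times_{V_{A_0}}V_{A_2}$ glue to a global space, and that the resulting object genuinely satisfies the definition of a divisorial log deformation (flatness, log smoothness off $Z$, correct local models along $Z$) — in particular that the log structures match up under the \'etale-equivalence of Lemma~\ref{etale-equivalent-categories}. Flatness and unramifiedness of the glued \'etale maps is handled by Lemma~\ref{pushing-out-etale-maps}; the remaining care is in tracking strictness of all comparison maps (Lemma~\ref{div-strict}) so that the log-theoretic data is determined by the underlying schematic data plus the fixed ghost sheaf $\ol\shM_{X_\CC}$. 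Once this is set up cleanly, surjectivity in (H1) and bijectivity in (H2) follow formally, and (H3) is immediate from properness.
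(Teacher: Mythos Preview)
Your plan is essentially the paper's proof: verify Schlessinger's (H1)--(H3), building the pushout $X_A$ from $\shN=\shM_1\times_{\shM_0}\shM_2$ and using Lemmas~\ref{lemma-prep-pushout} and~\ref{pushing-out-etale-maps} to supply model neighborhoods along $Z$, while (H3) is the cited result from \cite{logmirror2}. The one place where the paper is sharper than your sketch is (H2): rather than ``gluing isomorphisms $\phi_1,\phi_2$'', the paper observes that the pushout $X_A$ admits a \emph{canonical} map $f:(X_A,\shN)\to(X_A',\shN')$ to any competitor and then shows $f$ is an isomorphism locally at each $\bar x\in Z$ by combining the local uniqueness of Lemma~\ref{lem-local-unique} with the \emph{uniqueness} part of the \'etale equivalence in Lemma~\ref{etale-equivalent-categories} (so the local isomorphism $V_A'\to V_A$ necessarily commutes with $f$); your phrase ``no obstruction to patching the isomorphisms'' hides exactly this step, which is where one needs that $X_A'$ itself is the pushout of its restrictions.
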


\begin{proof}
The last criterion (H3) is proved in \cite{logmirror2},
Theorem~2.11,(2). It remains to verify (H1) and (H2). We begin with (H1). Let
$A_2\to A_0$ be surjective. Set $A:=A_1\times_{A_0}A_2$ and note this is
naturally an Artinian $\CC\lfor t\rfor$-algebra. Let $(X_i,\shM_i)\to
\Spec{A_i}$ be divisorial log deformations lifting the maps $A_0\to A_i$. Just
as in the proof of (H1) for the log smooth deformation functor in \cite{fkato},
we obtain a glued log space $(X_A,\shN)$ via $\shN:=\shM_1 \times_{\shM_0}
\shM_2 \to \shO_{X_1}\times_{\shO_{X_0}}\shO_{X_2} =:\shO_{X_A}$ with log map to
$(\Spec A,\NN\times A^\times)$ compatible with restrictions to $X_0,X_1,X_2$. In
view of Lemma~\ref{div-strict}, we have $\overline\shM_1=\overline\shM_0=
\overline\shM_2=:\overline\shM$ and there is a natural map
$\alpha:\overline\shN\to \overline\shM_1\times_{\overline\shM_0} \overline\shM_2
= \overline\shM$ that we claim is an isomorphism. Indeed,
since $\ol \shM_2\to\ol\shM_0$ is surjective, $\alpha$ is easily seen to be
surjective. Now assume $(m_1,m_2),(m_1',m_2')\in\ol\shN$ map to the same element
under $\alpha$. Then $m_1=\eps_1m_1', m_2=\eps_2m_2'$ for $\eps_i\in
\shM_i^\times$. The cancellation law in $\shM_0$ gives that $\eps_1$ and
$\eps_2$ map to the same element in $\shM_0^\times$, hence glue to an element of $\shN^\times$. Thus $(m_1,m_2)$ and $(m_1',m_2')$ map to the same element in $\ol\shN$, proving injectivity of $\alpha$. By the same argument as in Lemma~\ref{div-strict}, we now
know that $X_i\to X_A$ are strict.

Away from the incoherent locus $Z$, it was argued in \cite{fkato} that
$(X_A,\shN)$ is a log smooth lifting of $X_0$. It remains to show the existence
of local models along $Z$ (which then also implies the flatness along $Z$). Let
$\bar x\in Z$ be a geometric point. Lemma~\ref{lemma-prep-pushout} provides a
diagram of local models and we use the push-out for each row to obtain the
following commutative diagram
\[ 
\resizebox{11.4cm}{!}{
\xymatrix@C=.6em{
&&&&&V_{0}\ar[ld] \ar[rd] \ar@/_0.2pc/[dllll] \ar@/^0.2pc/[drrrr]\\
&X_{0}\ar[ld] \ar[rd]&&&V_{1}\ar[rd]|(.40)\hole\ar|(.55)\hole@/_0.2pc/[dllll]\ar|(.24)
\hole|(.48)\hole@/^0.2pc/[drrrr]&&V_{2}\ar[ld]\ar@/_0.2pc/[dllll]\ar@/^0.2pc/[drrrr]
&&&Y_{0}\ar|(.5)\hole[ld] \ar[rd]\ar@<2pt>[ddrrrr]\\
X_{1}\ar[rd]&&X_{2}\ar[ld]& & &V_{A}\ar@{-->}@/_0.2pc/[dllll]\ar@{-->}@/^0.2pc/[drrrr]&&&
Y_{1}\ar[rd]\ar|(.4)\hole[drrrrr]&&Y_{2}\ar[ld]\ar[drrr]\\
&X_{A}&&&&&&&&Y_{A}\ar[rrrr]&& && U.
}}
\]
The dashed maps are \'etale by Lemma~\ref{pushing-out-etale-maps} and $Y_A$
agrees with $\Spec A\times_{\Spec\CC\lfor t\rfor}U$. The strictness of all
vertical maps follows from the strictness of $X_i\to X_A$ proved above.
Lemma~\ref{div-strict} then also gives strictness of the dashed maps, using
that $X_1\to X_A$ is a homeomorphism on underlying spaces. We now have
obtained local models for $X_A$, so $X_A$ is a divisorial log deformation of
$X_\CC$ that maps to $(X_1,X_2)$ under the map in \eqref{eq-DD-map}. Thus this
map is surjective, finishing the proof of (H1).

Finally we turn to (H2), for which only injectivity is left to be shown. Let $A_0=\CC$
and $A_2=\CC[\eps]$. Using the same reasoning as in \cite{fkato}, Proof of (H2),
it suffices to prove the following assertion (Lemma 9.2 in \cite{fkato}).

If $(X_A',\shN')\to (\Spec A_i,\NN\times A_i^\times)$ is a divisorial log
deformation that fits in a commutative square
\[
\begin{CD}
(X_{1},\shM_1)@>>> (X_A',\shN')\\
@AAA @AAA\\
(X_{0},\shM_0)@>>> (X_{2},\shM_2)
\end{CD}
\]
so that the restriction maps to $(X_{i},\shM_i)$ for $i=1,2$ induce
isomorphisms, then the natural map $f:(X_A,\shN)\to (X_A',\shN')$ is an isomorphism. The proof in
\cite{fkato} works for us away from $Z$, so it remains to prove $f$ is an
isomorphism along $Z$. Let $\bar x\in Z$ be a point and let $V'_{A}\to
X_A'$ be the strict \'etale neighborhood of $\bar x$ obtained from
the neighborhood $V_\CC$ of $\bar x$ in $X_\CC$ via
Lemma~\ref{etale-equivalent-categories}. Then Lemma~\ref{lem-local-unique}
provides a $\Spec A$-isomorphism $V'_{A}\stackrel{\sim}\to V_A$. Since
the restrictions to $X_\CC\la V_\CC$ are compatible isomorphisms,
Lemma~\ref{etale-equivalent-categories} shows this isomorphism commutes with
$f$. In particular, $f$ is an isomorphism at $\bar x$, completing the proof.
\end{proof}

%
%

\section{Isomorphism of affine and algebraic \texorpdfstring{$H^1(\Theta)$}{H1(Theta)}}
\label{appendix-C}
Let $(B,\P,\varphi)$ be a simple tropical manifold and $x\in
\Spec(\CC[H^1(B,i_*\check\Lambda)^*])$ a closed point. Let $X_x$ denote the fiber of
the canonical family above it. In particular $(B,\P)$ is the \emph{intersection
complex} of $X_0(B,\P)$ and also of $X_x$. Occurrences of $\tau,\sigma$ with
various indices below will always refer to cells in $\P$. Inclusions of closed
strata are covariant: $\tau_0\subset\tau_1\Rightarrow X_{\tau_0}\subset
X_{\tau_1}$. Note that, inconveniently, in order to parse all upcoming
references to \cite{logmirror2}, a mental translation to the \emph{dual
intersection complex} as used in \cite{logmirror2} must be made. The translation is
straightforward, but nonetheless potentially confusing. For $\sigma \in \P$, let
$V_\sigma$ denote the standard open set of $X_x$ that is the open star of the
dense torus of the stratum $X_\sigma$, i.e. the disjoint union of the dense
torus orbits of all $X_{\sigma'}$ for $\sigma'$ containing $\sigma$. Note the
contravariance: $V_{\sigma_1}\subset V_{\sigma_0}$ for ${\sigma_0}\subset
{\sigma_1}$. Refine the partial order $\subseteq$ of $\P$ to a total order $\le$
so that for any sheaf $\shF$ on $X_x$ we obtain a \v{C}ech complex $\check
C^j(\{V_\sigma\}_\sigma,\shF)=\bigoplus_{\sigma_0<\ldots<\sigma_j}\Gamma(V_{\sigma_0}\cap
\ldots\cap V_{\sigma_j},\shF)$ with the usual \v{C}ech differential $\check
C^j\to \check C^{j+1}$. A decoration with $\dagger$ refers to the space with log
structure (given by $t=0$). Following \cite{logmirror2}, let $j:X_x\setminus
Z_x\hra X_x$ denote the open inclusion of the locus where the log structure is
coherent and then we write short
\[
\Omega^r:=j_*\Omega^r_{X_x^\dagger/x^\dagger},\qquad \Theta
:=j_*\Theta_{X_x^\dagger/x^\dagger}.
\]
The main purpose of this section is to prove the following proposition. For the statement, recall that $W_\tau\subset B$ denotes the open set given by the disjoint union of the
relative interiors of all cells in the barycentric subdivision of $\P$ that
contain the barycenter of $\tau$. 

\begin{proposition} 
\label{open-cover-is-iso}
We have a natural isomorphism of \v Cech cohomologies
\[
\textstyle
H^1\big((W_\tau)_\tau ,\iota_*\bigwedge^{n-1}\Lambda\otimes\CC\big)\lra
\check H^1(\{V_\tau\}_\tau,\Omega^{n-1}).
\]
Moreover, if $B$ is orientable, using the global volume forms
$\iota_*\bigwedge^{n}\Lambda\simeq\ul\ZZ$ and $\Omega^{n}\simeq\shO_{X_x}$, this
isomorphism can be rewritten as
\[
\check H^1\big((W_\tau)_\tau,\iota_*\check\Lambda\otimes\CC\big)
\lra \check H^1(\{V_\tau\}_\tau,\Theta).
\]
Explicitly, the image of a cocycle $\big(n_{\omega\tau}\big)_{\omega,\tau}$ with
$n_{\omega\tau}\in \Gamma(W_\omega\cap W_\tau, \iota_*\check\Lambda\otimes\CC)$
is the cocycle $\big(\partial_{n_{\omega\tau}} \big)_{\omega\tau}$ with
$\partial_{n_{\omega\tau}}\in\Gamma(V_\omega\cap V_\tau,\Theta)$ the logarithmic
vector field defined by $n_{\omega\tau}$.
\end{proposition}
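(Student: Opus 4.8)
The plan is to prove the isomorphism by comparing two \v Cech complexes computing the same cohomology, using the open covers $\{W_\tau\}_\tau$ of $B$ and $\{V_\tau\}_\tau$ of $X_x$, which are compatible in the sense that $V_\tau$ is (a thickening of) the part of $X_x$ lying over $W_\tau$ under the momentum/retraction map to $B$. First I would set up the local building block: for a single cell $\tau$, identify $\Gamma(V_\tau,\Omega^{n-1})$ (the push-forward from the coherent locus) with the affine-geometric data $\Gamma(W_\tau, \iota_*\bigwedge^{n-1}\Lambda\otimes\CC)$. This is the heart of the matter and rests on the explicit local structure of simple toric log Calabi--Yau spaces: near a stratum $X_\tau$, the log de Rham complex $\Omega^\bullet_{X_x^\dagger/x^\dagger}$ is, away from the incoherent locus $Z_x$, the relative logarithmic de Rham complex of a toric pair, whose sections of $\Omega^{n-1}$ (equivalently $\Omega^1$ after using the volume form $\Omega^n\simeq\O_{X_x}$, when $B$ is orientable) are computed by invariant log one-forms, i.e.\ by the lattice $\check\Lambda$ of integral cotangent vectors on the corresponding chart of $B$. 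The passage from $\check\Lambda$-valued data to $\Lambda$-valued data and back is exactly the trace/volume-form identification $\iota_*\bigwedge^n\Lambda\simeq\ul\ZZ$ already invoked in the statement, which contracts $\bigwedge^{n-1}\Lambda$ with $\bigwedge^n\Lambda^\vee$ to give $\check\Lambda$. I would cite \cite{logmirror2}, \S1 and the local description of $\Omega^r$, $\Theta$ there, together with simplicity, for the precise local computation, and the explicit description of $\partial_{n}$ as the logarithmic vector field attached to a cotangent vector $n$.

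Second, I would check that these local identifications are compatible with restriction maps: for $\omega\subset\tau$ one has $W_\tau\subset W_\omega$ and $V_\tau\subset V_\omega$, and the inclusion-induced restriction on sections of $\Omega^{n-1}$ corresponds under the above identification to the restriction of $\iota_*\bigwedge^{n-1}\Lambda\otimes\CC$. (Here one uses that on the overlap $W_\omega\cap W_\tau$ the affine structure, hence the local system $\Lambda$, is the one coming from the larger chart, and that the monomial coordinates on $V_\omega$ restrict to those on $V_\tau$ up to the scaling by gluing data $s_0,\sigma_0$, which is an invertible scalar and therefore does not affect invariant log forms.) Granting this, the assignment $\tau\mapsto\big(\Gamma(W_\tau,\iota_*\bigwedge^{n-1}\Lambda\otimes\CC)\xrightarrow{\ \sim\ }\Gamma(V_\tau,\Omega^{n-1})\big)$, and more generally on all finite intersections $W_{\sigma_0}\cap\dots\cap W_{\sigma_j}$ versus $V_{\sigma_0}\cap\dots\cap V_{\sigma_j}$, is an isomorphism of \v Cech cochain complexes $\check C^\bullet\big((W_\tau)_\tau,\iota_*\bigwedge^{n-1}\Lambda\otimes\CC\big)\to\check C^\bullet(\{V_\tau\}_\tau,\Omega^{n-1})$ commuting with the \v Cech differentials (the differentials being alternating sums of restriction maps, which match by the previous step). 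Passing to $H^1$ gives the first displayed isomorphism, and the orientable reformulation follows by applying the global isomorphisms $\iota_*\bigwedge^n\Lambda\simeq\ul\ZZ$ and $\Omega^n\simeq\O_{X_x}$ fibrewise, which turn $\bigwedge^{n-1}\Lambda$ into $\check\Lambda$ and $\Omega^{n-1}$ into $\Theta$, compatibly on overlaps; the explicit cocycle formula $n_{\omega\tau}\mapsto\partial_{n_{\omega\tau}}$ is then just the chart-level statement from the first step.

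The main obstacle I expect is the local identification in the first step: one must be careful that $\Omega^{n-1}=j_*\Omega^{n-1}_{X_x^\dagger/x^\dagger}$, with the push-forward from the coherent locus $X_x\setminus Z_x$, is still computed by \emph{globally defined} invariant log forms on each $V_\tau$, i.e.\ that the push-forward does not introduce extra sections beyond those predicted by $\iota_*\bigwedge^{n-1}\Lambda$ on $W_\tau$. This is precisely where simplicity of $(B,\P)$ enters (it controls the codimension-two incoherent locus $Z_x$ so that Hartogs-type extension across $Z_x$ holds and $j_*$ behaves well), and I would lean on \cite{logmirror2}, Proposition~2.2 and the cohomological computations around Theorems~3.22 and~4.2 there, rather than redo the local toric bookkeeping. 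A secondary, more bookkeeping-level point is to verify that the scalars coming from the fixed normalized gluing data $(s_0,\sigma_0)$ genuinely act only by invertible constants on each chart and cancel in the comparison of restriction maps, so that the chosen closed point $x$ plays no role beyond fixing the target space $X_x$; this is routine but must be said.
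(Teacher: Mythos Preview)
Your proposal has a genuine gap in the first step, and it is not a matter of bookkeeping. You propose to identify $\Gamma(V_\tau,\Omega^{n-1})$ with $\Gamma(W_\tau,\iota_*\bigwedge^{n-1}\Lambda\otimes\CC)$ for each $\tau$, and then promote this to an isomorphism of \v Cech cochain complexes. But the two sides have different sizes: $V_\tau$ is an affine open subset of $X_x$, and $\Omega^{n-1}$ is a coherent sheaf, so $\Gamma(V_\tau,\Omega^{n-1})$ is an infinite-dimensional $\CC$-vector space (on a single toric irreducible component it contains every form $f\cdot\dlog z_{i_1}\wedge\ldots\wedge\dlog z_{i_{n-1}}$ with $f$ an arbitrary regular function). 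On the other hand, $\Gamma(W_\tau,\iota_*\bigwedge^{n-1}\Lambda\otimes\CC)$ is finite-dimensional. The ``invariant log forms'' you invoke are only a tiny subspace of the sections of $\Omega^{n-1}$ on an affine chart; they are not all of $\Gamma(V_\tau,\Omega^{n-1})$. So there is no chain-level isomorphism of \v Cech complexes, and the proposed argument does not go through.

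The paper deals with exactly this discrepancy by interposing a double complex. The key local identification from \cite{logmirror2} is not with sections over the \emph{open} $V_\tau$ but with global sections over the \emph{closed, proper} stratum $X_{\tau_0}$ of a stratum-adapted sheaf $(\Omega^{n-1}_{\tau_1}|_{X_{\tau_0}})/\shT ors$; properness is what forces these spaces to be finite-dimensional and to match the affine side. One then builds double complexes ${\bf\Lambda}^{i,j}$ and ${\bf\Omega}^{i,j}$ whose $i$-direction is the barycentric resolution $d_{\rm bct}$ of $\Omega^{n-1}$ and whose $j$-direction is the \v Cech direction, with an injection $\Phi:{\bf\Lambda}^{i,j}\hookrightarrow{\bf\Omega}^{i,j}$. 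Taking $d_2$-cohomology first and using the vanishing of higher cohomology on both sides (acyclicity of the $\{W_\tau\}$-cover for $\iota_*\bigwedge^{n-1}\Lambda$, and \cite{logmirror2}, Theorem~3.22 together with Lemma~3.20 on the $\Omega$-side, crucially using $r=n-1$) reduces $\Phi$ to a quasi-isomorphism; taking $d_1$-cohomology first and using the exactness of the barycentric complex then yields the map \eqref{Eqn: tilde Phi} from the affine \v Cech complex into the \v Cech complex of $\Omega^{n-1}$, and one concludes that it is a quasi-isomorphism, hence induces the desired isomorphism on $H^1$. In short, the isomorphism only exists after passing to cohomology via this double-complex comparison; your attempt to produce it on cochains is blocked by the mismatch in dimensions.
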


For $\tau_0\subseteq\tau_1$, recall from
\cite{logmirror2}, Lemma 3.20,\footnote{\cite{logmirror2} uses the notation
$e:\tau_0\to\tau_1$ but we stick with $\tau_0\subseteq\tau_1$ assuming no
self-intersecting cells, as in \cite{theta}.}
the isomorphism
\begin{equation}
\label{part2-main-technical-iso}
\textstyle
\Gamma(W_{\tau_0}\cap W_{\tau_1},\iota_*\bigwedge^r\Lambda\otimes\CC)= \Gamma(X_{\tau_0},(\Omega^r_{\tau_1}|_{X_{\tau_0}})/{\shT ors})
\end{equation}
where $\Omega^{r}_{\tau}=\kappa_{\tau,*}\kappa_{\tau}^*(\Omega^r|_{X_\tau})$ for $\kappa_{\tau}:X_\tau\setminus Z_\tau \hra X_\tau$ the open embedding defined in loc.cit..
Note that, by the argument in the proof of Theorem 3.21 of \cite{logmirror2}, we have 
\begin{equation}
\label{only-outer-ones-matter}
\textstyle
\Gamma(W_{\tau_0}\cap\ldots\cap W_{\tau_i},\iota_*\bigwedge^r\Lambda\otimes\CC)=
\Gamma(W_{\tau_0}\cap W_{\tau_i},\iota_*\bigwedge^r\Lambda\otimes\CC).
\end{equation}
We will use this identification to define a map of double complexes ${\bf\Lambda}^{i,j}\ra
{\bf\Omega}^{i,j}$ and we only care about $r=n-1$ for $n=\dim B$. The first of
these double complexes arises as a ``doubling'' of the \v{C}ech complex of
$\iota_*\bigwedge^{n-1}\Lambda$ with respect to the cover $\{W_\tau\}_{\tau\in
\P}$, as follows:
\begin{equation}
\label{def-lambdaij}
\textstyle
{\bf\Lambda}^{i,j}=\bigoplus_{\sigma_0\subsetneq\ldots\subsetneq\sigma_j\subseteq
\tau_0\subsetneq\ldots\subsetneq\tau_i} \Gamma((W_{\tau_0}\cap
W_{\tau_i})\cap(W_{\sigma_0}\cap W_{\sigma_j}),\iota_*\bigwedge^{n-1}\Lambda\otimes\CC).
\end{equation}
Note that $W_{\sigma_1}\cap W_{\sigma_2}=\emptyset$ unless
$\sigma_{1}\subseteq\sigma_2$ or $\sigma_{2}\subseteq\sigma_1$, so summing over
$\sigma_0\subsetneq\ldots\subsetneq\sigma_j$ gives the \v{C}ech
complex for the cover $\{W_\tau\}_\tau$:
\[\textstyle
\bigoplus_{j=0}^n \Lambda^{i,j}=
\check C^i\big( (W_\tau)_\tau,\iota_*\bigwedge^{n-1} \Lambda\otimes\CC\big).
\]
Using \eqref{only-outer-ones-matter}, the differential $i\to i+1$ is the usual
alternating sum of the \v{C}ech-differential, and similarly for the differential $j\to
j+1$.

The second double complex is
\begin{equation}
\label{def-omegaij}
{\bf\Omega}^{i,j}=\bigoplus_{\sigma_0\subsetneq\ldots\subsetneq\sigma_j\subseteq \tau_0\subsetneq\ldots\subsetneq\tau_i} 
\Gamma(X_{\tau_0}\cap V_{\sigma_0}\cap\ldots\cap V_{\sigma_j},(\Omega^{n-1}_{\tau_i}|_{X_{\tau_0}\cap V_{\sigma_j}})/{\shT ors}).
\end{equation}
The differential $i\to i+1$ is the differential $d_{\operatorname{bct}}$ given
in \cite[p.736, just before Theorem 3.9]{logmirror2} and the differential $j\to
j+1$ is a \v{C}ech-type-differential analogous to the one in
${\bf\Lambda}^{i,j}$. Note however that, unlike for the cover
$\{W_\sigma\}_\sigma$, we may have $V_{\sigma_1}\cap V_{\sigma_2}\neq \emptyset$
even if none of $\sigma_1,\sigma_2$ is contained in the other. We will later use
Lemma~\ref{partial-to-total-order} to take care of this fact.

\begin{lemma}
\label{dbl-cplx-map}
There is a natural injection of double-complexes $\Phi:{\bf\Lambda}^{i,j}\ra
{\bf\Omega}^{i,j}$.
\end{lemma}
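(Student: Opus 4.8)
The plan is to define $\Phi$ one direct summand at a time, feeding the two nested layers of restriction maps present in \eqref{def-lambdaij}--\eqref{def-omegaij} through the comparison isomorphism \eqref{part2-main-technical-iso} of \cite{logmirror2}, Lemma~3.20. Fix chains $\sigma_0\subsetneq\cdots\subsetneq\sigma_j\subseteq\tau_0\subsetneq\cdots\subsetneq\tau_i$ in $\P$. On the affine side, \eqref{only-outer-ones-matter} identifies the corresponding summand of ${\bf\Lambda}^{i,j}$ with $\Gamma\bigl(W_{\sigma_0}\cap W_{\tau_i},\iota_*\bigwedge^{n-1}\Lambda\otimes\CC\bigr)$, i.e.\ only the outermost cells of the chain matter; on the algebraic side, contravariance of $\tau\mapsto V_\tau$ gives $V_{\sigma_0}\cap\cdots\cap V_{\sigma_j}=V_{\sigma_j}$, so the matching summand of ${\bf\Omega}^{i,j}$ is $\Gamma\bigl(X_{\tau_0}\cap V_{\sigma_j},(\Omega^{n-1}_{\tau_i}|_{X_{\tau_0}\cap V_{\sigma_j}})/{\shT ors}\bigr)$. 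The torsion-free, torus-equivariant sheaf $(\Omega^{n-1}_{\tau_i}|_{X_{\tau_0}})/{\shT ors}$ on $X_{\tau_0}$ has, by the sheaf-level form of \eqref{part2-main-technical-iso} underlying the proofs of \cite{logmirror2}, Lemma~3.20 and Theorem~3.21, sections over the invariant open $X_{\tau_0}\cap V_{\sigma_j}$ canonically equal to the sections of $\iota_*\bigwedge^{n-1}\Lambda\otimes\CC$ over the associated open $W'\subseteq B$; composing this identification with the natural comparison of sections over $W_{\sigma_0}\cap W_{\tau_i}$ and over $W'$ gives the summand map, and I would set $\Phi$ equal to the direct sum of these. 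Concretely $\Phi$ sends a section of $\iota_*\bigwedge^{n-1}\Lambda\otimes\CC$ to the relative logarithmic $(n-1)$-form it determines under \eqref{part2-main-technical-iso}, and it is this cochain-level map that will underlie the comparison of Proposition~\ref{open-cover-is-iso}.

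Injectivity of $\Phi$ reduces to injectivity on summands, and there $\Phi$ is the composite of the isomorphism \eqref{part2-main-technical-iso} with a comparison of sections of the pushforward of a local system over two nonempty opens of $B$; such a comparison is injective, so $\Phi$ is a monomorphism.

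Next I would verify that $\Phi$ is a map of double complexes. In the $j$-direction both differentials are the alternating \v Cech differentials assembled from the restriction maps in \eqref{only-outer-ones-matter} and from their algebraic counterparts for the $V_\sigma$, and \eqref{part2-main-technical-iso} is functorial in the cells, so $\Phi$ commutes with $d_j$. In the $i$-direction the differential on ${\bf\Lambda}^{i,j}$ is the ordinary \v Cech differential for the cover $\{W_\tau\}_\tau$, while that on ${\bf\Omega}^{i,j}$ is $d_{\bct}$; that $d_{\bct}$ matches the barycentric \v Cech differential under \eqref{part2-main-technical-iso} is precisely the identity extracted in \cite{logmirror2} while proving Theorem~3.21, and it transfers to the doubled complexes because the extra $\sigma$-indices enter only via restriction to the invariant opens $V_{\sigma_\bullet}$, with which $d_{\bct}$ is manifestly compatible.

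The main obstacle I anticipate is bookkeeping rather than any new idea: one must pin down the open $W'\subseteq B$ attached to the non-proper stratum $X_{\tau_0}\cap V_{\sigma_j}$, and check that the comparison \eqref{part2-main-technical-iso}, stated in \cite{logmirror2} for the proper strata $X_{\tau_0}$, genuinely upgrades to these opens compatibly with the torsion-free quotients and with $d_{\bct}$. Once that localized form of \cite{logmirror2}, \S3 is in hand, the construction of $\Phi$ and the two compatibilities above are routine.
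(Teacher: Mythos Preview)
Your proposal is correct and follows essentially the same approach as the paper: define $\Phi$ summandwise via the identification \eqref{part2-main-technical-iso}, reduce injectivity to the summand level, and check compatibility with the two differentials by functoriality. The paper's proof is slightly more concrete in one respect: rather than invoking an abstract ``sheaf-level form'' of \eqref{part2-main-technical-iso} over the non-proper open $X_{\tau_0}\cap V_{\sigma_j}$, it picks a maximal cell $\hat\tau\supseteq\tau_i$ and uses the explicit injection $\Omega^{n-1}_{\tau_i}\hra \shO_{X_{\tau_0}}\otimes_\ZZ\bigwedge^{n-1}\Lambda_{\hat\tau}$ from \cite{logmirror2}, Lemma~3.12 and Proposition~3.17, sending $m_1\wedge\cdots\wedge m_{n-1}$ to $\dlog z^{m_1}\wedge\cdots\wedge\dlog z^{m_{n-1}}$; it then argues directly that the ambiguity in $\hat\tau$ and in gluing data is irrelevant because one only cares about the monodromy-invariant constant-coefficient part, which is torus-invariant. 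This concreteness is exactly what resolves the bookkeeping obstacle you flagged at the end, so your instinct about where the work lies is accurate.
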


\begin{proof} 
Given $\sigma_0\subseteq\ldots\subseteq \sigma_j\subseteq\tau_0$, the torus-invariant
open subset $X_{\tau_0}\cap V_{\sigma_0}\cap\ldots\cap V_{\sigma_j}=X_{\tau_0}\cap
V_{\sigma_j}$ of the toric variety $X_{\tau_0}$ is of the form $V=\Spec\CC[P]$
for $P$ a toric monoid. By \cite[Lemma 3.12 and Proposition 3.17]{logmirror2},
there is an injection $\Omega^r_{\tau}\hra \Omega^r_{X_{\hat\tau}}(\log \partial
X_{\hat\tau})|_{X_{\tau}}=\shO_{X_\tau}\otimes_\ZZ \bigwedge^r
\Lambda_{\hat\tau}$ for any maximal cell $\hat\tau$ containing $\tau$.
Here, $1\otimes (m_1\wedge\ldots\wedge m_r)$ gets identified with
$\frac{dz^{m_1}}{z^{m_1}}\wedge\ldots\wedge\frac{dz^{m_r}}{z^{m_r}}$. When
changing the choice of $\hat\tau$, identifying $\Lambda_{\hat\tau}$ with another
$\Lambda_{\hat\tau'}$ generally depends on the chosen path in $B\setminus\Delta$, and
furthermore the gluing data rescale the monomials. Both of these won't bother us
for the following reasons. We will only be interested in the subsheaf
$\CC\otimes_\ZZ \bigwedge^r \Lambda_{\hat\tau}$ which is actually invariant
under this torus action, because the scaling operation $z\mapsto \lambda z$
leaves $\frac{dz}z$ invariant. Even better, we will actually only care about the
monodromy invariant part of this subsheaf. With this in mind, in view of
\eqref{def-lambdaij}, it is straightforward to produce the following map
\begin{align*}
\textstyle
\Gamma((W_{\tau_0}\cap W_{\tau_i})\cap(W_{\sigma_0}\cap W_{\sigma_j}),\iota_*\bigwedge^{n-1}\Lambda\otimes\CC)&\stackrel{\eqref{only-outer-ones-matter}}{=}
\textstyle
\CC\otimes\Gamma(W_{\sigma_0}\cap W_{\tau_i},\iota_*\bigwedge^{n-1}\Lambda)\\
&\hra \Gamma(X_{\tau_0}\cap V_{\sigma_j},(\Omega^{n-1}_{\hat\tau}|_{X_{\tau_0}\cap V_{\sigma_j}})/{\shT ors})
\end{align*}
and its image is contained in $\Gamma(X_{\tau_0}\cap
V_{\sigma_j},(\Omega^{n-1}_{\tau_i}|_{X_{\tau_0}\cap V_{\sigma_j}})/{\shT
ors})$. This gives an injection from the sum in \eqref{def-lambdaij} to the one
in \eqref{def-omegaij}. The map respects the differentials by what we said
before and by the functoriality of \v{C}ech-type complexes.
\end{proof}

\begin{remark}
We never used that $r=n-1$, so a similar map as in the previous lemma exists for
any $r$. In fact, the statement of Lemma~\ref{dbl-cplx-map} can be upgraded to
an injection of triple complexes when taking the de Rham differential for
${\bf\Omega}^{i,j}$ and an additional trivial differential $r\to r+1$ on
${\bf\Lambda}^{i,j}$.
\end{remark}

We need a technical lemma before we can prove Proposition~\ref{open-cover-is-iso}. 
For a sheaf $\shF$ on $X_x$, consider the exact sequence of complexes
\begin{equation*} 
\resizebox{\textwidth}{!}{$
0\ra \hspace{1ex} \overbrace{\displaystyle\hspace{-1ex}\bigoplus_{\substack{\sigma_0<\ldots<\sigma_j\\ \exists
k<j:\sigma_k\not\subset \sigma_{k+1}}}\Gamma(V_{\sigma_0}\cap \ldots.\cap
V_{\sigma_j},\shF)}^{K^j:=} \ra\check C^j(\{V_\sigma\}_\sigma,\shF)\stackrel{e}{\ra}
\displaystyle\bigoplus_{\sigma_0\subsetneq\ldots\subsetneq\sigma_j}\Gamma(V_{\sigma_0}\cap
\ldots.\cap V_{\sigma_j},\shF) \ra 0.$}
\end{equation*}

\begin{lemma}
\label{partial-to-total-order}
The surjection $e$ is a quasi-isomorphism. Denoting by $d_2$ the differential in
the second index of ${\bf\Omega}^{i,j}$, we conclude that
\begin{equation}
\label{eq-coho-j}
H^p_{d_2}({\bf\Omega}^{i,\bullet})= \bigoplus_{\tau_0\subsetneq\ldots\subsetneq\tau_i}
H^p(X_{\tau_0},(\Omega^{n-1}_{\tau_i}|_{X_{\tau_0}})/{\shT ors}).
\end{equation}
\end{lemma}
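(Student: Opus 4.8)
The statement has two parts: first, that the surjection $e$ of complexes is a quasi-isomorphism, equivalently that the kernel complex $K^\bullet$ (generated by those \v Cech summands over $\sigma_0<\ldots<\sigma_j$ for which $\sigma_k\not\subset\sigma_{k+1}$ for some $k$) is acyclic; second, that this allows one to identify $d_2$-cohomology of ${\bf\Omega}^{i,\bullet}$ with the displayed direct sum. The first I would prove by a standard ``extra degeneracy''/contracting-homotopy argument for semisimplicial-type complexes, and the second is then essentially formal.

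\textbf{Step 1: acyclicity of $K^\bullet$.} Fix the total order $\le$ refining $\subseteq$. The key observation is that for $\sigma_1,\sigma_2\in\P$ one has $V_{\sigma_1}\cap V_{\sigma_2}\neq\emptyset$ iff $\sigma_1$ and $\sigma_2$ have a common upper bound in $\P$, i.e.\ there is $\tau$ containing both, in which case $V_{\sigma_1}\cap V_{\sigma_2}=V_\tau\cap(\text{union of }V_{\sigma'})$; more precisely the nonempty intersections $V_{\sigma_0}\cap\ldots\cap V_{\sigma_j}$ are governed by the existence of a least common upper bound $\tau=\sigma_0\vee\ldots\vee\sigma_j$ (which exists because $\P$ is a polyhedral complex: the intersection of the closed stars is the star of the smallest cell containing all of them), and then that intersection depends only on $\tau$. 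I would exploit this to build a simplicial contracting homotopy on $K^\bullet$: given a chain $\sigma_0<\ldots<\sigma_j$ with some $\sigma_k\not\subset\sigma_{k+1}$, let $\tau=\sigma_0\vee\ldots\vee\sigma_j$; inserting $\tau$ at the end of the chain (it is $\ge$ all the $\sigma_i$ in the total order, being a common upper bound containing each of them) defines a map $h\colon K^j\to K^{j+1}$, where I check that $\sigma_0<\ldots<\sigma_j<\tau$ still lies in $K^{j+1}$ (it contains the bad pair $\sigma_k\not\subset\sigma_{k+1}$) and that $V_{\sigma_0}\cap\ldots\cap V_{\sigma_j}\cap V_\tau=V_{\sigma_0}\cap\ldots\cap V_{\sigma_j}$, so that $h$ is a well-defined map on sections of $\shF$. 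The usual computation $dh+hd=\pm\mathrm{id}$ — the insertion-of-a-terminal-element homotopy — then shows $K^\bullet$ is acyclic. One minor point to watch: the homotopy must be compatible with the constraint defining $K^\bullet$ on \emph{all} faces, so I would instead run the argument face-by-face, or equivalently filter $K^\bullet$ by the value of $\tau$ and observe that on each piece we are looking at the \v Cech complex of a constant-type cover that is a cone over the terminal object $\tau$.

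\textbf{Step 2: passing to $d_2$-cohomology.} Granting that $e$ is a quasi-isomorphism, for each fixed $i$ the complex ${\bf\Omega}^{i,\bullet}$ (differential $d_2$, the \v Cech-type differential in the $\sigma$-index) has the same cohomology as the quotient complex $\bigoplus_{\sigma_0\subsetneq\ldots\subsetneq\sigma_j}\Gamma(V_{\sigma_0}\cap\ldots\cap V_{\sigma_j},(\Omega^{n-1}_{\tau_i}|_{X_{\tau_0}})/{\shT ors})$, and by the identification $V_{\sigma_0}\cap\ldots\cap V_{\sigma_j}\cap X_{\tau_0}=V_{\sigma_j}\cap X_{\tau_0}$ for $\sigma_0\subsetneq\ldots\subsetneq\sigma_j$, this quotient complex is, for fixed $\tau_0\subsetneq\ldots\subsetneq\tau_i$, precisely the complex computing \v Cech cohomology of the sheaf $(\Omega^{n-1}_{\tau_i}|_{X_{\tau_0}})/{\shT ors}$ on $X_{\tau_0}$ with respect to the open cover $\{V_{\sigma_j}\cap X_{\tau_0}\}_{\sigma_j}$ by torus-invariant affines (restricting to $\sigma_j\subseteq\tau_0$, so that these cover $X_{\tau_0}$). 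Each $V_{\sigma_j}\cap X_{\tau_0}=\Spec\CC[P]$ is affine; since $(\Omega^{n-1}_{\tau_i}|_{X_{\tau_0}})/{\shT ors}$ is coherent (indeed a subsheaf of a free sheaf as recalled in the proof of Lemma~\ref{dbl-cplx-map}), this cover is Leray, so \v Cech cohomology computes sheaf cohomology, giving \eqref{eq-coho-j}.

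\textbf{Main obstacle.} The delicate point is Step 1 — writing down an honest contracting homotopy on $K^\bullet$ that respects both the \v Cech sign conventions and the ``defect'' condition picking out $K^\bullet$ inside the full \v Cech complex. The clean way is the filtration-by-$\tau$ argument: the terms with least common upper bound a fixed $\tau$ form a subquotient that is the (reduced) \v Cech complex of a cover which is a cone with apex $V_\tau$, hence contractible, and one must only check the filtration is exhaustive, bounded, and compatible with $d_2$. Once that bookkeeping is in place, the rest is formal, and the identifications of intersections used in Step 2 are exactly the ones already recorded in the excerpt (contravariance $V_{\sigma_1}\subset V_{\sigma_0}$ for $\sigma_0\subset\sigma_1$, plus the fact that these are the standard affine opens of toric $X_{\tau_0}$).
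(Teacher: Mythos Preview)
Your Step~1 has a genuine gap: you assume that any collection $\sigma_0,\ldots,\sigma_j$ with nonempty $V_{\sigma_0}\cap\ldots\cap V_{\sigma_j}$ has a \emph{unique} least common upper bound $\tau=\sigma_0\vee\ldots\vee\sigma_j$ in $\P$. This is false in general. The paper even gives the counterexample: take $(B,\P)$ a circle subdivided into two intervals; the two vertices then have \emph{two} minimal common upper bounds, namely both intervals. So your ``insert $\tau$ at the end'' homotopy is not well-defined, and your proposed filtration ``by the value of $\tau$'' does not make sense as stated. The correct structural fact is that
\[
V_{\sigma_0}\cap\ldots\cap V_{\sigma_j}\;=\!\!\bigsqcup_{\sigma\in\langle\sigma_0,\ldots,\sigma_j\rangle}\!\! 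V_\sigma,
\]
a \emph{disjoint union} over the set $\langle\sigma_0,\ldots,\sigma_j\rangle$ of minimal cells containing all the $\sigma_k$. The paper then filters $K^\bullet$ by the \emph{dimension} of such $\sigma$ (not by $\sigma$ itself), and shows each graded piece $\bar K^\bullet_\sigma$ is acyclic by splitting it according to whether $\sigma_j=\sigma$ or $\sigma_j\neq\sigma$, exhibiting it as the cone of an isomorphism. Your cone/terminal-object intuition is morally right, but it must be implemented componentwise on this disjoint union and with the dimension filtration to handle the non-uniqueness.

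A smaller point: in Step~2 you have the direction of $e$ backwards. The complex ${\bf\Omega}^{i,\bullet}$ is \emph{already} indexed by chains $\sigma_0\subsetneq\ldots\subsetneq\sigma_j$ (see its definition), so it is the \emph{target} of $e$, not the source. The role of the quasi-isomorphism is to identify the $d_2$-cohomology of ${\bf\Omega}^{i,\bullet}$ with that of the full \v Cech complex of the pushforward sheaf $q_{\tau_0,*}\big((\Omega^{n-1}_{\tau_i}|_{X_{\tau_0}})/\shT ors\big)$ for the affine cover $\{V_\sigma\}$ of $X_x$, which then computes sheaf cohomology on $X_{\tau_0}$. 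Your argument for why this last identification holds (affineness, coherence) is fine.
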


\begin{proof}
We show that $K^\bullet$ is acyclic. Note that $V_{\sigma_0}\cap \ldots\cap
V_{\sigma_j}=\emptyset$ unless there is a $\sigma\in\P$ that contains
$\sigma_0,\ldots,\sigma_j$. Let $\langle\sigma_0,\ldots,\sigma_j\rangle$ denote
the set of minimal elements with respect to $\subseteq$ in the set of all
$\sigma\in\P$ that contain $\sigma_0,\ldots,\sigma_j$ (e.g. for $(B,\P)$ two
intervals glued to form a circle and $\sigma_0,\sigma_1$ being the two vertices,
we have $\langle\sigma_0,\sigma_1\rangle$ is the set containing the two
intervals). The use of this definition is the following observation 
\[
V_{\sigma_0}\cap \ldots.\cap V_{\sigma_j}=\bigsqcup_{\sigma\in \langle\sigma_0,\ldots,\sigma_j\rangle} V_{\sigma}.
\]
Let $K_{\sigma}^\bullet$ be the subcomplex of $K^\bullet$ consisting of the
summands for open sets contained in $V_\sigma$, in particular involving only
summands of terms for $\sigma_0<\ldots<\sigma_j$ where $\sigma$ contains
$\sigma_0,\ldots,\sigma_j$. The subcomplexes $K_{i}^\bullet=\sum_{\dim\sigma=i}
K_{\sigma}^\bullet$ form a filtration of $K^\bullet$. We show that the gradeds
of this filtration are acyclic. Let $\bar K_{\sigma}^\bullet$ be the quotient of
$K_{\sigma}^\bullet$ defined by removing terms where $V_\sigma$ is properly
contained in a component of $V_{\sigma_0}\cap\ldots\cap V_{\sigma_j}$ rather
than equal to a component, precisely
\[
\bar K_{\sigma}^j=\bigoplus_{{\sigma_0<\ldots<\sigma_j}\atop{\sigma \in\langle\sigma_0,\ldots,\sigma_j\rangle}} \Gamma(V_{\sigma},\shF).
\]
We have $K_i/K_{i+1}=\bigoplus_{\dim \sigma=i}\bar K_{\sigma}^\bullet$. We find
that $\bar K_{\sigma}^\bullet$ is isomorphic to the cone of an isomorphism of
complexes and thus acyclic. Indeed, write $\bar K_{\sigma}^\bullet = {}_1\bar
K_{\sigma}^\bullet\ra {}_2\bar K_{\sigma}^\bullet$ where ${}_1\bar
K_{\sigma}^\bullet$ gathers the summands with $\sigma_j\neq\sigma$ and ${}_2\bar
K_{\sigma}^\bullet$ gathers the summands with $\sigma_j=\sigma$. We finished
showing the acyclicity of $K^\bullet$.

Since $V_\sigma$ is affine for each $\sigma$ and so are their intersections,
$\{V_\sigma\}_\sigma$ forms an affine cover of $X_x$. Let $q_\tau:X_\tau\ra X_x$
denote the inclusion of the stratum. We have for
$\shF:=q_{\tau,*}(\Omega^{n-1}_{\tau_i}|_{X_{\tau_0}})/{\shT ors}$ that the
quasi-isomorphism $e$ is a map between the \v{C}ech complex of $\shF$ and a
summand of the complex $({\bf\Omega}^{i,\bullet},d_2)$. Summing the maps $e$ for
all these summands and taking cohomology yields \eqref{eq-coho-j}.
\end{proof}

\begin{proof}[Proof of Proposition~\ref{open-cover-is-iso}]
The main tool is the injection of double complexes $\Phi$ from
Lemma~\ref{dbl-cplx-map}. We consider taking cohomology for the second
differential $j\ra j+1$ for both complexes ${\bf\Lambda}^{i,j}$ and
${\bf\Omega}^{i,j}$. Lemma~\ref{partial-to-total-order} gives the result for the
complex ${\bf\Omega}^{i,j}$. By the proof of Theorem~3.22 and Lemma~3.20 in
\cite{logmirror2}, using that $r=n-1$, we find that
$H^p_{d_2}({\bf\Omega}^{i,\bullet})=0$ for $p>0$. Note that this fails in
general for other degrees $r$ as was found in \cite{Ru10},\,Theorem~1.6. We also
have that $H^p_{d_2}({\bf\Lambda}^{i,\bullet})=0$ for $p>0$ since the cover
$\{W_\tau\}_\tau$ is acyclic for $\iota_*\bigwedge^{r}\Lambda$ for any $r$ by
\cite[Lemma 5.5]{logmirror1}. Thus, taking cohomology by the differential $j\ra
j+1$ on source and target of $\Phi$ simultaneously yields a map induced by
$\Phi$ that is concentrated in degrees $(i,0)$. The resulting complexes for the
remaining differential $i\ra i+1$ the isomorphism of barycentric complexes that
led to the proof of \cite[Theorem 3.22]{logmirror2}. Consequently and relevant
for us is the conclusion that $\Phi:{\bf\Lambda}^{i,j}\ra{\bf\Omega}^{i,j}$ is a
quasi-isomorphism on the total complex of the double complex.

We next consider what happens when we first take cohomology under the first
differential $d_1$, that is, $i\ra i+1$. All cohomology groups at $i>0$ vanish:
for ${\bf\Lambda}^{i,j}$ again because of the acyclicity of the cover
$\{W_\tau\}_\tau$ and for ${\bf\Omega}^{i,j}$ by the exactness of the
barycentric differentials \cite{logmirror1},\,Proposition A.2 and
\cite{logmirror2},\,Theorem 3.5.

Therefore, since $\Phi$ is a quasi-isomorphism, also the induced map on
$d_1$-cohomology that is concentrated in $i=0$,
\begin{equation}
\label{Eqn: tilde Phi}
\textstyle
\tilde\Phi:\bigoplus_{\sigma_0\subsetneq\ldots\subsetneq\sigma_j} \Gamma(W_{\sigma_0}\cap W_{\sigma_j},\iota_*\bigwedge^{n-1}\Lambda\otimes\CC) \lra \bigoplus_{\sigma_0\subsetneq\ldots\subsetneq\sigma_j} \Gamma(V_{\sigma_j},\Omega^{n-1})
\end{equation}
is a quasi-isomorphism under the remaining differential $d_2$, that is, $j\ra j+1$. 
Taking cohomology with respect to $d_2$ and composing with the inverse of the quasi-isomorphism $e$ from Lemma~\ref{partial-to-total-order}, we conclude the assertion.
\end{proof}

\end{appendix}


\sloppy

\end{document}